\newcommand{\basek}{\Bbbk} % base ring
\newcommand{\Modk}{\Mod(\basek)}
\newcommand{\Chk}{\Ch(\basek)}
\newcommand{\Homcpx}{\Hom^\bullet} % Hom complex in Ch(k)
\newcommand{\dgCat}{\catname{dgCat}}
\newcommand{\dgCatk}{\catname{dgCat}_\basek}
\newcommand{\HodgCat}{\catname{HodgCat}}
\newcommand{\intHom}{\mathbb{R}\underline{\mathrm{Hom}}} % internal Hom of hodgCat
\newcommand{\HotwodgCat}{\catname{Ho}_2\catname{dgCat}} % homotopy 2-category
\newcommand{\CAT}{\catname{CAT}} % cat of large categories
\newcommand{\dg}{\mathrm{dg}}
\newcommand{\Comdg}{C_\dg}
\newcommand{\Comdgk}{\Comdg(\basek)}
\newcommand{\Com}{C} % dg module
\newcommand{\Kom}{K}
\newcommand{\Dom}{D}
\newcommand{\Perf}{\catname{Perf}}
\DeclareMathOperator{\Cone}{Cone}
\DeclareMathOperator{\Cocone}{Cocone}
\newcommand{\Acyc}{\catname{Acyc}} % acyclic complex
\newcommand{\hproj}{\catname{h}\text{-}\catname{proj}}
\newcommand{\hinj}{\catname{h}\text{-}\catname{inj}}
\DeclareMathOperator{\hp}{\mathsf{p}} % h-projective replacement
\DeclareMathOperator{\hi}{\mathsf{i}} % h-injective replacement
\newcommand{\cofQ}{Q} % cofibrant replacement of dg categories
\newcommand{\cofresol}{q} % cofibrant resolution
\newcommand{\familyS}{\mathrm{S}} % family of objects
\newcommand{\tria}[1]{\catname{tria}(#1)} % smallest tria full subcat
\newcommand{\thick}[1]{\catname{thick}(#1)} % smallest thick subcat
\newcommand{\Loc}[1]{\catname{Loc}(#1)} % smallest localizing subcat
\newcommand{\cpt}{\text{cpt}}
\newcommand{\Bimod}{\catname{Bimod}}
\newcommand{\DBimod}{\catname{DBimod}}
\newcommand{\DBimodhf}{\DBimod_\text{hf}}
\newcommand{\DBimodhp}{\DBimod_\text{hp}}
\newcommand{\rqr}{\text{rqr}}
\newcommand{\DBimodrqr}{\DBimod^\rqr}
\newcommand{\procomp}{\odot} % composition for profunctor
\newcommand{\Dprocomp}{\procomp^\mathbb{L}} % derived procomposition
\newcommand{\bbL}{\mathbb{L}}
\newcommand{\bbR}{\mathbb{R}}
\newcommand{\bbLT}{\bbL T}
\newcommand{\bbRH}{\bbR H}
\newcommand{\bbLtil}{\widetilde{\bbL}}
\newcommand{\Dddag}{{\bbR\ddag}} % derived daggar
\newcommand{\DIsbellL}{\bbL\mathcal{L}} % derived Isbell dual L
\newcommand{\DIsbellR}{\bbR\mathcal{R}} % derived Isbell dual R
\DeclareMathOperator{\DRan}{DRan} % derived Ran
\DeclareMathOperator{\DRift}{DRift} % derived Rift
\newcommand{\qucl}[1]{\overline{#1}} % the full subcat of quasi-representable modules
\newcommand{\Htil}{\widetilde{H}^0}
\newcommand{\Winit}{{W_\text{init}}}
\newcommand{\Vterm}{{V_\text{term}}}
\newcommand{\Wcoprod}{{W_\text{coprod}}}
\newcommand{\Vprod}{{V_\text{prod}}}
\newcommand{\Wcoshift}[1]{{W_{#1\text{-coshift}}}}
\newcommand{\Vshift}[1]{{V_{#1\text{-shift}}}}
\newcommand{\Wcone}{{W_\text{cone}}}
\newcommand{\Vcocone}{{V_\text{cocone}}}
\newcommand{\localization}{\delta}
\newcommand{\bicatK}{\mathcal{K}}
\newcommand{\bicatL}{\mathcal{L}}
\newcommand{\bicatM}{\mathcal{M}}
\newcommand{\moncatV}{\mathcal{V}}
\newcommand{\Prof}{\catname{Prof}}
\newcommand{\VCat}{\moncatV\text{-}\Cat}
\newcommand{\VProf}{\moncatV\text{-}\Prof}
\newcommand{\Span}{\catname{Span}}
\newcommand{\co}{\text{co}}
\newcommand{\coop}{\text{coop}}
\newcommand{\IsbellL}{\mathcal{L}}
\newcommand{\IsbellR}{\mathcal{R}}
\newcommand{\equipsubAst}{\ast}
\newcommand{\equipAst}{(\mplaceholder)_{\equipsubAst}}
\newcommand{\coequipAst}{(\mplaceholder)^{\equipsubAst}}
\newcommand{\equip}{\equipAst}
\newcommand{\coequip}{\coequipAst}
\newcommand{\equipsubStar}{\star}
\newcommand{\equipStar}{(\mplaceholder)_{\equipsubStar}}
\newcommand{\equipsubBullet}{\bullet}
\newcommand{\equipBullet}{(\mplaceholder)_{\equipsubBullet}}
\def\slashedarrowfill@#1#2#3#4#5{%
	$\m@th\thickmuskip0mu\medmuskip\thickmuskip\thinmuskip\thickmuskip
	\relax#5#1\mkern-7mu%
	\cleaders\hbox{$#5\mkern-2mu#2\mkern-2mu$}\hfill
	\mathclap{#3}\mathclap{#2}%
	\cleaders\hbox{$#5\mkern-2mu#2\mkern-2mu$}\hfill
	\mkern-7mu#4$%
}
\def\rightslashedarrowfill@{%
	\slashedarrowfill@\relbar\relbar\mapstochar\rightarrow}
\newcommand\xslashedrightarrow[2][]{%
	\ext@arrow 0055{\rightslashedarrowfill@}{#1}{#2}}
\def\slashedrightarrow{\xslashedrightarrow{}}
\DeclareMathAlphabet{\mathbbold}{U}{bbold}{m}{n}
\newcommand{\warrcat}{\mathbbold{2}} % warking arrow category {0<1}
\newcommand{\mymemo}[1]{\textcolor{orange}{#1}} % memo用
\title[A formal category theoretic approach]{A formal category theoretic approach to the homotopy theory of dg categories}
\author[Y.\ Imamura]{Yuki Imamura}
\date{\today}%%%%%%%%%%%%%%%%%%%
\address[Y.\ Imamura]{%
	Research Institute for Mathematical Sciences, Kyoto University, Kyoto 606-8502, Japan
	%Department of Mathematics, Graduate School of Science, Osaka University, Machikaneyama 1-1, Toyonaka , Osaka 560-0043, Japan
}
\email{u287972b@alumni.osaka-u.ac.jp}
\keywords{dg category, dg bimodule, quasi-functor, proarrow equipment, homotopical (co)limit.}
\subjclass[2020]{18G35, 18D60, 18D65, 14F08}
\begin{document}

\begin{abstract}
	We introduce a bicategory that refines the localization of the category of dg categories with respect to quasi-equivalences and investigate its properties via formal category theory.
	Concretely, we first introduce the bicategory of dg categories \(\mathsf{DBimod}\), whose Hom categories are given by the derived categories of dg bimodules, and then define the desired bicategory as the sub-bicategory $\mathsf{DBimod}^\text{rqr}$ consisting of right quasi-representable dg bimodules.

	The first half of the paper is devoted to the study of adjunctions and equivalences in these bicategories.
	We then show that the embedding $\mathsf{DBimod}^\text{rqr} \hookrightarrow \mathsf{DBimod}$ forms a proarrow equipment in the sense of Richard J. Wood, which provides a framework for formal category theory and enables us to define (weighted) (co)limits in an abstract setting. 
	From this proarrow equipment, we derive the notion of homotopical (co)limits in dg categories, including homotopical shifts and cones, which in turn allows us to give a formal characterization of pretriangulated dg categories.
	As an application, we provide a conceptual proof of the fact that pretriangulatedness is preserved under the gluing procedure, and we establish reflection results concerning adjoints and colimits.
\end{abstract}

\maketitle

\tableofcontents
%% tableofcontents には資料データである adrees email keywords subjclass などが必要

\setcounter{section}{-1}
\section{Introduction}

%% theoremのカウンタをアルファベットにする
\renewcommand{\thetheorem}{\Alph{theorem}}

Let $\basek$ be a commutative ring with unit (we will assume from \cref{section:the_homotopy_category_theory_of_dg_categories_over_a_field} onwards that $\basek$ is a field for simplicity).

A \emph{differential graded category} (or \emph{dg category}) over $\basek$ is an enriched category over the symmetric monoidal closed category $\Chk$ of cochain complexes of $\basek$-modules.
If a dg category $\catA$ is \emph{pretriangulated}, then its \emph{homotopy category} $H^0(\catA)$ (the preadditive category obtained by taking $0$-th cohomology of Hom complexes of $\catA$) has a triangulated structure. Hence (pretriangulated) dg categories are viewed as an enhancement of triangulated categories, and used in algebraic geometry and representation theory (\cite{Bondal-Kapranov:1991}, \cite{Keller:2006arXiv}).

Besides the usual notion of equivalences as enriched categories, dg categories possess a weaker notion of equivalences called quasi-equivalences.
A dg functor $F\colon \catA\to \catB$ is called a \emph{quasi-equivalence} if all $F_{A,A'}\colon \catA(A,A') \to \catB(FA,FA')$ are quasi-isomorphisms for $A,A'\in\catA$ and if the induced functor $H^0(F)\colon H^0(\catA)\to H^0(\catB)$ is an equivalence of categories. Quasi-equivalences are considered to be the right notion of equivalences between dg categories (\cite[p.\ 617]{Toen:2007}).
In \cite{Tabuada:2005quillen_model_dgCat}, Tabuada established the homotopy theory of dg categories by constructing a model structure on the category $\dgCat$ of small dg categories whose weak equivalences are the quasi-equivalences. From this we get the localization $\HodgCat$ of $\dgCat$ with respect to quasi-equivalences, which is called the \emph{homotopy category of dg categories}.

In addition to the model structure, the category $\dgCat$ has the structure of a symmetric monoidal closed category. Unfortunately the monoidal structure on $\dgCat$ is not compatible with the model structure in the sense that it is not a monoidal model category. Nevertheless To\"en proved in \cite{Toen:2007} that the homotopy category $\HodgCat$ has a symmetric monoidal closed structure $(\otimes^\bbL,\basek,\intHom)$. The internal Hom $\intHom(\catA,\catB)$ can be defined as the dg category $\Dom_\dg(\catA,\catB)^\rqr$ of \emph{right quasi-representable dg bimodules} (see \cite[Theorem 6.1]{Toen:2007}).

Furthermore, $\dgCat$ is a $2$-category as well. Does the 2-category structure on $\dgCat$ descend to the homotopy category $\HodgCat$? The $2$-structure on $\dgCat$ is also incompatible with the model structure. Yet, the following proposition hints at a $2$-categorical structure on $\HodgCat$.

\begin{proposition*}[{\cite[Corollary 4.8]{Toen:2007}; see also \cite{Canonaco-Stellari:2015Internal_Homs}}]
	There exists a bijection
	\[ \Hom_\HodgCat(\catA,\catB) \cong H^0\big( \intHom(\catA,\catB) \big) /{\cong}, \]
	where the right hand side denotes the set of isomorphism classes of the objects.
\end{proposition*}

This result suggests that a $2$-category with $H^0 \big( \intHom(\catA,\catB) \big)$ as Hom-categories would serve as a refinement of $\HodgCat$. Once such a refining $2$-category is obtained, it is expected that we can develop a homotopical version of category theory for dg categories, just as we use the $2$-category structure on $\dgCat$ to develop category theory of dg categories (as enriched categories).

The purpose of this paper is to construct such a $2$-category in terms of dg bimodules and to study its properties through the lens of formal category theory.
Specifically, rather than a mere $2$-category, we construct it as the bicategory $\DBimodrqr$ of \emph{right quasi-representable} dg bimodules, which is a sub-bicategory of the derived bicategory $\DBimod$ of dg bimodules.
These bicategories have been previously considered by several authors, including
%It is worth noting that these bicategories have already been considered by several authors: 
\cite{Keller:1998invariance},
\cite{Keller:2005triangulated_orbit},
\cite{Lunts-Orlov:2010uniqueness_of_enhancements},
\cite{Johnson:2014azumaya},
\cite{Genovese:2017},
\cite{Anno-Logvinenko:2017spherical},
\cite{Orlov:2020finite-dimensional},
\cite{Yekutieli:2020Derived_categories},
\cite{Bodzenta-Bondal:2022},
\cite{Campbell-Ponto:2023riemann-roch_in_monoidal_2-cat}, 
among others.
While the construction is not novel, we will include a detailed account for the sake of completeness.
The Hom category $\DBimodrqr(\catA,\catB)=\Dom(\catA,\catB)^\rqr$ is equivalent to $H^0\big( \intHom(\catA,\catB) \big)$, and thus we call the bicategory $\DBimodrqr$ the \emph{homotopy bicategory of dg categories}.
%the bicategory $\DBimodrqr$ is an ``incarnation'' of a 2-category refinement of HodgCat.

In the second half of the paper, we present an approach to the homotopy theory of dg categories from the viewpoint of proarrow equipments. Proarrow equipments are known to provide a framework for formal category theory (\cite{Wood:1982proarrow1,Wood:1985proarrow2}); see \cref{section:proarrow_equipment} for an overview. We can observe that the bicategory $\DBimodrqr$ together with $\DBimod$ forms a proarrow equipment $\equipStar\colon \DBimodrqr\hookrightarrow\DBimod$, which enables the development of category theory within the bicategory $\DBimodrqr$. Applying the general theory of proarrow equipment, we introduce the notion of homotopical (co)limits in a dg category, which are respected by quasi-equivalences.
We could therefore refer to the resulting category theory in $\DBimodrqr$ as the \textit{homotopy category theory} of dg categories.

\vspace{1em}\noindent\textbf{Results of this paper.}
A \emph{dg bimodule} $X\colon \catA \slashedrightarrow\catB$ between dg categories is defined as a dg functor $X\colon \catB^\op\otimes\catA \to \Comdgk$, where $\Comdgk$ is the dg category of complexes of $\basek$-modules. A dg bimodule $X\colon \basek \slashedrightarrow\catB$ from the unit dg category $\basek$ is nothing but a (right) dg $\catB$-module.
All dg categories and the categories $\Com(\catA,\catB)$ of dg bimodules assemble into a bicategory $\Bimod$ whose composition $\procomp$ is given by the tensor product of dg bimodules.

The bicategory $\DBimod$ is a derived version of $\Bimod$.
Let $\Dom(\catA,\catB)$ be the derived category of dg bimodules. The \emph{derived composition}
\[ \Dprocomp\colon \Dom(\catB,\catC) \times \Dom(\catA,\catB) \to \Dom(\catA,\catC) \]
is defined as the total left derived functor of the composition $\procomp$. In general, the derived composition is not associative or unital. However, it is associative and unital when we restrict to \emph{locally h-projective} (or \emph{locally h-flat}) dg categories.
Namely we have the bicategory $\DBimodhp$ (resp.\ $\DBimodhf$) such that
\begin{itemize}
	\item the objects are locally h-projective (resp.\ locally h-flat) small dg categories;
	\item the Hom categories $\DBimod(\catA,\catB)$ are the derived categories $\Dom(\catA,\catB)$ of dg bimodules.
\end{itemize}
We remark that if the base ring $\basek$ is a field, then all dg categories are locally h-projective (and locally h-flat). We define $\DBimod\coloneqq \DBimodhp$ and refer to it as the \emph{derived bicategory of dg bimodules}.

In \cref{section:adjunctions_in_DBimod}, we study adjunctions in $\DBimod$ and characterize when a dg bimodule is left adjoint there, generalizing \cite[Corollary 6.6]{Genovese:2017}.

\begin{theorem}[{\cref{thm:right_compact_bimodules_are_right_adjoint}}]\label{theorem-N}
	Let $\catA,\catB$ be locally h-projective small dg categories and $X\colon \catA\slashedrightarrow\catB$ be a dg bimodule. Then the following are equivalent.
	\begin{enumerate}[label=\equivitem]
		\item The dg bimodule $X$ has a right adjoint in the bicategory $\DBimod$.
		\item The derived Hom functor $\bbRH_X\colon \Dom(\catB)\to\Dom(\catA)$ preserves coproducts.
		\item $X$ is right compact.
	\end{enumerate}
\end{theorem}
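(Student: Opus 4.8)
My plan is to take as the backbone the derived tensor--Hom adjunction $\bbLT_X = X\Dprocomp(-)\dashv\bbRH_X$ between $\Dom(\catA)$ and $\Dom(\catB)$ (obtained by specialising the composition $\Dprocomp$ and the derived Hom to $\catC=\basek$), and then to prove (1)$\Rightarrow$(2), (2)$\Leftrightarrow$(3), and finally (2)$\Rightarrow$(1), the last carrying the real content. For (1)$\Rightarrow$(2): if $X$ admits a right adjoint $Y\colon\catB\slashedrightarrow\catA$ in $\DBimod$, then postcomposition is adjoint, so taking $\catC=\basek$ yields an adjoint pair $X\Dprocomp(-)\dashv Y\Dprocomp(-)$ of functors between $\Dom(\catA)$ and $\Dom(\catB)$. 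Comparing this with the tensor--Hom adjunction and using uniqueness of right adjoints gives $\bbRH_X\cong Y\Dprocomp(-)$, and the right-hand side is a left-derived tensor functor, hence cocontinuous; in particular it preserves coproducts.

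Next, (2)$\Leftrightarrow$(3) is an objectwise computation. Writing $\bbRH_X$ through its Hom description, for $N\in\Dom(\catB)$ and $A\in\catA$ one has $H^n\bigl(\bbRH_X(N)(A)\bigr)\cong\Hom_{\Dom(\catB)}\bigl(X(-,A),N[n]\bigr)$. Since isomorphisms in $\Dom(\catA)$ are detected after evaluating at each $A$ and taking cohomology, and since both evaluation and $H^n$ commute with coproducts, the comparison morphism $\bbRH_X\bigl(\coprod_i N_i\bigr)\to\coprod_i\bbRH_X(N_i)$ is invertible iff for every $A$ the canonical map $\coprod_i\Hom\bigl(X(-,A),N_i\bigr)\to\Hom\bigl(X(-,A),\coprod_i N_i\bigr)$ is invertible, i.e. iff every $X(-,A)$ is compact in $\Dom(\catB)$. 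This is exactly right compactness of $X$, and it settles both implications simultaneously.

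Finally, (2)$\Rightarrow$(1) is where I expect the work to lie. I would define a candidate right adjoint by $Y(-,B):=\bbRH_X\bigl(\catB(-,B)\bigr)$, functorially in $B$, so that the assignment assembles into a dg bimodule $Y\colon\catB\slashedrightarrow\catA$. Because the representables generate $\Dom(\catB)$ as a localizing subcategory and both $Y\Dprocomp(-)$ and $\bbRH_X$ are triangulated and coproduct-preserving (the latter by (2)), the evident isomorphism $Y\Dprocomp\catB(-,B)\cong\bbRH_X\bigl(\catB(-,B)\bigr)$ on generators propagates to a natural isomorphism $Y\Dprocomp(-)\cong\bbRH_X$. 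It then remains to promote the $\catC=\basek$ adjunction to a genuine adjunction in $\DBimod$: the counit $\varepsilon\colon X\Dprocomp Y\Rightarrow\mathrm{id}_\catB$ is the tensor--Hom counit evaluated on representables, and the unit $\eta\colon\mathrm{id}_\catA\Rightarrow Y\Dprocomp X$ is the tensor--Hom unit $\catA(-,A)\to\bbRH_X\bigl(X(-,A)\bigr)\cong(Y\Dprocomp X)(-,A)$; the triangle identities reduce on representable generators to those of the tensor--Hom adjunction and extend everywhere by cocontinuity.

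The hard part will be this last upgrade, and the obstacle is coherence rather than any single estimate: realising $B\mapsto\bbRH_X\bigl(\catB(-,B)\bigr)$ as an honest dg bimodule (not merely a pointwise-defined object of $\Dom(\catA)$) requires a functorial model for the derived functor $\bbRH_X$ --- for instance via h-injective resolutions, where the local h-projectivity of $\catA,\catB$ enters --- and the triangle identities must be verified compatibly with these choices. If one prefers to avoid the hand construction, the cleanest route is to invoke derived Morita theory: the cocontinuous functors $\Dom(\catB)\to\Dom(\catA)$ are precisely the tensor functors $\bbLT_Y=Y\Dprocomp(-)$, so (2) forces $\bbRH_X\cong\bbLT_Y$ for some $Y$, and a local biequivalence $\DBimod\simeq\Cocont$ transports $\bbLT_X\dashv\bbRH_X$ back to $X\dashv Y$. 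In either approach the crux is the coherence of the comparison, not the computations.
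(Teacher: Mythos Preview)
Your arguments for (1)$\Rightarrow$(2) and (2)$\Leftrightarrow$(3) are correct and essentially match the paper's (the latter is packaged as \cref{prop:H_X_preserves_coproducts}, after first replacing $X$ by an h-projective model so that $\bbRH_X\cong H_X$).

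The gap is in (2)$\Rightarrow$(1), and you have correctly located it: coherence. Your candidate $Y$ is precisely the derived Isbell dual $\DIsbellL(X)=X_{\Dddag}I_\catB=\DRift_X I_\catB$, which the closed bicategory structure of $\DBimod$ (\cref{prop:DBimod_is_closed_bicategory}) supplies automatically together with a canonical counit $2$-cell. But your plan to verify the triangle identities by reduction to representable generators does not close: the pseudo-functor $\bbLtil\colon\DBimod\to\CAT$ is only locally \emph{conservative} (\cref{cor:bbLtil_is_locally_conservative}), not locally faithful, so even if the composite $(\varepsilon X)\cdot(X\eta)$ becomes the identity after applying $\bbLtil$, you cannot conclude it equals $\id_X$ in $\Dom(\catA,\catB)$. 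Your fallback via a ``local biequivalence $\DBimod\simeq\Cocont$'' would require local full faithfulness of $\bbLtil$, which is not established in the paper and is in fact the genuinely delicate point (incoherent natural transformations between derived tensor functors need not lift to morphisms of bimodules).

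The paper sidesteps this by trading the triangle identities (equalities of $2$-cells) for a criterion phrased purely in terms of \emph{invertibility} of $2$-cells, which is exactly what conservativity can detect: $X$ has a right adjoint iff the right Kan lifting $\DRift_X I_\catB$ is \emph{absolute} (\cref{prop:adjunction_as_Kan_extension/lifting}). Since $\DBimod$ is closed the lifting always exists; absoluteness means that for every $Z\colon\catC\slashedrightarrow\catB$ the comparison $2$-cell $n\colon\DIsbellL(X)\Dprocomp Z\Rightarrow\DRift_X Z$ is invertible. One then checks that $\bbLtil(n)$ is an isomorphism using $\bbLT_{\IsbellL(X)}\cong H_X$ and $\bbLT_{X_\ddag Z}\cong H_X\circ\bbLT_Z$ (from \cref{prop:H_X_preserves_coproducts} and \cref{lem:bbLT_of_Rift}, both of which use the coproduct-preservation hypothesis), and conservativity finishes.
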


One can find a partial result of \cref{theorem-N} in \cite[Proposition 2.11]{Anno-Logvinenko:2017spherical}. We also provide a characterization of equivalences in $\DBimod$.

\begin{theorem}[{\cref{thm:equivalence_in_DBimod}}]\label{theorem-M}
	Let $\catA,\catB$ be locally h-projective dg categories and
	$X\colon \catA\slashedrightarrow\catB$ be a dg bimodule. Then the following are equivalent.
	\begin{enumerate}[label=\equivitem]
		\item $X$ is an equivalence in the bicategory $\DBimod$.
		\item The derived tensor functor $\bbLT_X\colon \Dom(\catA)\to\Dom(\catB)$ is an equivalence of categories.
	\end{enumerate}
\end{theorem}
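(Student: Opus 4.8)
The plan is to regard the assignment $X\mapsto\bbLT_X$ as a pseudofunctor and to transport (adjoint) equivalences across it in both directions. Concretely, the associativity and unitality of the derived composition $\Dprocomp$ on locally h-projective dg categories — exactly what makes $\DBimod$ a bicategory — supply coherent isomorphisms $\bbLT_{Y\Dprocomp X}\cong\bbLT_Y\circ\bbLT_X$ and $\bbLT_{\mathrm{id}_\catA}\cong\mathrm{id}_{\Dom(\catA)}$, so that $\bbLT_{(-)}$ organizes into a pseudofunctor $\DBimod\to\CAT$ with $\catA\mapsto\Dom(\catA)$. Since any pseudofunctor preserves adjunctions, and hence equivalences, the implication (1)$\Rightarrow$(2) is immediate: an equivalence $X$ in $\DBimod$ is sent to an equivalence $\bbLT_X$ of categories.

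For the converse I would first isolate the one genuinely module-theoretic input, namely that $\bbLT_{(-)}$ is \emph{locally conservative}: a $2$-cell $f\colon X\to X'$ in $\Dom(\catA,\catB)$ with $\bbLT_f$ invertible is itself invertible. To check this I evaluate $\bbLT_f$ on the representable right $\catA$-modules $h_a=\catA(-,a)$. Representables are h-projective, so the derived tensor is computed underived there and the co-Yoneda lemma yields a natural isomorphism $\bbLT_X(h_a)\cong X(-,a)$; under it the component of $\bbLT_f$ at $h_a$ becomes the evaluation $f(-,a)\colon X(-,a)\to X'(-,a)$, which is therefore an isomorphism in $\Dom(\catB)$ for every $a\in\catA$. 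Since the evaluation functors $X\mapsto X(-,a)$ jointly reflect isomorphisms on $\Dom(\catA,\catB)$ (a cone of bimodules is acyclic precisely when it is acyclic at every pair of objects), $f$ is an isomorphism.

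Now assume (2). The derived tensor--Hom adjunction furnishes $\bbLT_X\dashv\bbRH_X$ on module categories, and since $\bbLT_X$ is an equivalence its right adjoint $\bbRH_X$ is a quasi-inverse, hence an equivalence, and in particular preserves coproducts. By \cref{theorem-N} this forces $X$ to admit a right adjoint $X^*\colon\catB\slashedrightarrow\catA$ in $\DBimod$, with unit $\eta\colon\mathrm{id}_\catA\to X^*\Dprocomp X$ and counit $\epsilon\colon X\Dprocomp X^*\to\mathrm{id}_\catB$. Applying the pseudofunctor $\bbLT_{(-)}$ turns this into the adjunction $\bbLT_X\dashv\bbLT_{X^*}$, whose unit and counit are $\bbLT_\eta$ and $\bbLT_\epsilon$ up to the coherence isomorphisms above. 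Because $\bbLT_X$ is an equivalence, this adjunction is an adjoint equivalence, so $\bbLT_\eta$ and $\bbLT_\epsilon$ are isomorphisms; by local conservativity, $\eta$ and $\epsilon$ are themselves isomorphisms in $\DBimod$. Hence $X\dashv X^*$ is an adjoint equivalence and $X$ is an equivalence in $\DBimod$, giving (1).

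The main obstacle I anticipate is not the formal transport of adjunctions but the careful handling of the two facts it rests on: first, pinning down the pseudofunctor coherence that identifies $\bbLT_\eta$ and $\bbLT_\epsilon$ with the unit and counit of $\bbLT_X\dashv\bbLT_{X^*}$, so that one may legitimately invoke ``$\bbLT_X$ an equivalence $\Rightarrow$ unit and counit invertible''; and second, the local conservativity lemma, whose proof hinges on the identification $\bbLT_X(h_a)\cong X(-,a)$ and hence on a clean account of how derived tensoring interacts with representable modules. Everything else is standard $2$-categorical bookkeeping together with a single appeal to \cref{theorem-N}.
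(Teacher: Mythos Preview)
Your proposal is correct and follows essentially the same route as the paper: both directions rely on the pseudo-functor $\bbLtil\colon\DBimod\to\CAT$, with the nontrivial implication going through \cref{theorem-N} to produce a right adjoint, then applying $\bbLtil$ and using local conservativity (the paper's \cref{cor:bbLtil_is_locally_conservative}) to pull invertibility of the unit and counit back to $\DBimod$. The only cosmetic difference is that you invoke condition~(ii) of \cref{theorem-N} (coproduct-preservation of $\bbRH_X$, via the equivalence $\bbLT_X$ forcing its right adjoint to be an equivalence), whereas the paper invokes condition~(iii) (right compactness, via equivalences preserving compact objects); these are of course interchangeable.
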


In \cref{section:quasi-functors}, we introduce the bicategory $\DBimodrqr$ of quasi-functors. We say that a dg bimodule $M\in \Dom(\catB)$ is \emph{quasi-representable} if $M\cong \catB(\mplaceholder,B)$ in $\Dom(\catB)$ for some $B\in \catB$. Let $\qucl{\catB}\subseteq\Dom(\catB)$ be the full subcategory of quasi-representables.
A dg bimodule $X\colon \catA\slashedrightarrow\catB$ is called \emph{right quasi-representable}, or a \emph{quasi-functor}, if for all $A\in \catA$ the dg $\catB$-modules $X(\mplaceholder,A)$ are quasi-representable, or equivalently if the derived tensor functor $\bbLT_X\colon \Dom(\catA)\to\Dom(\catB)$ preserves quasi-representables. Let $\Dom(\catA,\catB)^\rqr \subseteq \Dom(\catA,\catB)$ be the full subcategory of quasi-functors, and define $\DBimodrqr\subseteq \DBimod$ as the sub-bicategory of quasi-functors. We think of $\DBimodrqr$ as a bicategory refining the homotopy category $\HodgCat$, and call it the \emph{homotopy bicategory of dg categories}. We observe equivalences of $\DBimodrqr$, as follows.

\begin{theorem}[{\cref{thm:equivalence_in_DBimodrqr}}]\label{theorem-L}
	Let $\catA,\catB$ be locally h-projective dg categories and $X\colon \catA\slashedrightarrow\catB$ be a quasi-functor. Then the following are equivalent.
	\begin{enumerate}[label=\equivitem]
		\item $X$ is an equivalence in the bicategory $\DBimodrqr$ of quasi-functors.
		\item Both the left derived functor $\bbLT_X\colon \Dom(\catA)\to\Dom(\catB)$ and the restricted one $\res{\bbLT_X}{\qucl{\catA}}\colon \qucl{\catA}\to \qucl{\catB}$ are equivalences of categories.
	\end{enumerate}
	Note that a dg bimodule satisfying the condition (ii) is called a \emph{quasi-equivalence} in \cite[\S7.2]{Keller:1994Deriving}.
\end{theorem}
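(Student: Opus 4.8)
The plan is to build everything on \cref{thm:equivalence_in_DBimod} together with the pseudofunctoriality of the assignment $X\mapsto\bbLT_X$, which carries the derived composition $\Dprocomp$ to composition of functors, so that $\bbLT_{Y\Dprocomp X}\cong\bbLT_Y\circ\bbLT_X$ and $\bbLT$ of an identity $1$-cell is isomorphic to an identity functor. Since $\DBimodrqr\hookrightarrow\DBimod$ is a locally full sub-bicategory, the whole difficulty is concentrated in deciding whether a quasi-inverse can be chosen inside $\DBimodrqr$: the invertible $2$-cells witnessing an equivalence automatically lie in $\DBimodrqr$ once the $1$-cells do, so equivalences in $\DBimodrqr$ are exactly those equivalences of $\DBimod$ admitting a quasi-inverse that is again right quasi-representable.

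For $(i)\Rightarrow(ii)$ I would first observe that an equivalence in the sub-bicategory $\DBimodrqr$ is in particular an equivalence in $\DBimod$, so \cref{thm:equivalence_in_DBimod} immediately gives that $\bbLT_X$ is an equivalence of categories. Let $Y\colon\catB\slashedrightarrow\catA$ be a quasi-inverse of $X$ in $\DBimodrqr$. Being a quasi-functor, $Y$ makes $\bbLT_Y$ preserve quasi-representables, and pseudofunctoriality converts the isomorphisms $Y\Dprocomp X\cong\mathrm{id}_\catA$ and $X\Dprocomp Y\cong\mathrm{id}_\catB$ into natural isomorphisms $\bbLT_Y\circ\bbLT_X\cong\mathrm{id}$ and $\bbLT_X\circ\bbLT_Y\cong\mathrm{id}$. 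Restricting these to the full subcategories of quasi-representables — legitimate precisely because both $X$ and $Y$ are quasi-functors — exhibits $\res{\bbLT_X}{\qucl{\catA}}$ and $\res{\bbLT_Y}{\qucl{\catB}}$ as mutually quasi-inverse equivalences $\qucl{\catA}\simeq\qucl{\catB}$, which is $(ii)$.

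For the converse $(ii)\Rightarrow(i)$ I would again invoke \cref{thm:equivalence_in_DBimod}: since $\bbLT_X$ is an equivalence, $X$ is already an equivalence in $\DBimod$, hence admits a quasi-inverse $Y\colon\catB\slashedrightarrow\catA$ there, and by pseudofunctoriality $\bbLT_Y$ is a quasi-inverse of $\bbLT_X$. The crux is to promote this to an equivalence in $\DBimodrqr$ by showing that $Y$ is automatically a quasi-functor, i.e.\ that $\bbLT_Y$ preserves quasi-representables. This is where the second half of $(ii)$ enters: given $N\in\qucl{\catB}$, essential surjectivity of $\res{\bbLT_X}{\qucl{\catA}}\colon\qucl{\catA}\to\qucl{\catB}$ furnishes $M\in\qucl{\catA}$ with $\bbLT_X(M)\cong N$, whence $\bbLT_Y(N)\cong\bbLT_Y\bbLT_X(M)\cong M$ is quasi-representable. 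Thus $Y$ lies in $\DBimodrqr$, and the invertible $2$-cells $Y\Dprocomp X\cong\mathrm{id}_\catA$, $X\Dprocomp Y\cong\mathrm{id}_\catB$ inherited from $\DBimod$ make $X$ an equivalence in $\DBimodrqr$.

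I expect the main subtlety to be exactly this last point, namely that a quasi-inverse computed in the larger bicategory $\DBimod$ need not a priori be right quasi-representable: an equivalence $\bbLT_X$ of the full derived categories may hit a given quasi-representable $N$ only through a non-quasi-representable preimage, so the mere invertibility of $\bbLT_X$ does not force $\bbLT_Y$ to preserve $\qucl{(-)}$. It is the essential surjectivity contained in the second condition of $(ii)$ — as opposed to full faithfulness of the restriction, which is automatic once $\bbLT_X$ is an equivalence and $X$ is a quasi-functor — that supplies the preimage inside $\qucl{\catA}$ and closes the argument. This also explains why the restriction condition is a genuine strengthening of \cref{thm:equivalence_in_DBimod} rather than a consequence of it.
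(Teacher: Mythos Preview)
Your proof is correct and follows essentially the same approach as the paper. The only cosmetic difference is that the paper names the quasi-inverse explicitly as $\DIsbellL(X)$ (the canonical right adjoint coming from \cref{cor:quasi-functor_has_right_adjoint}) rather than an arbitrary quasi-inverse $Y$, and then asserts in one line that the equivalence $\qucl{\catA}\simeq\qucl{\catB}$ forces $\DIsbellL(X)$ to be a quasi-functor; your version spells out this last step via essential surjectivity, which is exactly the content the paper leaves implicit.
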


For a dg functor $F\colon \catA\to \catB$, the associated dg bimodule $F_\equipsubAst=\catB(\mplaceholder,F \mplaceholder)\colon \catA\slashedrightarrow\catB$ is clearly a quasi-functor. When $F$ is a quasi-equivalence, $F_\equipsubAst$ is an equivalence in $\DBimodrqr$; see \cite[\S7.2, Example]{Keller:1994Deriving}.
The next proposition states that the converse holds, which is also mentioned at the end of \cite[\S2.3]{Genovese-Lowen-VandenBergh:2022derived_deformation:arXiv} without proof.

\begin{proposition}[{\cref{prop:quasi-equivalence_is_just_equivalence_in_DBimodrqr}}]\label{Theorem-K}
	Let $\catA,\catB$ be locally h-projective dg categories and $F\colon \catA\to \catB$ be a dg functor. For the associated quasi-functor $F_\equipsubAst\colon\catA\slashedrightarrow\catB$, the following are equivalent.
	\begin{enumerate}[label=\equivitem]
		\item $F$ is a quasi-equivalence (in the sense of \cref{def:quasi-equivalence}).
		\item $F_\equipsubAst$ is an equivalence in the bicategory $\DBimodrqr$ of quasi-functors.
		\item Both the left derived functor $\bbLT_{F_\equipsubAst}\colon \Dom(\catA)\to\Dom(\catB)$ and the restricted one $\res{\bbLT_{F_\equipsubAst}}{\qucl{\catA}}\colon \qucl{\catA}\to \qucl{\catB}$ are equivalences of categories.
	\end{enumerate}
\end{proposition}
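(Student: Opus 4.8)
The plan is to take the equivalence (ii)$\Leftrightarrow$(iii) for granted, since it is exactly \cref{thm:equivalence_in_DBimodrqr} applied to the quasi-functor $F_\equipsubAst$, and to prove the genuine content, namely (i)$\Leftrightarrow$(iii). The whole argument rests on understanding how $\bbLT_{F_\equipsubAst}$ acts on representables. First I would record the derived Yoneda computation: since $\catA$ is locally h-projective the representable modules $\catA(\mplaceholder,A)$ are h-projective, so
\[ \Hom_{\Dom(\catA)}\big(\catA(\mplaceholder,A),\catA(\mplaceholder,A')[n]\big)\cong H^n\catA(A,A'), \]
and likewise over $\catB$. Because $\bbLT_{F_\equipsubAst}$ sends $\catA(\mplaceholder,A)\mapsto\catB(\mplaceholder,FA)$ naturally in $A$ (by the co-Yoneda lemma), the map it induces on these Hom-groups is precisely $H^n(F_{A,A'})$. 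Consequently, condition (a) in the definition of quasi-equivalence (all $F_{A,A'}$ quasi-isomorphisms) is equivalent to $\bbLT_{F_\equipsubAst}$ being fully faithful on the shifts of representables.

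Next I would identify the restricted functor. The Yoneda embedding induces an equivalence $\qucl{\catA}\simeq H^0(\catA)$ (fully faithful by the computation above, essentially surjective by the very definition of quasi-representability), and under $\qucl{\catA}\simeq H^0(\catA)$ and $\qucl{\catB}\simeq H^0(\catB)$ the functor $\res{\bbLT_{F_\equipsubAst}}{\qucl{\catA}}$ is carried to $H^0(F)$. Hence ``$\res{\bbLT_{F_\equipsubAst}}{\qucl{\catA}}$ is an equivalence'' is literally the clause ``$H^0(F)$ is an equivalence'' in the definition of quasi-equivalence. This settles the $\qucl$-piece of the correspondence in both directions at once.

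For the remaining comparison I would argue as follows. Assume (i). The essential-surjectivity/full-faithfulness clause of a quasi-equivalence gives that $\res{\bbLT_{F_\equipsubAst}}{\qucl{\catA}}$ is an equivalence. For $\bbLT_{F_\equipsubAst}$ itself, note that it preserves coproducts (being a left derived tensor functor), that the representables $\catA(\mplaceholder,A)$ are compact generators of $\Dom(\catA)$, and that their images $\catB(\mplaceholder,FA)$ are compact; by (a) it is fully faithful on these generators and their shifts. A two-step dévissage---running over a localizing subcategory in the second variable for a fixed generator, then over a localizing subcategory in the first variable---upgrades this to full faithfulness of $\bbLT_{F_\equipsubAst}$ on all of $\Dom(\catA)$. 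Essential surjectivity then follows because the essential image is localizing and contains every $\catB(\mplaceholder,FA)$; since $H^0(F)$ is essentially surjective, every $\catB(\mplaceholder,B)$ is isomorphic in $\Dom(\catB)$ to some $\catB(\mplaceholder,FA)$, and the representables generate $\Dom(\catB)$. Thus $\bbLT_{F_\equipsubAst}$ is an equivalence and (iii) holds. Conversely (iii) trivially yields (i): an equivalence is fully faithful, and its restriction to representables is exactly (a), while $\res{\bbLT_{F_\equipsubAst}}{\qucl{\catA}}$ being an equivalence is the remaining clause.

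The step I expect to be the main obstacle is the dévissage. One must check that the comparison map $\Hom_{\Dom(\catA)}(M,N)\to\Hom_{\Dom(\catB)}(\bbLT_{F_\equipsubAst}M,\bbLT_{F_\equipsubAst}N)$ is natural enough in each variable to define the two localizing subcategories, and that compactness of the representables $\catB(\mplaceholder,FA)$ is invoked precisely where $\Hom\big(\catB(\mplaceholder,FA),\mplaceholder\big)$ must commute with the coproducts in play. A related subtlety is that ``the representables $\catB(\mplaceholder,FA)$ generate $\Dom(\catB)$'' must be extracted from essential surjectivity of $H^0(F)$, not from the a priori weaker essential surjectivity of $\bbLT_{F_\equipsubAst}$; this is exactly why both clauses of (iii) are genuinely needed and neither alone suffices.
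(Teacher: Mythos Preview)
Your proposal is correct and follows essentially the same route as the paper. The only difference is one of packaging: where you spell out the two-step d\'evissage for full faithfulness and the localizing-subcategory argument for essential surjectivity, the paper simply invokes \cref{cor:Morita_equivalence_functor_criterion} (which in turn rests on \cref{prop:bbLT_X_is_equiv} and \cref{prop:lemma_for_compactly_generated_tria_cat}); your d\'evissage is exactly the content of those lemmas unpacked in place.
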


An adjunction in the bicategory $\DBimodrqr$ is referred to as an \emph{adjunction of quasi-functors}; see \cite[\S7]{Genovese:2017}. In \cref{subsection:adjunction_of_quasi-functors}, we investigate the relation between adjoint quasi-functors and adjoint dg functors, which has not been explored in \cite{Genovese:2017}. By showing that there exists a normal lax functor $\Gamma\colon \Bimod\to\DBimod$ and it induces a pseudo-functor $\gamma\colon \dgCat\to\DBimodrqr$, we obtain the following:

\begin{proposition}[{\cref{prop:adj_of_dg_functors_becomes_those_of_quasi-functors}}]\label{proposition-H}
	For an adjunction $F\dashv G$ of dg functors, we have an adjunction $F_\equipsubAst \dashv G_\equipsubAst$ of quasi-funcors.
\end{proposition}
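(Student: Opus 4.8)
The plan is to deduce the proposition formally from the fact, recalled just above, that the assignment $F\mapsto F_\equipsubAst$ underlies a pseudo-functor $\gamma\colon\dgCat\to\DBimodrqr$ (acting on a dg functor as $\gamma F=F_\equipsubAst$). An adjunction $F\dashv G$ of dg functors, with unit $\eta\colon\mathrm{id}\Rightarrow GF$ and counit $\epsilon\colon FG\Rightarrow\mathrm{id}$ obeying the triangle identities, is precisely an adjunction in the $2$-category $\dgCat$; and it is a general fact of bicategory theory that pseudo-functors preserve adjunctions. Applying $\gamma$ to $(F,G,\eta,\epsilon)$ thus produces an adjunction $\gamma F=F_\equipsubAst\dashv \gamma G=G_\equipsubAst$ in $\DBimodrqr$, which is the assertion.

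I would spell this out as follows. Writing $\gamma_{G,F}\colon G_\equipsubAst\Dprocomp F_\equipsubAst\xrightarrow{\sim}(GF)_\equipsubAst$ for the compositor of $\gamma$, and using that $\Gamma$ (hence $\gamma$) is \emph{normal}, so that the unitors are identities and $(\mathrm{id}_\catA)_\equipsubAst=\mathrm{id}_\catA$, the unit and counit of the desired adjunction are obtained by pasting the images of $\eta,\epsilon$ with these coherence isomorphisms:
\[
	\mathrm{id}_\catA \xrightarrow{\gamma(\eta)} (GF)_\equipsubAst \xrightarrow{\gamma_{G,F}^{-1}} G_\equipsubAst\Dprocomp F_\equipsubAst,
	\qquad
	F_\equipsubAst\Dprocomp G_\equipsubAst \xrightarrow{\gamma_{F,G}} (FG)_\equipsubAst \xrightarrow{\gamma(\epsilon)} \mathrm{id}_\catB .
\]
Observe that the unit uses the \emph{inverse} compositor while the counit uses the compositor itself: this is exactly where one needs $\gamma$ to be a genuine pseudo-functor rather than merely lax, since a lax functor supplies a comparison $2$-cell in only one of the two directions and so does not preserve adjunctions in general. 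Granting the compositors, the triangle identities for this unit and counit reduce to the triangle identities for $(\eta,\epsilon)$ in $\dgCat$ together with the coherence axioms relating $\gamma_{-,-}$ to the associators of $\DBimodrqr$; this is a routine mate/pasting-diagram check, which I would not carry out in detail.

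The upshot is that the real content sits upstream, in the already-established claim that the normal lax functor $\Gamma\colon\Bimod\to\DBimod$ restricts to a \emph{pseudo}-functor $\gamma\colon\dgCat\to\DBimodrqr$ — equivalently, that every compositor $\gamma_{-,-}$ is invertible, i.e.\ that $G_\equipsubAst\Dprocomp F_\equipsubAst\to(GF)_\equipsubAst$ is an isomorphism in $\Dom(\catA,\catA)$. This is a co-Yoneda computation, valid because representable bimodules are h-projective, so that the derived composition $\Dprocomp$ agrees there with the underived $\procomp$. I expect this invertibility to be the only genuine obstacle; once it is in hand, the proposition follows with no further manipulation of explicit bimodule maps.
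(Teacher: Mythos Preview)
Your proposal is correct and follows exactly the paper's approach: the proposition is deduced immediately from the fact that $\gamma\colon\dgCat\to\DBimodrqr$ is a pseudo-functor, together with the general principle that pseudo-functors preserve adjunctions. The paper's proof is a one-liner to this effect, and your additional unpacking of the unit/counit construction and the role of the invertible compositors is accurate elaboration of the same argument.
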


\Cref{section:the_homotopy_category_theory_of_dg_categories_over_a_field} is the main section of this paper. In this section, we develop the homotopy category theory of dg categories (over a field). Now let us write the inclusion $\DBimodrqr\hookrightarrow \DBimod$ by $\equipStar$.
By \cite[Corollary 6.6]{Genovese:2017} or \cref{theorem-N}, quasi-functors have right adjoints in $\DBimod$. Summarizing, we find that the pseudo-functor $\equipStar$ satisfies:
\begin{enumerate}
	\item $\equipStar$ is an identity on objects;
	\item $\equipStar$ is locally fully faithful;
	\item for every quasi-functor $f$, the dg bimodule $f=f_\equipsubStar$ has a right adjoint in $\DBimod$.
\end{enumerate}
Such a pseudo-functor is called a \emph{proarrow equipment}. This notion was introduced in the field of formal category theory; basic references are \cite{Wood:1982proarrow1,Wood:1985proarrow2} (see also \cref{section:proarrow_equipment} for a review). 
Formal category theory seeks to axiomatize ordinary category theory within an abstract $2$-category, and a proarrow equipment offers a formal framework for developing categorical reasoning.
For example, the pseudo-functor $\equipAst\colon\dgCat \rightarrow \Bimod$, sending a dg functor $F\colon \catA\to\catB$ to the dg bimodule $F_\equipsubAst=\catB(\mplaceholder,F\mplaceholder)$, forms a proarrow equipment, which supports $\Chk$-enriched category theory.  
The observation that $\equipStar\colon \DBimodrqr\hookrightarrow\DBimod$ is a proarrow equipment suggests that we can develop a version of category theory within $\DBimodrqr$, which may be regarded as a homotopical version of dg category theory.

In a proarrow equipment, we can define several notions in category theory, such as (weighted) (co)limits, Cauchy completeness, and pointwise Kan extensions.
In \cref{subsection:homotopical_(co)limits_in_a_dg_category}, applying the notion of (co)limits to the proarrow equipment $\equipStar\colon \DBimodrqr\hookrightarrow\DBimod$, we introduce the notion of homotopical (co)limits in a dg category (see \cref{def:homotopical_colimits_in_a_dg_category}). As special (co)limits, we obtain homotopical 
initial objects%(\cref{example:h-initial_object})
, coproducts%(\cref{example:h-coproduct})
, shifts%(\cref{example:h-tensor})
, and cocones%(\cref{example:h-cocone})
. In particular, we have a formal characterization of pretriangulated dg categories as those admitting certain homotopical limits.

\begin{corollary}[{\cref{cor:pretriangulated_iff_homotopy_cone_and_homotopy_shift}}]\label{corollary-J}
	A dg category $\catA$ has homotopical shifts and cocones if and only if it is pretriangulated in the sense that $H^0(\catA)$ is closed (up to isomorphism) under shifts and cones of $\Dom(\catA)$.
\end{corollary}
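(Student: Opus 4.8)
The plan is to unwind the formal definition of homotopical limit for the shift and cocone weights and to observe that in each case existence reduces to a quasi-representability condition in $\Dom(\catA)$ matching the stated pretriangulatedness. Throughout I use the quasi-Yoneda embedding $\catA \hookrightarrow \Dom(\catA)$, $A \mapsto \catA(\mplaceholder, A)$, whose essential image is the full subcategory $\qucl{\catA}$ of quasi-representables and which identifies $\qucl{\catA}$ with $H^0(\catA)$. The first step is to record the representability reformulation behind \cref{def:homotopical_colimits_in_a_dg_category}: a homotopical limit (weighted by some $V$, of some diagram $d$) exists in $\catA$ exactly when the object of $\Dom(\catA)$ computed from $V$ and $d$ by the derived operations of the equipment is quasi-representable, i.e.\ lies in $\qucl{\catA}$. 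This is the dg analogue, for $\equipStar\colon \DBimodrqr \hookrightarrow \DBimod$, of the classical fact that in $\equipAst\colon \dgCat \to \Bimod$ a weighted (co)limit exists iff the attendant presheaf is representable; it comes from the adjunction between a quasi-functor and its right adjoint in $\DBimod$ (\cref{thm:right_compact_bimodules_are_right_adjoint}) together with the derived Yoneda lemma.

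Next I specialise this to the two weights. An object $A \in \catA$ is presented by the quasi-functor $\basek \slashedrightarrow \catA$ corresponding to $\catA(\mplaceholder, A)$, and for the shift weight the associated object of $\Dom(\catA)$ is the shifted bimodule $\catA(\mplaceholder, A)[\pm 1]$; hence $\catA$ admits homotopical shifts iff each such shift is quasi-representable, i.e.\ iff $\qucl{\catA}$ is closed under the shift functor of $\Dom(\catA)$. Similarly, a morphism $f$ of $H^0(\catA)$ is the same as a morphism between quasi-representables in $\Dom(\catA)$, and the cocone weight produces the object $\Cocone(f) = \Cone(f)[-1] \in \Dom(\catA)$; thus $\catA$ admits homotopical cocones iff every such $\Cocone(f)$ is quasi-representable, i.e.\ iff $\qucl{\catA}$ is closed under cocones. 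I would verify both identifications by comparing the universal property of the weighted limit with the defining triangle of the shift, respectively the cocone, in the triangulated category $\Dom(\catA)$.

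It then remains to assemble the two halves and match them with the definition of pretriangulated. Because cones and cocones in $\Dom(\catA)$ differ only by a shift, closure of $\qucl{\catA}$ under shifts and cocones is equivalent to closure under shifts and cones (the zero object being supplied automatically, since $\Cone(\mathrm{id}) \cong 0$); transporting along $\qucl{\catA} \simeq H^0(\catA)$ gives precisely the condition that $H^0(\catA)$ be closed under shifts and cones in $\Dom(\catA)$. The main obstacle is the identification carried out in the second step---fixing the precise shift and cocone weights and checking that the abstract universal property in the equipment unwinds to the concrete representability statement in $\Dom(\catA)$. Once the weights are correctly pinned down, which is the substance of the examples preceding the corollary, the concluding equivalence is routine bookkeeping, modulo the elementary shift-relation between cones and cocones.
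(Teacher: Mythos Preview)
Your proposal is correct and follows essentially the same route as the paper: the corollary is deduced by combining the explicit computations of $\DRan_{\Vshift{n}} f_\equipsubStar \cong \catA(\mplaceholder,A)[n]$ and $\DRan_{\Vcocone} f_\equipsubStar \cong \Cocone(a_*)$ carried out in the examples preceding it, so that existence of each h-limit reduces to quasi-representability of the corresponding object of $\Dom(\catA)$, and then closure under shifts plus cocones is rephrased as closure under shifts plus cones via the shift relation you note. The paper treats the corollary as an immediate consequence of those two propositions with no further argument; your write-up makes the same reductions explicit.
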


In \cref{subsection:homotopy_Cauchy_complete_dg_categories}, we discuss preservation of (co)limits in our setting. 
We emphasize that quasi-equivalences respect homotopical (co)limits, since they become equivalences of $\DBimodrqr$ by \cref{Theorem-K}.
Using \cref{theorem-N}, we verify that initial and terminal objects, finite (co)products, shifts, and (co)cones are \emph{absolute}, i.e., preserved by any quasi-functor.
We also define a dg category $\catA$ to be \emph{h-Cauchy complete} when all left adjoint dg bimodules in $\DBimod$ with codomain $\catA$ are right quasi-representable. From \cref{theorem-N}, we find that $\catA$ is h-Cauchy complete if and only if the inclusion $H^0(\catA)\hookrightarrow \Perf(\catA)\coloneqq \Dom(\catA)^\cpt$ is an equivalence of categories (\cref{prop:characterization_of_h-Cauchy_complete}).

In \cref{subsection:application}, we see an application of the formal description of being pretriangulated to the gluing of dg categories along a dg bimodule.
%to some constructions of dg categories which have a bicategorical universality.
%, such as %quotients and gluings.
For example, it is shown in \cite[Lemma 4.3]{Kuznetsov-Lunts:2015} that the gluing of pretriangulated dg categories is again pretriangulated. \Cref{prop:gluing_has_right-adjoint-weighted_limits} gives a slight generalization and an alternative, more conceptual proof of this result.

We show that adjunctions of quasi-functors induce ordinary adjunctions between the corresponding $H^0$-categories, and that if a dg category has h-coproducts, its $H^0$-category admits coproducts in the ordinary sense.
Moreover, assuming the existence of h-shifts, we prove that both left adjoints and h-coproducts are reflected under passage to $H^0$-categories, as follows. 
This marks a key difference from the strict theory of dg categories, where different assumptions are required, and highlights the advantage of our homotopical framework.

\begin{theorem}[{\cref{thm:reflection_theorem_of_left_adjoint_for_quasi-functor,thm:reflection_theorem_of_h-coproduct}}]
	\label{theorem-AA}
	Let $\catA,\catB$ be dg categories with h-shifts.
	\begin{enumerate}
		\item Let $f\colon \catA\to\catB$ be a quasi-functor. Then $f$ admits a left adjoint as a quasi-functor if and only if the induced functor $\Htil(f)=\res{\bbLT_{f_\equipsubStar}}{\qucl{\catA}} \colon \qucl{\catA}\to \qucl{\catB}$ has a left adjoint.
		\item Let $\{A_j\}_{j\in J}$ be a family of objects of $\catA$. Then $\catA$ admits an h-coproduct of $\{A_j\}_j$ if and only if $H^0(\catA)$ has a coproduct of $\{A_j\}_j$.
	\end{enumerate}
\end{theorem}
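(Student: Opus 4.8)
\section*{Proof proposal}

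The plan is to reduce both statements to the following slogan: a universal property in $\DBimodrqr$ is witnessed by a \emph{quasi-isomorphism} of Hom-complexes, i.e.\ by an isomorphism in \emph{every} cohomological degree, whereas the corresponding property phrased in $H^0(\catA)\simeq\qucl{\catA}$ (via $A\mapsto\catA(\mplaceholder,A)$) records only $H^0$; the hypothesis of h-shifts is precisely what recovers the higher cohomology, through $H^n(\mplaceholder)\cong H^0\big((\mplaceholder)[n]\big)$, from degree zero. With this in mind, the "only if" directions are formal. For (1), the assignment $\catA\mapsto\qucl{\catA}$, $f\mapsto\Htil(f)=\res{\bbLT_{f_\equipsubStar}}{\qucl{\catA}}$ is the restriction to quasi-representables of the pseudo-functor $\bbLT_{(\mplaceholder)}\colon\DBimod\to\CAT$, hence is itself a pseudo-functor $\DBimodrqr\to\CAT$; since pseudo-functors preserve adjunctions, an adjunction $g\dashv f$ of quasi-functors yields $\Htil(g)\dashv\Htil(f)$. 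For (2), if $A$ is an h-coproduct of $\{A_j\}$ then the equipment definition of the coproduct-weighted colimit gives a quasi-isomorphism $\catA(A,\mplaceholder)\xrightarrow{\sim}\prod_j\catA(A_j,\mplaceholder)$; applying $H^0$ and using exactness of products of complexes yields $H^0\catA(A,\mplaceholder)\cong\prod_j H^0\catA(A_j,\mplaceholder)$, i.e.\ a coproduct in $H^0(\catA)$.

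For the converse in (1), suppose $L\dashv\Htil(f)$ with $L\colon\qucl{\catB}\to\qucl{\catA}$. Setting $gB\in\catA$ by $L(\catB(\mplaceholder,B))\cong\catA(\mplaceholder,gB)$ and transporting $L$ through $\qucl{}\simeq H^0$ produces a quasi-functor $g\colon\catB\to\catA$, that is, a right quasi-representable bimodule $G\coloneqq g_\equipsubStar\colon\catB\slashedrightarrow\catA$, together with an $H^0$-level adjunction isomorphism $H^0\catA(gB,A)\cong H^0\catB(B,fA)$ natural in $A,B$. By \cref{theorem-N} the quasi-functor $G$ is right compact, hence has a right adjoint $G'$ in $\DBimod$ with $\bbLT_{G'}\cong\bbRH_G$, and $\bbRH_G$ preserves coproducts. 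It then suffices to identify $G'$ with $f_\equipsubStar$: both $\bbRH_G$ and $\bbLT_{f_\equipsubStar}$ are cocontinuous functors $\Dom(\catA)\to\Dom(\catB)$, and on the compact generators derived Yoneda gives $\bbRH_G(\catA(\mplaceholder,A))\cong[B\mapsto\catA(gB,A)]$ and $\bbLT_{f_\equipsubStar}(\catA(\mplaceholder,A))\cong[B\mapsto\catB(B,fA)]$. The point is to promote the given $H^0$-isomorphism to a natural quasi-isomorphism between these, and here h-shifts enter: as $\bbLT_{f_\equipsubStar}$ is triangulated, $f$ commutes with shifts, so $f(A[n])\cong(fA)[n]$, and
\[ H^n\catA(gB,A)\cong H^0\catA(gB,A[n])\cong H^0\catB(B,f(A[n]))\cong H^0\catB(B,(fA)[n])\cong H^n\catB(B,fA), \]
the middle step being the $H^0$-adjunction at $A[n]$. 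Two cocontinuous functors agreeing naturally on a set of compact generators agree, so $\bbRH_G\cong\bbLT_{f_\equipsubStar}$; since such functors are determined by the bimodules inducing them, $G'\cong f_\equipsubStar$, whence $G\dashv f_\equipsubStar$ in $\DBimod$, and as both are quasi-representable and $\DBimodrqr\hookrightarrow\DBimod$ is locally full, this is an adjunction $g\dashv f$ of quasi-functors.

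For the converse in (2), let $A=\coprod_j A_j$ in $H^0(\catA)$, with injections represented by cocycles $\iota_j\in Z^0\catA(A_j,A)$; precomposition defines a chain map $\psi\colon\catA(A,\mplaceholder)\to\prod_j\catA(A_j,\mplaceholder)$ of dg $\catA$-modules, natural in the second variable, whose $H^0$ is the coproduct comparison and hence an isomorphism. As before, h-shifts give $H^n\catA(A,b)\cong H^0\catA(A,b[n])$ and $H^n\catA(A_j,b)\cong H^0\catA(A_j,b[n])$, and naturality of $\psi$ identifies $H^n(\psi_b)$ with $H^0(\psi_{b[n]})$, an isomorphism by the $H^0$-coproduct property at $b[n]$. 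Thus $\psi$ is a quasi-isomorphism, i.e.\ $\catA(A,\mplaceholder)\simeq\prod_j\catA(A_j,\mplaceholder)$ in $\Dom(\catA)$, which is exactly the defining property of an h-coproduct. I expect the main obstacle in both parts to be this promotion step: manufacturing an honest natural transformation of complexes from purely $H^0$-level data before h-shifts can be applied. Concretely one must lift $H^0$-morphisms to genuine cocycles and exploit the bimodule structure of $f_\equipsubStar$ to encode the action of the quasi-functor $f$ on Hom-complexes, and then verify that the comparison is natural and compatible with the shift isomorphisms, so that a degreewise isomorphism assembles into a quasi-isomorphism; h-shifts reduce every degree to degree zero, while \cref{theorem-N} supplies the cocontinuity that makes checking on generators legitimate.
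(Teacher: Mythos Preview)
Your argument for part (2) is essentially the paper's: both construct a comparison morphism $\catA(C,\mplaceholder)\to\prod_j\catA(A_j,\mplaceholder)$ in $\Dom(\catA^\op)$ from the $H^0$-coproduct data (you via explicit cocycles, the paper via Yoneda) and then use h-shifts to promote the $H^0$-isomorphism to an isomorphism in every degree. The paper packages the shift step more formally, working in $\DBimod$ and invoking that $\DRan_{S^n(\basek)}$ commutes with the right adjoints $A_j^{\equipsubAst}$ and with products, but the content is the same.

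For part (1), however, your approach diverges from the paper's and contains a genuine gap. The sentence ``transporting $L$ through $\qucl{}\simeq H^0$ produces a quasi-functor $g\colon\catB\to\catA$'' is not justified: an ordinary functor $L\colon H^0(\catB)\to H^0(\catA)$ does not in general lift to a quasi-functor, i.e.\ to a right quasi-representable dg bimodule $G\colon\catB\slashedrightarrow\catA$. Knowing the objects $gB$ and an $H^0$-level adjunction isomorphism gives no dg bimodule structure, and without $G$ you cannot form $\bbRH_G$ or the comparison $\bbRH_G\Rightarrow\bbLT_{f_\equipsubStar}$ you later need. (Indeed, if this lifting step were automatic, it would say that every additive functor between $H^0$-categories with shifts lifts to a quasi-functor, which is false.)

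The paper sidesteps this entirely: it never constructs $g$. Instead it uses \cref{prop:characterization_of_adjointness_for_quasi-functors}, which says $f$ has a left adjoint quasi-functor iff $f_\equipsubStar$ is \emph{left} quasi-representable, and then verifies the latter pointwise. For each $B\in\catB$, the $H^0$-adjunction yields an object $C\in\catA$ and, via Yoneda, a morphism $\alpha\colon\catA(C,\mplaceholder)\to f_\equipsubStar(B,\mplaceholder)$ in $\Dom(\catA^\op)$ inducing the given isomorphism on $H^0$; h-shifts then show $\alpha$ is an isomorphism in every degree, exactly as in your part (2). In other words, the correct fix is to run your own part (2) argument object-by-object on the modules $f_\equipsubStar(B,\mplaceholder)$ rather than trying to assemble a global $g$ first.
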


As an immediate consequence of \cref{theorem-AA}, we deduce the following result.

\begin{corollary}[{\cref{cor:cocompleteness_for_pretri_dg_cat,cor:cocontinuity_for_quasi-functor_bw_pretri_dg_cat}}]
	\label{corollary-AB}
	Let $\catA,\catB$ be pretriangulated dg categories.
	\begin{enumerate}
		\item $\catA$ has all h-colimits if and only if $H^0(\catA)$ has all coproducts.
		\item A quasi-functor $f\colon \catA\to \catB$ preserves all h-colimits if and only if the induced functor $\Htil(f)=\res{\bbLT_{f_\equipsubStar}}{\qucl{\catA}}\colon \qucl{\catA}\to\qucl{\catB}$ preserves all coproducts.
	\end{enumerate}
\end{corollary}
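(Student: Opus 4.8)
The plan is to deduce both statements from \cref{theorem-AA} by reducing arbitrary h-colimits to h-coproducts, exploiting that a pretriangulated dg category already carries the remaining generating h-colimits. The structural input is \cref{corollary-J}: since $\catA$ and $\catB$ are pretriangulated, they admit h-shifts and h-cocones. Together with h-coproducts, these are the building blocks of all weighted h-colimits, the homotopical shifts and cones accounting for the finite h-colimits (exactly as the triangulated structure yields finite homotopy colimits classically) and h-coproducts supplying the remaining infinite part. Consequently, for a pretriangulated dg category, \emph{having all h-colimits is equivalent to having all h-coproducts}, and, because h-shifts and h-cocones are absolute, a quasi-functor \emph{preserves all h-colimits as soon as it preserves all h-coproducts}.

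For part (1) I would then combine this reduction with \cref{theorem-AA}(2): $\catA$ has all h-coproducts if and only if $H^0(\catA)$ has all coproducts. Composing the two equivalences yields that $\catA$ has all h-colimits precisely when $H^0(\catA)$ has all coproducts.

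For part (2) I would first invoke absoluteness of h-shifts and h-cocones to reduce ``$f$ preserves all h-colimits'' to ``$f$ preserves all h-coproducts'', and then translate the latter into a statement about $\Htil(f)\colon \qucl{\catA}\to\qucl{\catB}$. Under the derived Yoneda equivalence $H^0(\catA)\simeq\qucl{\catA}$, a family $\{A_j\}$ admits an h-coproduct in $\catA$ exactly when its image admits a coproduct in $\qucl{\catA}$, by \cref{theorem-AA}(2) together with part (1), and the h-coproduct object corresponds to the coproduct object; moreover $\Htil(f)=\res{\bbLT_{f_\equipsubStar}}{\qucl{\catA}}$ sends $[\catA(\mplaceholder,A_j)]$ to $[\catB(\mplaceholder,fA_j)]$, since $\bbLT_{f_\equipsubStar}$ carries representables to the quasi-representables computing $f$. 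Unwinding these identifications, ``$f$ preserves the h-coproduct of $\{A_j\}$'' becomes exactly ``$\Htil(f)$ preserves the coproduct of $\{A_j\}$'', and since every coproduct existing in $\qucl{\catA}$ arises from an h-coproduct in $\catA$ (part (1)), the assertions ``$f$ preserves all h-coproducts'' and ``$\Htil(f)$ preserves all coproducts'' coincide.

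The main obstacle I anticipate is the generation step underlying the opening paragraph: making precise that, for a pretriangulated dg category, every weighted h-colimit is built from h-coproducts, h-shifts, and h-cocones, and that a quasi-functor preserving these generators preserves all weighted h-colimits. Once this reduction is secured (or imported from the preceding development of homotopical (co)limits and the absoluteness results), both statements follow formally from \cref{theorem-AA} and \cref{corollary-J}.
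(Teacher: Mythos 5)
Your proposal is correct and follows essentially the same route as the paper: the generation step you flag as the main obstacle is exactly \cref{prop:h-cocomplete_iff_having_coshifts_cones_coproducts} (proved there by a localizing-subcategory argument in $\Dom(\catJ)$), and the paper then combines it with \cref{thm:reflection_theorem_of_h-coproduct} and, for part (2), the translation of h-coproduct preservation into coproduct preservation by $\Htil(f)$ together with the absoluteness of h-shifts and h-cocones, just as you describe.
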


In the literature, the notions of cocompleteness for dg categories and cocontinuity for quasi-functors have been considered --- for example \cite[Definition 6.1]{Porta:2010} and \cite[Remark 3.9, Definition 2.5]{Lowen-RamosGonzalez:2022tensor_product_of_well_generated}. 
In light of \cref{corollary-AB}, our framework helps clarify the conceptual basis of the terminology used in these works.

\vspace{1em}\noindent\textbf{Outline.}
The paper is organized as follows.
\Cref{section:preliminaries_on_triangulated_categories} and \cref{section:Basic_dg_category_theory} are devoted to preliminaries on triangulated categories and dg categories. In \cref{subsection:(co)end_calculus} we summarize results on ends and coends which will be freely used in the sections that follow.

From \cref{section:dg_bimodules_and_their_bicategorical_structures} to \cref{section:quasi-functors}, we construct and study the bicategories $\DBimod$ and $\DBimodrqr$.
We investigate adjunctions, equivalences, right Kan extensions, and right Kan liftings there. We also construct a pseudo-functor $\bbLtil\colon \DBimod \to \CAT$ which assigns dg categories $\catA$ to the derived categories $\Dom(\catA)$ and dg bimodules $X$ to the left derived functors $\bbLT_X$ of tensoring with $X$.
We also study adjunctions of quasi-functors and their relation to adjunctions of dg functors, which \cite{Genovese:2017} does not mention.

%In \cref{subsection:the_tensor_and_Hom_functors_associated_with_dg_bimodules}, we consider the derived tensor functor $\bbLT_X$ and the derived Hom functor $\bbRH_X$ associated with dg bimodules. We see that assignments $\catA\mapsto\Dom(\catA)$, $X\mapsto \bbLT_X$ form a pseudo functor $\bbLtil\colon \DBimod \to \CAT$ which is locally conservative.

In \cref{section:the_homotopy_category_theory_of_dg_categories_over_a_field}, we develop the homotopy category theory of dg categories (over a field), including homotopical (co)limits, absoluteness of (co)limits, and homotopy Cauchy completeness. We apply them to the gluing of dg categories. 
\cref{subsection:reflection_of_adjoints_and_colimits} establishes the reflection theorems for left adjoints and h-coproducts under the $H^0$-construction.

Finally, in \cref{section:universality_of_htpy_cat_theory}, we point out that the pair $(\Gamma,\gamma)$ behaves like a ``morphism of proarrow equipments'' in a sense and that the proarrow equipment $\equipStar\colon \DBimodrqr\hookrightarrow\DBimod$ has a universal property similar to that of localizations (\cref{thm:universality_2}).

\Cref{section:proarrow_equipment} contains definitions and basic results on proarrow equipments. It mostly follows \cite{Wood:1982proarrow1} except for \cref{subsection:absolute_limits_and_Cauchy_completeness}, which is folklore.

\begin{acknowledgements*}
	The author would like to express his gratitude to his supervisor, Shinnosuke Okawa, for a lot of advice through regular seminars.
	The author is also very grateful to Hisashi Aratake for inviting him to the world of category theory and giving him opportunities for learning topics on enriched categories and $2$-categories, including proarrow equipments.

	This research was conducted while the author was affiliated with the University of Osaka, and some of the results are part of the author's PhD dissertation.
	This work was supported by JST SPRING Grant Number JPMJSP2138 and JSPS KAKENHI Grant Number JP23KJ1488.
\end{acknowledgements*}

%% theoremのカウンタをもとに戻す
\setcounter{theorem}{0}%
\renewcommand{\thetheorem}{\thesection.\arabic{theorem}}

%% We tacitly assume that all categories discussed in this paper are locally small. 
%% All (co)limits are indexed by small categories.

\section{Preliminaries on triangulated categories}\label{section:preliminaries_on_triangulated_categories}

Let $\catT$ be a triangulated category.
In this section, we assume that all subcategories of triangulated categories are strictly full; i.e. full and closed under isomorphisms\footnote{A subcategory closed under isomorphisms is also called \emph{replete}.}.
A subcategory $\catS$ of $\catT$ is called a \emph{triangulated subcategory} if $\catS$ is closed under shifts and cones in $\catT$. A triangulated subcategory $\catS$ of $\catT$ is \emph{thick} if it is closed under direct summands. When $\catT$ has small coproducts, a triangulated subcategory $\catS$ is called \emph{localizing} if it is closed under coproducts. We remark that localizing subcategories are thick.

For a set $\familyS$ of objects of $\catT$, let $\tria{\familyS}$ denote the smallest triangulated subcategory of $\catT$ containing $\familyS$, and $\thick{\familyS}$ the smallest thick subcategory containing $\familyS$. Also, let $\Loc{\familyS}$ denote the smallest localizing subcategory containing $\familyS$, when $\catT$ has small coproducts.
We have the sequence of inclusion relations $\tria{\familyS}\subseteq \thick{\familyS}\subseteq \Loc{\familyS}$.

For a set $\familyS$ of objects of $\catT$, we define the \emph{right orthogonal subcategory} $\familyS^\perp$ of $\familyS$ to be the full subcategory of $\catT$ spanned by those objects $X$ which $\Hom_\catT(C[n],X)=0$ for all $C\in\familyS$ and all $n \in \Z$.

\begin{definition}[\cite{Bondal-van_den_Bergh:2003generators}]
	Let $\catT$ be a triangulated category with coproducts.
	\begin{enumerate}
		\item An object $C\in\catT$ is called \emph{compact} if the functor $\Hom_\catT(C,\mplaceholder)\colon \catT\to\Ab$ preserves coproducts.
		\item A set $\familyS$ of objects of $\catT$ \emph{generates} $\catT$ if $\familyS^\perp=0$.
		\item $\catT$ is \emph{compactly generated} if it is generated by a set $\familyS$ of compact objects. The set $\familyS$ is called a \emph{generating set} of $\catT$.
	\end{enumerate}
\end{definition}

For example, the derived category $\Dom(R)$ of a ring $R$ is generated by the single compact generator $R$. More generally, we will see that the derived category of a dg category is also compactly generated by the representables (\cref{prop:Dom(catA)_is_compactly_generated}).

%The following is a famous result due to Neeman.

\begin{proposition}[\cite{Neeman:1992connection, Neeman:1996brown_representability, Neeman:2001triangulated}]\label{prop:Brown_representability_theorem}
	Let $\catT$ be a compactly generated triangulated category with $\familyS$ a generating set.
	\begin{enumerate}
		\item The full subcategory $\catT^\cpt$ consisting of all compact objects of $\catT$ coincides with $\thick{\familyS}$.
		\item The equality $\Loc{\familyS}=\catT$ holds.
	\end{enumerate}
\end{proposition}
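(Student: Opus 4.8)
The plan is to establish (2) first, since the generation statement is used in (1), and throughout to exploit the defining property $\familyS^\perp=0$ of a generating set together with the homotopy colimit (cellular approximation) technique. Recall that $\familyS^\perp$ is computed using all shifts, so an object killed by every map out of a shift of a member of $\familyS$ is forced to vanish.

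For (2), I would produce, for an arbitrary $X\in\catT$, a triangle $X'\to X\to X''\to X'[1]$ with $X'\in\Loc{\familyS}$ and $X''\in\familyS^\perp$. The self-contained route is the Bousfield cellular construction: starting from $X_0=0$ and a map to $X$, one iteratively attaches coproducts of shifted objects of $\familyS$, choosing the cells so that at each stage every map from a shift of an object of $\familyS$ into the current cone is hit, and its failure of injectivity is remedied at the next stage. Taking $X'=\operatorname{hocolim}_i X_i$ then gives $X'\in\Loc{\familyS}$ together with $\Hom_\catT(S[n],X')\xrightarrow{\ \sim\ }\Hom_\catT(S[n],X)$ for every $S\in\familyS$ and $n\in\Z$, so that the cone $X''$ of $X'\to X$ lies in $\familyS^\perp$. (Alternatively one may quote Brown representability \cite{Neeman:1996brown_representability} to get a right adjoint to the inclusion $\Loc{\familyS}\hookrightarrow\catT$, producing the same triangle.) Since $\familyS$ generates, $\familyS^\perp=0$, hence $X''=0$ and $X\cong X'\in\Loc{\familyS}$; as $X$ was arbitrary, $\Loc{\familyS}=\catT$.

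For (1), the inclusion $\thick{\familyS}\subseteq\catT^\cpt$ is routine: I would check that $\catT^\cpt$ is a thick subcategory --- compactness is preserved under shifts by adjunction, under cones by applying the five lemma to the long exact sequences coming from $\Hom_\catT(-,\coprod_i Y_i)$, and under direct summands because a summand of a compact object is compact --- and it contains $\familyS$ by hypothesis, hence contains the smallest thick subcategory $\thick{\familyS}$. The content is the reverse inclusion $\catT^\cpt\subseteq\thick{\familyS}$. Here I would take a compact $X$ and present it as $X\cong\operatorname{hocolim}_i X_i$ for a tower $X_0\to X_1\to\cdots$ with each $X_i\in\thick{\familyS}$ (indeed in $\tria{\familyS}$). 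Since $X$ is compact, the natural map $\varinjlim_i\Hom_\catT(X,X_i)\to\Hom_\catT(X,\operatorname{hocolim}_i X_i)\cong\Hom_\catT(X,X)$ is an isomorphism, coming from the defining triangle $\coprod_i X_i\to\coprod_i X_i\to\operatorname{hocolim}_i X_i$ and compactness; therefore $\mathrm{id}_X$ factors through some $X_i$, exhibiting $X$ as a direct summand of $X_i\in\thick{\familyS}$, and thus $X\in\thick{\familyS}$.

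The hard part will be the tower used in the reverse inclusion of (1). The naive cellular tower from (2) only places its terms in $\Loc{\familyS}$, since it attaches infinite coproducts of cells; for the retraction argument, however, one needs the approximating objects inside $\thick{\familyS}$, i.e. built from $\familyS$ by finitely many shifts, cones and summands. Arranging this finiteness --- that at each stage only finitely many cells are required, which is exactly where the compactness of $X$ enters essentially --- is the crux of Neeman's theorem \cite{Neeman:1992connection,Neeman:2001triangulated}. Once such a tower with $X_i\in\thick{\familyS}$ and $X\cong\operatorname{hocolim}_i X_i$ is secured, the retract step above concludes; I would therefore concentrate the effort there, or, since the result is classical, cite Neeman and record only the retraction argument.
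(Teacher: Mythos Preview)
The paper does not prove this proposition; it is stated with citations to Neeman's work and used as a black box. Your sketch is correct and is essentially the standard argument from those references: the cellular (Bousfield) tower for (2), and for (1) the retraction-through-a-finite-stage argument once one has a tower with terms in $\thick{\familyS}$. You have also correctly identified the genuine difficulty, namely arranging the finiteness of the approximating tower, and your suggestion to cite Neeman for that step is exactly what the paper itself does.
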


\begin{proposition}[{cf.~\cite[Lemma\ 4.2]{Keller:1994Deriving}}]\label{prop:lemma_for_compactly_generated_tria_cat}
	Let $\catT,\catT'$ be triangulated categories with coproducts, $F,G\colon \catT\to\catT'$ triangulated functors preserving coproducts, and $\alpha\colon F\Rightarrow G$ a natural transformation. If $\catT$ is compactly generated by a set $\familyS$ of objects, then the following hold.
	\begin{enumerate}
		\item The restricted functor $\res{F}{\tria{\familyS}}\colon \tria{\familyS}\to \catT'$ is fully faithful if and only if for all $C,C'\in \familyS$ and all $n \in \Z$,
		      \[ F\colon \Hom_\catT(C,C'[n]) \to \Hom_{\catT'}(FC,FC'[n]) \]
		      is bijective.
		\item If (1) holds and if for each $C\in \familyS$ the object $FC$ of $\catT'$ is compact, then $F\colon \catT\to \catT'$ is fully faithful.
		\item $F$ is an equivalence if and only if $F$ is fully faithful and the set $\{FC\}_{C\in \familyS}$ of objects compactly generates $\catT'$.
		\item $\alpha$ is an isomorphism if and only if $\alpha_C\colon FC\to GC$ is invertible for each $C\in \familyS$.
	\end{enumerate}
\end{proposition}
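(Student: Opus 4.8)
The plan is to prove all four parts by the same \emph{dévissage} scheme: in each case I single out the full subcategory of $\catT$ consisting of the objects on which $F$ (or $\alpha$) behaves as required, verify that it is closed under the operations generating $\tria{\familyS}$ or $\Loc{\familyS}$, note that it contains $\familyS$, and then invoke \cref{prop:Brown_representability_theorem} to conclude that it exhausts $\tria{\familyS}$ or $\catT=\Loc{\familyS}$. The three tools used repeatedly are the triangulated five lemma (in a morphism of distinguished triangles, if two of the three vertical maps are invertible then so is the third), the fact that $\Hom_\catT(C,\mplaceholder)$ preserves coproducts exactly when $C$ is compact, and the equality $\Loc{\familyS}=\catT$ of \cref{prop:Brown_representability_theorem}(2).

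I would begin with (4), the cleanest instance. Let $\mathcal{D}\subseteq\catT$ be the full subcategory of objects $X$ for which $\alpha_X$ is invertible. Since $F,G$ are triangulated and $\alpha$ is natural, $\mathcal{D}$ is closed under shifts and, by the five lemma applied to the induced morphism of triangles, under cones; and since $F,G$ preserve coproducts while a coproduct of isomorphisms is an isomorphism, it is closed under coproducts. Thus $\mathcal{D}$ is localizing and, containing $\familyS$ by hypothesis, equals $\Loc{\familyS}=\catT$, which is the nontrivial implication (the converse being trivial).

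For (1) the ``only if'' direction is immediate because $C,C'[n]\in\tria{\familyS}$. For ``if'' I run a two-variable dévissage inside $\tria{\familyS}$. Fixing $C\in\familyS$, the objects $Y$ for which $F\colon\Hom_\catT(C,Y[n])\to\Hom_{\catT'}(FC,FY[n])$ is bijective for all $n$ form a triangulated subcategory (closure under cones via the five lemma, applied now to the long exact $\Hom$-sequences attached to a triangle); by hypothesis it contains $\familyS$, hence all of $\tria{\familyS}$. Fixing then $Y\in\tria{\familyS}$, the objects $X$ for which the analogous map is bijective likewise form a triangulated subcategory, which contains $\familyS$ by the first step and hence $\tria{\familyS}$. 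Combining the two conclusions gives bijectivity for all $X,Y\in\tria{\familyS}$, i.e.\ full faithfulness of $\res{F}{\tria{\familyS}}$. Part (2) is the same two-variable argument carried out over $\catT=\Loc{\familyS}$, where the auxiliary subcategories must now be shown \emph{localizing}. In the first step the compactness of $FC$ together with the coproduct-preservation of $F$ makes the two functors $Y\mapsto\Hom_\catT(C,Y[n])$ and $Y\mapsto\Hom_{\catT'}(FC,FY[n])$ send coproducts to coproducts compatibly, so that subcategory is localizing; containing $\familyS$ by the standing bijectivity hypothesis, it equals $\catT$, yielding bijectivity for every $C\in\familyS$ and every $Y\in\catT$. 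In the second step the contravariant functor $X\mapsto\Hom_\catT(X,Y)$ turns coproducts into products, and a product of bijections is a bijection, so no compactness is needed there; that subcategory is again localizing and contains $\familyS$, hence equals $\catT$, proving $F$ fully faithful on all of $\catT$.

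Finally, for (3) the ``only if'' direction is routine: an equivalence preserving coproducts carries the compact generating set $\familyS$ to a compact generating set $\{FC\}$. For ``if'' it remains to show $F$ essentially surjective. Since $F$ is fully faithful, triangulated, and coproduct-preserving, its essential image is a localizing subcategory of $\catT'$ containing $\{FC\}_{C\in\familyS}$; by \cref{prop:Brown_representability_theorem}(2) applied in $\catT'$ this image equals $\Loc{\{FC\}}=\catT'$, so $F$ is essentially surjective and hence an equivalence. The only genuinely delicate point in the whole argument is the coproduct-closure in (2): one must keep track of \emph{where} compactness of $FC$ is needed (the covariant variable) and where it is not (the contravariant variable), and carry out the two dévissage steps in the correct order --- first over $Y$ with a generator $C$ fixed, then over $X$. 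Once this bookkeeping is arranged, every verification reduces to the five lemma together with the compactness identity.
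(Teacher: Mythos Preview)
The paper does not supply a proof for this proposition; it is stated with a reference to Keller and then used as a black box throughout. Your d\'evissage argument is correct and is the standard way to establish these facts: each part is handled by showing that the full subcategory of objects on which the desired conclusion holds is triangulated (for (1)) or localizing (for (2), (3), (4)), contains $\familyS$, and hence equals $\tria{\familyS}$ or $\catT=\Loc{\familyS}$ via \cref{prop:Brown_representability_theorem}. Your bookkeeping in (2), distinguishing where compactness of $FC$ is required (the covariant variable) from where it is not (the contravariant one), is exactly the point one has to get right, and you do.
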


\begin{comment}
Let $\catT,\catT'$ be compactly generated triangulated categories with $\familyS,\familyS'$ generating sets, respectively, and $F\colon \catT\to\catT'$ a triangulated functor preserving coproducts. If the restriction $\res{F}{\Z S}\colon \Z\familyS\to \Z\familyS'$ is an equivalence, then $F$ is an equivalence. Here $\Z\familyS\coloneqq \{C[n] \mid C\in \familyS,\> n \in \Z\}$.\mymemo{使ってない？から不要かも}
\end{comment}

\begin{proposition}\label{prop:condition_for_preserving_compact}
	Let $\catT,\catT'$ be triangulated categories with coproducts and $F\colon \catT\to\catT'$ a triangulated functor. Suppose that $\catT$ is compactly generated by a set $\familyS$ of objects. If $F(C)\in \catT'$ is compact for all $C\in \familyS$, then $F$ preserves compact objects.
\end{proposition}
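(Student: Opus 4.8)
The plan is to exploit the identification of the compact objects of $\catT$ with the thick subcategory generated by $\familyS$. Concretely, I would consider the full subcategory $\mathcal{C}\subseteq\catT$ consisting of those objects $X$ for which $F(X)$ is compact in $\catT'$, and show that $\mathcal{C}$ is a thick triangulated subcategory containing $\familyS$. Granting this, minimality of $\thick{\familyS}$ gives $\thick{\familyS}\subseteq\mathcal{C}$, and since $\catT^\cpt=\thick{\familyS}$ by \cref{prop:Brown_representability_theorem}(1), every compact object of $\catT$ lies in $\mathcal{C}$, i.e. is sent by $F$ to a compact object. That is exactly the claim, so the entire argument reduces to the thickness of $\mathcal{C}$.

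To check thickness I would invoke the standard fact that the compact objects $(\catT')^\cpt$ of any triangulated category with coproducts form a thick triangulated subcategory (shifts and cones of compacts are compact, and a direct summand of a compact object is compact). Containment $\familyS\subseteq\mathcal{C}$ is precisely the hypothesis. Closure under shifts follows because $F$ triangulated gives $F(X[n])\cong F(X)[n]$, which is compact whenever $F(X)$ is. For closure under cones, a distinguished triangle $X\to Y\to Z\to X[1]$ with $X,Y\in\mathcal{C}$ is carried by the triangulated functor $F$ to a distinguished triangle $FX\to FY\to FZ\to FX[1]$ in $\catT'$; since $FX,FY$ are compact and $(\catT')^\cpt$ is closed under cones, $FZ$ is compact, so $Z\in\mathcal{C}$. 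Finally, $F$ additive gives $F(X\oplus Y)\cong FX\oplus FY$, so if $X\oplus Y\in\mathcal{C}$ then each summand is sent to a direct summand of a compact object, hence to a compact object; thus $\mathcal{C}$ is closed under direct summands. It is closed under isomorphism because it is defined by a property of $F(X)$ that is stable under isomorphism, so $\mathcal{C}$ is strictly full in the sense of \cref{section:preliminaries_on_triangulated_categories}.

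I do not expect a genuine obstacle here: the content lies entirely in citing the two structural inputs, and the only points that deserve care are that both facts I rely on---that $(\catT')^\cpt$ is a thick subcategory and that $\catT^\cpt=\thick{\familyS}$---genuinely require $\catT$ and $\catT'$ to admit coproducts, which is part of the hypotheses. I would therefore be explicit that it is the additivity of $F$ together with its compatibility with shifts and distinguished triangles that transports the thickness of $(\catT')^\cpt$ back along $F$ to the subcategory $\mathcal{C}$; everything else is formal.
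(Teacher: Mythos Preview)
Your argument is correct and follows exactly the paper's approach: define $\mathcal{C}=\{X\in\catT\mid F(X)\text{ compact}\}$, observe it is a thick subcategory containing $\familyS$, and invoke \cref{prop:Brown_representability_theorem} to get $\catT^\cpt=\thick{\familyS}\subseteq\mathcal{C}$. The only difference is that you spell out the verification of thickness in detail where the paper states it in one line.
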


\begin{proof}
	Let $\catT_0\subseteq\catT$ be the full subcategory spanned by those objects $X$ that $F(X)$ is compact. Then $\catT_0$ is a thick subcategory containing $\familyS$. Hence \cref{prop:Brown_representability_theorem} shows that $\catT_0$ contains $\thick{\familyS}=\catT^\cpt$, which completes the proof.
\end{proof}

\begin{proposition}[{\cite[Lemma 2.10 (2)]{Kuznetsov-Lunts:2015}}]\label{prop:left-adj_preserves_compact_iff_right-adj_preserves_coproduct}
	Let $\catT,\catT'$ be triangulated categories with coproducts and $F\colon \catT\to\catT'$ a triangulated functor. Suppose that $\catT$ is compactly generated by a set $\familyS$ of objects, and that $F$ has a right adjoint $G$. Then $F$ preserves compact objects if and only if $G$ preserves coproducts.
\end{proposition}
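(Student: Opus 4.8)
The plan is to prove both implications by transporting coproducts across the adjunction $F\dashv G$ and comparing with the evident canonical comparison maps. The single tool I will lean on is a detection principle for isomorphisms: since $\familyS$ generates $\catT$, we have $\familyS^\perp=0$, and because $\familyS$ consists of compact objects, every shift $C[n]$ with $C\in\familyS$ is again compact. Consequently a morphism $\theta$ in $\catT$ is an isomorphism if and only if $\Hom_\catT(C[n],\theta)$ is bijective for all $C\in\familyS$ and all $n\in\Z$; this follows by applying $\Hom_\catT(C[n],\mplaceholder)$ to the triangle defining $\Cone(\theta)$ and noting $\Cone(\theta)\in\familyS^\perp=0$ forces $\Cone(\theta)=0$.

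For the easy implication, suppose $G$ preserves coproducts. Let $C\in\catT$ be any compact object and $\{Y_i\}$ a family in $\catT'$. I would chain the natural isomorphisms $\Hom_{\catT'}(FC,\coprod_i Y_i)\cong \Hom_\catT(C,G(\coprod_i Y_i))\cong \Hom_\catT(C,\coprod_i GY_i)\cong \coprod_i \Hom_\catT(C,GY_i)\cong \coprod_i\Hom_{\catT'}(FC,Y_i)$, using adjunction, then that $G$ preserves coproducts, then compactness of $C$, then adjunction again. Since the composite is the canonical comparison map, $FC$ is compact; note this direction uses neither compact generation nor any genuine appeal to $\familyS$.

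For the converse, suppose $F$ preserves compact objects. I want the canonical comparison $\theta\colon \coprod_i G(Y_i)\to G(\coprod_i Y_i)$ to be an isomorphism, and by the detection principle above it suffices to show $\Hom_\catT(C[n],\theta)$ is bijective for every $C\in\familyS$, $n\in\Z$. Here $C[n]$ is compact and, by hypothesis, so is $F(C[n])\cong F(C)[n]$; thus I can run the analogous chain $\Hom_\catT(C[n],\coprod_i GY_i)\cong \coprod_i\Hom_\catT(C[n],GY_i)\cong \coprod_i\Hom_{\catT'}(F(C[n]),Y_i)\cong \Hom_{\catT'}(F(C[n]),\coprod_i Y_i)\cong \Hom_\catT(C[n],G(\coprod_i Y_i))$, invoking compactness of $C[n]$, adjunction, compactness of $F(C[n])$, and adjunction. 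This yields the required bijection, hence $\theta$ is an isomorphism and $G$ preserves coproducts.

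The main obstacle is not any of these isomorphisms individually but the bookkeeping that the composite chain agrees \emph{on the nose} with $\Hom_\catT(C[n],\theta)$ (respectively with the coproduct comparison for $FC$). This is a mate/naturality verification: one must check that the canonical map $\theta$ is precisely the one induced by the unit and counit of $F\dashv G$ together with the coproduct inclusions, so that applying $\Hom_\catT(C[n],\mplaceholder)$ really recovers the four-step composite. I would dispatch this by expressing $\theta$ explicitly through the unit--counit triangle identities rather than by diagram-chasing each square, keeping the compactness inputs ($C[n]$ compact for one direction, $F(C[n])$ compact for the other) clearly separated.
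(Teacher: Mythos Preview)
The paper does not supply its own proof of this proposition; it simply cites \cite[Lemma 2.10 (2)]{Kuznetsov-Lunts:2015}. Your argument is the standard one and is correct: the detection principle via $\familyS^\perp=0$ is exactly what compact generation buys, and both chains of adjunction/compactness isomorphisms are valid. The naturality check you flag at the end is routine (each step is a component of a natural transformation and the composite is forced by the universal property of the coproduct), so there is no genuine obstacle.
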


\section{Basic dg category theory}\label{section:Basic_dg_category_theory}

Let $\basek$ be a commutative base ring with unit.
This section collects some definitions and basic results concerning dg categories.
The reader is referred to \cite{Keller:1994Deriving}, \cite{Keller:2006arXiv}, \cite{Drinfeld:2004dg_quotients}, and \cite{Toen:2011lectures} for dg category theory and \cite{Kelly:1982Basic} for general enriched category theory. Also \cite[Section 3]{Kuznetsov-Lunts:2015} is helpful.

\subsection{Preliminaries on dg categories}\label{subsection:preliminaries_on_dg_categories}

Let $\Chk$ denote the category of cochain complexes of $\basek$-modules; we use cohomological notation. The category $\Chk$ is a symmetric monoidal closed category via the tensor product $\otimes_\basek$ of complexes and the Hom complex $\Homcpx$, and furthermore has the following model category structure which is compatible with the monoidal structure.

\begin{proposition}[The projective model structure on $\Chk$; see {\cite[\S 2.3]{Hovey:1999}}]\label{thm:projective_model_structure_on_Ch(k)}
	The category $\Chk$ of cochain complexes of $\basek$-modules has a model category structure whose weak equivalences are quasi-isomorphisms and whose fibrations are surjections,
	%\[\wequ = \Qism \coloneqq \{\text{quasi-isomorphisms}\}, \quad \Fib = \{\text{surjections}\}\]
	which is called the \emph{projective model structure} on $\Chk$. This model category is %cofibrantly generated and 
	monoidal model with respect to the tensor product of complexes.

	All objects are fibrant.
	Note that when $\basek$ is a field, all objects are cofibrant.
\end{proposition}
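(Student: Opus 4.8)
The plan is to obtain the model structure from Kan's recognition theorem for cofibrantly generated model categories (e.g.\ \cite[Theorem~2.1.19]{Hovey:1999}) and then to check the monoidal axioms on generators. For $n\in\Z$ let $S^n$ denote the complex with $\basek$ in degree $n$ and zero differential, and let $D^n$ denote the contractible complex $\basek\xrightarrow{\mathrm{id}}\basek$ placed in degrees $n-1$ and $n$. I would take as generating cofibrations the top-degree inclusions $I=\{\,S^n\hookrightarrow D^n\,\}_{n\in\Z}$ and as generating trivial cofibrations $J=\{\,0\to D^n\,\}_{n\in\Z}$. Since $\Chk$ is a category of modules it is bicomplete and locally presentable, so every relevant domain is small, and the class of quasi-isomorphisms visibly satisfies two-out-of-three and is closed under retracts. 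It then remains to identify the lifting-injective maps and to verify the matching conditions of the recognition theorem.

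Using the natural identifications $\Hom_{\Chk}(S^n,X)\cong Z^n(X)$ and $\Hom_{\Chk}(D^n,X)\cong X^{n-1}$, a direct inspection shows that a map has the right lifting property against $J$ exactly when it is surjective in every degree, and against $I$ exactly when it is a surjective quasi-isomorphism. Thus the $J$-injectives are the degreewise surjections (the intended fibrations) and the $I$-injectives are the surjective quasi-isomorphisms, whence $I\text{-inj}=J\text{-inj}\cap\{\text{quasi-isomorphisms}\}$. The other hypothesis, that every relative $J$-cell complex is a trivial cofibration, holds because each $0\to D^n$ has contractible cofiber and quasi-isomorphisms are stable under the pushouts and transfinite compositions built from $J$; concretely one uses that filtered colimits of $\basek$-modules are exact. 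Feeding this into the recognition theorem yields the projective model structure with the stated weak equivalences and fibrations. I expect the identification of the $I$-injectives to be the main technical point: showing that a surjective quasi-isomorphism admits the required lifts against $S^n\hookrightarrow D^n$ amounts to solving $dx=z$ and $f(x)=y$ simultaneously, which is an inductive diagram chase exploiting the acyclicity of the kernel.

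For the monoidal statement I would verify the pushout--product axiom on the generating sets. Because $S^m\otimes_\basek S^n\cong S^{m+n}$ and $S^m\otimes_\basek D^n\cong D^{m+n}$, while a tensor product of two disk complexes decomposes as a direct sum of disks, the pushout--product of two maps in $I$ is again a pushout of maps in $I$, and the pushout--product of a map in $I$ with a map in $J$ lands among the trivial cofibrations; all of these are explicit computations with complexes of free $\basek$-modules. The unit axiom is then immediate once one observes that the unit $\basek\cong S^0$ is cofibrant: the map $0\to S^{n-1}$ is the pushout of $S^n\hookrightarrow D^n$ along $S^n\to 0$, so every $S^n$, and in particular $S^0$, is cofibrant.

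Finally, every object is fibrant because the fibrations are the degreewise surjections and $X\to 0$ is always surjective. When $\basek$ is a field, every complex $V$ of $\basek$-vector spaces splits (non-canonically) as a direct sum of its cohomology, regarded as a complex with zero differential, and a contractible complex; the former is a coproduct of copies of the spheres $S^n$ and the latter a coproduct of disks $D^n$. Since each $S^n$ and each $D^n$ is cofibrant and coproducts of cofibrant objects are cofibrant, every such $V$ is cofibrant.
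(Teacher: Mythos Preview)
The paper does not give its own proof of this proposition; it is stated as a background result with a citation to Hovey's book \cite[\S2.3]{Hovey:1999} and no further argument. Your sketch is correct and is essentially the proof one finds in the cited reference: the model structure is obtained from the recognition theorem using exactly the generating sets $I=\{S^n\hookrightarrow D^n\}$ and $J=\{0\to D^n\}$ you describe, the identification of $I$-injectives with surjective quasi-isomorphisms proceeds via the acyclic-kernel argument you outline, and the monoidal axiom is checked on generators. The remarks on fibrancy (trivial, since fibrations are surjections) and on cofibrancy over a field (via the splitting of a complex of vector spaces into spheres and disks) are also correct.

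One small imprecision: the pushout--product of two maps in $I$ is not literally a single pushout of a map in $I$, but it is a cofibration whose cofiber is the tensor product of the cofibers, namely $S^{m-1}\otimes S^{n-1}\cong S^{m+n-2}$; since the map is a degreewise split monomorphism between bounded complexes of free modules, this is enough to see it lies in $I\text{-cof}$. This does not affect the validity of your argument.
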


A \emph{dg category} over $\basek$ is just a $\Chk$-enriched category. In this paper, all dg categories are assumed to be over $\basek$.
All cochain complexes of $\basek$-modules form the dg category $\Comdgk$ whose Hom objects are given by the Hom complex $\Homcpx$. We identify the base ring $\basek$ with the dg category with one single object and with the unique Hom complex $\basek$.
We call a dg category $\catA$ \emph{small} when the collection $\obj(\catA)$ of objects is a small set. Let $\dgCat=\dgCatk$ denote the category of small dg categories over $\basek$ (we often omit the subscript $\basek$).

We can associate two ordinary categories with a dg category.
For a dg category $\catA$, let $Z^0(\catA)$ be the \emph{underlying category} (or the \emph{$0$-th cocycle category}) with the same objects whose Hom sets are $Z^0(\catA(A,B))$, and $H^0(\catA)$ be the \emph{homotopy category} (or the \emph{$0$-th cohomology category}) with the same objects whose Hom sets are $H^0(\catA(A,B))$. In the case of the dg category $\Comdgk$ of complexes of $\basek$-modules, its underlying category $Z^0(\Comdgk)$ is just the enriching category $\Chk$.

\begin{definition}\label{def:quasi-equivalence}
	Let $F\colon \catA\to\catB$ be a dg functor between dg categories.
	\begin{enumerate}
		\item $F$ is \emph{quasi-fully faithful} if for all $A,A'\in \catA$, the morphism $F_{AA'}\colon \catA(A,A')\to \catB(FA,FA')$ of complexes is a quasi-isomorphism.
		\item $F$ is \emph{quasi-essentially surjective} if the functor $H^0(F)\colon H^0(\catA)\to H^0(\catB)$ between homotopy categories is essentially surjective.
		      %すなわち任意の対象$B\in\catB$に対し$H^0(\catB)$において$B$が$FA$と同型になるような対象$A\in\catA$が存在するときをいう．
		\item $F$ is a \emph{quasi-equivalence} if it is quasi-fully faithful and quasi-essentially surjective.
	\end{enumerate}
\end{definition}

For a quasi-equivalence $F$, the induced functor $H^0(F)$ is an equivalence of categories.

\begin{comment}

\begin{definition}
	A dg functor $F\colon \catA\to\catB$ is called a \emph{quasi-fibration} if the following two conditions are satisfied:
	\begin{enumerate}[label=\conditem]
		\item (locally fibration) For all $A,A'\in \catA$, the morphism $F_{AA'}\colon \catA(A,A')\to \catB(FA,FA')$ of complexes is a fibration with respect to the projective model structure on $\Chk$, i.e.\ a surjection.
		\item (isofibration) The induced functor $H^0(F)\colon H^0(\catA)\to H^0(\catB)$ is an isofibration: that is, for all $A'\in H^0(\catA)$ and all isomorphisms $v\colon B\xrightarrow{\cong} FA'$ of $H^0(\catB)$, there exists an isomorphism  $u\colon A\xrightarrow{\cong} A'$ of $H^0(\catA)$ such that $H^0(F)(u)=v$.
	\end{enumerate}
\end{definition}

\end{comment}

\begin{theorem}[The Tabuada model structure on $\dgCat$ \cite{Tabuada:2005quillen_model_dgCat}]\label{thm:projective_model_structure_on_dgCatk}
	The category $\dgCat$ of small dg categories (over $\basek$) has a model category structure whose weak equivalences are quasi-equivalences. Let $\HodgCat$ denote the homotopy category of $\dgCat$.
	% and whose fibrations are quasi-fibrations.
	%\[\wequ = \{\text{擬同値}\}, \quad \Fib = \{\text{quasi-fibration}\}\]
	%This model category is cofibrantly generated.
	%すべてのdg圏はfibrantである．
\end{theorem}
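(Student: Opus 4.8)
The plan is to realize this as a cofibrantly generated model structure and verify the hypotheses of a standard recognition theorem (Kan's theorem, or \cite[Theorem 2.1.19]{Hovey:1999}). First I would record the generating (trivial) cofibrations of the projective model structure on $\Chk$ from \cref{thm:projective_model_structure_on_Ch(k)}: writing $S^n$ for the complex concentrated in degree $n$ and $D^n$ for the contractible disk complex, the generating cofibrations are the boundary inclusions $S^{n}\hookrightarrow D^{n+1}$ and the generating trivial cofibrations are the maps $0\to D^n$. To transport these to $\dgCat$, I would use the functor sending a complex $K$ to the dg category $\mathcal{C}(K)$ with two objects and single nontrivial Hom complex $K$. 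I then set $I\coloneqq\{\emptyset\to\basek\}\cup\{\mathcal{C}(S^n)\to\mathcal{C}(D^{n+1})\}_n$ as generating cofibrations and $J\coloneqq\{\mathcal{C}(0)\to\mathcal{C}(D^n)\}_n\cup\{j\colon\basek\to\mathcal{K}\}$ as generating trivial cofibrations, where $\mathcal{K}$ is a cofibrant model of ``a single contractible isomorphism'' (the dg category with two objects freely made isomorphic in a homotopy-coherent, contractible manner). Constructing $\mathcal{K}$ correctly and checking that $j$ is a quasi-equivalence is the first nontrivial input; the map $j$ is precisely what forces quasi-essential surjectivity in the lifting arguments.

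With $I$ and $J$ fixed, the verification splits into the routine and the essential. Routine: $\dgCat$ is complete and cocomplete (it is a category of enriched categories); the class $\wequ$ of quasi-equivalences satisfies two-out-of-three and is closed under retracts, which follows from the corresponding properties of quasi-isomorphisms in $\Chk$ together with two-out-of-three for equivalences of the $H^0$-categories; and the domains of $I$ and $J$ are small, so the small object argument applies. The essential content is the identification of the two lifting classes. I would show that $I\text{-inj}$ consists of the dg functors that are surjective on objects and locally trivial fibrations of complexes (so that $I\text{-inj}=\wequ\cap\Fib$), and that $J\text{-inj}$ consists of the \emph{quasi-fibrations} --- dg functors that are locally surjective and whose induced map on $H^0$ is an isofibration. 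Unwinding the lifting conditions against the generators yields these descriptions more or less directly.

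The two hypotheses that remain are $I\text{-inj}=\wequ\cap J\text{-inj}$ and $J\text{-cell}\subseteq\wequ$. For the first, I would check that a quasi-fibration which is a quasi-equivalence is automatically surjective on objects (using the isofibration property to lift the isomorphisms supplied by quasi-essential surjectivity) and locally a trivial fibration, and conversely; this is a short diagram chase once the two classes are described. The genuine obstacle is $J\text{-cell}\subseteq\wequ$: one must show that pushouts and transfinite composites of the maps in $J$ remain quasi-equivalences. The maps $\mathcal{C}(0)\to\mathcal{C}(D^n)$ only adjoin a contractible Hom complex and are easily seen to be quasi-equivalences, but controlling a pushout of dg categories requires an explicit description of the Hom complexes of the amalgamated dg category in terms of alternating composable strings of old and new morphisms. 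Proving that the comparison maps on these Hom complexes are quasi-isomorphisms --- especially for pushouts along $j\colon\basek\to\mathcal{K}$, which glue in a contractible isomorphism --- demands a filtration or an explicit contracting-homotopy argument, and this is where the main technical work of \cite{Tabuada:2005quillen_model_dgCat} lies. Once $J\text{-cell}\subseteq\wequ$ is established, the recognition theorem delivers the model structure, and $\HodgCat$ is its homotopy category by definition.
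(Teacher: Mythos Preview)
The paper does not prove this statement at all: it is recorded as a background result and attributed to \cite{Tabuada:2005quillen_model_dgCat}, with no argument given. Your sketch is not competing with anything in the paper itself; rather, it is an outline of the proof in the cited reference, and as such it is broadly faithful to Tabuada's strategy (cofibrantly generated model structure via Kan's recognition theorem, with the generating sets built from the projective generators of $\Chk$ plus the extra ``interval'' map $\basek\to\mathcal{K}$).

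As an outline it is essentially sound, and you correctly identify the genuinely hard step: stability of quasi-equivalences under pushouts along the generating trivial cofibrations, which requires an explicit description of Hom complexes in a pushout of dg categories. One small caution: the indexing of the generating (trivial) cofibrations and the precise construction of $\mathcal{K}$ vary across the literature (Tabuada's original paper, Toën's survey, and later accounts differ in conventions), so if you were to flesh this out you would want to fix one source and match its conventions throughout. But for the purposes of this paper, nothing more than the citation is expected.
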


\begin{remark}
	The category $\dgCatk$ of small dg categories becomes symmetric monoidal closed via the tensor product $\otimes$ of $\Chk$-enriched categories and the dg functor category $\Fun_\dg(\mplaceholder,\mplaceholder)$. However, it is not a monoidal model category because the tensor products of cofibrant dg categories are not necessarily cofibrant (\cite[p.~631]{Toen:2007}).
\end{remark}

\begin{proposition}\label{prop:cofibrant_replacement_of_dg_cat}
	In the Tabuada model category $\dgCatk$, the following hold.
	\begin{enumerate}
		\item For all dg categories $\catA$, there exists an identity-on-object quasi-equivalence $\cofresol_\catA \colon \cofQ(\catA)\to\catA$ with $\cofQ(\catA)$ a cofibrant dg category. We call it a \emph{cofibrant resolution} of $\catA$.
		\item For a cofibrant dg category $\catA$, each Hom complex $\catA(A,B)$ is cofibrant with respect to the projective model structure on $\Chk$.
	\end{enumerate}
\end{proposition}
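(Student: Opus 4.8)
The plan is to prove the two parts by separate means: part (1) by a small object argument carried out relative to the discrete dg category on $\obj(\catA)$, and part (2) by a cellular analysis of cofibrant objects together with the monoidal compatibility of the projective model structure on $\Chk$ (\cref{thm:projective_model_structure_on_Ch(k)}).

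For (1), write $O\coloneqq\obj(\catA)$ and let $\basek_O$ be the discrete dg category on $O$, with $\Hom$ equal to $\basek$ on the diagonal and $0$ elsewhere. Since $\basek_O$ is the coproduct of copies of the unit dg category $\basek$, the map $\emptyset\to\basek_O$ is a cofibration (a coproduct of the generating cofibration $\emptyset\to\basek$). Among Tabuada's generating cofibrations, those of the form $\mathcal{S}^{n-1}\to\mathcal{D}^n$ — the identity-on-objects maps of two-object dg categories induced by the generating cofibrations $S^{n-1}\to D^n$ of $\Chk$ — do not alter the object set. I would run the small object argument on $\basek_O\to\catA$ using \emph{only} these object-preserving cells, producing a factorisation $\basek_O\to\cofQ(\catA)\xrightarrow{\cofresol_\catA}\catA$ in which $\basek_O\to\cofQ(\catA)$ is a relative cell complex and $\cofresol_\catA$ has the right lifting property against every $\mathcal{S}^{n-1}\to\mathcal{D}^n$. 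The latter says precisely that each component $\cofQ(\catA)(A,B)\to\catA(A,B)$ is a surjective quasi-isomorphism, so $\cofresol_\catA$ is identity-on-objects and quasi-fully faithful, hence a quasi-equivalence. Finally $\cofQ(\catA)$ is cofibrant because $\emptyset\to\basek_O\to\cofQ(\catA)$ is a composite of cofibrations.

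For (2), recall that in a cofibrantly generated model category every cofibrant object is a retract of a cell complex; as a retract in $\dgCat$ restricts to a retract on each $\Hom$ complex and retracts of cofibrant complexes are cofibrant, it suffices to treat the case where $\catA$ is built from $\emptyset$ by transfinitely attaching Tabuada's generating cofibrations. I would show by transfinite induction that at every stage $\catA_i$ all $\Hom$ complexes are cofibrant and all transition maps are cofibrations in $\Chk$; cofibrancy of $\catA$ then passes through the defining transfinite composite. Attaching an $\emptyset\to\basek$ cell only adjoins a new object with endomorphism complex $\basek$ (cofibrant) and leaves existing $\Hom$ complexes untouched. The essential case is attaching $\mathcal{S}^{n-1}\to\mathcal{D}^n$, which freely adjoins a single morphism $\xi\colon x\to y$ with $d\xi$ a prescribed cocycle of $\catA_i(x,y)$. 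Here the key input is the explicit pushout formula: for each pair $(a,b)$, the graded module $\catA_{i+1}(a,b)$ decomposes as the direct sum over $p\ge0$ of
\[
  E_p\otimes(\basek\xi)^{\otimes p},\qquad
  E_p\coloneqq\catA_i(y,b)\otimes\catA_i(y,x)^{\otimes(p-1)}\otimes\catA_i(a,x),
\]
the $p=0$ summand being $\catA_i(a,b)$.

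Filtering by the number $p$ of occurrences of $\xi$, the differential preserves the filtration (since $d\xi$ lies in $\catA_i$), each coefficient $E_p$ is a coproduct of tensor products of $\Hom$ complexes of $\catA_i$ and hence cofibrant by the inductive hypothesis, and the transition $F_{p-1}\to F_p$ is the pushout of the map $E_p\otimes(S^{n-1}\to D^n)$, which is a cofibration by the pushout–product axiom. Thus $\catA_i(a,b)=F_0\to\operatorname{colim}_pF_p=\catA_{i+1}(a,b)$ is a transfinite composite of cofibrations, so it is a cofibration and $\catA_{i+1}(a,b)$ is cofibrant. The routine parts are the existence and lifting properties of the factorisations in part (1); I expect the main obstacle to be the bookkeeping in part (2) — organising the word-length filtration of the pushout and verifying at each step that the transition map is a cofibration in $\Chk$ — which is exactly where the monoidal structure of the projective model structure (\cref{thm:projective_model_structure_on_Ch(k)}) is needed.
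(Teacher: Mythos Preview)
The paper does not give a proof but refers to \cite[Proposition 2.3]{Toen:2007} and \cite[Lemma 3.6, 3.7]{Belmans:2013master}; your argument is exactly the standard one carried out in those references, so in substance you are supplying the proof the paper defers.

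One small correction in part~(2): for $p\ge 2$ the transition $F_{p-1}\to F_p$ is not literally a pushout of $E_p\otimes(S^{n-1}\to D^n)$. The cokernel $F_p/F_{p-1}\cong E_p\otimes(\basek\xi)^{\otimes p}$ is $E_p$ shifted by $p\cdot\lvert\xi\rvert$ rather than by the fixed degree of $D^n/S^{n-1}$, and the attaching map into $F_{p-1}$ is the signed sum over the $p$ ways of replacing a single $\xi$ by $d\xi$, not the single composition map you have for $p=1$. The fix is immediate: $F_{p-1}\to F_p$ is a degreewise split monomorphism whose cokernel is a shift of $E_p$, which is cofibrant by the inductive hypothesis and the monoidal axiom of \cref{thm:projective_model_structure_on_Ch(k)}; hence it is a cofibration in $\Chk$. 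With this adjustment the inductive step goes through and the rest of your argument is correct.
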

%\item When $\basek$ is a field, all dg categories are cofibrant. \mymemo{（reference？）}
% Is this a mistake?

\begin{proof}
	See \cite[Proposition 2.3]{Toen:2007} or the excellent thesis~\cite[Lemma 3.6, 3.7]{Belmans:2013master}.
\end{proof}

Recall that a complex $X$ of $\basek$-modules is defined to be \emph{h-projective} if $\Homcpx(X,N)$ is acyclic for every acyclic complex $N\in \Chk$. (We will soon generalize the notion of h-projectiveness to dg modules.)

Since cofibrant objects in $\Chk$ are h-projective% to be added with suitable reference
, every cofibrant dg category has h-projective Hom complexes by \cref{prop:cofibrant_replacement_of_dg_cat} above. Such a dg category will be called \emph{locally h-projective}. Note that those dg categories whose Hom complexes are h-projective are often called h-projective simply, but we do not choose this terminology here.
We also remark that, when $\basek$ is a field, every dg category is locally h-projective.

\subsection{Dg modules}\label{subsection:dg_bimodules}

For a small dg category $\catA$, a \emph{(right) dg $\catA$-module} is a dg functor $M\colon \catA^\op\to \Comdgk$. Let $\Comdg(\catA)=\Fun_\dg(\catA^\op,\Comdgk)$ denote the dg category of dg $\catA$-modules. Put $\Com(\catA)=Z^0(\Comdg(\catA))$ and $\Kom(\catA)=H^0(\Comdg(\catA))$. A right dg $\catA^\op$-module, or a dg functor $\catA\to\Comdgk$, is called a \emph{left dg $\catA$-module}. In the following, a right dg module is referred to as just a dg module unless otherwise stated.

For a dg $\catA$-module $M$ and an integer $n \in \Z$, we define the \emph{shift} $M[n]$ as the dg module given by $M[n](A)= M(A)[n]$ for all $A\in \catA$. Here $M(A)[n]$ denotes the usual shift of the complex $M(A)$. For a morphism $\theta\colon M\to N$ in $\Com(\catA)=Z^0(\Comdg(\catA))$, we define the \emph{cone} $\Cone(\theta)$ by $\Cone(\theta)(A) = \Cone(\theta_A)$, where $\Cone(\theta_A)$ is the usual mapping cone of the morphism $\theta_A\colon M(A)\to N(A)$ of complexes.

Given a morphism $\theta\colon M\to N$ in $\Com(\catA)$, there is a canonical triangle $M \to N \to \Cone(\theta) \to M[1]$. With distinguished triangles as those isomorphic to canonical ones, the homotopy category $\Kom(\catA)=H^0(\Comdg(\catA))$ of dg modules is a triangulated category.
Since we have the dg Yoneda embedding $\yoneda\colon \catA \hookrightarrow \Comdg(\catA)$, any homotopy category $H^0(\catA)$ can be embedded to the triangulated category $\Kom(\catA)$.

\begin{definition}
	Let $\catA$ be a small dg category and $M\in\Comdg(\catA)$ a dg $\catA$-module.
	\begin{enumerate}
		\item $M$ is called \emph{acyclic} if for all $A\in\catA$, the complex $N(A)\in\Chk$ is acyclic. Let $\Acyc(\catA)\subseteq \Comdg(\catA)$ be the full subcategory consisting of acyclic dg modules.
		\item $M$ is called \emph{h-projective} if for all acyclic dg modules $N\in \Acyc(\catA)$, the complex $\Comdg(\catA)(M,N)$ is also acyclic; or equivalently if $H^0(\Comdg(\catA)(M,N))=\Kom(\catA)(M,N)\cong 0$ for all acyclic dg modules $N\in \Acyc(\catA)$. Let $\hproj(\catA)\subseteq \Comdg(\catA)$ be the full subcategory consisting of h-projective dg modules.
		\item $M$ is called \emph{h-injective} if for all acyclic dg modules $N\in \Acyc(\catA)$, the complex $\Comdg(\catA)(N,M)$ is also acyclic; or equivalently if $H^0(\Comdg(\catA)(N,M))=\Kom(\catA)(N,M)\cong 0$ for all acyclic dg modules $N\in \Acyc(\catA)$. Let $\hinj(\catA)\subseteq \Comdg(\catA)$ be the full subcategory consisting of h-injective dg modules.
	\end{enumerate}
\end{definition}

The dg Yoneda lemma implies that the representable dg functors $\catA(\mplaceholder,A)\colon \catA^\op\to \Comdgk$ are h-projective dg $\catA$-modules. Therefore the Yoneda embedding $\yoneda \colon \catA \to \Comdg(\catA)$ factors through $\hproj(\catA)$.

%dg圏$\catA$に対して$\Comdg(\catA)$も$\hproj(\catA)$もpretriangulatedなdg圏で，そのホモトピー圏はすべての余積を持つ．

A morphism $\theta\colon M\to N$ of dg modules over a small dg category $\catA$, or a morphism in $\Com(\catA)=Z^0(\Comdg(\catA))$, is called a \emph{quasi-isomorphism} if for each $A\in\catA$, the morphism $\theta_A\colon M(A)\to N(A)$ in $\Chk$ is a quasi-isomorphism. Note that a morphism $\theta\colon M\to N$ of dg modules is a quasi-isomorphism if and only if its cone $\Cone(\theta)$ is acyclic.

\begin{lemma}[{\cite[Section\ 3]{Keller:1994Deriving}}]\label{lem:existence_of_h-proj_resolution}
	Let $M\in\Comdg(\catA)$ be a dg module over a small dg category $\catA$.
	\begin{enumerate}
		\item $M$ admits an h-projective resolution: that is, there exists a quasi-isomorphism $\hp(M)\to M$ in $\Com(\catA)$ with $\hp(M)$ h-projective.
		\item $M$ admits an h-injective resolution: that is, there exists a quasi-isomorphism $M\to \hi(M)$ in $\Com(\catA)$ with $\hi(M)$ h-injective.
	\end{enumerate}
\end{lemma}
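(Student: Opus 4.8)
The plan is to prove both parts by explicit (co)cellular constructions built out of (co)representable dg modules, part (2) dualizing part (1); the real content will lie in controlling the limit that appears in the injective case.

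\textbf{Part (1).} First I would isolate the building blocks. By the dg Yoneda lemma there is a natural isomorphism $\Comdg(\catA)(\catA(\mplaceholder,A)[n], N) \cong N(A)[-n]$, so every shifted representable is h-projective; since $\Comdg(\catA)(\bigoplus_i P_i, N) \cong \prod_i \Comdg(\catA)(P_i, N)$ and a product of acyclic complexes is acyclic (products are exact over $\basek$), arbitrary coproducts of shifted representables --- call them \emph{free} dg modules --- are h-projective as well. Next I would record the closure properties: h-projective modules form a triangulated subcategory of $\Kom(\catA)$ (closed under shifts and cones), and a module $P$ carrying an exhaustive filtration $0 = P_{-1} \subseteq P_0 \subseteq P_1 \subseteq \cdots$ with each inclusion split on underlying graded modules and each subquotient $P_i/P_{i-1}$ free is h-projective; this last point holds because $\Comdg(\catA)(P,N) \cong \varprojlim_i \Comdg(\catA)(P_i,N)$ is an inverse limit of acyclic complexes along surjective transition maps, hence acyclic by the Milnor ($\varprojlim^1$) sequence.

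With the building blocks in hand I would assemble $\hp(M)$ as a telescope. Start with a free module $P_0$ and a map $p_0\colon P_0 \to M$ surjective on cohomology --- for instance $P_0 = \bigoplus_{A}\bigoplus_{z} \catA(\mplaceholder,A)[-\deg z]$, the sum over all homogeneous cocycles $z$ of $M(A)$, with $p_0$ given by the dg Yoneda correspondence between such cocycles and closed degree-$0$ maps $\catA(\mplaceholder,A)[-\deg z]\to M$. Inductively, given $p_n\colon P_n \to M$ with $P_n$ free-built, I would adjoin free modules to kill the cohomology of the mapping cone $\Cone(p_n)$ in the next stage, keeping the inclusion $P_n \hookrightarrow P_{n+1}$ split on graded modules and extending $p_n$ to $p_{n+1}$. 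Setting $\hp(M) = \varinjlim_n P_n$, the resulting filtration shows $\hp(M)$ is h-projective, and since mapping cones and cohomology both commute with filtered colimits, $\Cone(\varinjlim_n p_n) \cong \varinjlim_n \Cone(p_n)$ is acyclic; hence $\hp(M)\to M$ is a quasi-isomorphism.

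\textbf{Part (2).} I would run the dual argument with \emph{cofree} modules in place of free ones. For $A\in\catA$ and an injective $\basek$-module $J$, the coinduced module $\operatorname{coind}_A(J)(B) = \Homcpx(\catA(B,A),J)$ is right adjoint to evaluation at $A$, so $\Comdg(\catA)(N,\operatorname{coind}_A(J)) \cong \Homcpx(N(A),J)$ by co-Yoneda; this is acyclic for acyclic $N$ because $\Hom_\basek(\mplaceholder,J)$ is exact, so such cofree modules are h-injective, and h-injectives are closed under shifts, cones, and arbitrary products (a product of acyclic Hom-complexes is acyclic). Using that $\basek$-modules have enough injectives, I would embed $M$ into a product of shifted cofree modules $I^0$ realizing a termwise monomorphism, form the cone, iterate to build a tower $I^0\to I^1\to\cdots$ of h-injectives, and assemble $\hi(M)$ as the product-totalization $\operatorname{Tot}(I^0\to I^1\to\cdots)$, together with the induced map $M\to\hi(M)$.

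\textbf{Main obstacle.} The crux --- and the point where part (2) is genuinely harder than part (1) --- is convergence. In part (1) the resolution is a filtered colimit and cohomology commutes with filtered colimits, so acyclicity of the cone is automatic. In part (2) the totalization $\hi(M)$ is an inverse limit, and cohomology commutes neither with products nor with inverse limits in general; one must therefore separately verify that $\hi(M)$ is h-injective (again controlled by the $\varprojlim^1$/Milnor argument, using that the transition maps of the relevant Hom-towers are surjective) and that $M\to\hi(M)$ is a quasi-isomorphism (a boundedness/convergence argument, e.g.\ resolving the canonical truncations $\tau_{\le n}M$ and passing to the limit, following Spaltenstein). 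An alternative, less hands-on route is to observe that $\Com(\catA)$ is a Grothendieck abelian category and to invoke the general existence of K-injective resolutions there, h-injective dg modules being precisely the K-injective objects; but that machinery merely repackages the same $\varprojlim^1$-vanishing that is the heart of the matter.
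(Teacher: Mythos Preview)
The paper does not supply its own proof of this lemma; it simply cites \cite[Section~3]{Keller:1994Deriving} and moves on. Your sketch is essentially the standard argument one finds there (semi-free resolutions built as filtered colimits of iterated free extensions for part~(1), and the Spaltenstein-style product-totalization for part~(2)), and you have correctly identified the one genuine subtlety, namely the convergence of the inverse limit in the injective case. So there is nothing to compare against beyond noting that your write-up is a faithful expansion of the reference the paper invokes.
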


We define the \emph{derived category} $\Dom(\catA)$ of a small dg category $\catA$ as the localization of $\Kom(\catA)$ with respect to quasi-isomorphisms. This is just the Verdier quotient of $\Kom(\catA)$ by the triangulated subcategory $H^0(\Acyc(\catA))$ of acyclic dg modules, and we see from \cref{lem:existence_of_h-proj_resolution} that there are equivalences of triangulated categories
\[ \Dom(\catA) \simeq \Kom(\catA)/H^0(\Acyc(\catA)) \simeq H^0(\hproj(\catA)) \simeq H^0(\hinj(\catA)), \]
and inclusions $H^0(\catA) \subseteq \Dom(\catA) \subseteq \Kom(\catA)$.
Moreover the localization functor $\localization\colon\Kom(\catA)\to\Dom(\catA)$ has a left adjoint $\hp\colon \Dom(\catA)\to\Kom(\catA)$ and a right adjoint $\hi\colon \Dom(\catA)\to\Kom(\catA)$.

\begin{proposition}\label{prop:Dom(catA)_is_compactly_generated}
	For a small dg category $\catA$, the derived category $\Dom(\catA)$ is a triangulated category compactly generated by the set $\{\catA(\mplaceholder,A)\}_{A\in\catA}$.
\end{proposition}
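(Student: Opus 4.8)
The plan is to establish the two defining properties of compact generation separately---that each representable $\catA(\mplaceholder,A)$ is compact and that the family $\{\catA(\mplaceholder,A)\}_{A\in\catA}$ generates $\Dom(\catA)$---both of which will follow from one computation reducing $\Hom$-groups in $\Dom(\catA)$ to cohomology of evaluations. Triangulatedness has already been recorded, since $\Dom(\catA)$ is a Verdier quotient of $\Kom(\catA)$. Write $h_A\coloneqq\catA(\mplaceholder,A)$. Because $h_A$ is h-projective (by the dg Yoneda lemma, as recalled after the definition of h-projectivity), the localization functor $\localization\colon\Kom(\catA)\to\Dom(\catA)$ induces an isomorphism $\Hom_{\Kom(\catA)}(h_A,M)\cong\Hom_{\Dom(\catA)}(h_A,M)$ for every dg module $M$. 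Combining this with the dg Yoneda isomorphism $\Comdg(\catA)(h_A,M)\cong M(A)$ of complexes and with $\Hom_{\Kom(\catA)}(h_A,M)=H^0\big(\Comdg(\catA)(h_A,M)\big)$, I obtain a natural isomorphism
\[ \Hom_{\Dom(\catA)}(h_A,M)\cong H^0(M(A)), \]
which is the key technical input from which everything else is formal.

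For compactness, I would first observe that $\Dom(\catA)$ has all coproducts and that they are modeled by termwise direct sums: the functor $\localization$ has a right adjoint $\hi$ and hence preserves coproducts, while coproducts in $\Kom(\catA)$ are the direct sums $\bigoplus_i M_i$, computed pointwise by $(\bigoplus_i M_i)(A)=\bigoplus_i M_i(A)$. Since evaluation at $A$ and $H^0$ both commute with direct sums, the displayed isomorphism gives
\[ \Hom_{\Dom(\catA)}\big(h_A,\textstyle\coprod_i M_i\big)\cong H^0\big(\textstyle\bigoplus_i M_i(A)\big)\cong\textstyle\bigoplus_i H^0(M_i(A))\cong\textstyle\bigoplus_i\Hom_{\Dom(\catA)}(h_A,M_i), \]
so each $h_A$ is compact.

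For generation, I would use the shift-compatible version of the key isomorphism, $\Hom_{\Dom(\catA)}(h_A[n],M)\cong\Hom_{\Dom(\catA)}(h_A,M[-n])\cong H^{-n}(M(A))$. Ranging over all $n\in\Z$, an object $M$ lies in $\{h_A\}_{A\in\catA}^\perp$ precisely when $H^n(M(A))=0$ for all $A\in\catA$ and all $n\in\Z$; that is, when every complex $M(A)$ is acyclic, equivalently when $M$ is acyclic, equivalently when $M\cong 0$ in $\Dom(\catA)$. Hence $\{h_A\}_{A\in\catA}^\perp=0$ and the family generates. Together with the compactness above, this exhibits $\Dom(\catA)$ as compactly generated by $\{\catA(\mplaceholder,A)\}_{A\in\catA}$.

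Since the argument is a chain of reductions, there is no single deep obstacle; the step requiring the most care is the key isomorphism $\Hom_{\Dom(\catA)}(h_A,M)\cong H^0(M(A))$---specifically, checking that it is natural in $M$ and genuinely compatible with direct sums and shifts, as compactness is exactly the interaction of this naturality with coproducts. I would also make sure, before invoking them, that coproducts truly exist in $\Dom(\catA)$ and are realized by termwise direct sums, which is where the existence of the right adjoint $\hi$ to $\localization$ is used.
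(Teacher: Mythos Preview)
Your proof is correct and follows the standard route via the dg Yoneda lemma. The paper states this proposition without proof, treating it as a well-known fact from the literature on dg categories (cf.\ Keller), so there is no paper proof to compare against; your argument is exactly the expected one.
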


We refer to compact objects in $\Dom(\catA)$ as \emph{perfect} dg modules. It follows from \cref{prop:Brown_representability_theorem} that $\Perf(\catA)\coloneqq \Dom(\catA)^\cpt=\thick{\{\catA(\mplaceholder,A)\mid A\in\catA\}}$.

We call a dg category $\catA$ \emph{pretriangulated} if $H^0(\catA)$ is closed (up to isomorphism) under shifts and cones of $\Dom(\catA)$. The homotopy category of a pretriangulated dg category has the structure of a triangulated category induced by that of $\Dom(\catA)$.

%% tensor product of dg modules

Let $\catA$ be a small dg category and $M\in \Comdg(\catA)$ be a (right) dg $\catA$-module. For a left dg $\catA$-module $N\in \Comdg(\catA^\op)$, the \emph{tensor product $M\otimes_\catA N$ over $\catA$} is defined to be the coequalizer of the parallel morphisms
\[\begin{tikzcd}
		\coprod_{A,B \in \catA} M(A)\otimes_\basek \catA(B,A) \otimes_\basek N(B) \arrow[shift left=0.7ex]{r}{} \arrow[shift right=0.7ex]{r}[swap]{} & \coprod_{A\in \catA} M(A)\otimes_\basek N(A)
	\end{tikzcd}\]
in $\Chk$ (This is the same as the coend of $M(-)\otimes_\basek N(-)$; see \cref{exa:tensor_product_is_coend}).
The tensor product satisfies
\begin{equation}
	\Homcpx(M\otimes_\catA N, X) \cong \Comdg(\catA)(M,\Homcpx(N(\mplaceholder),X)) \label{eqn:example_of_tensor_Hom_adjunction}
\end{equation}
for all $X\in \Chk$, which we check later in \cref{exa:example_of_tensor_Hom_adjunction}. The tensor product over $\catA$ forms a dg functor
\[ \mplaceholder\otimes_\catA \mplaceholder\colon \Comdg(\catA)\otimes \Comdg(\catA^\op) \to \Comdgk. \]

\begin{definition}
	A dg module $M\in \Comdg(\catA)$ is called \emph{h-flat} if for all acyclic left dg modules $N\in \Comdg(\catA^\op)$, the tensor product $M\otimes_\catA N$ is also acyclic.
\end{definition}

\begin{lemma}\label{lem:homology_of_dual_complex}
	Let $E\in \Modk$ be an injective $\basek$-module. We identify $E$ as the complex $E\in \Chk$ concentrated in degree $0$.
	\begin{enumerate}
		\item For a complex $X\in \Chk$, we have $H^n(\Homcpx(X,E)) \cong \Hom_\basek(H^n(X),E)$.
		\item Suppose that $E\in\Modk$ is an injective cogenerator, i.e.\ that for every $\basek$-module $W\in \Modk$, $W=0$ if (and only if) $\Hom_\basek(W,E)=0$. Then a complex $X$ is acyclic if and only if the Hom complex $\Homcpx(X,E)$ is acyclic.
	\end{enumerate}
\end{lemma}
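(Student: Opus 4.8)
The plan is to reduce the whole statement to a single fact: because $E$ is injective, the contravariant functor $F = \Hom_\basek(\mplaceholder,E)\colon \Modk\to\Modk$ is exact, and an exact functor commutes with the formation of cohomology. Part (1) is exactly this commutation, and part (2) then falls out formally from part (1) together with the cogenerator hypothesis on $E$.

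For part (1), I would first observe that the Hom complex $\Homcpx(X,E)$ is, up to reindexing and sign, obtained from $X$ by applying $F$ in each degree: since $E$ is concentrated in degree $0$ its internal differential vanishes, so the differential of $\Homcpx(X,E)$ is induced solely by $d_X$. Writing $H^n(\Homcpx(X,E))\cong F(H^n(X))$ for the content to be established, I would use that $E$ injective makes $F$ exact, i.e.\ it sends short exact sequences of $\basek$-modules to short exact sequences. I would then make the computation explicit by splitting $X$ into the short exact sequences $0\to Z^n\to X^n\to B^{n+1}\to 0$ and $0\to B^n\to Z^n\to H^n(X)\to 0$, where $Z^n=\ker d^n$ and $B^n=\operatorname{im}d^{n-1}$, applying $F$ to obtain short exact sequences of the duals, and chasing these to identify the subquotient computing $H^n(\Homcpx(X,E))$ with $F(H^n(X))=\Hom_\basek(H^n(X),E)$.

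For part (2), both implications follow from part (1). If $X$ is acyclic then $H^n(X)=0$ for all $n$, so $H^n(\Homcpx(X,E))\cong\Hom_\basek(0,E)=0$ and $\Homcpx(X,E)$ is acyclic; this direction uses only injectivity of $E$. Conversely, if $\Homcpx(X,E)$ is acyclic then $\Hom_\basek(H^n(X),E)\cong H^n(\Homcpx(X,E))=0$ for every $n$, and the defining property of the injective cogenerator $E$ forces $H^n(X)=0$ for all $n$, so $X$ is acyclic.

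The argument is elementary, and I do not expect a genuine obstacle. The only point requiring care lies in part (1): correctly matching the grading and signs of $\Homcpx(X,E)$ with the termwise dual complex, and checking that the isomorphism $H^n(\Homcpx(X,E))\cong F(H^n(X))$ is natural in $X$. Everything else is a direct consequence of the exactness of $\Hom_\basek(\mplaceholder,E)$ and the cogenerator property.
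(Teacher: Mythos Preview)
Your proposal is correct and follows essentially the same approach as the paper: both reduce part (1) to the fact that $\Hom_\basek(\mplaceholder,E)$ is exact when $E$ is injective (the paper states this in one line, while you spell out the short exact sequence chase), and both derive part (2) directly from part (1) together with the cogenerator property.
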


\begin{proof}
	(1) Since $E$ is a complex concentrated in degree $0$, $\Homcpx(X,E)$ is obtained by applying the functor $\Hom_\basek(\mplaceholder,E)$ to the each degree term of $X$. If $E$ is injective, then $\Hom_\basek(\mplaceholder,E)$ is exact, and so commutes with taking $n$-th cohomology.

	(2) This follows from (1).
\end{proof}

\begin{proposition}\label{prop:h-projective_implies_h-flat}
	Every h-projective dg module $M\in \Comdg(\catA)$ is h-flat.
\end{proposition}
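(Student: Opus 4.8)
The plan is to dualize the problem using an injective cogenerator, thereby converting the tensor-condition defining h-flatness into the Hom-condition defining h-projectivity. Fix an injective cogenerator $E$ of $\Modk$ (such a module exists for any ring, for instance the character module $\Hom_{\mathbb{Z}}(\basek,\mathbb{Q}/\mathbb{Z})$), regarded as a complex concentrated in degree $0$. Let $N\in\Comdg(\catA^\op)$ be an acyclic left dg module; I must show that $M\otimes_\catA N$ is acyclic. By \cref{lem:homology_of_dual_complex}(2), it suffices to prove that the Hom complex $\Homcpx(M\otimes_\catA N, E)$ is acyclic.

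Here the tensor-Hom adjunction \eqref{eqn:example_of_tensor_Hom_adjunction} enters: applied with $X=E$ it gives an isomorphism of complexes
\[ \Homcpx(M\otimes_\catA N, E) \cong \Comdg(\catA)\big(M,\Homcpx(N(\mplaceholder),E)\big). \]
So I would reduce to showing that the right-hand side is acyclic. Since $M$ is h-projective by hypothesis, this will follow once I verify that $\Homcpx(N(\mplaceholder),E)$ is an acyclic \emph{right} dg $\catA$-module, and hence a legitimate test object in the definition of h-projectivity.

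Next I would check precisely this. The assignment $A\mapsto \Homcpx(N(A),E)$ is contravariant in $A$, because $\Homcpx(\mplaceholder,E)$ is contravariant in its first argument and $N$ is covariant; thus $\Homcpx(N(\mplaceholder),E)$ is indeed a right dg $\catA$-module. For acyclicity I would apply \cref{lem:homology_of_dual_complex}(1) objectwise: for each $A\in\catA$,
\[ H^n\big(\Homcpx(N(A),E)\big) \cong \Hom_\basek\big(H^n(N(A)),E\big) = 0, \]
the last equality holding because $N$ is acyclic, i.e.\ $H^n(N(A))=0$ for all $n$ and all $A$. Hence $\Homcpx(N(\mplaceholder),E)$ lies in $\Acyc(\catA)$. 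Feeding this acyclic module into the h-projectivity of $M$ shows that $\Comdg(\catA)(M,\Homcpx(N(\mplaceholder),E))$ is acyclic, and tracing back through the displayed isomorphism and \cref{lem:homology_of_dual_complex}(2) yields that $M\otimes_\catA N$ is acyclic, as required.

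As for difficulties, there is no deep obstacle; the argument is essentially formal once the duality idea is in place. The points demanding care are purely bookkeeping: confirming the variance, so that $\Homcpx(N(\mplaceholder),E)$ is a genuine right $\catA$-module and not a left one, and ensuring that \eqref{eqn:example_of_tensor_Hom_adjunction} is an isomorphism of \emph{complexes} rather than merely of cohomology groups, so that acyclicity genuinely transfers across it. Both are already guaranteed by the way the tensor product and the Hom complexes are set up earlier in this subsection.
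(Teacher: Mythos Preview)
Your proof is correct and follows essentially the same approach as the paper: dualize via an injective cogenerator, apply the tensor--Hom adjunction \eqref{eqn:example_of_tensor_Hom_adjunction}, use \cref{lem:homology_of_dual_complex} to check that $\Homcpx(N(\mplaceholder),E)$ is acyclic, and then invoke h-projectivity of $M$. Your additional remarks on variance and on the adjunction being an isomorphism of complexes are points the paper leaves implicit, but the argument is the same.
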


\begin{proof}
	Take an injective cogenerator $E$ of $\Modk$. Remark that
	\[ \Homcpx(M\otimes_\catA N, E) \cong \Comdg(\catA)(M,\Homcpx(N(\mplaceholder),E)) \]
	for a left dg module $N\in \Comdg(\catA^\op)$. If $N$ is acyclic, then so is $\Homcpx(N(\mplaceholder),E)$ by \cref{lem:homology_of_dual_complex}. H-projectiveness of $M$ implies that $\Comdg(\catA)(M,\Homcpx(N(\mplaceholder),E))$, and hence $\Homcpx(M\otimes_\catA N, E)$, is also acyclic. Therefore we see from \cref{lem:homology_of_dual_complex} that $M\otimes_\catA N$ is acylic, which shows $M$ is h-flat.
\end{proof}

\subsection{(Co)end calculus}\label{subsection:(co)end_calculus}

Before proceeding to the next section, we now introduce the notion of (co)ends and collect some facts on (co)ends.
(Co)ends are a kind of (co)limits, and provide helpful tools to compute homs and tensors of dg modules.
We refer the reader to Loregian's book~\cite{Loregian:2021coend_calculus} and Kelly's book~\cite{Kelly:1982Basic} for details.
For the dg case, \cite[Section 3]{Genovese:2017} is also a good reference.

Here we define (co)ends of dg functors valued in the dg category $\Comdgk$ of complexes, because only such (co)ends will appear in this paper.

\begin{definition}
	Let $\catA$ be a small dg category and $T\colon \catA^\op\otimes \catA \to \Comdgk$ be a dg functor. An \emph{end} $\int_{A\in\catA} T(A,A)$ of $T$ is defined as the equalizer of
	\[\begin{tikzcd}
			\prod_{A\in \catA} T(A,A) \arrow[shift left=0.7ex]{r}{\rho} \arrow[shift right=0.7ex]{r}[swap]{\sigma} & \prod_{A,B \in \catA} {\Homcpx}\big(\catA(A,B),T(A,B)\big),
		\end{tikzcd}\]
	where $\rho$ and $\sigma$ are induced by the morphisms
	\begin{align*}
		 & \rho_{AB} \colon T(A,A) \to {\Homcpx}\big(\catA(A,B),T(A,B)\big), \quad x \mapsto (a \mapsto T(A,a)(x)),   \\
		 & \sigma_{AB} \colon T(B,B) \to {\Homcpx}\big(\catA(A,B),T(A,B)\big), \quad x \mapsto (a \mapsto T(a,B)(x)).
	\end{align*}

	Dually, a \emph{coend} $\int^{A\in\catA} T(A,A)$ of $T$ is defined as the coequalizer of
	\[\begin{tikzcd}
			\coprod_{A,B \in \catA} \catA(B,A) \otimes_\basek T(A,B) \arrow[shift left=0.7ex]{r}{\rho'} \arrow[shift right=0.7ex]{r}[swap]{\sigma'} & \coprod_{A\in \catA} T(A,A),
		\end{tikzcd}\]
	where $\rho'$ and $\sigma'$ are induced by the morphisms
	\begin{align*}
		 & \rho'_{AB} \colon \catA(B,A)\otimes_\basek T(A,B) \to T(A,A),\quad a\otimes x \mapsto T(A,a)(x)       \\
		 & \sigma'_{AB} \colon \catA(B,A) \otimes_\basek T(A,B) \to T(B,B), \quad a \otimes x \mapsto T(a,B)(x).
	\end{align*}
\end{definition}

\begin{example}\label{exa:tensor_product_is_coend}
	Let $M\in \Comdg(\catA)$ be a right dg $\catA$-module and $N\in \Comdg(\catA^\op)$ be a left dg $\catA$-module.
	The tensor product $M \otimes_\catA N$ over $\catA$, introduced in the previous subsection, is the coend
	\[ M \otimes_\catA N = \int^{A\in \catA} M(A)\otimes_\basek N(A). \]
\end{example}

\begin{proposition}\label{prop:Homcpx_of_functor_cat_is_end}
	Let $\catA$ be a small dg category and $\Fun_\dg(\catA,\Comdgk)$ denote the dg functor category from $\catA$ to $\Comdgk$. For dg functors $F,G\colon \catA\to\Comdgk$, the Hom complex $\Fun_\dg(\catA,\Comdgk)(F,G)$ is isomorphic to the end of $\Homcpx(F(-),G(-))$:
	\[ \Fun_\dg(\catA,\Comdgk)(F,G) \cong \int_{A} \Homcpx(F(A),G(A)). \]
\end{proposition}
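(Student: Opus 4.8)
The plan is to realize both complexes as subcomplexes of the product complex $P \coloneqq \prod_{A\in\catA}\Homcpx(F(A),G(A))$ and to check that they are cut out of $P$ by literally the same condition. Since $\catA$ is small and $\Chk$ is complete, both the product $P$ and the equalizer defining the end exist. First I would unwind the concrete definition of the Hom complex of the dg functor category: a degree-$n$ element of $\Fun_\dg(\catA,\Comdgk)(F,G)$ is a family $\phi=(\phi_A)_{A\in\catA}$ with $\phi_A\in\Homcpx(F(A),G(A))^n$ subject to the graded naturality condition
\[ \phi_{B}\circ F(a) = (-1)^{mn}\, G(a)\circ \phi_A \qquad (a\in\catA(A,B)^m), \]
with differential inherited componentwise from $P$. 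In other words, $\Fun_\dg(\catA,\Comdgk)(F,G)$ is exactly the subcomplex of $P$ consisting of the graded-natural families.

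Next I would specialize the end to the dg functor $T(A,B)=\Homcpx(F(A),G(B))$ and compute the two structure maps of the defining equalizer explicitly. By definition the end is the subcomplex of $P$ equalizing
\[ \rho,\ \sigma\colon P \longrightarrow \prod_{A,B\in\catA}\Homcpx\big(\catA(A,B),\Homcpx(F(A),G(B))\big). \]
Unravelling $T(A,a)=\Homcpx(F(A),G(a))$ (post-composition by $G(a)$) and $T(a,B)=\Homcpx(F(a),G(B))$ (pre-composition by $F(a)$), I would show that, evaluated at a homogeneous $a\in\catA(A,B)^m$ and a degree-$n$ family $\phi$, the $(A,B)$-components read $\rho_{AB}(\phi)(a)=G(a)\circ\phi_A$ and $\sigma_{AB}(\phi)(a)=(-1)^{mn}\phi_B\circ F(a)$. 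Hence the equalizer condition $\rho(\phi)=\sigma(\phi)$ is precisely the graded naturality condition recorded above.

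It remains to assemble the comparison. Both $\Fun_\dg(\catA,\Comdgk)(F,G)$ and $\int_{A}\Homcpx(F(A),G(A))$ are now identified with the same graded subobject of $P$, and both carry the differential restricted from $P$; therefore the identity on $P$ restricts to an isomorphism of complexes between them, which is the asserted isomorphism. Naturality in $F$ and $G$ is immediate, since every map in sight is induced from the ambient product $P$.

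The main obstacle I anticipate is purely bookkeeping rather than conceptual: correctly tracking the Koszul signs produced by the internal-hom functoriality of $T$ (in the maps $T(A,a)$, $T(a,B)$, and in the evaluation pairing) so that the sign appearing in the equalizer condition matches exactly the sign in the chosen convention for graded natural transformations. A secondary point to pin down is that whatever concrete definition of $\Fun_\dg(\catA,\Comdgk)$ one adopts agrees with ``graded-natural families equipped with the componentwise differential''; once that convention is fixed, the argument is a direct unwinding of definitions.
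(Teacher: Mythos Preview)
Your proposal is correct and is precisely the standard argument: identify both sides as the same subcomplex of $\prod_A \Homcpx(F(A),G(A))$ cut out by graded naturality. The paper does not give its own proof of this proposition; it is stated as a basic (co)end identity with references to Loregian, Kelly, and Genovese, so there is nothing to compare against beyond confirming that your unwinding matches the definitions the paper records. Your caveat about Koszul signs is well placed, since the precise sign in $\sigma_{AB}$ depends on the convention chosen for the contravariant action of $\Homcpx$, but once that convention is fixed consistently with the definition of graded natural transformations the two conditions coincide on the nose.
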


\begin{proposition}\label{prop:Homcpx_preserves_ends}
	Let $\catA$ be a small dg category and $T\colon \catA^\op\otimes \catA \to \Comdgk$ be a dg functor. For a complex $X\in \Comdgk$, we have natural isomorphisms
	\begin{align*}
		{\Homcpx}\mleft( X, \int_A T(A,A) \mright) & \cong \int_A \Homcpx(X,T(A,A)),      \\
		{\Homcpx}\mleft( \int^A T(A,A),X \mright)  & \cong \int_A \Homcpx(T(A,A),X),      \\
		X \otimes_\basek \int^A T(A,A)             & \cong \int^A X\otimes_\basek T(A,A).
	\end{align*}
\end{proposition}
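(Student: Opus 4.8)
The plan is to deduce all three isomorphisms from a single principle: a (co)end in $\Comdgk$ is, by definition, an (co)equalizer of a (co)product in $\Chk$, so it suffices to understand how each of the three functors $\Homcpx(X,\mplaceholder)$, $X\otimes_\basek\mplaceholder$, and $\Homcpx(\mplaceholder,X)$ interacts with such (co)limits. Recall that in the symmetric monoidal closed category $\Chk$ the tensor--Hom adjunction gives $X\otimes_\basek\mplaceholder\dashv\Homcpx(X,\mplaceholder)$; hence $\Homcpx(X,\mplaceholder)$ is a right adjoint and preserves all limits, while $X\otimes_\basek\mplaceholder$ is a left adjoint and preserves all colimits.

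For the first isomorphism, I would apply $\Homcpx(X,\mplaceholder)$ to the equalizer diagram defining $\int_A T(A,A)$. Since this functor preserves products and equalizers, it carries that diagram to an equalizer of $\prod_A\Homcpx(X,T(A,A))\rightrightarrows\prod_{A,B}\Homcpx(X,\Homcpx(\catA(A,B),T(A,B)))$. Using the commutation of internal Homs, $\Homcpx(X,\Homcpx(\catA(A,B),T(A,B)))\cong\Homcpx(\catA(A,B),\Homcpx(X,T(A,B)))$, one checks that this is precisely the equalizer defining $\int_A\Homcpx(X,T(A,A))$; matching the structure maps $\rho,\sigma$ is a routine verification. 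The third isomorphism is entirely dual: applying the colimit-preserving functor $X\otimes_\basek\mplaceholder$ to the coequalizer-of-coproduct defining $\int^A T(A,A)$ yields the coequalizer defining $\int^A X\otimes_\basek T(A,A)$, once one rewrites $X\otimes_\basek(\catA(B,A)\otimes_\basek T(A,B))$ by associativity of $\otimes_\basek$.

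For the middle isomorphism, the key point is that the contravariant internal Hom $\Homcpx(\mplaceholder,X)\colon\Chk^\op\to\Chk$ converts colimits into limits: it sends coproducts to products, $\Homcpx(\coprod_i Y_i,X)\cong\prod_i\Homcpx(Y_i,X)$, and coequalizers to equalizers. Applying it to the coequalizer defining $\int^A T(A,A)$ therefore produces an equalizer which, after identifying $\Homcpx(\catA(B,A)\otimes_\basek T(A,B),X)\cong\Homcpx(\catA(B,A),\Homcpx(T(A,B),X))$, is exactly the equalizer defining $\int_A\Homcpx(T(A,A),X)$. This preservation property of $\Homcpx(\mplaceholder,X)$ itself follows from the self-duality encoded in $\Homcpx(Y,\Homcpx(Z,X))\cong\Homcpx(Z,\Homcpx(Y,X))$.

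Naturality in $X$ (and in $T$) is automatic, since every identification above is a naturality square for one of the structural isomorphisms of the closed structure. The step I expect to require the most care is not any single adjunction but the variance bookkeeping in the middle isomorphism: because $\Homcpx(\mplaceholder,X)$ is contravariant in the coend variable, one must check that the dinaturality maps $\rho',\sigma'$ of the coend are sent to the maps $\rho,\sigma$ of the end in the correct order. A further point to keep in mind is that all of these are isomorphisms of complexes, not merely of the underlying graded modules or their degree-zero parts; the cleanest way to guarantee this is to phrase the preservation statements in their $\Chk$-enriched form, so that the functors preserve the enriched (weighted) (co)limits of which ends and coends are special cases, rather than appealing to the unenriched slogan that adjoints preserve (co)limits.
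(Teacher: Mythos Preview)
Your argument is correct and is exactly the standard one: each isomorphism follows from the fact that (co)ends in $\Comdgk$ are (co)equalizers of (co)products, together with the appropriate preservation properties of $\Homcpx(X,\mplaceholder)$, $\Homcpx(\mplaceholder,X)$, and $X\otimes_\basek\mplaceholder$ coming from the symmetric monoidal closed structure. Your remark about phrasing preservation in the $\Chk$-enriched sense is the right way to ensure the isomorphisms hold as complexes.

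There is nothing to compare against: the paper does not prove this proposition. \Cref{subsection:(co)end_calculus} only states it (along with the neighboring facts on ends and coends) and refers the reader to Loregian's and Kelly's books for details. Your write-up is more than adequate as a proof; if anything, you could shorten it by simply noting that these three isomorphisms are instances of the general fact that the internal Hom and tensor of a symmetric monoidal closed category are $\moncatV$-(co)continuous in each variable, and that (co)ends are weighted (co)limits.
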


\begin{proposition}[(co)Yoneda lemma]\label{prop:(co)Yoneda_lemma}
	Let $\catA$ be a small dg category.
	\begin{enumerate}
		\item For a dg functor $F\colon \catA \to\Comdgk$, we have natural isomorphisms
		      \[ F(B) \cong \int_A \Homcpx(\catA(B,A),F(A)) \cong \int^A \catA(A,B) \otimes_\basek F(A) \]
		      for all $B\in\catA$.
		\item For a dg functor $G\colon \catA^\op \to\Comdgk$, we have natural isomorphisms
		      \[ G(B) \cong \int_A \Homcpx(\catA(A,B),G(A)) \cong \int^A \catA(B,A) \otimes_\basek G(A) \]
		      for all $B\in\catA$.
	\end{enumerate}
\end{proposition}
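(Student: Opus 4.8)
The plan is to deduce everything from the enriched (dg) Yoneda lemma together with the compatibilities of $\Homcpx$ with (co)ends recorded in \cref{prop:Homcpx_of_functor_cat_is_end,prop:Homcpx_preserves_ends}. Since statement (2) is obtained from statement (1) by applying the latter to the opposite dg category $\catA^\op$, I will only treat (1). Moreover, the two displayed isomorphisms in (1) are handled separately: the end isomorphism is essentially a restatement of the Yoneda lemma, while the coend isomorphism is extracted from it by a duality argument that tests against an arbitrary complex.

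For the end form, first I would rewrite the right-hand side using \cref{prop:Homcpx_of_functor_cat_is_end}, which identifies $\int_A \Homcpx(\catA(B,A),F(A))$ with the Hom complex $\Fun_\dg(\catA,\Comdgk)(\catA(B,\mplaceholder),F)$ of dg natural transformations out of the representable; the strong Yoneda lemma then supplies the natural isomorphism with $F(B)$. If one prefers a self-contained argument, the same isomorphism can be produced directly from the universal property of the end: the action maps $\catA(B,A)\otimes_\basek F(B)\to F(A)$ assemble, under the tensor--Hom adjunction of $\Chk$, into a wedge $F(B)\to \Homcpx(\catA(B,A),F(A))$; its wedge condition is exactly the composition axiom for $F$, and its universality is verified by evaluating a competing wedge at the identity $\mathrm{id}_B$, i.e.\ by precomposing the $A=B$ component with the unit $\basek\to\catA(B,B)$. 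The contravariant end form needed for (2) is proved in the same way.

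For the coend form, I would test against an arbitrary complex $X\in\Comdgk$. Applying $\Homcpx(\mplaceholder,X)$ and using the second isomorphism of \cref{prop:Homcpx_preserves_ends} turns the coend into an end:
\[ \Homcpx\mleft(\int^A \catA(A,B)\otimes_\basek F(A),\,X\mright) \cong \int_A \Homcpx\big(\catA(A,B)\otimes_\basek F(A),\,X\big). \]
The closed structure of $\Chk$ (the tensor--Hom adjunction underlying \eqref{eqn:example_of_tensor_Hom_adjunction}) rewrites the integrand as $\Homcpx(\catA(A,B),\Homcpx(F(A),X))$. Recognizing $\Homcpx(F(\mplaceholder),X)\colon \catA^\op\to\Comdgk$ as a contravariant dg functor and applying the end form of (2) collapses this end to $\Homcpx(F(B),X)$. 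Thus $\Homcpx(\int^A \catA(A,B)\otimes_\basek F(A),X)\cong \Homcpx(F(B),X)$ naturally in $X$, and the Yoneda lemma for $\Comdgk$ upgrades this natural isomorphism of Hom complexes to the desired isomorphism $F(B)\cong\int^A \catA(A,B)\otimes_\basek F(A)$.

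The step I would be most careful about is the bookkeeping of the four variances and the resulting apparent circularity: because the coend form of (1) is derived through the end form of (2), I would establish both end forms first (they are instances of the single Yoneda lemma) before running the Hom-testing argument. Beyond that, the final passage from a natural isomorphism of Hom complexes to an isomorphism of objects relies on the Yoneda embedding of $\Comdgk$ being fully faithful, and naturality of all the isomorphisms in $B$ is routine once the canonical wedges are used throughout.
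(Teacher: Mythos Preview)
Your proof is correct and follows the standard route one finds in the references the paper cites (Loregian, Kelly). The paper itself does not prove this proposition: it is listed among the preliminary facts in \cref{subsection:(co)end_calculus} with the blanket reference ``We refer the reader to Loregian's book~\cite{Loregian:2021coend_calculus} and Kelly's book~\cite{Kelly:1982Basic} for details,'' so there is nothing to compare against beyond noting that your argument is precisely the kind of proof those sources give. Your handling of the apparent circularity (proving both end forms first, then deriving the coend forms by testing against an arbitrary $X$) is the right way to organize it, and the final step---passing from a natural isomorphism $\Homcpx(-,X)\cong\Homcpx(-,X)$ to an isomorphism of objects---is just the enriched Yoneda lemma again, applied inside $\Comdgk$.
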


\begin{proposition}[Fubini theorem]\label{prop:Fubini_theorem_for_end}
	Let $\catA,\catB$ be small dg categories and $T\colon \catA^\op\otimes\catB^\op\otimes \catA\otimes\catB \to \Comdgk$ be a dg functor. Then we have
	\[ \int_{(A,B)\in \catA\otimes\catB} T(A,B,A,B) \cong \int_{A\in \catA} \int_{B \in \catB} T(A,B,A,B) \cong \int_{B \in \catB} \int_{A\in \catA} T(A,B,A,B), \]
	\[ \int^{(A,B)\in \catA\otimes\catB} T(A,B,A,B) \cong \int^{A\in \catA} \int^{B \in \catB} T(A,B,A,B) \cong \int^{B \in \catB} \int^{A\in \catA} T(A,B,A,B). \]
\end{proposition}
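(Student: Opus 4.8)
The plan is to establish the three isomorphisms for ends; the coend assertions then follow by the formally dual argument, reversing all arrows and replacing equalizers, products, and internal homs by coequalizers, coproducts, and tensor products. Throughout I may assume all the ends in question exist, since $\Comdgk$ is complete and each end is by definition an equalizer of a product. The strategy is to characterize an end by its universal property as the object representing dinatural wedges, and then to deduce each isomorphism from the uniqueness of representing objects (the Yoneda lemma).

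First I would rephrase the defining equalizer as a universal property: the end $\int_A T(A,A)$ comes equipped with a universal dinatural wedge $\lambda_A\colon \int_A T(A,A)\to T(A,A)$, through which every dinatural family $\mu_A\colon W\to T(A,A)$ factors uniquely. By \cref{prop:Homcpx_preserves_ends} the internal hom $\Homcpx(X,-)$ preserves this end, so the universal property is compatible with the enrichment and $\int_A T(A,A)$ corepresents the assignment sending $X$ to the complex of dinatural families $X\to T(A,A)$.

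The crux of the proof is the following identification of wedges over a product dg category: a wedge from $W$ to $T(A,B,A,B)$ indexed by $\catA\otimes\catB$ is exactly a family $\mu_{A,B}\colon W\to T(A,B,A,B)$ that is separately dinatural in $A$ for each fixed $B$ and in $B$ for each fixed $A$. This rests on the factorization $(\catA\otimes\catB)\big((A,B),(A',B')\big)=\catA(A,A')\otimes_\basek\catB(B,B')$ of the Hom complexes of the tensor-product dg category, together with the interchange law: every morphism of $\catA\otimes\catB$ is assembled from those of the form $(a,1_B)$ and $(1_A,b)$, and the single dinaturality constraint over $\catA\otimes\catB$ splits accordingly into the two separate constraints. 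Granting this, the double end $\int_{(A,B)}T$, the iterated end $\int_A\int_B T$, and the iterated end $\int_B\int_A T$ all carry universal wedges of this same bi-dinatural type, hence corepresent one and the same functor, and the desired isomorphisms follow from Yoneda.

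The main obstacle I expect is making the wedge decomposition precise in the dg-enriched setting rather than the $\Set$-based one. Relating the combined wedge over $\catA\otimes\catB$ to the \emph{ordered} iterated ends --- first in $B$ then in $A$, as against the reverse --- requires commuting morphisms of possibly nonzero degree past one another, so one must check that the Koszul signs coming from the graded symmetry of $\otimes_\basek$ cancel and that both orders of iteration impose identical dinaturality conditions; this is precisely the content of the enriched interchange law. Once this sign bookkeeping is confirmed, the agreement of the two iterated ends with the double end is automatic. Alternatively, one can avoid the explicit sign check altogether by expressing each end as the weighted limit $\{\catA(\mplaceholder,\mplaceholder),T\}$, noting that $\catA\otimes\catB$ carries the external tensor weight $\catA(\mplaceholder,\mplaceholder)\otimes_\basek\catB(\mplaceholder,\mplaceholder)$, and invoking the associativity (Fubini) of weighted limits in $\Comdgk$, as developed in \cite{Kelly:1982Basic}.
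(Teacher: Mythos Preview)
Your proposal is correct. The paper does not actually supply a proof of this proposition: \cref{prop:Fubini_theorem_for_end} is one of several (co)end facts collected without proof in \cref{subsection:(co)end_calculus}, with the reader referred to \cite{Loregian:2021coend_calculus} and \cite{Kelly:1982Basic} for details. Your argument---reducing to the universal property of ends, identifying wedges over $\catA\otimes\catB$ with families separately dinatural in each variable, and invoking Yoneda---is exactly the standard proof one finds in those references; the alternative via weighted limits and Kelly's Fubini theorem is likewise standard. Your attention to the Koszul signs is appropriate for the dg setting, though as you note the enriched interchange law handles this automatically.
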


\begin{example}\label{exa:example_of_tensor_Hom_adjunction}
	Let $M\in \Comdg(\catA)$ be a right dg $\catA$-module, $N\in \Comdg(\catA^\op)$ be a left dg $\catA$-module, and $X\in \Chk$ be a complex. Then we can prove \eqref{eqn:example_of_tensor_Hom_adjunction} by (co)end calculus, as follows:
	\begin{alignat*}{2}
		\Homcpx(M\otimes_\catA N, X)
		 & = \Homcpx\left( \int^{A\in \catA} M(A)\otimes_\basek N(A),X \right) & \quad & \text{by \cref{exa:tensor_product_is_coend},}       \\
		 & \cong \int_{A\in \catA}\Homcpx(M(A)\otimes_\basek N(A),X)           &       & \text{by \cref{prop:Homcpx_preserves_ends},}        \\
		 & \cong \int_{A\in \catA}\Homcpx(M(A), \Homcpx(N(A),X))               &       & \text{by the adjunction,}                           \\
		 & \cong \Comdg(\catA)(M,\Homcpx(N(\mplaceholder),X))                  &       & \text{by \cref{prop:Homcpx_of_functor_cat_is_end}.}
	\end{alignat*}
\end{example}

%\cref{prop:Homcpx_of_functor_cat_is_end}
%\cref{prop:Homcpx_preserves_ends}
%\cref{prop:(co)Yoneda_lemma}
%\cref{prop:Fubini_theorem_for_end}

\section{Dg bimodules and their bicategorical structures}\label{section:dg_bimodules_and_their_bicategorical_structures}

\subsection{Bicategories of dg bimodules}\label{subsection:the_bicategory_of_dg_bimodules}

In this subsection, we construct two bicategories $\Bimod$ and $\DBimod$ whose 1-morphisms are dg bimodules.
For details on bicategories, we refer to \cite{Benabou:1967bicategories} and \cite{Johnson-Yau:2021}.

Let $\catA$, $\catB$ be small dg categories. A \emph{dg bimodule} (or a \emph{dg profunctor}) $X\colon \catA\slashedrightarrow\catB$ from $\catA$ to $\catB$ is a dg functor $X\colon \catB^\op\otimes\catA \to \Comdgk$. This is the same as a right dg $(\catB\otimes\catA^\op)$-module. Let $\Comdg(\catA,\catB)\coloneqq \Comdg(\catB\otimes\catA^\op)=\Fun_\dg(\catB^\op\otimes\catA,\Comdgk)$ denote the dg category of dg bimodules from $\catA$ to $\catB$, and put
\[ \Com(\catA,\catB)=\Com(\catA^\op\otimes\catB),\quad \Kom(\catA,\catB)=\Kom(\catA^\op\otimes\catB),\quad \Dom(\catA,\catB)=\Dom(\catA^\op\otimes\catB). \]
Regarding the base ring $\basek$ as a dg category with a single object, we have $\basek^\op\otimes\catA\cong\catA$ and $\catA^\op\otimes\basek\cong \catA^\op$. Hence, a dg bimodule $M\colon \basek\slashedrightarrow\catA$ from $\basek$ to $\catA$ is precisely a right dg $\catA$-module $M\colon \catA^\op\to\Comdgk$ and a dg bimodule $N\colon \catA\slashedrightarrow\basek$ from $\catA$ to $\basek$ is precisely a left dg $\catA$-module $N\colon \catA\to\Comdgk$.

A dg bimodule $X\colon\catA\slashedrightarrow\catB$ is called \emph{h-projective} if it is h-projective as a right dg $(\catB\otimes\catA^\op)$-module. On the other hand, one can get from a dg bimodule $X\colon \catA\slashedrightarrow\catB$ a right dg $\catB$-module $X(\mplaceholder,A)$ for $A\in \catA$ and a left dg $\catA$-module $X(B,\mplaceholder)$ for $B\in \catB$. We call $X$ \emph{right h-projective} if each $X(\mplaceholder,A)$ is h-projective as a right dg $\catB$-module, and \emph{left h-projective} if each $X(B,\mplaceholder)$ is h-projective as a left dg $\catA$-module. Similarly we define h-injective (h-flat, compact, etc.) dg bimodules and right or left h-injective (h-flat, compact, etc.) ones in the same way.

For two dg bimodules $X\colon \catA\slashedrightarrow\catB$ and $Y\colon \catB \slashedrightarrow \catC$, their \emph{tensor product over $\catB$} is defined to be the dg bimodule $X \otimes_\catB Y\colon \catC^\op\otimes\catA \to\Comdgk$ given by
\[ (X \otimes_\catB Y)(C,A) = X(\mplaceholder,A) \otimes_\catB Y(C,\mplaceholder) = \int^{B\in\catB} X(B,A) \otimes_\basek Y(C,B). \]
Defining the composition $\procomp$ of dg bimodules by $Y\procomp X\coloneqq X \otimes_\catB Y$ (Notice the order!), we obtain a bicategory $\Bimod$ whose objects are all small dg categories and whose Hom categories $\Bimod(\catA,\catB)$ are the categories $\Com(\catA,\catB)$ of dg bimodules.
Its identities $I_\catA\colon \catA\slashedrightarrow\catA$ are the dg Hom functors $\catA(\mplaceholder,\mplaceholder)\colon \catA^\op\otimes\catA\to \Comdgk$.

%The bicategory $\Bimod$ is closed
Let us introduce other constructions of dg bimodules. For dg bimodules $X\colon \catA \slashedrightarrow \catB$ and $Z\colon \catA\slashedrightarrow\catC$, the dg bimodule $X^\ddag Z\colon \catB\slashedrightarrow\catC$ is defined by setting
\[ X^\ddag Z(C,B) = \Fun_\dg(\catA,\Comdgk)(X(B,\mplaceholder), Z(C,\mplaceholder)) = \int_{A\in\catA} {\Homcpx}\big(X(B,A),Z(C,A)\big). \]
We also define the bimodule $Y_\ddag Z\colon \catA\slashedrightarrow\catB$ given by
\[ Y_\ddag Z (B,A) = \Fun_\dg(\catC^\op,\Comdgk)(Y(\mplaceholder,B),Z(\mplaceholder,A)) = \int_{C\in\catC} {\Homcpx}\big(Y(C,B),Z(C,A)\big) \]
for dg bimodules $Y\colon \catB\slashedrightarrow\catC$ and $Z\colon \catA\slashedrightarrow\catC$.
The following is well-known, which we can check by (co)end calculus.

\begin{proposition}\label{prop:Bimod_is_closed_bicategory}
	Let $X\colon \catA\slashedrightarrow\catB$, $Y\colon \catB\slashedrightarrow\catC$, and $Z\colon \catA\slashedrightarrow\catC$ be dg bimodules between small dg categories. Then we have natural isomorphisms of complexes
	\begin{align*}
		\Comdg(\catA,\catC)(Y\odot X, Z) & \cong \Comdg(\catB,\catC)(Y,X^\ddag Z), \\
		\Comdg(\catA,\catC)(Y\odot X,Z)  & \cong \Comdg(\catA,\catB)(X,Y_\ddag Z).
	\end{align*}
	This means that $X^\ddag Z$ forms the right Kan extension $\Ran_X Z$ and $Y_\ddag Z$ forms the right Kan lifting $\Rift_Y Z$ in $\Bimod$; in other words, $\Bimod$ is a closed bicategory.
\end{proposition}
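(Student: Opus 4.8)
The plan is to prove both isomorphisms by a direct (co)end computation using the tools assembled in \cref{subsection:(co)end_calculus}, and then to read off the bicategorical statement by applying $Z^0$. Everything is formal (co)end calculus: no step requires more than one of the stated lemmas.

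First I would rewrite the left-hand side as an end. Since a bimodule $\catA\slashedrightarrow\catC$ is a dg functor $\catC^\op\otimes\catA\to\Comdgk$, \cref{prop:Homcpx_of_functor_cat_is_end} gives
\[ \Comdg(\catA,\catC)(Y\odot X, Z) \cong \int_{(C,A)} \Homcpx\bigl((Y\odot X)(C,A), Z(C,A)\bigr). \]
Substituting the coend formula $(Y\odot X)(C,A) = \int^{B} X(B,A)\otimes_\basek Y(C,B)$ and pulling the coend out of the first argument of $\Homcpx$ as an end by \cref{prop:Homcpx_preserves_ends}, then applying the tensor-Hom adjunction of $\Chk$ to strip off the factor $Y(C,B)$, I arrive at a triple end of the integrand $\Homcpx\bigl(Y(C,B), \Homcpx(X(B,A), Z(C,A))\bigr)$. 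I would then invoke the Fubini theorem (\cref{prop:Fubini_theorem_for_end}) to reorder the ends so that the $A$-end sits innermost and the $(C,B)$-end outermost, and pull $\Homcpx(Y(C,B),-)$ through the $A$-end (again \cref{prop:Homcpx_preserves_ends}). The inner end over $A$ is by definition $X^\ddag Z(C,B)$, so the expression becomes $\int_{(C,B)} \Homcpx\bigl(Y(C,B), X^\ddag Z(C,B)\bigr)$, which is $\Comdg(\catB,\catC)(Y, X^\ddag Z)$ by \cref{prop:Homcpx_of_functor_cat_is_end} once more.

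The second isomorphism is entirely parallel. Starting from the same triple integrand $\Homcpx\bigl(X(B,A)\otimes_\basek Y(C,B), Z(C,A)\bigr)$, I would instead use the tensor-Hom adjunction to strip off the factor $X(B,A)$, reorder by Fubini so that the $C$-end is innermost and the $(B,A)$-end outermost, recognize the inner $C$-end as $Y_\ddag Z(B,A)$, and conclude $\Comdg(\catA,\catB)(X, Y_\ddag Z)$. The only difference between the two computations is which of the two symmetric forms of the $\Chk$ tensor-Hom adjunction one applies.

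\textbf{Main obstacle.} The genuine difficulty is purely bookkeeping: one must respect the contravariance convention (a bimodule $\catA\slashedrightarrow\catB$ lives on $\catB^\op\otimes\catA$) and keep the variables $A,B,C$ tracked consistently through the Fubini reshuffles, since the placement of arguments in $X^\ddag Z$ and $Y_\ddag Z$ is asymmetric. No analytic content is involved. Finally, to obtain the closedness statement I would note that each isomorphism is natural (in $Y$ for the first, in $X$ for the second), being a composite of natural isomorphisms, and apply $Z^0$ to pass from isomorphisms of complexes to bijections of hom-sets $\Bimod(\catA,\catC)(Y\odot X, Z)=Z^0\Comdg(\catA,\catC)(Y\odot X,Z)$. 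These bijections are precisely the universal properties defining $X^\ddag Z=\Ran_X Z$ and $Y_\ddag Z=\Rift_Y Z$ in $\Bimod$, so $\Bimod$ is a closed bicategory.
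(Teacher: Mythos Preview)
Your proposal is correct and is precisely the (co)end computation the paper has in mind: the paper states the result as ``well-known, which we can check by (co)end calculus'' and omits the details, and your chain of \cref{prop:Homcpx_of_functor_cat_is_end}, \cref{prop:Homcpx_preserves_ends}, the tensor--Hom adjunction, and \cref{prop:Fubini_theorem_for_end} is exactly that check (compare \cref{exa:example_of_tensor_Hom_adjunction} for the same pattern in miniature).
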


The composition $\procomp$ of dg bimodules has dg functoriality and gives rise to a dg functor
\[ \procomp\colon \Comdg(\catB,\catC) \otimes \Comdg(\catA,\catB) \to \Comdg(\catA,\catC), \]
which induces the functor
\[ \procomp\colon \Kom(\catB,\catC) \times \Kom(\catA,\catB) \to \Kom(\catA,\catC). \]
Then we define the \emph{derived composition} $\Dprocomp$ by the right Kan extension
\[
	\begin{tikzcd}
		\Kom(\catB,\catC) \times \Kom(\catA,\catB) \arrow{r}{\procomp}[name=B1,below,pos=0.45]{} \arrow{d}[swap]{\localization\times\localization} & \Kom(\catA,\catC) \arrow{d}{\localization} \\
		\Dom(\catB,\catC) \times \Dom(\catA,\catB) \arrow[dashed]{r}{\Dprocomp}[name=A1,above,pos=0.45]{} & \Dom(\catA,\catC)\rlap{.}
		\arrow[Rightarrow,from=A1,to=B1,shorten <=2ex,shorten >=1.5ex," "]
	\end{tikzcd}
\]
Since the localization functor $\localization\colon \Kom(\catA,\catB)\to \Dom(\catA,\catB)$ has a left adjoint $\hp\colon \Dom(\catA,\catB)\to\Kom(\catA,\catB)$, we see that the derived composition $\mplaceholder\Dprocomp\mplaceholder$ is given by the composites
\[ \Dom(\catB,\catC) \times \Dom(\catA,\catB) \xrightarrow{\hp\times \hp} \Kom(\catB,\catC) \times \Kom(\catA,\catB) \xrightarrow{\procomp} \Kom(\catA,\catC) \xrightarrow{\localization} \Dom(\catA,\catC). \]

\begin{lemma}\label{lem:precomp_with_right_h-flat_preserves_qism}
	Consider small dg categories $\catA$, $\catB$, and $\catC$.
	\begin{enumerate}
		\item If a dg bimodule $X\colon \catA\slashedrightarrow\catB$ is right h-flat, then the precomposition
		      \[ \mplaceholder\procomp X = X\otimes_\catB \mplaceholder \colon \Comdg(\catB,\catC) \to \Comdg(\catA,\catC) \]
		      preserves quasi-isomorphisms.
		      %次で示す\cref{lem:h-proj_implies_right_h-proj}により，これは$\catA$がlocally h-projectiveかつ$X$がh-projectiveならば成り立つ．
		\item If a dg bimodule $Y\colon \catB\slashedrightarrow\catC$ is left h-flat, then the postcomposition
		      \[ Y\procomp \mplaceholder = \mplaceholder\otimes_\catB Y\colon \Comdg(\catA,\catB) \to \Comdg(\catA,\catC) \]
		      preserves quasi-isomorphisms.
		      %次で示す\cref{lem:h-proj_implies_right_h-proj}により，これは$\catC$がlocally h-projectiveかつ$Y$がh-projectiveならば成り立つ．
	\end{enumerate}
\end{lemma}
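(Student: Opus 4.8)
The plan is to reduce both statements to the defining property of h-flatness by means of a cone argument; I treat (1) in detail, as (2) is handled symmetrically by exchanging the roles of left and right modules. First I would recall that a dg bimodule $\catB\slashedrightarrow\catC$ is nothing but a right dg $(\catC\otimes\catB^\op)$-module, so that the characterization of quasi-isomorphisms via cones applies verbatim: a morphism $\theta\colon W\to W'$ in $\Comdg(\catB,\catC)$ is a quasi-isomorphism if and only if its cone $\Cone(\theta)$ is acyclic. Next I would observe that the dg functor $X\otimes_\catB\mplaceholder$ commutes with the formation of cones, i.e.\ there is a natural isomorphism
\[ \Cone\big(X\otimes_\catB\theta\big) \cong X\otimes_\catB\Cone(\theta). \]
Combining these two facts, proving that $X\otimes_\catB\theta$ is a quasi-isomorphism reduces to showing that $X\otimes_\catB N$ is acyclic whenever $N\colon\catB\slashedrightarrow\catC$ is an acyclic dg bimodule.

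To verify this last claim I would compute the tensor product objectwise. For $A\in\catA$ and $C\in\catC$ the definition of the composition gives
\[ (X\otimes_\catB N)(C,A) = X(\mplaceholder,A)\otimes_\catB N(C,\mplaceholder), \]
where $X(\mplaceholder,A)$ is a right dg $\catB$-module and $N(C,\mplaceholder)$ is a left dg $\catB$-module. Since $N$ is acyclic, each $N(C,\mplaceholder)$ is an acyclic left dg $\catB$-module. By hypothesis $X$ is right h-flat, which means precisely that each $X(\mplaceholder,A)$ is h-flat as a right dg $\catB$-module; hence $X(\mplaceholder,A)\otimes_\catB N(C,\mplaceholder)$ is acyclic. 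As this holds for every $A$ and $C$, the bimodule $X\otimes_\catB N$ is acyclic, which completes the reduction.

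The only point requiring any care is the compatibility of the tensor product with cones, namely the isomorphism $\Cone(X\otimes_\catB\theta)\cong X\otimes_\catB\Cone(\theta)$, so I expect this to be the main (though minor) obstacle. It is a routine consequence of the fact that $X\otimes_\catB\mplaceholder$ is an additive dg functor that commutes with shifts and the coproducts entering the coend defining the tensor product; everything else is a direct unwinding of the definition of h-flatness. Part (2) is proved in exactly the same way: one instead tensors the acyclic right dg $\catB$-module $N(\mplaceholder,A)$ against the h-flat left dg $\catB$-module $Y(C,\mplaceholder)$, invoking the left h-flatness of $Y$ in place of the right h-flatness of $X$.
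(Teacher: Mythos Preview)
Your proof is correct and follows essentially the same approach as the paper: reduce preservation of quasi-isomorphisms to preservation of acyclicity, then compute the tensor product objectwise as $(N\procomp X)(C,A)=X(\mplaceholder,A)\otimes_\catB N(C,\mplaceholder)$ and invoke right h-flatness. The paper's proof is terser---it simply asserts ``it suffices to show that if $N$ is acyclic then so is $N\procomp X$'' without spelling out the cone argument---but the substance is identical.
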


\begin{proof}
	(1) It suffices to show that if $N\in \Comdg(\catB,\catC)$ is an acyclic dg bimodule, then so is $N\procomp X$. Given an acyclic $N$, then $N(C,\mplaceholder)$ is an acyclic left dg $\catB$-module. Therefore the assumption that $X$ is right h-flat implies that
	\[ N\procomp X (C,A) = X(\mplaceholder,A) \otimes_\catB N(C,\mplaceholder) \]
	is acyclic for $A\in \catA$ and $C\in \catC$. The same for (2).
\end{proof}

The following propositions are basic relations between properties of bimodules as two-sided modules and as one-sided ones. Recall that a dg category is \emph{locally h-projective} (respectively, \emph{locally h-flat}) if its Hom complexes are h-projective (respectively, h-flat) in $\Comdgk$.

\begin{lemma}[dg category version of {\cite[Lemma 14.3.11]{Yekutieli:2020Derived_categories}}]\label{lem:h-flat_with_locally_h-flat_domain_is_right_h-flat}
	Consider small dg categories $\catA$, $\catB$, and $\catC$.
	\begin{enumerate}
		\item If $\catA$ is locally h-flat and if a dg bimodule $X\colon \catA\slashedrightarrow\catB$ is h-flat, then $X$ is right h-flat.
		\item If $\catC$ is locally h-flat and if a dg bimodule $Y\colon \catB\slashedrightarrow\catC$ is h-flat, then $Y$ is left h-flat.
	\end{enumerate}
\end{lemma}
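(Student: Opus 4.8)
The plan is to prove (1); part (2) is entirely dual, obtained by swapping the roles of the two variables (replacing right dg $\catB$-modules by left dg $\catA$-modules and using local h-flatness of $\catC$). Fix an object $A_0\in\catA$. To show that the right dg $\catB$-module $X(\mplaceholder,A_0)$ is h-flat, I take an arbitrary acyclic left dg $\catB$-module $N'\in\Comdg(\catB^\op)$ and must prove that the complex $X(\mplaceholder,A_0)\otimes_\catB N'$ is acyclic.

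First I would reduce this one-sided tensor product to a two-sided one. Writing $\catD\coloneqq\catB\otimes\catA^\op$, so that $X$ is a right dg $\catD$-module, I form the left dg $\catD$-module $N$ given by $N(B,A')\coloneqq N'(B)\otimes_\basek \catA(A',A_0)$; this is covariant in $B$ and contravariant in $A'$, hence a genuine dg functor $\catB\otimes\catA^\op\to\Comdgk$. Using the coend expression for the tensor product over $\catD$ (\cref{exa:tensor_product_is_coend}), Fubini (\cref{prop:Fubini_theorem_for_end}) to split off the $\catB$-coend, the compatibility of $\otimes_\basek$ with coends (\cref{prop:Homcpx_preserves_ends}), and finally the co-Yoneda lemma (\cref{prop:(co)Yoneda_lemma}) in the $\catA$-variable, I expect the chain of natural isomorphisms
\[
X\otimes_\catD N \cong \int^{A'} \bigl( X(\mplaceholder,A')\otimes_\catB N' \bigr)\otimes_\basek \catA(A',A_0) \cong X(\mplaceholder,A_0)\otimes_\catB N'.
\]

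Next I would check that $N$ is acyclic as a dg $\catD$-module. Each value $N(B,A')=N'(B)\otimes_\basek \catA(A',A_0)$ is the tensor product over $\basek$ of the acyclic complex $N'(B)$ with the Hom complex $\catA(A',A_0)$, which is h-flat in $\Chk$ precisely because $\catA$ is locally h-flat. Since tensoring an h-flat complex with an acyclic one yields an acyclic complex (the $\basek$-case of the definition of h-flatness), every $N(B,A')$ is acyclic, so $N$ is acyclic. Invoking the hypothesis that $X$ is h-flat as a right dg $\catD$-module, the complex $X\otimes_\catD N$ is acyclic, whence so is the isomorphic complex $X(\mplaceholder,A_0)\otimes_\catB N'$. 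As $A_0$ and $N'$ were arbitrary, $X$ is right h-flat.

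The computation is routine (co)end calculus, so no step is genuinely deep; the only point demanding care is the bookkeeping of variances and opposite categories in the Fubini/co-Yoneda reduction, in particular confirming that the outer coend remaining after splitting off the $\catB$-coend is exactly the one that co-Yoneda evaluates at $A_0$. The conceptual heart of the argument — and the unique place where the hypothesis is actually consumed — is the observation that local h-flatness of $\catA$ is exactly what forces the auxiliary module $N$ to be acyclic, thereby transporting the two-sided h-flatness of $X$ to the one-sided statement.
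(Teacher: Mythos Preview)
Your proposal is correct and follows essentially the same route as the paper: both introduce the auxiliary left $(\catB\otimes\catA^\op)$-module $N(B,A')=N'(B)\otimes_\basek\catA(A',A_0)$, identify $X\otimes_{\catB\otimes\catA^\op}N$ with $X(\mplaceholder,A_0)\otimes_\catB N'$ via Fubini and co-Yoneda, and then use local h-flatness of $\catA$ to see that $N$ is acyclic and h-flatness of $X$ to conclude. The only cosmetic difference is the direction in which the coend calculation is written (you reduce the two-sided tensor to the one-sided one, the paper expands the one-sided tensor into the two-sided one), but the content is identical.
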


\begin{proof}
	(1) We want to show that the right dg $\catB$-module $X(\mplaceholder,A)$ is h-flat for every $A\in \catA$. Given a left dg $\catB$-module $N\colon \catB\to \Comdgk$, we have isomorphisms
	\begin{alignat*}{2}
		X(\mplaceholder,A) \otimes_\catB N
		 & = \int^{B' \in \catB} X(B',A)\otimes N(B')                                                         & \quad & \text{(Definition)}                         \\
		 & \cong \int^{A'\in \catA} \catA(A',A)\otimes \int^{B' \in \catB} X(B',A')\otimes N(B')              &       & \text{(\cref{prop:(co)Yoneda_lemma})}       \\
		 & \cong \int^{A'\in \catA} \int^{B' \in \catB} \catA(A',A)\otimes X(B',A')\otimes N(B')              &       & \text{(\cref{prop:Homcpx_preserves_ends})}  \\
		 & \cong \int^{(B',A')\in \catB\otimes\catA^\op} X(B',A')\otimes \big(N(B')\otimes\catA(A',A) \big)   &       & \text{(\cref{prop:Fubini_theorem_for_end})} \\
		 & \cong X \otimes_{\catB\otimes\catA^\op} \big( N(\mplaceholder)\otimes\catA(\mplaceholder,A) \big).
	\end{alignat*}
	Suppose $N$ is acyclic. Then $N(\mplaceholder)\otimes\catA(\mplaceholder,A)$ is acyclic by the locally h-flatness of $\catA$. Since $X$ is h-projective as a dg ($\catB\otimes\catA^\op$)-module, the above isomorphisms imply that $X(\mplaceholder,A) \otimes_\catB N$ is acyclic as well. Thus $X$ is right h-flat.

	(2) It follows in a similar manner, because there is an isomorphism
	\[ N \otimes_\catB Y(C,\mplaceholder) \cong Y \otimes_{\catC\otimes\catB^\op} \big(\catC(C,\mplaceholder)\otimes N(\mplaceholder)\big) \]
	for a right dg $\catB$-module $N\colon \catB^\op\to \Comdgk$ and an object $C\in \catC$.
\end{proof}

\begin{comment}
上の(2)の証明：
各$C\in \catC$に対して，left dg $\catB$-module $Y(C,\mplaceholder)$がh-flatであることを示そう．right dg $\catB$-module $N\colon \catB^\op\to \Comdgk$に対して，
\begin{align*}
	N \otimes_\catB Y(C,\mplaceholder)
	 & \cong \int^{B' \in \catB} N(B') \otimes Y(C,B')                                                   \\
	 & \cong \int^{C'\in \catC} \catC(C,C')\otimes \int^{B' \in \catB} N(B') \otimes Y(C',B')            \\
	 & \cong \int^{C'\in \catC} \int^{B' \in \catB} \catC(C,C')\otimes N(B') \otimes Y(C',B')            \\
	 & \cong \int^{(C',B')\in \catC\otimes\catB^\op} Y(C',B') \otimes \big(\catC(C,C')\otimes N(B')\big) \\
	 & \cong Y \otimes_{\catC\otimes\catB^\op} \big(\catC(C,\mplaceholder)\otimes N(\mplaceholder)\big)
\end{align*}
である．$N$がacyclicのとき，$\catC$がlocally h-flatであることから$\catC(C,\mplaceholder)\otimes N(\mplaceholder)$もacyclicで，よって$Y$がh-projective dg ($\catC\otimes\catB^\op$)-moduleより$N \otimes_\catB Y(C,\mplaceholder)$もacyclicとなる．よって$Y$はleft h-flatである．
\end{comment}

\begin{lemma}[dg category version of {\cite[Lemma 14.3.11]{Yekutieli:2020Derived_categories}}]
	Consider small dg categories $\catA$, $\catB$, and $\catC$.%\mymemo{（使ってないからいらない？）}
	\begin{enumerate}
		\item If $\catA$ is locally h-flat and if a dg bimodule $X\colon \catA\slashedrightarrow\catB$ is h-injective, then $X$ is right h-injective.
		\item If $\catC$ is locally h-flat and if a dg bimodule $Y\colon \catB\slashedrightarrow\catC$ is h-injective, then $Y$ is left h-injective.
	\end{enumerate}
\end{lemma}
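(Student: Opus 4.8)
The plan is to mirror the proof of \cref{lem:h-flat_with_locally_h-flat_domain_is_right_h-flat}, replacing the tensor--Hom manipulations with pure Hom manipulations and coends with ends. I would prove only (1) in detail, since (2) follows by the evident symmetry (swapping the roles of $\catA$ and $\catC$ and of left and right, exactly as in the h-flat lemma), the analogous isomorphism there being $\Comdg(\catB^\op)(N, Y(C,\mplaceholder)) \cong \Comdg(\catC\otimes\catB^\op)\big(\catC(\mplaceholder,C)\otimes_\basek N(\mplaceholder),\, Y\big)$.

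For (1), I want to show that each right dg $\catB$-module $X(\mplaceholder,A)$ is h-injective, i.e.\ that $\Comdg(\catB)(N, X(\mplaceholder,A))$ is acyclic for every acyclic right dg $\catB$-module $N$. The key step is to rewrite this one-sided Hom complex as a two-sided one. Expressing it as an end via \cref{prop:Homcpx_of_functor_cat_is_end}, then expanding $X(B,A)$ through the Yoneda isomorphism \cref{prop:(co)Yoneda_lemma}, pushing $\Homcpx(N(B),\mplaceholder)$ inside the resulting end by \cref{prop:Homcpx_preserves_ends}, converting the iterated internal Hom by the tensor--Hom adjunction of $\Chk$, and finally reassembling with the Fubini theorem \cref{prop:Fubini_theorem_for_end} and \cref{prop:Homcpx_of_functor_cat_is_end} again, I expect to obtain the natural isomorphism
\begin{equation*}
	\Comdg(\catB)(N, X(\mplaceholder,A)) \cong \Comdg(\catB\otimes\catA^\op)\big(N(\mplaceholder)\otimes_\basek\catA(A,\mplaceholder),\, X\big),
\end{equation*}
where the right-hand bimodule is $(B,A')\mapsto N(B)\otimes_\basek\catA(A,A')$. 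This is the precise Hom-analogue of the tensor isomorphism used in the h-flat proof.

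With this isomorphism in hand the conclusion is immediate: if $N$ is acyclic, then each complex $N(B)\otimes_\basek\catA(A,A')$ is the tensor of an acyclic complex with the Hom complex $\catA(A,A')$, which is h-flat by local h-flatness of $\catA$; hence each such complex is acyclic, so the bimodule $N(\mplaceholder)\otimes_\basek\catA(A,\mplaceholder)$ is acyclic. Since $X$ is h-injective as a right dg $(\catB\otimes\catA^\op)$-module, the right-hand Hom complex is acyclic, and therefore so is $\Comdg(\catB)(N, X(\mplaceholder,A))$. Thus $X(\mplaceholder,A)$ is h-injective for every $A$, i.e.\ $X$ is right h-injective.

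I do not anticipate a genuine obstacle; the content is a routine but careful (co)end computation, and the only points requiring attention are bookkeeping the variances so that the intermediate expressions really are modules over $\catB\otimes\catA^\op$, and recognizing that local h-\emph{flatness} (rather than local h-projectivity or h-injectivity) is exactly what guarantees acyclicity of $N(\mplaceholder)\otimes_\basek\catA(A,\mplaceholder)$: it is the Hom complex $\catA(A,A')$ that gets tensored against the acyclic $N$, so flatness of the Hom complexes is the relevant hypothesis.
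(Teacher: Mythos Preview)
Your proposal is correct and follows essentially the same approach as the paper: the paper derives exactly the isomorphism $\Comdg(\catB)(N,X(\mplaceholder,A)) \cong \Comdg(\catB\otimes\catA^\op)\big(N(\mplaceholder)\otimes_\basek\catA(A,\mplaceholder), X\big)$ via the same chain of end manipulations (\cref{prop:Homcpx_of_functor_cat_is_end}, \cref{prop:(co)Yoneda_lemma}, \cref{prop:Homcpx_preserves_ends}, tensor--Hom adjunction, \cref{prop:Fubini_theorem_for_end}), and then concludes by local h-flatness of $\catA$ and h-injectivity of $X$ exactly as you describe. Part (2) is likewise dispatched with ``Similarly'' in the paper.
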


\begin{proof}
	(1) We want to show that the right dg $\catB$-module $X(\mplaceholder,A)$ is h-injective for $A\in \catA$. Given a right dg $\catB$-module $N\colon \catB^\op\to \Comdgk$, we have
	\begin{alignat*}{2}
		 & \Comdg(\catB)(N,X(\mplaceholder,A))                                                                     & \quad &                                                    \\
		 & \cong \int_{B'\in \catB} \Homcpx(N(B'),X(B',A))                                                         & \quad & \text{(\cref{prop:Homcpx_of_functor_cat_is_end})}  \\
		 & \cong \int_{A'\in\catA} \Homcpx\mleft( \catA(A,A'), \int_{B'\in \catB} \Homcpx(N(B'),X(B',A')) \mright) &       & \text{(\cref{prop:(co)Yoneda_lemma})}              \\
		 & \cong \int_{A'\in\catA}\int_{B'\in \catB} \Homcpx\big( \catA(A,A'), \Homcpx(N(B'),X(B',A')) \big)       &       & \text{(\cref{prop:Homcpx_preserves_ends})}         \\
		 & \cong \int_{A'\in\catA}\int_{B'\in \catB} \Homcpx\left( \catA(A,A')\otimes N(B'), X(B',A') \right)      &       & \text{(adjunction)}                                \\
		 & \cong \int_{(B',A')\in\catB\otimes\catA^\op} \Homcpx\left( N(B')\otimes\catA(A,A'), X(B',A') \right)    &       & \text{(\cref{prop:Fubini_theorem_for_end})}        \\
		 & \cong \Comdg(\catB\otimes\catA^\op)( N(\mplaceholder)\otimes\catA(A,\mplaceholder), X)                  &       & \text{(\cref{prop:Homcpx_of_functor_cat_is_end}).}
	\end{alignat*}
	Suppose $N$ is acyclic. Then $N(\mplaceholder)\otimes\catA(A,\mplaceholder)$ is acyclic by the locally h-flatness of $\catA$. Since $X$ is h-injective as a dg ($\catB\otimes\catA^\op$)-module, $\Comdg(\catB)(N,X(\mplaceholder,A))$ is acyclic as well. Thus $X$ is right h-injective.

	(2) Similarly.
\end{proof}

\begin{lemma}[{\cite[Lemma 3.4]{Canonaco-Stellari:2015Internal_Homs}}]\label{lem:h-proj_implies_right_h-proj}
	% or Genovese:2017 Lemma 5.5
	Let $\catA$ and $\catB$ be small dg categories, and $X\colon \catA\slashedrightarrow\catB$ a dg bimodule.
	\begin{enumerate}
		\item If $\catA$ is locally h-projective and if $X$ is h-projective, then $X$ is right h-projective.
		\item If $\catB$ is locally h-projective and if $X$ is h-projective, then $X$ is left h-projective.
	\end{enumerate}
\end{lemma}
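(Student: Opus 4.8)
The plan is to mirror the (co)end computations already carried out for the h-flat and h-injective analogues (\cref{lem:h-flat_with_locally_h-flat_domain_is_right_h-flat} and its h-injective counterpart just above), but to arrange the calculation so that $X$ sits in the \emph{first} argument of a Hom complex, so that the h-projectivity of $X$ as a right dg $(\catB\otimes\catA^\op)$-module becomes directly applicable. Concretely, for (1) I would fix $A\in\catA$ together with an acyclic right dg $\catB$-module $N$, and compute $\Comdg(\catB)(X(\mplaceholder,A),N)$, aiming to identify it with $\Comdg(\catB\otimes\catA^\op)(X,M)$ for a suitably chosen test bimodule $M$.

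First I would write $\Comdg(\catB)(X(\mplaceholder,A),N)\cong\int_{B'}\Homcpx(X(B',A),N(B'))$ by \cref{prop:Homcpx_of_functor_cat_is_end}, and then insert a copy of $\catA(\mplaceholder,A)$ by applying the coend form of the Yoneda lemma (\cref{prop:(co)Yoneda_lemma}) to the covariant functor $X(B',\mplaceholder)$, giving $X(B',A)\cong\int^{A'}\catA(A',A)\otimes X(B',A')$. Pulling this coend out of the first Hom-slot turns it into an end (\cref{prop:Homcpx_preserves_ends}), after which the tensor-hom adjunction and the Fubini theorem (\cref{prop:Fubini_theorem_for_end}) reassemble the resulting double end, and \cref{prop:Homcpx_of_functor_cat_is_end} identifies it as
\[ \int_{(B',A')}\Homcpx\bigl(X(B',A'),\Homcpx(\catA(A',A),N(B'))\bigr)\cong\Comdg(\catB\otimes\catA^\op)(X,M), \]
where $M(B',A')\coloneqq\Homcpx(\catA(A',A),N(B'))$. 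A short variance check confirms that $M$ is indeed a dg bimodule $\catA\slashedrightarrow\catB$ (contravariant in $B'$, covariant in $A'$).

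The decisive step is then to see that $M$ is acyclic whenever $N$ is. Since $N$ is acyclic, each complex $N(B')$ is acyclic; and since $\catA$ is locally h-projective, each Hom complex $\catA(A',A)$ is h-projective, so by the defining property of h-projective complexes $\Homcpx(\catA(A',A),N(B'))$ is acyclic. Hence $M$ is acyclic, and the h-projectivity of $X$ forces $\Comdg(\catB\otimes\catA^\op)(X,M)$, and therefore $\Comdg(\catB)(X(\mplaceholder,A),N)$, to be acyclic. This shows $X(\mplaceholder,A)$ is h-projective, i.e.\ that $X$ is right h-projective. For (2) the argument is entirely symmetric: one instead inserts $\catB(B,\mplaceholder)$ via Yoneda in the $\catB$-variable and invokes local h-projectivity of $\catB$ to guarantee acyclicity of the resulting test bimodule.

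I expect the only real obstacle to be bookkeeping: keeping the variances straight through the Yoneda insertion and the adjunction (so that $M$ genuinely lands in $\Comdg(\catB\otimes\catA^\op)$), and extracting the coend from the correct Hom-slot so that $X$ ends up in the first argument. The conceptual heart --- ``h-projective tested against acyclic is acyclic'' --- is immediate once the isomorphism is set up, which is precisely why local h-projectivity of the domain (rather than local h-flatness, as in the h-flat case) is the correct hypothesis.
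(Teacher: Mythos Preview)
Your proposal is correct and essentially identical to the paper's proof: both arrive at the isomorphism $\Comdg(\catB)(X(\mplaceholder,A),N)\cong\Comdg(\catB\otimes\catA^\op)\bigl(X,\Homcpx(\catA(-,A),N(-))\bigr)$ and conclude via local h-projectivity of $\catA$ and h-projectivity of $X$. The only cosmetic difference is that the paper applies the end form of Yoneda to the contravariant functor $A\mapsto\int_{B'}\Homcpx(X(B',A),N(B'))$ rather than the coend form to $X(B',A)$ inside the first Hom-slot, but this leads to exactly the same intermediate expression after one step.
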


\begin{proof}
	(1) Let us verify that the dg $\catB$-module $X(\mplaceholder,A)\in \Comdg(\catB)$ is h-projective for $A\in \catA$.
	Given a dg bimodule $N \in \Comdg(\catB)$, we have
	\begin{alignat*}{2}
		 & \Comdg(\catB)(X(\mplaceholder,A),N)                                                                       & \quad &                                                    \\
		 & \cong \int_{B'\in\catB} \Homcpx(X(B',A), N(B'))                                                           & \quad & \text{(\cref{prop:Homcpx_of_functor_cat_is_end})}  \\
		 & \cong \int_{A'\in\catA} {\Homcpx}\mleft( \catA(A',A), \int_{B'\in\catB} \Homcpx(X(B',A'), N(B')) \mright) &       & \text{(\cref{prop:(co)Yoneda_lemma})}              \\
		 & \cong \int_{A'\in\catA} \int_{B'\in\catB} {\Homcpx}\big( \catA(A',A), \Homcpx(X(B',A'), N(B')) \big)      &       & \text{(\cref{prop:Homcpx_preserves_ends})}         \\
		 & \cong \int_{A'\in\catA} \int_{B'\in\catB} {\Homcpx}\big(X(B',A'), \Homcpx(\catA(A',A), N(B')) \big)       &       & \text{(adjunction)}                                \\
		 & \cong \int_{(B',A')\in \catB\otimes\catA^\op} {\Homcpx}\big(X(B',A'), \Homcpx(\catA(A',A), N(B')) \big)   &       & \text{(\cref{prop:Fubini_theorem_for_end})}        \\
		 & \cong \Comdg(\catB\otimes\catA^\op)\big(X,\Homcpx(\catA(-,A), N(-))\big)                                  &       & \text{(\cref{prop:Homcpx_of_functor_cat_is_end})}.
	\end{alignat*}
	If $N$ is acyclic, then $\Homcpx(\catA(-,A), N(-))$ is so by the assumption that $\catA$ is locally h-projective. Since $X$ is h-injective as a dg ($\catB\otimes\catA^\op$)-module, the above calculation shows $\Comdg(\catB)(X(\mplaceholder,A),N)$ is acyclic as well. Thus $X$ is right h-projective.

	(2) Similarly.
\end{proof}

\begin{lemma}[{\cite[Lemma 6.1]{Genovese:2017}, \cite[Proposition 2.5]{Anno-Logvinenko:2017spherical}}]\label{lem:tensor_of_h-proj_is_h-proj}
	Let $X\colon \catA\slashedrightarrow\catB$ and $Y\colon \catB\slashedrightarrow\catC$ be dg bimodules.
	\begin{enumerate}
		\item If $X$ is h-projective and $Y$ is right h-projective, then $Y\procomp X$ is h-projective.
		\item If $X$ is left h-projective and $Y$ is h-projective, then $Y\procomp X$ is h-projective.
	\end{enumerate}
	In particular, \cref{lem:h-proj_implies_right_h-proj} shows that if $\catB$ is locally h-projective and if $X$ and $Y$ are h-projective, then $Y\procomp X$ is also h-projective.
\end{lemma}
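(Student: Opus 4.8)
The plan is to unwind the definition of h-projectivity and then feed the problem into the closed bicategory structure of \cref{prop:Bimod_is_closed_bicategory}. Recall that a dg bimodule $W\colon \catA\slashedrightarrow\catC$ is h-projective precisely when $\Comdg(\catA,\catC)(W,N)$ is acyclic for every acyclic $N\in\Comdg(\catA,\catC)$. So in both cases I fix an arbitrary acyclic bimodule $N\colon \catA\slashedrightarrow\catC$ and aim to show that $\Comdg(\catA,\catC)(Y\procomp X,N)$ is acyclic. The two Kan-type adjunctions of \cref{prop:Bimod_is_closed_bicategory} let me strip off one of the two factors: the isomorphism $\Comdg(\catA,\catC)(Y\procomp X,N)\cong \Comdg(\catA,\catB)(X,Y_\ddag N)$ will handle (1), where the two-sided hypothesis sits on $X$, while $\Comdg(\catA,\catC)(Y\procomp X,N)\cong \Comdg(\catB,\catC)(Y,X^\ddag N)$ will handle (2), where it sits on $Y$. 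Throughout I will use that acyclicity of a bimodule is checked pointwise, so $N$ acyclic is the same as each $N(\mplaceholder,A)$ being acyclic as a right dg $\catC$-module, and the same as each $N(C,\mplaceholder)$ being acyclic as a left dg $\catA$-module.

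For part (1) I would first observe that $Y_\ddag N$ is acyclic as a two-sided bimodule. Indeed, by definition and \cref{prop:Homcpx_of_functor_cat_is_end} its component reads $Y_\ddag N(B,A)=\int_{C}\Homcpx(Y(C,B),N(C,A))\cong \Comdg(\catC)(Y(\mplaceholder,B),N(\mplaceholder,A))$. Since $N$ is acyclic, each right dg $\catC$-module $N(\mplaceholder,A)$ is acyclic, and since $Y$ is right h-projective each $Y(\mplaceholder,B)$ is h-projective; hence this Hom complex is acyclic for all $A,B$, so $Y_\ddag N$ is acyclic. Now h-projectivity of $X$ forces $\Comdg(\catA,\catB)(X,Y_\ddag N)$ to be acyclic, and the adjunction transports this back to $\Comdg(\catA,\catC)(Y\procomp X,N)$, finishing (1).

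Part (2) is the mirror image. Here $X^\ddag N(C,B)=\int_{A}\Homcpx(X(B,A),N(C,A))\cong \Comdg(\catA^\op)(X(B,\mplaceholder),N(C,\mplaceholder))$, the left-hand factor now being a left dg $\catA$-module. Left h-projectivity of $X$ makes each $X(B,\mplaceholder)$ h-projective, and acyclicity of $N$ makes each $N(C,\mplaceholder)$ acyclic, so $X^\ddag N$ is acyclic; h-projectivity of $Y$ then kills $\Comdg(\catB,\catC)(Y,X^\ddag N)$, and the adjunction finishes the argument. The final ``in particular'' is then immediate: when $\catB$ is locally h-projective, \cref{lem:h-proj_implies_right_h-proj}(1) upgrades the h-projective $Y$ to a right h-projective one, so part (1) applies directly. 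I do not anticipate a genuine obstacle here, since the content is entirely formal; the only point demanding care is the bookkeeping of variances, namely correctly identifying in which one-sided dg module category each auxiliary Hom complex $Y_\ddag N$ and $X^\ddag N$ lives, so that the one-sided h-projectivity hypotheses on $Y$ and $X$ apply verbatim.
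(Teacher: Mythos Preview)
Your proof is correct and follows essentially the same approach as the paper: both parts use the closed bicategory isomorphisms of \cref{prop:Bimod_is_closed_bicategory} to rewrite $\Comdg(\catA,\catC)(Y\procomp X,N)$ as $\Comdg(\catA,\catB)(X,Y_\ddag N)$ in (1) and as $\Comdg(\catB,\catC)(Y,X^\ddag N)$ in (2), then use the one-sided h-projectivity hypothesis to see that the auxiliary bimodule is acyclic, and finally invoke the two-sided h-projectivity hypothesis. The paper's proof is slightly terser but identical in content.
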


\begin{proof}
	(1) Given an acyclic dg bimodule $N \in \Comdg(\catA,\catC)$, we have
	\[ \Comdg(\catA,\catC)(Y\procomp X, N) \cong \Comdg(\catA,\catB)(X,Y_\ddag N), \]
	where the dg bimodule $Y_\ddag N$ is defined as
	\begin{align*}
		Y_\ddag N (B,A) & = \Comdg(\catC)(Y(\mplaceholder,B),N(\mplaceholder,A)).
	\end{align*}
	By the right h-projectiveness of $Y$, we can see that $Y_\ddag N (B,A)$ is acyclic. Since $X\in \Comdg(\catA,\catB)$ is h-projective, $\Comdg(\catA,\catC)(Y\procomp X, N)$ turns out to be acyclic. Thus $Y\procomp X$ is h-projective.

	(2) Similarly.
\end{proof}

\begin{comment}
上の(2)の証明：
acyclic dg bimodule $N \in \Comdg(\catA,\catC)$に対して
\[ \Comdg(\catA,\catC)(Y\odot X, N) \cong \Comdg(\catB,\catC)(Y,X^\ddag N) \]
である．ここで
\begin{align*}
	X^\ddag N (C,B) & = \Comdg(\catA^\op)(X(B,\mplaceholder),N(C,\mplaceholder))
\end{align*}
である．
$X$がleft h-projectiveより各$X(B,\mplaceholder)$がh-projective dg $\catA^\op$-moduleで，$N(C,\mplaceholder)$はまたacyclicだから$X^\ddag N (C,B)$もacyclicとなる．よって$Y\in \Comdg(\catB,\catC)$がh-projectiveであることから，$\Comdg(\catA,\catC)(Y\procomp X, N)$がacyclicとわかる．
\end{comment}

Although the derived composition $\Dprocomp$ does not have associativity in general, we can prove the following result.
%これは，dg bimoduleの合成$\procomp$がh-projectiveであることを保たないからである．

\begin{proposition}
	%cf. Yekutieli Proposition 14.3.13
	Let $X\colon \catA\slashedrightarrow\catB$, $Y\colon \catB\slashedrightarrow\catC$, and $Z\colon \catC\slashedrightarrow\catD$ be dg bimodules. If either (1) $\catB$, $\catC$ are locally h-projective or (2) $\catA$, $\catD$ are locally h-flat, then there is a natural isomorphism
	\[ (Z\Dprocomp Y)\Dprocomp X \cong Z\Dprocomp (Y\Dprocomp X) \]
	in $\Dom(\catA,\catD)$.
\end{proposition}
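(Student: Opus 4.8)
The plan is to reduce both bracketings to a single threefold tensor product of h-projective resolutions. Write $P=\hp X$, $Q=\hp Y$, $R=\hp Z$ for the chosen h-projective resolutions, so that by the explicit description of the derived composition one has $Y\Dprocomp X=\localization(Q\procomp P)$ and $Z\Dprocomp Y=\localization(R\procomp Q)$, where $Q\procomp P=P\otimes_\catB Q$ and $R\procomp Q=Q\otimes_\catC R$. Because $\Bimod$ is a genuine bicategory, its associator supplies an isomorphism of bimodules
\[ (R\procomp Q)\procomp P \;=\; P\otimes_\catB(Q\otimes_\catC R)\;\cong\;(P\otimes_\catB Q)\otimes_\catC R\;=\;R\procomp(Q\procomp P) \]
in $\Com(\catA,\catD)$, hence in $\Dom(\catA,\catD)$. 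It therefore suffices to prove that $(Z\Dprocomp Y)\Dprocomp X\cong\localization\big((R\procomp Q)\procomp P\big)$ and $Z\Dprocomp(Y\Dprocomp X)\cong\localization\big(R\procomp(Q\procomp P)\big)$; the only real point is that the intermediate h-projective resolutions needed to form the outer derived composition may be replaced by the strict composites $R\procomp Q$ and $Q\procomp P$ without changing the class in $\Dom$.

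Under hypothesis (1) this is immediate from \cref{lem:tensor_of_h-proj_is_h-proj}: since $\catB$ and $\catC$ are locally h-projective and $P,Q,R$ are h-projective, the composites $Q\procomp P$ and $R\procomp Q$ are again h-projective. Being h-projective representatives of $Y\Dprocomp X$ and $Z\Dprocomp Y$ respectively, they serve directly as the resolutions used in the next application of $\Dprocomp$, so that $Z\Dprocomp(Y\Dprocomp X)=\localization\big(R\procomp(Q\procomp P)\big)$ and $(Z\Dprocomp Y)\Dprocomp X=\localization\big((R\procomp Q)\procomp P\big)$, and the associator finishes the argument.

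Under hypothesis (2) the binary composites need not be h-projective, so I would instead argue that replacing them by resolutions is harmless. By \cref{prop:h-projective_implies_h-flat} the resolutions $P,Q,R$ are h-flat; since $\catA$ is locally h-flat, \cref{lem:h-flat_with_locally_h-flat_domain_is_right_h-flat}(1) makes $P\colon\catA\slashedrightarrow\catB$ right h-flat, and since $\catD$ is locally h-flat, \cref{lem:h-flat_with_locally_h-flat_domain_is_right_h-flat}(2) makes $R\colon\catC\slashedrightarrow\catD$ left h-flat. Consequently \cref{lem:precomp_with_right_h-flat_preserves_qism} tells us that precomposition $\mplaceholder\procomp P$ and postcomposition $R\procomp\mplaceholder$ both preserve quasi-isomorphisms. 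Thus an h-projective resolution $\hp(R\procomp Q)\to R\procomp Q$ stays a quasi-isomorphism after $\mplaceholder\procomp P$, giving $(Z\Dprocomp Y)\Dprocomp X=\localization\big(\hp(R\procomp Q)\procomp P\big)\cong\localization\big((R\procomp Q)\procomp P\big)$; symmetrically $\hp(Q\procomp P)\to Q\procomp P$ stays a quasi-isomorphism after $R\procomp\mplaceholder$, giving $Z\Dprocomp(Y\Dprocomp X)\cong\localization\big(R\procomp(Q\procomp P)\big)$. The associator again identifies the two, completing the proof.

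I expect the main obstacle to be purely bookkeeping: correctly matching the variance (right versus left h-flatness, and which of $\catA,\catD$ controls which resolution) in the two hypotheses, and checking that an h-projective composite genuinely functions as the resolution demanded by the next instance of $\Dprocomp$ rather than merely as a quasi-isomorphic replacement. Both points are handled by the lemmas already established, so no essentially new idea is required beyond the observation that a single threefold tensor product $P\otimes_\catB Q\otimes_\catC R$ computes both sides.
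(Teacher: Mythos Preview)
Your proof is correct and follows essentially the same approach as the paper: both cases reduce the two bracketings to the strict associator of $\Bimod$ applied to the triple $\hp Z\procomp\hp Y\procomp\hp X$, using \cref{lem:tensor_of_h-proj_is_h-proj} in case~(1) to see that the intermediate composites are already h-projective, and using \cref{lem:h-flat_with_locally_h-flat_domain_is_right_h-flat} together with \cref{lem:precomp_with_right_h-flat_preserves_qism} in case~(2) to absorb the extra h-projective resolution via a quasi-isomorphism. The only cosmetic difference is that the paper writes the case~(2) chain as a single string of isomorphisms passing through $(Z\Dprocomp Y)\procomp\hp(X)$ rather than phrasing it as ``the resolution map stays a quasi-isomorphism after $\procomp P$'', but the content is identical.
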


\begin{proof}
	(1) Suppose that $\catB$ and $\catC$ are locally h-projective. Then by \cref{lem:tensor_of_h-proj_is_h-proj} we may assume $\hp(Z\Dprocomp Y)=\hp(Z)\procomp \hp(Y)$ and $\hp(Y\Dprocomp X)=\hp(Y)\procomp \hp(X)$. Thus we have
	\begin{align*}
		(Z\Dprocomp Y)\Dprocomp X & \cong \hp(Z \Dprocomp Y) \procomp \hp(X)              \\
		                          & \cong \big(\hp(Z)\procomp \hp(Y)\big) \procomp \hp(X) \\
		                          & \cong \hp(Z) \procomp \big(\hp(Y)\procomp \hp(X)\big) \\
		                          & \cong \hp(Z) \procomp \hp(Y\Dprocomp X)               \\
		                          & \cong Z\Dprocomp (Y\Dprocomp X).
	\end{align*}

	(2) If $\catA$ and $\catD$ are locally h-flat, then \cref{prop:h-projective_implies_h-flat} and \cref{lem:h-flat_with_locally_h-flat_domain_is_right_h-flat} shows that $\hp(X)$ is right h-flat and $\hp(Z)$ is left h-flat. Thus we have by \cref{lem:precomp_with_right_h-flat_preserves_qism}
	\begin{align*}
		(Z\Dprocomp Y)\Dprocomp X & \cong \hp(Z \Dprocomp Y) \procomp \hp(X)               \\
		                          & \cong (Z \Dprocomp Y) \procomp \hp(X)                  \\
		                          & \cong \big(\hp(Z) \procomp \hp(Y)\big) \procomp \hp(X) \\
		                          & \cong \hp(Z) \procomp \big(\hp(Y)\procomp \hp(X)\big)  \\
		                          & \cong \hp(Z) \procomp (Y\Dprocomp X)                   \\
		                          & \cong \hp(Z) \procomp \hp(Y\Dprocomp X)                \\
		                          & \cong Z\Dprocomp (Y\Dprocomp X).\qedhere
	\end{align*}
\end{proof}

\begin{proposition}
	Let $X\colon\catA\slashedrightarrow\catB$ be dg bimodule.
	\begin{enumerate}
		\item If $\catB$ is locally h-flat, then there is a natural isomorphism $X\Dprocomp I_\catA \cong X$ in $\Dom(\catA,\catB)$.
		\item If $\catA$ is locally h-flat, then there is a natural isomorphism $I_\catB \Dprocomp X \cong X$ in $\Dom(\catA,\catB)$.
	\end{enumerate}
\end{proposition}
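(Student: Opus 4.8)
The plan is to reduce both unitality isomorphisms to the strict (underived) unitality of $\procomp$, which is nothing but the co-Yoneda lemma, and then to control the h-projective resolutions using the h-flatness hypotheses. Recall that by definition $X\Dprocomp I_\catA=\localization\big(\hp(I_\catA)\otimes_\catA \hp(X)\big)$ and $I_\catB\Dprocomp X=\localization\big(\hp(X)\otimes_\catB \hp(I_\catB)\big)$, where $\hp(-)$ denotes a chosen h-projective resolution. The underived input is that, for the \emph{unresolved} identity bimodule $I_\catA=\catA(\mplaceholder,\mplaceholder)$, the co-Yoneda lemma (\cref{prop:(co)Yoneda_lemma}) supplies strict natural isomorphisms $I_\catA\otimes_\catA M\cong M$ and $M\otimes_\catB I_\catB\cong M$ for every bimodule $M$; in particular $I_\catA\otimes_\catA \hp(X)\cong \hp(X)$ and $\hp(X)\otimes_\catB I_\catB\cong \hp(X)$.

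For (1), I would first form the morphism $\hp(I_\catA)\otimes_\catA\hp(X)\to I_\catA\otimes_\catA\hp(X)$ induced by the resolution quasi-isomorphism $\hp(I_\catA)\to I_\catA$, and then compose with the co-Yoneda isomorphism to land in $\hp(X)$. The crux is to check that this first morphism is a quasi-isomorphism, i.e.\ that the functor $\mplaceholder\otimes_\catA\hp(X)$ preserves quasi-isomorphisms. By \cref{lem:precomp_with_right_h-flat_preserves_qism}(2) it suffices that $\hp(X)$ be left h-flat. Since $\catB$ is locally h-flat by hypothesis and $\hp(X)$ is h-projective, hence h-flat (\cref{prop:h-projective_implies_h-flat}), \cref{lem:h-flat_with_locally_h-flat_domain_is_right_h-flat}(2) (applied to $\hp(X)\colon\catA\slashedrightarrow\catB$, whose codomain is the locally h-flat $\catB$) shows $\hp(X)$ is indeed left h-flat. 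Combining, we get a chain of quasi-isomorphisms $\hp(I_\catA)\otimes_\catA\hp(X)\to I_\catA\otimes_\catA\hp(X)\cong\hp(X)\to X$, whence $X\Dprocomp I_\catA\cong X$ in $\Dom(\catA,\catB)$.

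Part (2) is entirely symmetric: I would use the resolution $\hp(I_\catB)\to I_\catB$ together with the co-Yoneda isomorphism $\hp(X)\otimes_\catB I_\catB\cong\hp(X)$, and verify that $\hp(X)\otimes_\catB\mplaceholder$ preserves quasi-isomorphisms via \cref{lem:precomp_with_right_h-flat_preserves_qism}(1). This requires $\hp(X)$ to be right h-flat, which now follows from the hypothesis that $\catA$ is locally h-flat, via \cref{prop:h-projective_implies_h-flat} and \cref{lem:h-flat_with_locally_h-flat_domain_is_right_h-flat}(1) (this time using the domain $\catA$). The same pattern of quasi-isomorphisms then yields $I_\catB\Dprocomp X\cong X$.

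Finally, every map in sight — the resolution quasi-isomorphisms, the co-Yoneda isomorphisms, and the identification $\hp(X)\cong X$ in $\Dom(\catA,\catB)$ — is natural in $X$, so the resulting isomorphisms are natural. I expect the only genuine subtlety to be the bookkeeping of left/right h-flatness: one must match correctly which leg of the tensor is being varied with the appropriate one-sided flatness of $\hp(X)$, and observe that the local h-flatness hypothesis on the correct category ($\catB$ in (1), $\catA$ in (2)) is precisely what feeds \cref{lem:h-flat_with_locally_h-flat_domain_is_right_h-flat}. Everything else is formal.
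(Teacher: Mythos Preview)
Your proposal is correct and follows essentially the same approach as the paper: use the left/right h-flatness of $\hp(X)$ (deduced from the local h-flatness hypothesis via \cref{prop:h-projective_implies_h-flat} and \cref{lem:h-flat_with_locally_h-flat_domain_is_right_h-flat}) together with \cref{lem:precomp_with_right_h-flat_preserves_qism} to replace $\hp(I_\catA)$ (resp.\ $\hp(I_\catB)$) by $I_\catA$ (resp.\ $I_\catB$), and then invoke co-Yoneda. The paper's proof is simply a terser version of your argument, writing out the chain $X\Dprocomp I_\catA = \hp(X)\procomp\hp(I_\catA)\cong \hp(X)\procomp I_\catA\cong \hp(X)\cong X$ directly.
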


\begin{proof}
	(1) If $\catB$ is locally h-flat, then $\hp(X)$ is left h-flat. Hence we have
	\[ X\Dprocomp I_\catA = \hp(X) \procomp \hp(I_\catA) \cong \hp(X) \procomp I_\catA \cong \hp(X) \cong X \]
	by \cref{lem:precomp_with_right_h-flat_preserves_qism}.
	(2) Similarly.
	% (2) $\catA$がlocally h-flatのとき，$\hp(X)$はright h-flatだから
	%\[ I_\catB \Dprocomp X =\hp(I_\catB)\procomp \hp(X)\cong I_\catB \procomp \hp(X) \cong \hp(X) \cong X \] となる．
\end{proof}

According to the above propositions, we can define $\DBimodhf$ as the bicategory such that
\begin{itemize}
	\item the objects are locally h-flat small dg categories;
	\item the Hom categories $\DBimodhf(\catA,\catB)$ are the derived categories $\Dom(\catA,\catB)$ of dg bimodules;
	\item the compositions are the derived compositions.
\end{itemize}
Likewise, we can define the bicategory $\DBimodhp$ whose objects are locally h-projective small dg categories. This bicategory is a full sub-bicategory of $\DBimodhf$.

Recall that every dg category $\catA$ has a locally h-projective resolution $\cofQ(\catA)$.
We can use this assignment to get a new bicategory $\cofQ^*\DBimodhp$ whose objects are all small dg categories and whose Hom categories are $\Dom(\cofQ(\catA),\cofQ(\catB))$, which is the same as the bicategory written as $\catname{DBimod}$ in \cite[p.\ 651]{Genovese:2017}.

In the following, we will mainly take care of the bicategory $\DBimodhp$ of locally h-projective small dg categories, and simply put $\DBimod\coloneqq \DBimodhp$.

\subsection{The tensor and Hom functors associated with dg bimodules}\label{subsection:the_tensor_and_Hom_functors_associated_with_dg_bimodules}

For a dg bimodule $X\colon \catA\slashedrightarrow\catB$ between small dg categories, we have a dg adjunction
\begin{equation*}
	T_X \colon
	\begin{tikzcd}
		\Comdg(\catA) \arrow[shift left=1ex]{r}{} \arrow[phantom]{r}[sloped]{\scriptstyle\perp} &
		\Comdg(\catB) \arrow[shift left=1ex]{l}{}
	\end{tikzcd}
	\colon H_X
\end{equation*}
where
\begin{align*}
	T_X & = X\procomp\mplaceholder = \mplaceholder \otimes_\catA X, \text{ and }                   \\
	H_X & = X_\ddag(\mplaceholder), \text{ i.e. } H_X(N)(A) = \Comdg(\catB)(X(\mplaceholder,A),N).
\end{align*}
The notation is borrowed from Keller's paper~\cite{Keller:1994Deriving}. The assignments $\catA \mapsto \Com(\catA)$ and $X\mapsto T_X$ form a pseudo-functor
\[ T\colon \Bimod \to \CAT, \]
where $\CAT$ denotes the $2$-category of not necessarily small categories.

Applying the $0$-th cohomology category functor $H^0$ to the above dg adjunction, we have an ordinary adjunction
\begin{equation*}
	T_X \colon
	\begin{tikzcd}
		\Kom(\catA) \arrow[shift left=1ex]{r}{} \arrow[phantom]{r}[sloped]{\scriptstyle\perp} &
		\Kom(\catB) \arrow[shift left=1ex]{l}{}
	\end{tikzcd}
	\colon H_X
\end{equation*}
(we use the same notation $T_X,H_X$.). Then we define the left derived functor of $T_X$ as the right Kan extension
\[
	\begin{tikzcd}
		\Kom(\catA) \arrow{d}[swap]{\localization} \arrow{r}{T_X}[name=B1,below,pos=0.5]{} & \Kom(\catB) \arrow{d}{\localization} \\
		\Dom(\catA) \arrow[dashed]{r}{\bbLT_X}[name=A1,above,pos=0.5]{} & \Dom(\catB)\rlap{.}
		\arrow[Rightarrow,from=A1,to=B1,shorten <=2ex,shorten >=1.5ex," "]
	\end{tikzcd}
\]
Since the localization $\localization\colon \Kom(\catA)\to \Dom(\catA)$ has a left adjoint $\hp\colon \Dom(\catA)\to\Kom(\catA)$, the left derived functor $\bbLT_X$ is given by the composites
\[ \Dom(\catA) \xrightarrow{\hp} \Kom(\catA) \xrightarrow{T_X} \Kom(\catB) \xrightarrow{\localization} \Dom(\catB), \]
and becomes an absolute Kan extension. Also we define the right derived functor of $H_X$ as the left Kan extension
\[
	\begin{tikzcd}
		\Kom(\catA) \arrow{d}[swap]{\localization} & \Kom(\catB) \arrow{d}{\localization} \arrow{l}[swap]{H_X}[name=A1,above,pos=0.5]{} \\
		\Dom(\catA) & \Dom(\catB)\rlap{.} \arrow[dashed]{l}[swap]{\bbRH_X}[name=B1,below,pos=0.5]{}
		\arrow[Rightarrow,from=A1,to=B1,shorten <=2ex,shorten >=3.5ex," "]
	\end{tikzcd}
\]
Since the localization $\localization\colon \Kom(\catB)\to \Dom(\catB)$ has a right adjoint $\hi \colon \Dom(\catB)\to\Kom(\catB)$, the right derived functor $\bbRH_X$ is given by the composites
\[ \Dom(\catB) \xrightarrow{\hi} \Kom(\catB) \xrightarrow{H_X} \Kom(\catA) \xrightarrow{\localization} \Dom(\catA), \]
and becomes an absolute Kan extension.
The following result is fundamental.

\begin{proposition}
	Let $X\colon \catA\slashedrightarrow\catB$ be a dg bimodule.
	\begin{enumerate}
		\item The derived functors $\bbLT_X$ and $\bbRH_X$ are triangulated functors.
		\item We have an adjunction
		      \begin{equation*}
			      \bbLT_X \colon
			      \begin{tikzcd}
				      \Dom(\catA) \arrow[shift left=1ex]{r}{} \arrow[phantom]{r}[sloped]{\scriptstyle\perp} &
				      \Dom(\catB) \arrow[shift left=1ex]{l}{}
			      \end{tikzcd}
			      \colon \bbRH_X\rlap{.}
		      \end{equation*}
	\end{enumerate}
\end{proposition}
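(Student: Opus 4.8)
The plan is to work entirely through the explicit factorizations of the two derived functors that are recorded just above the statement, namely
\[ \bbLT_X = \localization\circ T_X\circ \hp \colon \Dom(\catA)\to\Dom(\catB), \qquad \bbRH_X = \localization\circ H_X\circ \hi \colon \Dom(\catB)\to\Dom(\catA), \]
together with the three adjunctions that are built into these factorizations. For part~(1) I would first note that the underlying functors $T_X=\mplaceholder\otimes_\catA X$ and $H_X=X_\ddag(\mplaceholder)$ descend to \emph{triangulated} functors on the homotopy categories $\Kom$. Both are additive dg functors, and the shift and mapping cone of dg (bi)modules are computed pointwise; since tensoring and the Hom complex commute with these pointwise constructions — one has natural isomorphisms $\Cone(\theta)\otimes_\catA X\cong\Cone(\theta\otimes_\catA X)$ and $M[1]\otimes_\catA X\cong (M\otimes_\catA X)[1]$, and the analogous statements for $H_X$ — they send canonical triangles to canonical triangles and commute with shifts. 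The localization $\localization$ is triangulated by the very definition of the triangulated structure on $\Dom$, and its adjoints $\hp$ and $\hi$ are triangulated, being adjoints of a triangulated functor. As $\bbLT_X$ and $\bbRH_X$ are composites of triangulated functors, they are triangulated.

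For part~(2) I would assemble the adjunction $\bbLT_X\dashv\bbRH_X$ out of the three elementary adjunctions $\hp\dashv\localization$ on the $\catA$-side, $T_X\dashv H_X$ on the homotopy categories (this is the $H^0$ of the dg adjunction recorded above), and $\localization\dashv\hi$ on the $\catB$-side. Concretely, for $P\in\Dom(\catA)$ and $P'\in\Dom(\catB)$ I would chain the natural bijections
\begin{align*}
\Dom(\catB)(\bbLT_X P,P') &= \Dom(\catB)(\localization T_X\hp P,\,P') \\
&\cong \Kom(\catB)(T_X\hp P,\,\hi P') \\
&\cong \Kom(\catA)(\hp P,\,H_X\hi P') \\
&\cong \Dom(\catA)(P,\,\localization H_X\hi P') \\
&= \Dom(\catA)(P,\,\bbRH_X P'),
\end{align*}
where the three displayed isomorphisms use $\localization\dashv\hi$, then $T_X\dashv H_X$, then $\hp\dashv\localization$, and the two equalities are just the factorizations above. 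Naturality of the composite in both $P$ and $P'$ is inherited from that of the three constituent adjunctions.

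I do not expect a genuine obstacle: once the factorizations and the $H^0$-level adjunction $T_X\dashv H_X$ are in hand, the argument is purely formal. The mildest subtlety in part~(2) is bookkeeping the directions of the three adjunctions so that the composite bijection runs the correct way (the left adjoint $\hp$ on the source side, the right adjoint $\hi$ on the target side). The only point in part~(1) needing a word of justification is the compatibility of $T_X$ and $H_X$ with pointwise cones and shifts, which I would dispatch with the one-line (co)end isomorphisms indicated above; being additive dg functors, they necessarily preserve these finite, differential-twisted constructions.
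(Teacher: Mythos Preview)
The paper does not supply a proof of this proposition; it is simply labeled ``fundamental'' and stated without argument. Your proof is correct and is the standard way one fills in this detail. The factorizations $\bbLT_X=\localization\circ T_X\circ\hp$ and $\bbRH_X=\localization\circ H_X\circ\hi$ are exactly those recorded in the paper immediately before the statement, and your chain of hom-set bijections for part~(2) is the direct composition of the three adjunctions $\hp\dashv\localization$, $T_X\dashv H_X$, and $\localization\dashv\hi$.

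One small remark on part~(2): since the paper observes just above the proposition that both $\bbLT_X$ and $\bbRH_X$ are \emph{absolute} Kan extensions, an alternative one-line justification is the general fact that absolute left and right derived functors of an adjoint pair are again adjoint. Your explicit bijection chain is precisely the unwinding of that abstract statement, so the two packagings are equivalent. For part~(1), your observation that $T_X$ and $H_X$ come from dg functors and hence commute with the pointwise shift and cone constructions is the right reason they are triangulated on $\Kom$; the only thing one might add is that $\hp$ and $\hi$ are triangulated not merely as adjoints of $\localization$ but, more concretely, because they realize the triangulated equivalences $\Dom(\catA)\simeq H^0(\hproj(\catA))$ and $\Dom(\catB)\simeq H^0(\hinj(\catB))$ followed by inclusion into $\Kom$.
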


The next proposition is extracted from \cite{Keller:1994Deriving}. It seems to require the additional assumption of right h-projectiveness, which is not included in the original paper.

\begin{proposition}[{\cite[\S6.1, Lemma (a)]{Keller:1994Deriving}}]\label{prop:bbLT_X_is_equiv}
	Let $X\colon \catA\slashedrightarrow\catB$ be a dg bimodule. If $X$ is right h-projective, then the following are equivalent.
	\begin{enumerate}[label=\equivitem]
		\item The left derived functor $\bbLT_X\colon \Dom(\catA)\to\Dom(\catB)$ is an equivalence of categories.
		\item The morphism of complexes
		      \[ \catA(A,A') \to \Comdg(\catB)(X(\mplaceholder,A),X(\mplaceholder,A')) \]
		      is a quasi-isomorphism for all $A,A'\in \catA$, and the set $\{X(\mplaceholder,A)\}_{A\in \catA}$ of objects compactly generates $\Dom(\catB)$.
	\end{enumerate}
\end{proposition}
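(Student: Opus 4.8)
The plan is to reduce the whole statement to the general recognition principle for functors between compactly generated triangulated categories, namely \cref{prop:lemma_for_compactly_generated_tria_cat}, applied to $F=\bbLT_X\colon\Dom(\catA)\to\Dom(\catB)$ with the generating set $\familyS=\{\catA(\mplaceholder,A)\}_{A\in\catA}$. First I would record the inputs that make this machinery applicable: $\bbLT_X$ is a triangulated functor and, being a left adjoint (with right adjoint $\bbRH_X$), preserves coproducts; moreover $\Dom(\catA)$ is compactly generated by $\familyS$ by \cref{prop:Dom(catA)_is_compactly_generated}. Next I would identify the values of $\bbLT_X$ on the generators: since each representable $\catA(\mplaceholder,A)$ is h-projective we may compute $\bbLT_X$ without resolving, so $\bbLT_X(\catA(\mplaceholder,A))=\catA(\mplaceholder,A)\otimes_\catA X\cong X(\mplaceholder,A)$ by the co-Yoneda lemma (\cref{prop:(co)Yoneda_lemma}).

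The heart of the argument is the translation of the morphism-of-complexes condition in (ii) into the Hom-bijectivity condition of \cref{prop:lemma_for_compactly_generated_tria_cat}(1), and this is exactly where right h-projectivity is needed. By the Yoneda lemma and h-projectivity of the representables, $\Hom_{\Dom(\catA)}(\catA(\mplaceholder,A),\catA(\mplaceholder,A')[n])\cong H^n(\catA(A,A'))$; and because $X(\mplaceholder,A)$ is h-projective, the derived Hom in $\Dom(\catB)$ is computed by the naive Hom complex, so $\Hom_{\Dom(\catB)}(X(\mplaceholder,A),X(\mplaceholder,A')[n])\cong H^n\big(\Comdg(\catB)(X(\mplaceholder,A),X(\mplaceholder,A'))\big)$. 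Under these identifications the map induced by $\bbLT_X$ on Homs is precisely $H^n$ of the structure morphism $\catA(A,A')\to\Comdg(\catB)(X(\mplaceholder,A),X(\mplaceholder,A'))$. Hence this structure morphism is a quasi-isomorphism for all $A,A'$ if and only if the Hom-bijectivity hypothesis of \cref{prop:lemma_for_compactly_generated_tria_cat}(1) holds, if and only if $\res{\bbLT_X}{\tria{\familyS}}$ is fully faithful.

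With this translation in hand, both implications become formal. For (ii)$\Rightarrow$(i): the quasi-isomorphism condition makes $\res{\bbLT_X}{\tria{\familyS}}$ fully faithful; compact generation of $\Dom(\catB)$ by $\{X(\mplaceholder,A)\}$ forces each $X(\mplaceholder,A)$ to be compact, so \cref{prop:lemma_for_compactly_generated_tria_cat}(2) upgrades $\bbLT_X$ to a fully faithful functor on all of $\Dom(\catA)$; then \cref{prop:lemma_for_compactly_generated_tria_cat}(3), together with the compact generation, yields that $\bbLT_X$ is an equivalence. For (i)$\Rightarrow$(ii): an equivalence is in particular fully faithful, so its restriction to $\tria{\familyS}$ is fully faithful and part (1) recovers the quasi-isomorphism condition; and the ``only if'' direction of part (3) gives that $\{X(\mplaceholder,A)\}=\{\bbLT_X(\catA(\mplaceholder,A))\}$ compactly generates $\Dom(\catB)$.

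I expect the only subtle point---and the reason the statement needs an assumption absent from \cite{Keller:1994Deriving}---to be the computation of $\Hom_{\Dom(\catB)}(X(\mplaceholder,A),X(\mplaceholder,A')[n])$ via the honest Hom complex. This step genuinely requires $X(\mplaceholder,A)$ to be h-projective, i.e.\ $X$ right h-projective; otherwise one would first have to replace $X(\mplaceholder,A)$ by an h-projective resolution, and the naive Hom complex would not compute the derived Hom, breaking the identification with $H^n$ of the structure morphism. Everything else is a routine application of the compactly-generated-category toolkit of \cref{prop:lemma_for_compactly_generated_tria_cat}.
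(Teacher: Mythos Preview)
Your proposal is correct and follows essentially the same approach as the paper's proof: compute $\bbLT_X$ on the representable generators, use h-projectivity of both $\catA(\mplaceholder,A)$ and $X(\mplaceholder,A)$ to identify the relevant derived Homs with cohomology of the naive Hom complexes, and then invoke \cref{prop:Dom(catA)_is_compactly_generated} together with \cref{prop:lemma_for_compactly_generated_tria_cat}. Your write-up is in fact more explicit than the paper's, which simply records the two Hom identifications and then says ``the assertion follows''; you have correctly unpacked which parts of \cref{prop:lemma_for_compactly_generated_tria_cat} are used in each direction, and your diagnosis of where right h-projectivity enters is exactly the point the paper flags just before the statement.
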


\begin{proof}
	Since the representable $\catA(\mplaceholder,A)$ is h-projective, we have $\bbLT_X(\catA(\mplaceholder,A))= T_X(\catA(\mplaceholder,A))=X(\mplaceholder,A)$ and
	\begin{align*}
		\Hom_{\Dom(\catA)}(\catA(-,A),\catA(-,A')[n])&\cong \Hom_{\Kom(\catA)}(\catA(-,A),\catA(-,A')[n]) \\
		&\cong H^n(\Comdg(\catA)(\catA(-,A),\catA(-,A')))\\
		&\cong H^n(\catA(A,A')).
	\end{align*}
	Because $X(-,A)$ is h-projective, we also have 
	\begin{align*}
		\Hom_{\Dom(\catB)}(X(-,A),X(-,A')[n])&\cong \Hom_{\Kom(\catB)}(X(-,A),X(-,A')[n])\\
		&\cong H^n(\Comdg(\catB)(X(-,A),X(-,A'))).
	\end{align*}
	Thus the assertion follows from \cref{prop:Dom(catA)_is_compactly_generated} and \cref{prop:lemma_for_compactly_generated_tria_cat}.
\end{proof}

\begin{corollary}\label{cor:Morita_equivalence_functor_criterion}
	For a dg functor $F\colon \catA\to\catB$ between small dg categories, the following are equivalent.
	\begin{enumerate}[label=\equivitem]
		\item The left derived functor $\bbLT_{F_*}\colon \Dom(\catA)\to\Dom(\catB)$ is an equivalence of categories.
		\item $F$ is quasi-fully faithful and the set $\{\catB(\mplaceholder,FA)\}_{A\in \catA}$ of objects compactly generates $\Dom(\catB)$.
	\end{enumerate}
\end{corollary}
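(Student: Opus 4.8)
The plan is to deduce the corollary directly from \cref{prop:bbLT_X_is_equiv} applied to the bimodule $X = F_* = \catB(\mplaceholder, F\mplaceholder) \colon \catA \slashedrightarrow \catB$ associated with $F$. The first thing to verify is that this bimodule meets the hypothesis of that proposition, namely right h-projectivity. For each $A \in \catA$ the dg $\catB$-module $F_*(\mplaceholder, A) = \catB(\mplaceholder, FA)$ is representable, and by the dg Yoneda lemma representables are h-projective dg $\catB$-modules; hence $F_*$ is right h-projective and \cref{prop:bbLT_X_is_equiv} applies to it.

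With $X = F_*$, condition (i) of \cref{prop:bbLT_X_is_equiv} is verbatim condition (i) of the corollary, so it remains to identify the two clauses of condition (ii) of the proposition with the two clauses of condition (ii) of the corollary. The compact generation clause is immediate: since $X(\mplaceholder, A) = \catB(\mplaceholder, FA)$, the assertion that $\{X(\mplaceholder, A)\}_{A \in \catA}$ compactly generates $\Dom(\catB)$ is literally the assertion that $\{\catB(\mplaceholder, FA)\}_{A \in \catA}$ compactly generates $\Dom(\catB)$.

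For the remaining clause, I would compute the structural morphism $\catA(A, A') \to \Comdg(\catB)(X(\mplaceholder, A), X(\mplaceholder, A'))$ in the case $X = F_*$. By the dg Yoneda lemma there is a natural isomorphism $\Comdg(\catB)(\catB(\mplaceholder, FA), \catB(\mplaceholder, FA')) \cong \catB(FA, FA')$, and tracing through the definitions the structural morphism corresponds under this isomorphism to the component $F_{A, A'} \colon \catA(A, A') \to \catB(FA, FA')$ of the dg functor $F$. Consequently the requirement that this morphism be a quasi-isomorphism for all $A, A'$ is exactly the statement that $F$ is quasi-fully faithful in the sense of \cref{def:quasi-equivalence}. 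Combining the two identifications with \cref{prop:bbLT_X_is_equiv} then yields the equivalence of (i) and (ii).

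The only nonroutine point is the last identification, namely checking that the abstract structural map of the bimodule $F_*$ genuinely coincides with $F_{A, A'}$ under Yoneda. This is a bookkeeping verification: one unwinds the left $\catA$-action on $F_*$, which sends $a \otimes b$ to $(Fa) \circ b$, curries it into $\Comdg(\catB)(\catB(\mplaceholder, FA), \catB(\mplaceholder, FA'))$, and applies the Yoneda isomorphism. I expect no conceptual difficulty here, only care with the variance conventions built into the definition of $F_*$.
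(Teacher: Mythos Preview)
Your proposal is correct and takes essentially the same approach as the paper, which simply writes ``It follows from \cref{prop:bbLT_X_is_equiv}.'' You have in fact spelled out more than the paper does: the verification that $F_*$ is right h-projective (so that the proposition applies) and the Yoneda identification of the structural map with $F_{A,A'}$ are left implicit there, so your write-up is a faithful expansion of the intended one-line argument.
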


\begin{proof}
	It follows from \cref{prop:bbLT_X_is_equiv}.
\end{proof}

Assigning to a dg bimodule $X\colon \catA\slashedrightarrow\catB$ the left derived functor $\bbLT_X\colon \Dom(\catA)\to\Dom(\catB)$ gives rise to a functor
$\bbL=\bbL_{\catA,\catB} \colon \Com(\catA,\catB) \to \Fun(\Dom(\catA),\Dom(\catB))$.

\begin{proposition}[{\cite[\S6.1, Lemma (b)]{Keller:1994Deriving}}]\label{prop:qism_induces_iso_btw_derived_func}
	A morphism $\mu\colon X\to Y$ of dg bimodules in $\Com(\catA,\catB)$ is a quasi-isomorphism if and only if the natural transformation $\bbLT_\mu\colon \bbLT_X \Rightarrow \bbLT_Y$ is an isomorphism.
\end{proposition}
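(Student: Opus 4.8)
The plan is to reduce the statement to a check on the compact generators of $\Dom(\catA)$, so that both implications are obtained at once from a single chain of equivalences. First I would record that $\bbLT_X$ and $\bbLT_Y$ are triangulated functors $\Dom(\catA)\to\Dom(\catB)$ that preserve coproducts: they are triangulated as noted above, and each preserves coproducts because it is a left adjoint (with right adjoint $\bbRH_X$, resp.\ $\bbRH_Y$). Since $\Dom(\catA)$ is compactly generated by the representables $\{\catA(\mplaceholder,A)\}_{A\in\catA}$ (\cref{prop:Dom(catA)_is_compactly_generated}), I would apply \cref{prop:lemma_for_compactly_generated_tria_cat}(4) to the natural transformation $\bbLT_\mu\colon \bbLT_X\Rightarrow\bbLT_Y$. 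This yields that $\bbLT_\mu$ is an isomorphism if and only if its component $(\bbLT_\mu)_{\catA(\mplaceholder,A)}$ is invertible in $\Dom(\catB)$ for every $A\in\catA$. Because this is itself an \emph{equivalence}, it already encodes both directions of the proposition.

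Next I would identify these components explicitly. As the representable $\catA(\mplaceholder,A)$ is h-projective, one may take $\hp(\catA(\mplaceholder,A))=\catA(\mplaceholder,A)$, whence $\bbLT_X(\catA(\mplaceholder,A))=T_X(\catA(\mplaceholder,A))=\catA(\mplaceholder,A)\otimes_\catA X\cong X(\mplaceholder,A)$, the last isomorphism being the co-Yoneda lemma (\cref{prop:(co)Yoneda_lemma}), and likewise for $Y$. The step demanding care is to verify that, under these identifications, $(\bbLT_\mu)_{\catA(\mplaceholder,A)}$ is exactly the image in $\Dom(\catB)$ of $\mu(\mplaceholder,A)\colon X(\mplaceholder,A)\to Y(\mplaceholder,A)$; this follows from the naturality of the co-Yoneda isomorphism in the bimodule variable, together with the description of $\bbLT_\mu$ as $\localization\circ T_\mu\circ\hp$ and the fact that $\hp$ acts trivially on the h-projective object $\catA(\mplaceholder,A)$. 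I expect this bookkeeping to be the main obstacle, in the sense that it is the only place where one must track the concrete form of the transformation rather than invoke a black box.

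Finally I would close the chain. A morphism of dg $\catB$-modules becomes invertible in $\Dom(\catB)$ precisely when it is a quasi-isomorphism, so $(\bbLT_\mu)_{\catA(\mplaceholder,A)}$ is invertible for all $A$ if and only if $\mu(\mplaceholder,A)$ is a quasi-isomorphism of $\catB$-modules for all $A$. By definition $\mu$ is a quasi-isomorphism of bimodules if and only if each $\mu_{(B,A)}$ is a quasi-isomorphism of complexes, i.e.\ if and only if each $\mu(\mplaceholder,A)$ is a quasi-isomorphism of $\catB$-modules. Concatenating these equivalences gives that $\bbLT_\mu$ is an isomorphism if and only if $\mu$ is a quasi-isomorphism, as required. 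Everything beyond the component identification is the invocation of \cref{prop:lemma_for_compactly_generated_tria_cat}(4) and the standard characterization of quasi-isomorphisms as the maps inverted by $\localization$.
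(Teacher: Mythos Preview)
Your proof is correct and follows essentially the same approach as the paper: reduce via \cref{prop:Dom(catA)_is_compactly_generated} and \cref{prop:lemma_for_compactly_generated_tria_cat}(4) to the components at representables, identify these with $\mu(\mplaceholder,A)$, and conclude. The paper's proof is terser, eliding the bookkeeping you spell out (the co-Yoneda identification and the description of $\bbLT_\mu$ on h-projectives), but the argument is the same.
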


\begin{proof}
	By \cref{prop:Dom(catA)_is_compactly_generated} and \cref{prop:lemma_for_compactly_generated_tria_cat}~(4), we can see that $\bbLT_\mu$ is an isomorphism if and only if each component $\mu_A\colon X(\mplaceholder,A) \to Y(\mplaceholder,A)$ is invertible in $\Dom(\catB)$. The latter is equivalent to the condition that each $\mu_A$ is a quasi-isomorphism, which means $\mu$ itself is a quasi-isomorphism.
\end{proof}

\Cref{prop:qism_induces_iso_btw_derived_func} implies that the functor $\bbL_{\catA,\catB}$ factors as
\[
	\begin{tikzcd}
		\Com(\catA,\catB) \arrow{d}[swap]{\localization} \arrow{r}{\bbL_{\catA,\catB}} & \Fun(\Dom(\catA),\Dom(\catB)) \\
		\Dom(\catA,\catB) \arrow[dashed]{ru}[swap]{\bbLtil_{\catA,\catB}}
	\end{tikzcd}
\]
by the universality of localization.

\begin{corollary}[See also {\cite[Lemma C.5]{Bodzenta-Bondal:2022}}]\label{cor:bbLtil_is_locally_conservative}
	Let $\catA$ and $\catB$ be small dg categories. Then the induced functor $\bbLtil=\bbLtil_{\catA,\catB}\colon \Dom(\catA,\catB) \to \Fun(\Dom(\catA),\Dom(\catB))$ is conservative: namely, a morphism $\alpha\colon X\Rightarrow Y$ in $\Dom(\catA,\catB)$ is invertible if and only if $\bbLtil(\alpha)$ is a natural isomorphism.
\end{corollary}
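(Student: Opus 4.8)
The plan is to reduce the statement to \cref{prop:qism_induces_iso_btw_derived_func}, which already treats the case of an honest morphism of complexes. One direction is immediate: since $\bbLtil=\bbLtil_{\catA,\catB}$ is a functor, it sends invertible $2$-cells to invertible ones, so if $\alpha$ is invertible in $\Dom(\catA,\catB)$ then $\bbLtil(\alpha)$ is a natural isomorphism. All the content lies in the converse.

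For the converse, I would first pass to h-projective representatives. Using the equivalence $\Dom(\catA,\catB)\simeq H^0(\hproj(\catA,\catB))$ recorded after \cref{lem:existence_of_h-proj_resolution}, I may assume that $X$ and $Y$ are h-projective dg bimodules and that $\alpha$ is the class of a genuine morphism $\mu\colon X\to Y$ in $\Kom(\catA,\catB)$. Since $\bbLtil$ is defined precisely as the factorization of $\bbL_{\catA,\catB}$ through the localization $\localization$, and homotopic chain maps induce the same transformation of derived functors, we have the identification $\bbLtil(\alpha)=\bbLT_\mu$.

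The key observation is that for h-projective $X$ and $Y$, the $2$-cell $\alpha$ is invertible in $\Dom(\catA,\catB)$ if and only if $\mu$ is a quasi-isomorphism: an isomorphism in $H^0(\hproj(\catA,\catB))$ is exactly a homotopy equivalence, and between h-projective objects a quasi-isomorphism is the same thing as a homotopy equivalence. Thus the assertion becomes precisely the equivalence ``$\mu$ is a quasi-isomorphism $\iff$ $\bbLT_\mu=\bbLtil(\alpha)$ is an isomorphism'', which is the statement of \cref{prop:qism_induces_iso_btw_derived_func}. (Alternatively, without choosing h-projective models, one represents $\alpha$ by a roof $X\xleftarrow{s}Z\xrightarrow{f}Y$ with $s$ a quasi-isomorphism; then $\bbLT_s$ is invertible by the proposition, so invertibility of $\bbLtil(\alpha)$ forces $\bbLT_f$, hence $f$, to be a quasi-isomorphism, whence $\alpha=\localization(f)\localization(s)^{-1}$ is invertible.)

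The only genuinely non-formal point, and what I regard as the main obstacle, is the bookkeeping required to lift an abstract $2$-cell $\alpha$ of the derived bicategory to a concrete chain map $\mu$ and to confirm the identification $\bbLtil(\alpha)=\bbLT_\mu$ under this lift; everything else is a direct application of \cref{prop:qism_induces_iso_btw_derived_func}. Once this identification is secured, the corollary follows formally.
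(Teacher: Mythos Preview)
Your proposal is correct and essentially the same as the paper's argument. In fact, your parenthetical ``alternative'' is exactly what the paper does: it writes $\alpha=\mu\circ(p_X)^{-1}$ with $p_X\colon\hp(X)\to X$ an h-projective resolution, observes $\bbLtil(\alpha)=\bbL(\mu)\circ\bbL(p_X)^{-1}$, and then applies \cref{prop:qism_induces_iso_btw_derived_func} twice; your main version just replaces both $X$ and $Y$ by h-projectives rather than only $X$, which is a cosmetic difference.
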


\begin{proof}
	Every morphism $\alpha\colon X\Rightarrow Y$ in $\Dom(\catA,\catB)$ can be written as the composite $\mu \circ (p_X)^{-1}$ in which $\mu\colon \hp(X)\to Y$ is some morphism of dg bimodules and $p_X\colon\hp(X)\to X$ is an h-projective resolution. Then we have $\bbLtil(\alpha)=\bbLtil(\mu)\circ \bbLtil(p_X)^{-1} =\bbL(\mu)\circ \bbL(p_X)^{-1}$. Since $\bbL(p_X)$ is invertible by \cref{prop:qism_induces_iso_btw_derived_func}, $\bbLtil(\alpha)$ is an isomorphism if and only if $\bbL(\mu)=\bbLT_\mu$ is so. Again by \cref{prop:qism_induces_iso_btw_derived_func} the latter is equivalent to $\mu$ being a quasi-isomorphism. Thus the assertion follows.
\end{proof}

\begin{proposition}\label{prop:bbLtilde_becomes_pseudo-functor}
	Let $X\colon \catA\slashedrightarrow\catB$ and $Y\colon \catB\slashedrightarrow\catC$ be dg bimodules.
	\begin{enumerate}
		\item There is an isomorphism $\bbLtil(I_\catA)=\bbL(I_\catA)\cong \id_{\Dom(\catA)}$.
		\item If $\catC$ is locally h-flat, then we have $\bbLtil(Y)\circ \bbLtil(X) \cong \bbLtil(Y \Dprocomp X)$.
	\end{enumerate}
\end{proposition}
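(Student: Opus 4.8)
The plan is to reduce both claims to the strict (underived) compatibility of the tensor functors $T_{(\mplaceholder)}$ with the composition $\procomp$, and then to control the passage to derived functors through the explicit formula $\bbLT_X = \localization\circ T_X\circ\hp$, using the h-flatness lemmas of \cref{subsection:the_bicategory_of_dg_bimodules}.

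For (1), I would use that the identity bimodule $I_\catA = \catA(\mplaceholder,\mplaceholder)$ is a strict unit for $\procomp$, so that $T_{I_\catA}\cong\id_{\Kom(\catA)}$. Then $\bbLT_{I_\catA} = \localization\circ T_{I_\catA}\circ\hp\cong\localization\circ\hp$, and since $\hp$ is an h-projective resolution (a section of $\localization$ up to quasi-isomorphism, indeed fully faithful into $\Kom(\catA)$) we have $\localization\circ\hp\cong\id_{\Dom(\catA)}$; this gives the claim at once.

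For (2), by functoriality of $\bbLtil$ together with \cref{prop:qism_induces_iso_btw_derived_func} I may replace $X,Y$ by their h-projective resolutions $X' = \hp(X)$, $Y' = \hp(Y)$, so that it suffices to prove $\bbLT_{Y'}\circ\bbLT_{X'}\cong\bbLT_{Y'\procomp X'}$; note that $Y\Dprocomp X = \localization(Y'\procomp X')$ by definition of the derived composition, whence $\bbLtil(Y\Dprocomp X) = \bbLT_{Y'\procomp X'}$ on the nose. The strict input is associativity of $\procomp$ in $\Bimod$, which gives $T_{Y'\procomp X'}\cong T_{Y'}\circ T_{X'}$. Writing out the composites,
\[ \bbLT_{Y'}\circ\bbLT_{X'} = \localization_\catC\circ T_{Y'}\circ(\hp_\catB\circ\localization_\catB)\circ T_{X'}\circ\hp_\catA, \]
which differs from $\bbLT_{Y'\procomp X'}\cong\localization_\catC\circ T_{Y'}\circ T_{X'}\circ\hp_\catA$ only by the insertion of $\hp_\catB\circ\localization_\catB$ in the middle.

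The comparison is then induced by the counit $\varepsilon\colon\hp_\catB\circ\localization_\catB\Rightarrow\id_{\Kom(\catB)}$ of the adjunction $\hp\dashv\localization$, which is a levelwise quasi-isomorphism; applying $\localization_\catC\circ T_{Y'}$ to $\varepsilon$ produces the candidate natural transformation $\bbLT_{Y'}\circ\bbLT_{X'}\Rightarrow\bbLT_{Y'\procomp X'}$, and the whole point is to show it is invertible. This is exactly where the hypothesis on $\catC$ is used: $Y'=\hp(Y)$ is h-flat by \cref{prop:h-projective_implies_h-flat}, and since $\catC$ is locally h-flat, \cref{lem:h-flat_with_locally_h-flat_domain_is_right_h-flat}(2) shows $Y'$ is \emph{left} h-flat, whence \cref{lem:precomp_with_right_h-flat_preserves_qism}(2) guarantees that $T_{Y'} = Y'\procomp\mplaceholder$ preserves quasi-isomorphisms. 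Consequently $\localization_\catC(T_{Y'}(\varepsilon))$ is an isomorphism in $\Dom(\catC)$, completing the argument. The main obstacle is precisely this preservation step: it is the only non-formal ingredient, and it is exactly why local h-flatness of the target $\catC$ cannot be omitted.
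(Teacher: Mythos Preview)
Your argument is correct and follows essentially the same route as the paper. Both proofs reduce to the observation that $\hp Y$ is left h-flat when $\catC$ is locally h-flat (via \cref{lem:h-flat_with_locally_h-flat_domain_is_right_h-flat}(2) and \cref{prop:h-projective_implies_h-flat}), so that $T_{\hp Y}$ preserves quasi-isomorphisms and the intermediate h-projective resolution $\hp_\catB\circ\localization_\catB$ can be dropped; the paper writes this as a chain of isomorphisms in $\Dom(\catC)$ while you phrase it via the counit of $\hp\dashv\localization$, but the content is identical.
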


\begin{proof}
	(1) It follows from the calculation  $\bbLT_{I_\catA}=\localization(\hp(\mplaceholder)\otimes_\catA I_\catA) \cong \localization\circ \hp(\mplaceholder)\cong \id_{\Dom(\catA)}$.

	(2) If $\catC$ is locally h-flat, then $\hp Y$ is left h-flat by \cref{lem:h-flat_with_locally_h-flat_domain_is_right_h-flat}. Hence we have
	\begin{align*}
		\bbLtil(Y)\circ \bbLtil(X)
		 & = \bbLT_Y\circ \bbLT_X                                               \\
		 & \cong \bbLT_{\hp Y} \circ \bbLT_{\hp X}                              \\
		 & \cong \hp(\hp(\mplaceholder)\otimes_\catA \hp X) \otimes_\catB \hp Y \\
		 & \cong (\hp(\mplaceholder)\otimes_\catA \hp X) \otimes_\catB \hp Y    \\
		 & \cong \hp(\mplaceholder)\otimes_\catA (\hp X \otimes_\catB \hp Y)    \\
		 & \cong \hp(\mplaceholder)\otimes_\catA (Y\Dprocomp X)                 \\
		 & \cong \bbLT_{Y\Dprocomp X} = \bbLtil(Y\Dprocomp X).\qedhere
	\end{align*}
\end{proof}

In the previous section, we observed that locally h-projective (resp.\ locally h-flat) small dg categories together with the derived categories $\Dom(\catA,\catB)$ form a bicategory $\DBimodhp$ (resp.\ $\DBimodhf$). It follows from \cref{prop:bbLtilde_becomes_pseudo-functor} that the mapping $\catA\mapsto \Dom(\catA)$ and the functors $\bbLtil_{\catA,\catB}\colon \Dom(\catA,\catB) \to \Fun(\Dom(\catA),\Dom(\catB))$ assemble into a pseudo-functor
\[ \text{$\bbLtil\colon \DBimodhp\to \CAT$} \quad \text{(resp. $\bbLtil\colon \DBimodhf\to \CAT$)}. \]

\section{Adjunctions in $\DBimod$}\label{section:adjunctions_in_DBimod}

In the sequel, we will pay main attention to the bicategory $\DBimodhp$ of locally h-projective small dg categories, writing $\DBimod\coloneqq\DBimodhp$, unless stated otherwise.
In this section, we investigate right extensions, right liftings, and adjunctions in the bicategory $\DBimod$.

\begin{proposition}\label{prop:DBimod_is_closed_bicategory}
	Let $X\colon \catA\slashedrightarrow\catB$, $Y\colon \catB\slashedrightarrow\catC$, and $Z\colon \catA\slashedrightarrow\catC$ be dg bimodules between small dg categories. Then there exist dg bimodules $X^{\Dddag} Z$, $Y_{\Dddag} Z$ and natural isomorphisms of categories
	\begin{align*}
		\Dom(\catA,\catC)(Y\Dprocomp X, Z) & \cong \Dom(\catB,\catC)(Y,X^{\Dddag} Z), \\
		\Dom(\catA,\catC)(Y\Dprocomp X,Z)  & \cong \Dom(\catA,\catB)(X,Y_{\Dddag} Z).
	\end{align*}
	If $\catB$ is locally h-projective, we can take $X^{\Dddag} Z=(\hp X)^\ddag Z$ and $Y_{\Dddag} Z=(\hp Y)_{\ddag} Z$.

	In particular, when $\catA$, $\catB$, and $\catC$ are locally h-projective (or locally h-flat), we have $X^{\Dddag} Z=\DRan_X Z$ and $Y_{\Dddag} Z =\DRift_Y Z$ in the bicategory $\DBimodhp$ (or $\DBimodhf$); hence $\DBimodhp$ (and $\DBimodhf$) is closed. Here $\DRan$ and $\DRift$ denote right Kan extension and lifting in the bicategory $\DBimodhp$ (or $\DBimodhf$), respectively.
\end{proposition}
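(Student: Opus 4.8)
The plan is to deduce the derived adjunctions from the strict closed structure on $\Bimod$ (\cref{prop:Bimod_is_closed_bicategory}) by inserting the two adjunctions $\hp\dashv\localization\dashv\hi$ that connect $\Kom$ with $\Dom$. Recall that the derived composition is computed on representatives by $Y\Dprocomp X=\localization(\hp Y\procomp\hp X)$.

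First I would compute the left-hand Hom-set by passing up to $\Kom$ and across the strict adjunction. Using $\localization\dashv\hi$ to replace $Z$ by its h-injective resolution $\hi Z$, then applying $H^0$ of the first isomorphism in \cref{prop:Bimod_is_closed_bicategory}, and finally using $\hp\dashv\localization$ to descend in the first variable, one obtains
\begin{align*}
\Dom(\catA,\catC)(Y\Dprocomp X, Z)
&\cong \Kom(\catA,\catC)(\hp Y\procomp\hp X,\ \hi Z) \\
&\cong \Kom(\catB,\catC)(\hp Y,\ (\hp X)^\ddag\hi Z) \\
&\cong \Dom(\catB,\catC)\big(Y,\ \localization\big((\hp X)^\ddag\hi Z\big)\big).
\end{align*}
This prescribes $X^{\Dddag}Z\coloneqq\localization\big((\hp X)^\ddag\hi Z\big)$, and the completely parallel computation using the second isomorphism of \cref{prop:Bimod_is_closed_bicategory} gives $Y_{\Dddag}Z\coloneqq\localization\big((\hp Y)_\ddag\hi Z\big)$. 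This step needs no hypothesis beyond the existence of $\hp$- and $\hi$-resolutions, so the two natural isomorphisms already hold for arbitrary small dg categories.

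Second, to obtain the cleaner formula when $\catB$ is locally h-projective, I would show that $Z\mapsto(\hp X)^\ddag Z$ already preserves quasi-isomorphisms, so that passing to $\hi Z$ becomes superfluous. Since $\catB$ is locally h-projective and $\hp X$ is h-projective, \cref{lem:h-proj_implies_right_h-proj}(2) makes $\hp X$ left h-projective; hence each $\hp X(B,\mplaceholder)$ is an h-projective left dg $\catA$-module, and $(\hp X)^\ddag(\mplaceholder)(C,B)=\Comdg(\catA^\op)(\hp X(B,\mplaceholder),\mplaceholder(C,\mplaceholder))$ sends quasi-isomorphisms to quasi-isomorphisms. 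Applying this to $Z\to\hi Z$ yields $(\hp X)^\ddag Z\cong(\hp X)^\ddag\hi Z$ in $\Dom(\catB,\catC)$, so we may take $X^{\Dddag}Z=(\hp X)^\ddag Z$; the dual statement for $Y_{\Dddag}Z=(\hp Y)_\ddag Z$ uses right h-projectivity of $\hp Y$ via \cref{lem:h-proj_implies_right_h-proj}(1).

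Finally, for the Kan extension/lifting conclusion I would check that the composite bijection is natural in $Y$ (respectively in $X$): each of the three isomorphisms is natural in that variable, since $\hp$ and $\hi$ are functorial and the strict adjunction of \cref{prop:Bimod_is_closed_bicategory} is natural. Representability of $\Dom(\catA,\catC)(\mplaceholder\Dprocomp X, Z)$ by $X^{\Dddag}Z$, natural in $Y$, is exactly the universal property defining the right Kan extension in a bicategory, the counit $2$-cell being read off by Yoneda from the identity of $X^{\Dddag}Z$. When $\catA,\catB,\catC$ all lie in $\DBimodhp$ (or $\DBimodhf$) this says $X^{\Dddag}Z=\DRan_X Z$ and $Y_{\Dddag}Z=\DRift_Y Z$, so both bicategories are closed. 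The main point to be careful about is precisely this last bookkeeping --- ensuring the naturality lands in the variable that yields representability (and hence a genuine bicategorical extension/lifting, not merely a family of bijections) --- together with the quasi-isomorphism-preservation step, which is where local h-projectivity genuinely enters.
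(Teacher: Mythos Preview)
Your proof is correct and follows essentially the same route as the paper: both insert the adjunctions $\hp\dashv\localization\dashv\hi$ around the strict closed structure of \cref{prop:Bimod_is_closed_bicategory} to obtain $X^{\Dddag}Z=(\hp X)^\ddag(\hi Z)$, and both then invoke \cref{prop:closed_bicat_has_Ran_and_Rift} for the Kan extension/lifting conclusion.

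The only difference is in the simplification step when $\catB$ is locally h-projective. The paper observes via \cref{lem:tensor_of_h-proj_is_h-proj} that $\hp Y\procomp\hp X$ is already h-projective, so one may replace the use of $\localization\dashv\hi$ on the target by $\hp\dashv\localization$ on the source and run the same chain with $Z$ in place of $\hi Z$. You instead keep the original chain and argue that $(\hp X)^\ddag(\mplaceholder)$ preserves quasi-isomorphisms (using left h-projectivity of $\hp X$ from \cref{lem:h-proj_implies_right_h-proj}), so that $(\hp X)^\ddag Z\cong(\hp X)^\ddag\hi Z$ in $\Dom$. Both arguments are valid and of comparable length; yours has the mild advantage of isolating a reusable fact (that $(\hp X)^\ddag$ is quasi-isomorphism preserving under this hypothesis), while the paper's avoids needing to track which side the h-projectivity falls on.
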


\begin{proof}
	We have isomorphisms
	\begin{align*}
		\Dom(\catA,\catC)(Y\Dprocomp X, Z)
		 & \cong \Kom(\catA,\catC)(Y\Dprocomp X, \hi Z)                          \\
		 & \cong \Kom(\catA,\catC)(\hp Y\procomp\hp X, \hi Z)                    \\
		 & = H^0\Big( \Comdg(\catA,\catC)(\hp Y\procomp\hp X, \hi Z) \Big)       \\
		 & \cong H^0\Big( \Comdg(\catB,\catC)(\hp Y,(\hp X)^\ddag (\hi Z)) \Big) \\
		 & = \Kom(\catB,\catC)(\hp Y,(\hp X)^\ddag (\hi Z))                      \\
		 & \cong \Dom(\catB,\catC)(Y,(\hp X)^\ddag (\hi Z)).
	\end{align*}
	Therefore we can take $X^{\Dddag} Z=(\hp X)^\ddag (\hi Z)$. Similarly $Y_{\Dddag} Z=(\hp Y)_{\ddag} (\hi Z)$.

	If $\catB$ is locally h-projective, then $\hp(Y\Dprocomp X)=\hp Y\procomp \hp X$ by \cref{lem:tensor_of_h-proj_is_h-proj}. Hence we have $X^{\Dddag} Z=(\hp X)^\ddag Z$ by similar arguments.

	The last assertion is due to \cref{prop:closed_bicat_has_Ran_and_Rift}.
\end{proof}

As seen in \cref{prop:Bimod_is_closed_bicategory} the bicategory $\Bimod$ is closed, and so we have
\[ \Comdg(\catB,\catA)(Y,X_\ddag I_\catB) \cong \Comdg(\catB,\catB)(X\procomp Y,I_\catB) \cong \Comdg(\catA,\catB)(X,Y^\ddag I_\catB) \]
for small dg categories $\catA,\catB$, which means that the dg functors
\begin{align*}
	\IsbellL & = (\mplaceholder)_\ddag I_\catB \colon \Comdg(\catA,\catB) \to \Comdg(\catB,\catA)^\op, \\
	\IsbellR & = (\mplaceholder)^\ddag I_\catB \colon \Comdg(\catB,\catA)^\op \to \Comdg(\catA,\catB)
\end{align*}
form a dg adjunction $\IsbellL \dashv \IsbellR$. This adjunction is called \emph{dg Isbell duality} (\cite{Genovese:2017}).

Likewise the bicategory $\DBimodhp$ (or $\DBimodhf$) is also closed, and so we have
\[ \Dom(\catB,\catA)(Y,X_{\Dddag} I_\catB) \cong \Dom(\catB,\catB)(X\Dprocomp Y,I_\catB) \cong \Dom(\catA,\catB)(X,Y^{\Dddag} I_\catB) \]
for locally h-projective (or locally h-flat) small dg categories $\catA,\catB$, which means that the functors
\begin{align*}
	\DIsbellL & = (\mplaceholder)_{\Dddag} I_\catB \colon \Dom(\catA,\catB) \to \Dom(\catB,\catA)^\op, \\
	\DIsbellR & = (\mplaceholder)^{\Dddag} I_\catB \colon \Dom(\catB,\catA)^\op \to \Dom(\catA,\catB)
\end{align*}
form an adjunction $\DIsbellL \dashv \DIsbellR$. This adjunction is called \emph{derived Isbell duality} (\cite{Genovese:2017}).

\begin{lemma}\label{lem:H_X_preserves_acyclic_modules}
	Let $X\colon \catA\slashedrightarrow\catB$ be a dg bimodule. Suppose $X$ is right h-projective. Then the functor $H_X\colon \Comdg(\catB)\to\Comdg(\catA)$ preserves quasi-isomorphisms. In particular it induces $H_X\colon \Dom(\catA)\to\Dom(\catB)$ by the universality of localization, and the right and left derived functors $\bbRH_X,\bbL H_X$ are both isomorphic to it.
\end{lemma}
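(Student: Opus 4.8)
The plan is to reduce the statement to the single fact that $H_X$ carries acyclic dg modules to acyclic dg modules. First I would note that $H_X$, being the right adjoint of a dg adjunction, is a dg functor, so the induced functor $\Com(\catB)\to\Com(\catA)$ on underlying categories commutes with shifts and mapping cones of dg modules. Concretely, for $N\in\Comdg(\catB)$ and $A\in\catA$ one has $H_X(N[1])(A)=\Comdg(\catB)(X(\mplaceholder,A),N[1])\cong H_X(N)(A)[1]$, and because $\Comdg(\catB)(M',\mplaceholder)$ sends a mapping cone to a mapping cone, there is a natural isomorphism $H_X(\Cone(\theta))\cong\Cone(H_X(\theta))$ for every $\theta\colon M\to N$ in $\Com(\catB)$. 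Since a morphism of dg modules is a quasi-isomorphism exactly when its cone is acyclic, this reduces the claim to showing that $H_X(N)$ is acyclic whenever $N$ is.

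The core step is then immediate from the hypothesis. For acyclic $N$ and any $A\in\catA$, by definition of $H_X$ we have
\[ H_X(N)(A)=\Comdg(\catB)(X(\mplaceholder,A),N). \]
As $X$ is right h-projective, the right dg $\catB$-module $X(\mplaceholder,A)$ is h-projective, so by the very definition of h-projectivity the complex $\Comdg(\catB)(X(\mplaceholder,A),N)$ is acyclic. Hence $H_X(N)$ is acyclic for acyclic $N$, and combined with the reduction above this shows $H_X$ preserves quasi-isomorphisms. I do not expect a genuine obstacle here; the only point deserving a sentence of care is the cone-compatibility used in the reduction, everything else being a direct unwinding of definitions.

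For the \emph{in particular} clause I would proceed as follows. Since $H_X$ preserves quasi-isomorphisms, the universal property of the localization $\localization\colon\Kom(\catB)\to\Dom(\catB)$ produces an induced functor $\overline{H}_X\colon\Dom(\catB)\to\Dom(\catA)$ with $\overline{H}_X\circ\localization\cong\localization\circ H_X$. It remains to identify $\overline{H}_X$ with each derived functor. Recall that $\bbRH_X$ is computed as $\localization\circ H_X\circ\hi$ and $\bbL H_X$ as $\localization\circ H_X\circ\hp$; for $N\in\Dom(\catB)$ the resolution maps $N\to\hi(N)$ and $\hp(N)\to N$ are quasi-isomorphisms, which $H_X$ preserves by the first part, so after applying $\localization$ they become isomorphisms. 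This yields natural isomorphisms $\bbRH_X\cong\overline{H}_X\cong\bbL H_X$, reflecting the general principle that a functor already preserving weak equivalences serves as both its left and its right derived functor.
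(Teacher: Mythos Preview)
Your proof is correct and follows the same approach as the paper: reduce to showing that $H_X$ preserves acyclic modules, then use right h-projectivity of $X$ to conclude that $\Comdg(\catB)(X(\mplaceholder,A),N)$ is acyclic whenever $N$ is. The paper's proof is a single paragraph that asserts ``it suffices to observe that $H_X$ preserves acyclicity'' without justification and omits the derived-functor clause entirely; you have simply filled in the reduction via cones and spelled out why $\overline{H}_X\cong\bbRH_X\cong\bbL H_X$, both of which are routine and correct.
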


\begin{proof}
	It suffices to observe that $H_X$ preserves acyclicity. Given a dg module $N\in \Comdg(\catB)$ and an object $A\in \catA$, we have by definition
	\[ H_X N(A) = \Comdg(\catB)(X(\mplaceholder,A),N). \]
	If $N$ is acyclic, then $H_X N(A)$ is also acyclic as 	$X$ is right h-projective, which completes the proof.
\end{proof}

Let $X\colon\catA\slashedrightarrow\catB$ be a dg bimodule. The dg bimodule $\IsbellL(X)$ is the right Kan lifting in $\Bimod$
\[
	\begin{tikzcd}
		& \catA \arrow{d}{X}[sloped,marking]{\mapstochar}
		\arrow[Rightarrow,d,"",shift right=3.4ex,shorten <=1.7ex,shorten >=-0.3ex] \\
		\catB \arrow[bend left=15]{ru}{\IsbellL(X)}[sloped,marking]{\mapstochar} \arrow{r}[swap]{I_\catB}[sloped,marking]{\mapstochar}
		& \catB\rlap{.}
	\end{tikzcd}
\]
Applying the pseudo-functor $T\colon \Bimod\to \CAT$ to this diagram, we have a natural transformation $T_X\circ T_{\IsbellL(X)} \Rightarrow \Id_{\Com(\catB)}$, which corresponds to a natural transformation $\nu\colon T_{\IsbellL(X)}\Rightarrow H_X$ by the adjunction $T_X\dashv H_X$. We finally get a transformation $\bbL\nu\colon \bbLT_{\IsbellL(X)} \Rightarrow \bbL H_X\colon \Dom(\catB)\to\Dom(\catA)$ between left derived functors.

\begin{proposition}[{\cite[\S6.2, Lemma]{Keller:1994Deriving}}]\label{prop:H_X_preserves_coproducts}
	Let $X\colon \catA\slashedrightarrow\catB$ be a dg bimodule and suppose $X$ is right h-projective. Notice that $\bbL H_X \cong H_X$ by \cref{lem:H_X_preserves_acyclic_modules}. Then:
	\begin{enumerate}
		\item For the representable dg module $\catA(\mplaceholder,A)\in \Dom(\catA)$, the component $\bbL\nu_{\catA(\mplaceholder,A)}\colon \bbLT_X(\catA(\mplaceholder,A))\to H_X(\catA(\mplaceholder,A))$ is invertible.
		\item The following are equivalent:
		      \begin{enumerate}[label=\equiviitem]
			      \item the transformation $\bbL\nu\colon \bbLT_{\IsbellL(X)} \Rightarrow H_X$ is an isomorphism;
			      \item the triangulated functor $H_X\colon \Dom(\catB)\to\Dom(\catA)$ preserves coproducts;
			      \item $X$ is right compact, i.e.\  $X(\mplaceholder,A)\in \Dom(\catB)$ is compact for each $A\in\catA$.
		      \end{enumerate}
		\item If the left derived functor $\bbLT_X\colon \Dom(\catA)\to\Dom(\catB)$ is an equivalence of categories, then we have $(\bbLT_X)^{-1}\cong \bbLT_{\IsbellL(X)}$.
	\end{enumerate}
\end{proposition}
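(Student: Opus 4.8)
The plan is to dispatch the three parts in order, leaning throughout on the compact-generation toolkit of \cref{prop:Dom(catA)_is_compactly_generated} and \cref{prop:lemma_for_compactly_generated_tria_cat}, and on the fact (\cref{lem:H_X_preserves_acyclic_modules}) that right h-projectivity of $X$ lets $H_X$ descend to a triangulated functor $\Dom(\catB)\to\Dom(\catA)$ with $\bbRH_X\cong\bbL H_X\cong H_X$. The transformation to keep track of is $\bbL\nu\colon\bbLT_{\IsbellL(X)}\Rightarrow H_X$ between functors $\Dom(\catB)\to\Dom(\catA)$, so the relevant generators are the representable $\catB$-modules $\catB(\mplaceholder,B)$.

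For part (1) I would compute both functors on these generators. Since each $\catB(\mplaceholder,B)$ is h-projective, $\bbLT_{\IsbellL(X)}(\catB(\mplaceholder,B))=T_{\IsbellL(X)}(\catB(\mplaceholder,B))\cong\IsbellL(X)(\mplaceholder,B)$ by co-Yoneda (\cref{prop:(co)Yoneda_lemma}), while by the definition $\IsbellL(X)=X_\ddag I_\catB$ one has $H_X(\catB(\mplaceholder,B))(A)=\Comdg(\catB)(X(\mplaceholder,A),\catB(\mplaceholder,B))=\IsbellL(X)(A,B)$. So the two sides literally coincide by Isbell duality; the genuine task is to verify that the comparison $\nu$ — obtained by applying the pseudo-functor $T$ to the counit $X\procomp\IsbellL(X)\Rightarrow I_\catB$ of the right Kan lifting and transposing across $T_X\dashv H_X$ — is exactly this identification rather than merely some isomorphism. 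This is a diagram chase unwinding the pseudofunctoriality of $T$ and the adjunction transpose, and I expect it to be the main obstacle. Granting it, $\bbL\nu$ is invertible on every representable.

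For part (2), the equivalence (a)$\Leftrightarrow$(b) is formal: if $\bbL\nu$ is invertible then $H_X\cong\bbLT_{\IsbellL(X)}$, which preserves coproducts as a left adjoint (via $\bbLT_{\IsbellL(X)}\dashv\bbRH_{\IsbellL(X)}$); conversely, if $H_X$ preserves coproducts then $\bbLT_{\IsbellL(X)}$ and $H_X$ are coproduct-preserving triangulated functors agreeing on the generators by part (1), so $\bbL\nu$ is invertible by \cref{prop:lemma_for_compactly_generated_tria_cat}(4). For (b)$\Leftrightarrow$(c) I would test $H_X$ against the representables of $\Dom(\catA)$: using that $\catA(\mplaceholder,A)$ is compact and $\Hom_{\Dom(\catA)}(\catA(\mplaceholder,A),M[n])\cong H^n(M(A))$, the canonical map $\coprod_i H_X(N_i)\to H_X(\coprod_i N_i)$ becomes, after applying these Homs, the canonical map $\coprod_i\Hom_{\Dom(\catB)}(X(\mplaceholder,A),N_i[n])\to\Hom_{\Dom(\catB)}(X(\mplaceholder,A),(\coprod_i N_i)[n])$, which is invertible for all $A,n$ precisely when each $X(\mplaceholder,A)$ is compact.

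For part (3), if $\bbLT_X$ is an equivalence it preserves compact objects; since $\catA(\mplaceholder,A)$ is compact (\cref{prop:Dom(catA)_is_compactly_generated}) and $\bbLT_X(\catA(\mplaceholder,A))\cong X(\mplaceholder,A)$, each $X(\mplaceholder,A)$ is compact, i.e.\ condition (c) holds. Part (2) then gives $\bbLT_{\IsbellL(X)}\cong H_X$, and $H_X\cong\bbRH_X$ by \cref{lem:H_X_preserves_acyclic_modules}. As $\bbLT_X\dashv\bbRH_X$ and $\bbLT_X$ is an equivalence, its right adjoint is its quasi-inverse, whence $(\bbLT_X)^{-1}\cong\bbRH_X\cong\bbLT_{\IsbellL(X)}$. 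Overall, once the concrete description of $\nu$ in part (1) is pinned down, parts (2) and (3) are essentially mechanical applications of the compact-generation machinery.
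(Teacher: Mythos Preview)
Your proposal is correct and follows essentially the same route as the paper: part (1) is the co-Yoneda/Isbell identification on representables, part (2)(a)$\Leftrightarrow$(b) is \cref{prop:lemma_for_compactly_generated_tria_cat}(4), and part (3) goes via (2) and the adjunction $\bbLT_X\dashv H_X$. Two minor remarks: your direct argument for (b)$\Leftrightarrow$(c) is exactly what the paper packages into \cref{prop:condition_for_preserving_compact} and \cref{prop:left-adj_preserves_compact_iff_right-adj_preserves_coproduct}; and you are right to evaluate at $\catB(\mplaceholder,B)$ in part (1), since $\bbL\nu$ is a transformation between functors $\Dom(\catB)\to\Dom(\catA)$ --- the paper's statement writing $\catA(\mplaceholder,A)$ appears to be a typo.
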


\begin{proof}
	(1) Since the representable $\catA(\mplaceholder,A)$ is h-projective, we have
	\begin{align*}
		\bbLT_{\IsbellL(X)}(\catA(\mplaceholder,A)) & \cong T_{\IsbellL(X)}(\catA(\mplaceholder,A)) \cong \IsbellL(X)(?,A)                      \\
		                                            & = \Comdg(\catA)(X(\mplaceholder,?),\catA(\mplaceholder,A)) = H_X(\catA(\mplaceholder,A)).
	\end{align*}

	(2) (a) $\Leftrightarrow$ (b): It follows from \cref{prop:Dom(catA)_is_compactly_generated} and \cref{prop:lemma_for_compactly_generated_tria_cat}~(4) together with (1) above.
	(b) $\Leftrightarrow$ (c): The condition that $X$ is right compact means that the left derived functor $\bbLT_X\colon \Dom(\catA) \to \Dom(\catB)$ sends the compact generators $\catA(\mplaceholder,A)$ to compact objects. Therefore, it follows from \cref{prop:condition_for_preserving_compact} and \cref{prop:left-adj_preserves_compact_iff_right-adj_preserves_coproduct}.

	(3) If $\bbLT_X$ is an equivalence, then so is $H_X$ because of the adjunction $\bbLT_X\dashv H_X$. In particular $H_X$ preserves coproducts. Therefore (2) implies $\bbLT_{\IsbellL(X)} \cong H_X \cong (\bbLT_X)^{-1}$.
\end{proof}

\begin{lemma}\label{lem:bbLT_of_Rift}
	Let $X\colon \catA\slashedrightarrow\catB$ be a dg bimodule and suppose that $X$ is right h-projective. Notice that $H_X$ preserves quasi-isomorphisms by \cref{lem:H_X_preserves_acyclic_modules} and we can take $\bbL H_X=H_X$. If the functor $H_X\colon \Dom(\catB)\to\Dom(\catA)$ preserves coproducts, then there exists a natural isomorphism
	\[ \bbLT_{X_\ddag Z} \cong H_X \circ \bbLT_Z \]
	for a dg bimodule $Z\colon \catC\slashedrightarrow\catB$.
\end{lemma}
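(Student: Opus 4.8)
The plan is to imitate, for a general $Z$, the construction of the comparison $\nu$ given just before \cref{prop:H_X_preserves_coproducts}, and then to test it on compact generators. By \cref{prop:Bimod_is_closed_bicategory} the dg bimodule $X_\ddag Z$ is the right Kan lifting of $Z$ along $X$ in $\Bimod$, so it comes with a counit $\varepsilon\colon X\procomp (X_\ddag Z)\to Z$ in $\Comdg(\catC,\catB)$. Applying the pseudo-functor $T\colon \Bimod\to\CAT$ to $\varepsilon$ and using $T_X\circ T_{X_\ddag Z}\cong T_{X\procomp(X_\ddag Z)}$, I obtain a natural transformation $T_X\circ T_{X_\ddag Z}\Rightarrow T_Z$; transposing it under the adjunction $T_X\dashv H_X$ yields $\nu_Z\colon T_{X_\ddag Z}\Rightarrow H_X\circ T_Z$. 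Deriving this (restricting to h-projective modules and localizing) produces a natural transformation $\alpha\colon \bbLT_{X_\ddag Z}\Rightarrow H_X\circ\bbLT_Z$, where I have used \cref{lem:H_X_preserves_acyclic_modules} to identify the left derived functor of $H_X\circ T_Z$ with $H_X\circ\bbLT_Z$ (here $H_X$ preserves quasi-isomorphisms, and $\bbLT_Z$ is computed by h-projective resolution).

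It then remains to prove that $\alpha$ is invertible. Both $\bbLT_{X_\ddag Z}$ and $H_X\circ\bbLT_Z$ are triangulated functors $\Dom(\catC)\to\Dom(\catA)$ that preserve coproducts: $\bbLT_{X_\ddag Z}$ and $\bbLT_Z$ preserve them as left adjoints, while $H_X$ preserves them by hypothesis. Since $\Dom(\catC)$ is compactly generated by the representables $\{\catC(\mplaceholder,C)\}_{C\in\catC}$ (\cref{prop:Dom(catA)_is_compactly_generated}), \cref{prop:lemma_for_compactly_generated_tria_cat}~(4) reduces the problem to checking that each component $\alpha_{\catC(\mplaceholder,C)}$ is invertible.

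For the generators I would run the computation of \cref{prop:H_X_preserves_coproducts}~(1). As $\catC(\mplaceholder,C)$ is h-projective, the co-Yoneda lemma (\cref{prop:(co)Yoneda_lemma}) gives $\bbLT_Z(\catC(\mplaceholder,C))\cong Z(\mplaceholder,C)$ in $\Dom(\catB)$, so that
\[ (H_X\circ\bbLT_Z)(\catC(\mplaceholder,C))\cong H_X(Z(\mplaceholder,C))=(X_\ddag Z)(\mplaceholder,C); \]
a second application of co-Yoneda identifies $\bbLT_{X_\ddag Z}(\catC(\mplaceholder,C))$ with the same dg $\catA$-module $(X_\ddag Z)(\mplaceholder,C)$. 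Unwinding the transpose shows that $\alpha_{\catC(\mplaceholder,C)}$ is precisely this identification, hence an isomorphism; by the previous paragraph $\alpha$ is then a natural isomorphism.

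I expect the only real difficulty to be the derived-level bookkeeping: one must check that transposing and deriving commute suitably, i.e.\ that $\bbL(H_X\circ T_Z)\cong H_X\circ\bbLT_Z$ and that $\alpha$ is the derived mate of $\varepsilon$, and that the two co-Yoneda identifications in the last step genuinely agree with the transposed map rather than being merely abstractly isomorphic. As a consistency check, taking $Z=I_\catB$ (so $\catC=\catB$) recovers the equivalence (a)$\Leftrightarrow$(b) of \cref{prop:H_X_preserves_coproducts}~(2).
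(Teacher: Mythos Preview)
Your proposal is correct and follows essentially the same route as the paper: construct the comparison by transposing the Kan-lifting counit $X\procomp(X_\ddag Z)\Rightarrow Z$ under $T_X\dashv H_X$, derive it, and then invoke \cref{prop:lemma_for_compactly_generated_tria_cat}~(4) together with the coproduct-preservation hypothesis to reduce to the representables, where both sides compute to $(X_\ddag Z)(\mplaceholder,C)$. The paper's proof is slightly terser about the derived bookkeeping you flag, but the argument is the same.
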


\begin{proof}
	Since the dg bimodule $X_\ddag Z$ is the right Kan lifting $\Rift_X Z$ of Z along X in the bicategory $\Bimod$, we have a $2$-cell $X \procomp (X_\ddag Z) \Rightarrow Z$ in $\Bimod$ and hence a natural transformation $T_X \circ T_{X_\ddag Z} \Rightarrow T_Z\colon \Com(\catC)\to\Com(\catB)$. By the adjunction $T_X \dashv H_X$, we get $T_{X_\ddag Z} \Rightarrow H_X\circ T_Z\colon \Com(\catC)\to \Com(\catA)$, which induces a natural transformation $\bbLT_{X_\ddag Z} \Rightarrow H_X\circ \bbLT_Z\colon \Dom(\catC)\to \Dom(\catA)$ between left derived functors. Now we have
	\begin{align*}
		\bbLT_{X_\ddag Z}(\catC(\mplaceholder,C))
		 & = T_{X_\ddag Z}(\catC(\mplaceholder,C)) = (X_\ddag Z)(?,C)                             \\
		 & = \Comdg(\catB)\big(X(\mplaceholder,?),Z(\mplaceholder,C)\big)                         \\
		 & = H_X\big(Z(\mplaceholder,C)\big)                                                      \\
		 & = H_X\big(T_Z(\catC(\mplaceholder,C))\big) = H_X \circ \bbLT_Z(\catC(\mplaceholder,C))
	\end{align*}
	for each $C\in \catC$. Therefore $\bbLT_{X_\ddag Z} \Rightarrow H_X\circ \bbLT_Z$ is invertible on representables. By the assumption that $H_X$ preserves coproducts, we see $\bbLT_{X_\ddag Z} \cong H_X\circ \bbLT_Z$ from \cref{prop:lemma_for_compactly_generated_tria_cat}.
\end{proof}

We can characterize left adjoints in $\DBimod$ as follows.

\begin{theorem}\label{thm:right_compact_bimodules_are_right_adjoint}
	Let $\catA,\catB$ be locally h-projective small dg categories and $X\colon \catA\slashedrightarrow\catB$ be a dg bimodule. Then the following are equivalent.
	\begin{enumerate}[label=\equivitem]
		\item The dg bimodule $X$ has a right adjoint in the bicategory $\DBimod$.
		\item The right derived functor $\bbRH_X\colon \Dom(\catB)\to\Dom(\catA)$ preserves coproducts.
		\item $X$ is right compact.
	\end{enumerate}
	In this case, $\DIsbellL(X)$ is the right adjoint of $X$ in $\DBimod$.
\end{theorem}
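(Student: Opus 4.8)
The plan is to establish the cycle (i) $\Rightarrow$ (ii) $\Leftrightarrow$ (iii) $\Rightarrow$ (i), the last implication simultaneously exhibiting $\DIsbellL(X)$ as the right adjoint. First I would normalise $X$: replacing it by its h-projective resolution $\hp X$ and using \cref{lem:h-proj_implies_right_h-proj} (applicable since $\catA$ is locally h-projective), I may assume $X$ is right h-projective, so that $\bbRH_X \cong H_X$ by \cref{lem:H_X_preserves_acyclic_modules}. With this identification in force, the equivalence (ii) $\Leftrightarrow$ (iii) requires no new work: it is exactly the equivalence of conditions (b) and (c) in \cref{prop:H_X_preserves_coproducts}(2).

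For (i) $\Rightarrow$ (ii) I would use that $\bbLtil\colon \DBimod \to \CAT$ is a pseudo-functor (\cref{prop:bbLtilde_becomes_pseudo-functor}) and that pseudo-functors preserve adjunctions. Thus an adjunction $X \dashv Y$ in $\DBimod$ yields $\bbLT_X \dashv \bbLT_Y$ in $\CAT$; comparing with the canonical adjunction $\bbLT_X \dashv \bbRH_X$ and invoking uniqueness of right adjoints gives $\bbRH_X \cong \bbLT_Y$. As $\bbLT_Y$ is itself a left adjoint (namely $\bbLT_Y \dashv \bbRH_Y$), it preserves coproducts, whence so does $\bbRH_X$.

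The substantive implication is (iii) $\Rightarrow$ (i). The candidate right adjoint is $Y \coloneqq \DIsbellL(X) = X_{\Dddag} I_\catB = \DRift_X I_\catB$, the right Kan lifting of $I_\catB$ through $X$, which exists because $\DBimod$ is closed (\cref{prop:DBimod_is_closed_bicategory}) and whose universal $2$-cell $X \Dprocomp Y \Rightarrow I_\catB$ is the prospective counit. I would then appeal to the standard fact of formal category theory (of the kind collected in \cref{section:proarrow_equipment}) that a $1$-cell $X$ admits a right adjoint, necessarily $\DRift_X I_\catB$, precisely when this right lifting is \emph{absolute}, i.e.\ the canonical comparison $2$-cells $Y \Dprocomp W \Rightarrow \DRift_X W$ are invertible for every $W\colon \catC \slashedrightarrow \catB$. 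Since absoluteness is a pure invertibility statement, the conservativity of $\bbLtil$ (\cref{cor:bbLtil_is_locally_conservative}) reduces it to an isomorphism after applying $\bbLtil$. There, pseudo-functoriality gives $\bbLtil(Y \Dprocomp W) \cong \bbLT_Y \circ \bbLT_W$, while \cref{lem:bbLT_of_Rift} — whose hypothesis that $H_X$ preserves coproducts is exactly (ii) — identifies $\bbLT_Y$ with $H_X$ (taking $Z = I_\catB$) and $\bbLtil(\DRift_X W)$ with $H_X \circ \bbLT_W$ (taking $Z = W$); the comparison is then the canonical isomorphism expressing that $H_X$, as the right adjoint of $\bbLT_X$, is an absolute right lifting of the identity in $\CAT$. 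Absoluteness therefore holds, and the cited fact delivers $X \dashv \DIsbellL(X)$.

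The main obstacle is precisely this last transport: an adjunction visible only through the derived tensor functors in $\CAT$ must be upgraded to a genuine adjunction of bimodules in $\DBimod$. Because $\Dprocomp$ is neither strictly associative nor unital and the relevant $2$-cells live in derived categories, the triangle identities cannot be verified by direct computation, and $\bbLtil$ is only conservative, not faithful, so it cannot detect equalities of $2$-cells. Routing the argument through absoluteness of the right lifting is what converts the problem into the invertibility questions that conservativity can settle; the delicate point is to confirm that the canonical comparison $2$-cells really do correspond to the canonical ones under \cref{lem:bbLT_of_Rift} and pseudo-functoriality. A secondary, purely bookkeeping issue is to check that in the reduction $\DIsbellL(X)$ is computed by $(\hp X)_\ddag I_\catB$, as \cref{prop:DBimod_is_closed_bicategory} permits when $\catB$ is locally h-projective.
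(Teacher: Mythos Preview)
Your proposal is correct and follows essentially the same route as the paper: normalise $X$ to be (right) h-projective, read off (ii)$\Leftrightarrow$(iii) from \cref{prop:H_X_preserves_coproducts}(2), get (i)$\Rightarrow$(ii) by applying $\bbLtil$ and uniqueness of right adjoints, and prove the remaining implication by showing that the right Kan lifting $\DIsbellL(X)=\DRift_X I_\catB$ is absolute via local conservativity of $\bbLtil$ together with \cref{lem:bbLT_of_Rift}. One trivial bookkeeping slip: the identification $X_{\Dddag}Z=(\hp X)_\ddag Z$ from \cref{prop:DBimod_is_closed_bicategory} uses local h-projectivity of $\catA$ (the codomain of the lift), not $\catB$---harmless here since both are assumed locally h-projective.
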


Note that this is a generalization of \cite[Corollary 6.6]{Genovese:2017} (see \cref{cor:quasi-functor_has_right_adjoint}).
One can find a partial result of this theorem in \cite[Proposition 2.11]{Anno-Logvinenko:2017spherical}.

\begin{proof}
	In the derived category $\Dom(\catA,\catB)$, every dg bimodule is isomorphic to an h-projective one, and so we may assume that $X$ is h-projective. Then it follows from \cref{lem:h-proj_implies_right_h-proj} that $X$ is right h-projective as $\catB$ is locally h-projective, and from \cref{lem:H_X_preserves_acyclic_modules} that $\bbRH_X\cong H_X$. Therefore \cref{prop:H_X_preserves_coproducts}~(2) leads to the equivalence of (ii) and (iii).

	(i) $\Rightarrow$ (ii): If $X$ has a right adjoint $Y$ in the bicategory $\DBimod$, then applying the pseudo-functor $\bbLtil\colon \DBimod\to \CAT$ leads to an adjunction $\bbLT_X \dashv \bbLT_Y$. Since $\bbLT_X \dashv \bbRH_X$, we have $\bbLT_Y\cong \bbRH_X$. In particular, $\bbRH_X$ preserves coproducts.

	(ii) $\Rightarrow$ (i): In the bicategory $\DBimod$, there exists a right Kan lifting
	\[
		\begin{tikzcd}
			& \catA \arrow{d}{X}[sloped,marking]{\mapstochar}
			\arrow[Rightarrow,d,"",shift right=3.0ex,shorten <=1.9ex,shorten >=-0.3ex] \\
			\catB \arrow[bend left=15]{ru}{\DIsbellL(X)=X_\Dddag I_\catB}[sloped,marking]{\mapstochar} \arrow{r}[swap]{I_\catB}[sloped,marking]{\mapstochar}
			& \catB\rlap{.}
		\end{tikzcd}
	\]
	It is sufficient by \cref{prop:adjunction_as_Kan_extension/lifting} to prove that this Kan lifting is absolute. Take a locally h-projective small dg category $\catC$ and a dg bimodule $Z\colon \catC\slashedrightarrow\catB$. Composing $Z$ with the above diagram, we use the universality of the right Kan lifting $\DRift_X Z=X_\Dddag Z$ to obtain a $2$-cell in $\DBimod$
	\[ n\colon \DIsbellL(X) \Dprocomp Z \Rightarrow \DRift_X Z=X_\Dddag Z \]
	satisfying
	\[
		\begin{tikzcd}
			\catB \arrow{r}{\DIsbellL(X)}[sloped,marking]{\mapstochar} \arrow[equal]{rd}
			& \catA \arrow{d}{X}[sloped,marking]{\mapstochar}
			\arrow[Rightarrow,d,"",shift right=2.8ex,shorten <=-0.4ex,shorten >=2.3ex] \\
			\catC \arrow{u}{Z}[sloped,marking]{\mapstochar} \arrow{r}[below]{Z}[sloped,marking]{\mapstochar}
			& \catB
		\end{tikzcd}
		\quad=\quad
		\begin{tikzcd}
			\catB \arrow{r}{\DIsbellL(X)}[sloped,marking]{\mapstochar}
			& \catA \arrow{d}{X}[sloped,marking]{\mapstochar}
			\arrow[Rightarrow,d,"",shift right=2.8ex,shorten <=2.3ex,shorten >=-0.4ex] \\
			\catC \arrow{u}{Z}[sloped,marking]{\mapstochar} \arrow{r}[below]{Z}[sloped,marking]{\mapstochar} \arrow{ru}[sloped,marking]{\mapstochar}[name=B2,above,pos=0.4]{}%[pos=0.2,outer sep=0pt,description]{{\tiny X_\Dddag Z}}
			\arrow[Rightarrow,from=1-1,to=B2,dotted,shorten <=-1pt,shorten >=-1pt,"n",pos=0.35]
			& \catB\rlap{.}
		\end{tikzcd}
	\]
	It remains to show that $n$ is invertible. Because the pseudo-functor $\bbLtil\colon \DBimod\to\CAT$ is locally conservative as seen in  \cref{cor:bbLtil_is_locally_conservative}, we only need to check that
	\[ \bbLtil(n) \colon \bbLT_{\DIsbellL(X)} \circ \bbLT_Z \to \bbLT_{X_\Dddag Z} \]
	is an isomorphism.
	Now that $H_X$ preserves coproducts, \cref{lem:H_X_preserves_acyclic_modules} and \cref{lem:bbLT_of_Rift} imply that $\bbLT_{\IsbellL(X)} \cong H_X$ and $\bbLT_{X_\ddag Z} \cong H_X \circ \bbLT_Z$. Since $X$ is h-projective and $\catA$ is locally h-projective, we have $X_\Dddag Z = X_\ddag Z$ by \cref{prop:DBimod_is_closed_bicategory}. Thus we can see that $\bbLtil(n)$ is an isomorphism.
\end{proof}

By duality we have:

\begin{theorem}\label{thm:left_compact_bimodules_has_left_adjoint}
	Let $\catA,\catB$ be locally h-projective small dg categories and $Y\colon \catB\slashedrightarrow\catA$ be a dg bimodule. Then the following are equivalent.
	\begin{enumerate}[label=\equivitem]
		\item The dg bimodule $Y$ has a left adjoint in the bicategory $\DBimod$.
		\item $Y$ is left compact, i.e.\ $Y(A,\mplaceholder)\in\Dom(\catB^\op)$ is compact for each $A\in\catA$.
	\end{enumerate}
	In this case, $\DIsbellR(Y)$ is the left adjoint of $Y$ in $\DBimod$.
\end{theorem}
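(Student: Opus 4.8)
The plan is to deduce this theorem from \cref{thm:right_compact_bimodules_are_right_adjoint} by passing to opposite dg categories. First I would set up the relevant involution: sending each dg category $\catA$ to its opposite $\catA^\op$ and each dg bimodule $Y\colon\catB\slashedrightarrow\catA$ to the bimodule $Y^\op\colon\catA^\op\slashedrightarrow\catB^\op$ obtained by swapping arguments, $Y^\op(B,A)=Y(A,B)$, gives rise to a pseudofunctor $(\mplaceholder)^\op\colon\DBimod\to\DBimod^\op$. On Hom-categories this is just the relabelling $\Dom(\catB,\catA)=\Dom(\catA^\op\otimes\catB)\xrightarrow{\sim}\Dom(\catB\otimes\catA^\op)=\Dom(\catA^\op,\catB^\op)$ induced by the symmetry of $\otimes$, so it reverses $1$-cells while preserving the direction of $2$-cells, and is in fact a biequivalence. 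Since $\catA^\op$ has the same Hom-complexes as $\catA$, local h-projectivity is preserved, so the construction restricts to $\DBimod=\DBimodhp$. I would also record that $(\mplaceholder)^\op$ is compatible with the derived composition $\Dprocomp$: because it preserves h-projective (and acyclic) bimodules it commutes with the h-projective resolutions used to define $\Dprocomp$, giving coherence isomorphisms $(Y\Dprocomp X)^\op\cong X^\op\Dprocomp Y^\op$.

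Granting this, the theorem follows formally. A pseudofunctor preserves adjunctions, and for any bicategory $\bicatK$ one has $f\dashv g$ in $\bicatK$ if and only if $g\dashv f$ in $\bicatK^\op$; applying this to $(\mplaceholder)^\op$ shows that $Y$ admits a left adjoint in $\DBimod$ precisely when $Y^\op$ admits a right adjoint in $\DBimod$. Next I would match the compactness conditions: the right $\catB^\op$-module $Y^\op(\mplaceholder,A)$ is, after unravelling, exactly the left $\catA$-module $Y(A,\mplaceholder)\in\Dom(\catB^\op)$, so ``$Y^\op$ is right compact'' is literally ``$Y$ is left compact''. Now \cref{thm:right_compact_bimodules_are_right_adjoint}, applied to $Y^\op\colon\catA^\op\slashedrightarrow\catB^\op$ (whose domain and codomain are again locally h-projective), yields the equivalence of (i) and (ii), and tells us that the right adjoint of $Y^\op$ is $\DIsbellL(Y^\op)$.

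It remains to identify the resulting left adjoint of $Y$, namely $(\DIsbellL(Y^\op))^\op$, as $\DIsbellR(Y)$. Here I would use that $(\mplaceholder)^\op$, being a biequivalence onto $\DBimod^\op$, transports the closed structure of \cref{prop:DBimod_is_closed_bicategory}, and that reversing $1$-cells interchanges right Kan extensions with right Kan liftings; together with $(I_\catB)^\op=I_{\catB^\op}$ this identifies $(\DIsbellR(Y))^\op=(\DRan_Y I_\catB)^\op$ with $\DRift_{Y^\op}I_{\catB^\op}=\DIsbellL(Y^\op)$, so that the left adjoint $(\DIsbellL(Y^\op))^\op$ of $Y$ equals $\DIsbellR(Y)$. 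The only genuinely laborious point is verifying once and for all that $(\mplaceholder)^\op$ is a well-defined pseudofunctor compatible with $\Dprocomp$ and with the derived Isbell constructions; since all of this is mechanical relabelling through the symmetry of $\otimes$ and the self-duality of $\catA\mapsto\catA^\op$, I expect the main obstacle to be bookkeeping rather than conceptual, with everything else a direct transcription of \cref{thm:right_compact_bimodules_are_right_adjoint}.
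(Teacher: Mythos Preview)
Your proposal is correct and follows exactly the same strategy as the paper: use the biequivalence $\DBimod\xrightarrow{\simeq}\DBimod^\op$, $\catA\mapsto\catA^\op$, induced by $\Dom(\catA,\catB)\cong\Dom(\catB^\op,\catA^\op)$, and then appeal directly to \cref{thm:right_compact_bimodules_are_right_adjoint} by duality. The paper's proof is terse and omits the bookkeeping you supply (compatibility with $\Dprocomp$ and identification of $\DIsbellL(Y^\op)^\op$ with $\DIsbellR(Y)$), but your elaboration of these points is accurate.
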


\begin{proof}
	Since $\catB^\op\otimes\catA \cong (\catA^\op)^\op\otimes\catB^\op$, we get $\Dom(\catA,\catB)\cong \Dom(\catB^\op,\catA^\op)$. This identification induces a $2$-equivalence
	\[ \DBimod \xrightarrow{\simeq} \DBimod^\op, \quad \catA \mapsto \catA^\op. \]
	Thus, by duality, it follows easily from \cref{thm:right_compact_bimodules_are_right_adjoint}.
\end{proof}

We can also get a characterization of equivalences in $\DBimod$.

\begin{theorem}\label{thm:equivalence_in_DBimod}
	Let $\catA,\catB$ be locally h-projective dg categories and
	$X\colon \catA\slashedrightarrow\catB$ be a dg bimodule. Then the following are equivalent.
	\begin{enumerate}[label=\equivitem]
		\item $X$ is an equivalence in the bicategory $\DBimod$.
		\item The left derived functor $\bbLT_X\colon \Dom(\catA)\to\Dom(\catB)$ is an equivalence of categories.
	\end{enumerate}
\end{theorem}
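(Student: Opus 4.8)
The plan is to prove the two implications separately, using the pseudo-functor $\bbLtil\colon \DBimod\to\CAT$ of \cref{prop:bbLtilde_becomes_pseudo-functor} as the bridge between the bicategorical picture in $\DBimod$ and the $1$-categorical picture in $\CAT$.

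For (i) $\Rightarrow$ (ii) I would simply apply $\bbLtil$. Any pseudo-functor sends internal equivalences to equivalences, since an equivalence together with its inverse $1$-cell and the invertible structure $2$-cells is carried to the analogous data in the target. Hence if $X$ is an equivalence in $\DBimod$, then $\bbLtil(X)=\bbLT_X$ is an equivalence of categories.

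For the converse (ii) $\Rightarrow$ (i) the first step is to produce a candidate inverse together with an honest adjunction. Since $\bbLT_X$ is an equivalence, its right adjoint (recall $\bbLT_X\dashv\bbRH_X$) is a quasi-inverse, and in particular $\bbRH_X$ preserves coproducts; therefore \cref{thm:right_compact_bimodules_are_right_adjoint} applies and furnishes a right adjoint $Y=\DIsbellL(X)$ of $X$ in $\DBimod$, say with unit $\eta\colon I_\catA\Rightarrow Y\Dprocomp X$ and counit $\varepsilon\colon X\Dprocomp Y\Rightarrow I_\catB$. The second step is to show that these two $2$-cells are invertible. Applying $\bbLtil$ to the adjunction $X\dashv Y$ and using the coherence isomorphisms $\bbLtil(Y)\circ\bbLtil(X)\cong\bbLtil(Y\Dprocomp X)$ and $\bbLtil(I_\catA)\cong\id$ of \cref{prop:bbLtilde_becomes_pseudo-functor}, one obtains an adjunction $\bbLT_X\dashv\bbLT_Y$ in $\CAT$ whose unit and counit are $\bbLtil(\eta)$ and $\bbLtil(\varepsilon)$ up to those coherence isomorphisms. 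An adjunction whose left adjoint $\bbLT_X$ is an equivalence is automatically an adjoint equivalence: the unit is invertible because $\bbLT_X$ is fully faithful, and the counit is then invertible by a triangle identity together with essential surjectivity of $\bbLT_X$. Thus $\bbLtil(\eta)$ and $\bbLtil(\varepsilon)$ are isomorphisms. Finally, since $\bbLtil$ is locally conservative by \cref{cor:bbLtil_is_locally_conservative}, the $2$-cells $\eta$ and $\varepsilon$ are themselves invertible in $\DBimod$, which exhibits $X$ as an equivalence.

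The main point requiring care is the bookkeeping in the second step: one must track the coherence isomorphisms of $\bbLtil$ carefully enough to confirm that the image of the triangle identities for $X\dashv Y$ really becomes the triangle identities for $\bbLT_X\dashv\bbLT_Y$, so that the latter is a genuine adjunction and the abstract fact ``a left adjoint that is an equivalence yields an adjoint equivalence'' can be invoked. Everything else is formal, and the crucial input from the earlier sections is the combination of \cref{thm:right_compact_bimodules_are_right_adjoint} (to get the adjoint $1$-cell in $\DBimod$) with the local conservativity of $\bbLtil$ (to descend invertibility of $2$-cells from $\CAT$ back to $\DBimod$).
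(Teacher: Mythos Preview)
Your proof is correct and follows essentially the same route as the paper: both directions use the pseudo-functor $\bbLtil$, and for (ii) $\Rightarrow$ (i) you invoke \cref{thm:right_compact_bimodules_are_right_adjoint} to obtain an adjunction in $\DBimod$, push it through $\bbLtil$, use that an equivalence left adjoint makes the adjunction an adjoint equivalence, and then reflect invertibility of the unit and counit via \cref{cor:bbLtil_is_locally_conservative}. The only cosmetic difference is that the paper verifies condition (iii) of \cref{thm:right_compact_bimodules_are_right_adjoint} (right compactness, since an equivalence $\bbLT_X$ preserves compact objects) rather than condition (ii) as you do.
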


\begin{proof}
	(i) $\Rightarrow$ (ii): This is trivial since the pseudo-functor $\bbLtil\colon \DBimod\to \CAT$ preserves equivalences.

	(ii) $\Rightarrow$ (i): Notice that $\bbLT_X$ sends the representable dg module $\catA(\mplaceholder,A)$ to $X(\mplaceholder,A)$. If $\bbLT_X$ is an equivalence, then it preserves compactness, which implies that $X$ is right compact. Hence by \cref{thm:right_compact_bimodules_are_right_adjoint} $X$ has a right adjoint $\DIsbellL(X)$ in $\DBimod$. Let the unit and counit of this adjunction be written by $\eta$ and $\varepsilon$, respectively. It suffices to show that these $2$-cells in $\DBimod$ are isomorphisms. Applying the pseudo-functor $\bbLtil$, we have an adjunction $\bbLT_X=\bbLtil(X)\dashv \bbLtil(\DIsbellL(X))$ with $\bbLtil(\eta)$ unit and $\bbLtil(\varepsilon)$ counit. Now that $\bbLT_X$ is an equivalence, it follows that $\bbLtil(\eta)$ and $\bbLtil(\varepsilon)$ are invertible. Then $\eta$ and $\varepsilon$ are invertible as well, because $\bbLtil$ is locally conservative by \cref{cor:bbLtil_is_locally_conservative}. Thus $X$ is an equivalence in $\DBimod$.
\end{proof}

\section{Quasi-functors}\label{section:quasi-functors}

Recall that we have put $\DBimod\coloneqq\DBimodhp$.

\subsection{The bicategory of quasi-functors}\label{subsection:the_bicategory_of_quasi-functors}

Let $M$ be a right dg module over a small dg category $\catA$. We say that $M$ is \emph{quasi-representable} if it is isomorphic to some representable dg module in the derived category $\Dom(\catA)$. This is equivalent to the condition that there are an object $A\in \catA$ and a quasi-isomorphism $\catA(\mplaceholder,A) \to M$ between dg modules. Let $\qucl{\catA}\subseteq \Dom(\catA)$ denote the full subcategory consisting of all quasi-representable dg modules. As the representables are h-projective, we have an equivalence $H^0(\catA)\simeq \qucl{\catA}$.

We call a dg bimodule $X\colon \catA\slashedrightarrow\catB$ \emph{right quasi-representable}, or a \emph{quasi-functor}, if the dg $\catB$-module $X(\mplaceholder,A)$ is quasi-representable for every $A\in\catA$. Then the left derived functor $\bbL T_X\colon \Dom(\catA) \to \Dom(\catB)$ is restricted to a functor
\[ \res{\bbLT_X}{\qucl{\catA}}\colon \qucl{\catA}\to \qucl{\catB}. \]
Analogously we define \emph{left quasi-representable} dg bimodules as dg bimodules $X$ for which $X(B,\mplaceholder)$ is a quasi-representable dg $\catA^\op$-module for every $B\in \catB$.

Now, quasi-representable dg modules are compact and so quasi-functors are right compact. Therefore \cref{thm:right_compact_bimodules_are_right_adjoint} induces the following.

\begin{corollary}[{\cite[Corollary 6.6]{Genovese:2017}}]\label{cor:quasi-functor_has_right_adjoint}
	A quasi-functor $X\colon \catA\slashedrightarrow\catB$ between locally h-projective small dg categories has a right adjoint $\DIsbellL(X)$ in $\DBimodhp$.
\end{corollary}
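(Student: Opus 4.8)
The plan is to reduce the claim directly to condition~(iii) of \cref{thm:right_compact_bimodules_are_right_adjoint} and then read off everything from that theorem. Since $\catA$ and $\catB$ are locally h-projective, the hypotheses of the theorem are met verbatim, so the implication (iii)~$\Rightarrow$~(i) will furnish the right adjoint, and the closing sentence of that theorem will identify it as $\DIsbellL(X)$. Thus the only thing I need to establish is that every quasi-functor $X\colon \catA\slashedrightarrow\catB$ is right compact, i.e.\ that $X(\mplaceholder,A)\in\Dom(\catB)$ is a compact object for each $A\in\catA$.

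To verify right compactness I would first unwind the definition: $X$ being a quasi-functor means that each $X(\mplaceholder,A)$ is quasi-representable, so there is an isomorphism $X(\mplaceholder,A)\cong \catB(\mplaceholder,B)$ in $\Dom(\catB)$ for some $B\in\catB$. I would then invoke \cref{prop:Dom(catA)_is_compactly_generated}, which states that $\Dom(\catB)$ is compactly generated by the set $\{\catB(\mplaceholder,B)\}_{B\in\catB}$ of representables; in particular each representable $\catB(\mplaceholder,B)$ is a compact object. Because compactness is invariant under isomorphism in $\Dom(\catB)$, it follows that $X(\mplaceholder,A)$ is compact for every $A\in\catA$, which is precisely the statement that $X$ is right compact.

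With right compactness in hand, \cref{thm:right_compact_bimodules_are_right_adjoint} immediately yields that $X$ has a right adjoint in $\DBimod=\DBimodhp$, and that this right adjoint is $\DIsbellL(X)=X_\Dddag I_\catB$. I do not anticipate any genuine obstacle here: all the substantive work---characterizing left adjoints in $\DBimod$ through right compactness and constructing the adjoint as a derived right Kan lifting---has already been carried out in \cref{thm:right_compact_bimodules_are_right_adjoint} and the lemmas feeding into it, notably \cref{prop:H_X_preserves_coproducts} and \cref{lem:bbLT_of_Rift}. The present corollary is therefore a clean specialization, the sole additional input being the elementary observation that quasi-representable dg modules are compact.
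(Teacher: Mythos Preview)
Your proposal is correct and follows essentially the same approach as the paper: the paper simply remarks that quasi-representable dg modules are compact, hence quasi-functors are right compact, and then invokes \cref{thm:right_compact_bimodules_are_right_adjoint}. You have merely made explicit, via \cref{prop:Dom(catA)_is_compactly_generated}, why quasi-representables are compact.
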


This is a homotopical analog of the well-known fact that the dg bimodule $F_\equipsubAst=\catB(\mplaceholder,F \mplaceholder)$ associated with a dg functor $F\colon \catA\to\catB$ is left adjoint to the dg bimodule $\IsbellL(F_\equipsubAst)=F^\equipsubAst=\catB(F\mplaceholder,\mplaceholder)$ in the bicategory $\Bimod$; see \cite[Proposition 7.9.1]{Borceux:1994HoCA1} or \cite[Remark 5.2.1]{Loregian:2021coend_calculus} for the $\Set$-enriched case.

Moreover we characterize those dg bimodules that can be written as $\DIsbellL(X)$ for a quasi-functor $X$.

\begin{lemma}\label{lem:A_ast_is_left_adjoint_to_A^star}
	Let $\catA$ be a locally h-projective dg category and $A\in \catA$ an object, which we view as a dg functor $A\colon \basek\to\catA$. Then the associated dg bimodule $A_\equipsubAst\colon \basek\slashedrightarrow \catA$ is left adjoint to the dg bimodule $A^\equipsubAst\colon \catA\slashedrightarrow\basek$ in $\DBimod$.
\end{lemma}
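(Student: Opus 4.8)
The plan is to obtain the right adjoint almost for free from \cref{cor:quasi-functor_has_right_adjoint} and then to identify it with $A^\equipsubAst$ by a Yoneda computation. First I would unwind the definitions. Viewing the object $A$ as a dg functor $A\colon \basek\to\catA$ and using $\basek^\op\otimes\catA\cong\catA$ and $\catA^\op\otimes\basek\cong\catA^\op$ as in \cref{subsection:the_bicategory_of_dg_bimodules}, the bimodule $A_\equipsubAst\colon \basek\slashedrightarrow\catA$ is exactly the representable right dg $\catA$-module $\catA(\mplaceholder,A)$, while $A^\equipsubAst\colon \catA\slashedrightarrow\basek$ is the representable left dg $\catA$-module $\catA(A,\mplaceholder)$.

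Since $\catA(\mplaceholder,A)$ is representable, hence quasi-representable, the bimodule $A_\equipsubAst$ is a quasi-functor $\basek\slashedrightarrow\catA$. Both $\basek$ and $\catA$ are locally h-projective (the Hom complex of $\basek$ is $\basek$ itself, which is h-projective), so \cref{cor:quasi-functor_has_right_adjoint} applies and already yields that $A_\equipsubAst$ has a right adjoint $\DIsbellL(A_\equipsubAst)$ in $\DBimod$. It therefore remains only to show $\DIsbellL(A_\equipsubAst)\cong A^\equipsubAst$ in $\Dom(\catA,\basek)$, which will give $A_\equipsubAst\dashv A^\equipsubAst$.

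For this identification I would first reduce the derived Isbell dual to the underived one. The representable $\catA(\mplaceholder,A)$ is h-projective, so $\hp A_\equipsubAst = A_\equipsubAst$, and since $\basek$ is locally h-projective \cref{prop:DBimod_is_closed_bicategory} gives $\DIsbellL(A_\equipsubAst)=(A_\equipsubAst)_{\Dddag}I_\catA=(A_\equipsubAst)_\ddag I_\catA=\IsbellL(A_\equipsubAst)$, with no h-injective resolution of $I_\catA$ needed. Then the Yoneda form of \cref{prop:(co)Yoneda_lemma} gives, naturally in $B\in\catA$,
\[ \IsbellL(A_\equipsubAst)(B)=\int_{C\in\catA}\Homcpx\big(\catA(C,A),\catA(C,B)\big)\cong\catA(A,B)=A^\equipsubAst(B), \]
so $\IsbellL(A_\equipsubAst)\cong A^\equipsubAst$ as dg bimodules, a fortiori in $\Dom(\catA,\basek)$. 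Combining this with the previous paragraph finishes the proof.

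I expect the only genuine subtlety to be the reduction $\DIsbellL(A_\equipsubAst)=\IsbellL(A_\equipsubAst)$: one must check that the h-projectivity of the representable, together with the (trivial) local h-projectivity of $\basek$, allows one to drop the h-injective resolution $\hi I_\catA$ in the formula for the derived right Kan lifting, so that the strict co-Yoneda isomorphism computes the derived dual on the nose. Everything else---reading off the two bimodules from the definitions and running the Yoneda calculation---is routine bookkeeping. (Alternatively, one could bypass quasi-functors and invoke \cref{thm:right_compact_bimodules_are_right_adjoint} directly, since $\catA(\mplaceholder,A)$ is compact and hence $A_\equipsubAst$ is right compact.)
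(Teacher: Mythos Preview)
Your proof is correct and follows essentially the same approach as the paper's own proof. The paper invokes \cref{thm:right_compact_bimodules_are_right_adjoint} directly (noting that $\catA(\mplaceholder,A)$ is h-projective and compact) rather than going through \cref{cor:quasi-functor_has_right_adjoint}, and it simply asserts $\DIsbellL(A_\equipsubAst)=\IsbellL(A_\equipsubAst)=A^\equipsubAst$ without spelling out the Yoneda step you wrote down; but since \cref{cor:quasi-functor_has_right_adjoint} is itself a specialization of \cref{thm:right_compact_bimodules_are_right_adjoint}, and the identification $\IsbellL(F_\equipsubAst)=F^\equipsubAst$ is the standard fact you verified, the two arguments are the same in substance.
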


\begin{proof}
	Under the isomorphism $\Comdg(\basek,\catA)\cong \Comdg(\catA)$, $A_\equipsubAst$ corresponds to $\catA(\mplaceholder,A)$, which is h-projective and compact. Thus, by \cref{thm:right_compact_bimodules_are_right_adjoint}, $A_\equipsubAst$ has $\DIsbellL(A_\equipsubAst)=\IsbellL(A_\equipsubAst)=A^\equipsubAst$ as a right adjoint.
\end{proof}

\begin{proposition}[{\cite[Corollary 5.9]{Genovese:2017}}]
	Let $\catA, \catB$ be locally h-projective dg categories.
	If a dg bimodule $X\colon\catA\slashedrightarrow\catB$ is right quasi-representable, then $\DIsbellL(X)$ is left quasi-representable and $\DIsbellR(\DIsbellL(X)) \cong X$.
	Dually, if a dg bimodule $Y\colon\catB\slashedrightarrow\catA$ is left quasi-representable, then $\DIsbellR(Y)$ is right quasi-representable and $\DIsbellL(\DIsbellR(Y)) \cong Y$.
\end{proposition}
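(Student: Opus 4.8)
The plan is to derive everything from the single adjunction $X \dashv \DIsbellL(X)$ together with uniqueness of adjoints. Since $X$ is right quasi-representable it is in particular right compact, so \cref{thm:right_compact_bimodules_are_right_adjoint} provides an adjunction $X \dashv \DIsbellL(X)$ in $\DBimod$. In particular $\DIsbellL(X)$ admits a left adjoint, namely $X$; by \cref{thm:left_compact_bimodules_has_left_adjoint} this both forces $\DIsbellL(X)$ to be left compact and identifies $\DIsbellR(\DIsbellL(X))$ as a left adjoint of $\DIsbellL(X)$. Uniqueness of left adjoints then gives $\DIsbellR(\DIsbellL(X)) \cong X$, which is the second assertion of the first statement. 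The dual (second) bullet follows by the same argument after applying the $2$-equivalence $\DBimod \simeq \DBimod^\op$, $\catA \mapsto \catA^\op$, used in the proof of \cref{thm:left_compact_bimodules_has_left_adjoint}, which interchanges $\DIsbellL$ with $\DIsbellR$ and ``right'' with ``left''. Thus only the left quasi-representability of $\DIsbellL(X)$ really needs work.

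To see that $\DIsbellL(X)$ is left quasi-representable, I would argue object-wise, checking that $\DIsbellL(X)(A,\mplaceholder)$ is a quasi-representable dg $\catB^\op$-module for each $A\in\catA$. Viewing $A$ as a dg functor $A\colon\basek\to\catA$, \cref{lem:A_ast_is_left_adjoint_to_A^star} gives $A_\equipsubAst \dashv A^\equipsubAst$ in $\DBimod$. Composing this with $X \dashv \DIsbellL(X)$ yields an adjunction
\[ X \Dprocomp A_\equipsubAst \;\dashv\; A^\equipsubAst \Dprocomp \DIsbellL(X). \]
Now $X \Dprocomp A_\equipsubAst \cong \bbLT_X(\catA(\mplaceholder,A)) \cong X(\mplaceholder,A)$, and right quasi-representability of $X$ provides an object $B_A\in\catB$ with $X(\mplaceholder,A) \cong \catB(\mplaceholder,B_A) = (B_A)_\equipsubAst$ in $\Dom(\catB)$. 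Since $(B_A)_\equipsubAst \dashv (B_A)^\equipsubAst$, again by \cref{lem:A_ast_is_left_adjoint_to_A^star}, uniqueness of adjoints gives
\[ A^\equipsubAst \Dprocomp \DIsbellL(X) \cong (B_A)^\equipsubAst = \catB(B_A,\mplaceholder). \]

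The final step is to identify the left-hand side with the pointwise module $\DIsbellL(X)(A,\mplaceholder)$. Because $A^\equipsubAst = \catA(A,\mplaceholder)$ is representable as a one-sided module, hence h-projective and so h-flat (\cref{prop:h-projective_implies_h-flat}), the derived composite agrees with the underived one (\cref{lem:precomp_with_right_h-flat_preserves_qism}), and the co-Yoneda lemma (\cref{prop:(co)Yoneda_lemma}) computes
\[ A^\equipsubAst \Dprocomp \DIsbellL(X) \cong \DIsbellL(X)\otimes_\catA \catA(A,\mplaceholder) \cong \DIsbellL(X)(A,\mplaceholder). \]
Combining the last two displays gives $\DIsbellL(X)(A,\mplaceholder) \cong \catB(B_A,\mplaceholder)$, which is representable and hence quasi-representable; so $\DIsbellL(X)$ is left quasi-representable. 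The one genuinely technical point — the ``hard part'' — is precisely this last identification: one must be sure that derived composition with the representable bimodule $A^\equipsubAst$ really recovers the pointwise module, which is exactly where the h-flatness of representables and the co-Yoneda lemma are needed. Everything else is formal manipulation of adjunctions and their uniqueness in the bicategory $\DBimod$.
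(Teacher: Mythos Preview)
Your proof is correct and follows essentially the same route as the paper's: both establish $X\dashv\DIsbellL(X)$, compose with $A_\equipsubAst\dashv A^\equipsubAst$, identify $X\Dprocomp A_\equipsubAst\cong(B_A)_\equipsubAst$ by right quasi-representability, and conclude $A^\equipsubAst\Dprocomp\DIsbellL(X)\cong(B_A)^\equipsubAst\cong\DIsbellL(X)(A,\mplaceholder)$ by uniqueness of adjoints and the pointwise identification. The only cosmetic difference is ordering: you deduce $\DIsbellR(\DIsbellL(X))\cong X$ directly from \cref{thm:left_compact_bimodules_has_left_adjoint} before proving left quasi-representability, whereas the paper does it afterwards, but the content is identical.
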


\begin{proof}
	We may assume that $X$ is h-projective. Then $X\Dprocomp A_\equipsubAst \cong X\procomp A_\equipsubAst \cong X(\mplaceholder,A)$ in $\Dom(\catB)$ and $B^\equipsubAst\Dprocomp X\cong B^\equipsubAst\procomp X\cong X(B,\mplaceholder)$ in $\Dom(\catA^\op)$ since $A_\equipsubAst$ and $B^\equipsubAst$ are h-projective. If $X$ is right quasi-representable, we have $X\dashv \DIsbellL(X)$ in $\DBimod$ by \cref{thm:right_compact_bimodules_are_right_adjoint}. By \cref{lem:A_ast_is_left_adjoint_to_A^star} and composing adjunctions, we obtain $X\Dprocomp A_\equipsubAst \dashv A^\equipsubAst\Dprocomp\DIsbellL(X)$. Now $X\Dprocomp A_\equipsubAst\cong X(\mplaceholder,A)$ is quasi-representable, so it can be written as $B_\equipsubAst$ for some $B\in\catB$. Therefore, by \cref{lem:A_ast_is_left_adjoint_to_A^star} again, we have $B^\equipsubAst\cong A^\equipsubAst\Dprocomp\DIsbellL(X)\cong \DIsbellL(X)(A,\mplaceholder)$, which shows that $\DIsbellL(X)$ is left quasi-representable. 
	Then, we have $\DIsbellR(\DIsbellL(X))\dashv \DIsbellL(X)$ because of \cref{thm:left_compact_bimodules_has_left_adjoint}, and hence $\DIsbellR(\DIsbellL(X)) \cong X$ by the uniqueness of left adjoints.
	
	The dual statement follows similarly. 
\end{proof}

As a corollary of the previous proposition, we see that a dg bimodule $Y\colon \catB \slashedrightarrow \catA$ is left quasi-representable if and only if it is right adjoint to a quasi-functor in the bicategory $\DBimod$.

It is easily seen that the identities $I_\catA$ and the derived compositions $Y\Dprocomp X$ of quasi-functors are again quasi-functors.
We write by $\DBimodrqr$ the sub-bicategory of $\DBimod$ determined by quasi-functors. We give a characterization of equivalences in $\DBimodrqr$.

\begin{theorem}\label{thm:equivalence_in_DBimodrqr}
	Let $\catA$ and $\catB$ be locally h-projective dg categories and $X\colon \catA\slashedrightarrow\catB$ be a quasi-functor. Then the following are equivalent.
	\begin{enumerate}[label=\equivitem]
		\item $X$ is an equivalence in the bicategory $\DBimodrqr$ of quasi-functors.
		\item Both the left derived functor $\bbLT_X\colon \Dom(\catA)\to\Dom(\catB)$ and the restricted one $\res{\bbLT_X}{\qucl{\catA}}\colon \qucl{\catA}\to \qucl{\catB}$ are equivalences of categories.\footnotemark
	\end{enumerate}
\end{theorem}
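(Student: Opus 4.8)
The plan is to deduce both implications from the characterization of equivalences in the ambient bicategory $\DBimod$ (\cref{thm:equivalence_in_DBimod}), together with the observation that the inclusion $\DBimodrqr\hookrightarrow\DBimod$ is locally full, so that the only genuinely new content concerns the behaviour on quasi-representables. Throughout I use that $\catA,\catB$ are locally h-projective, hence locally h-flat, so that $\bbLtil\colon\DBimod\to\CAT$ is an honest pseudo-functor (\cref{prop:bbLtilde_becomes_pseudo-functor}) and is locally conservative (\cref{cor:bbLtil_is_locally_conservative}).

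For (i) $\Rightarrow$ (ii) I would start from a quasi-inverse $Y\colon\catB\slashedrightarrow\catA$ of $X$ in $\DBimodrqr$, which is in particular a quasi-inverse in $\DBimod$; \cref{thm:equivalence_in_DBimod} then shows that $\bbLT_X$ is an equivalence. Since $X$ and $Y$ are both quasi-functors, the functors $\bbLT_X$ and $\bbLT_Y$ restrict to $\qucl{\catA}\to\qucl{\catB}$ and $\qucl{\catB}\to\qucl{\catA}$ respectively. Applying $\bbLtil$ to the invertible $2$-cells $Y\Dprocomp X\cong I_\catA$ and $X\Dprocomp Y\cong I_\catB$, and using the pseudo-functoriality isomorphisms $\bbLtil(Y)\circ\bbLtil(X)\cong\bbLtil(Y\Dprocomp X)$ (valid since all objects are locally h-flat), yields natural isomorphisms $\bbLT_Y\circ\bbLT_X\cong\id_{\Dom(\catA)}$ and $\bbLT_X\circ\bbLT_Y\cong\id_{\Dom(\catB)}$. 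These restrict to the full subcategories of quasi-representables, exhibiting $\res{\bbLT_X}{\qucl{\catA}}$ as an equivalence with inverse $\res{\bbLT_Y}{\qucl{\catB}}$.

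The substantive direction is (ii) $\Rightarrow$ (i). By \cref{thm:equivalence_in_DBimod}, the hypothesis that $\bbLT_X$ is an equivalence already makes $X$ an equivalence in $\DBimod$; let $Y$ be a quasi-inverse there, so that $\bbLT_Y\cong(\bbLT_X)^{-1}$ via $\bbLtil$. The key point is to show that this $Y$ is automatically a quasi-functor, for then, since $\DBimodrqr$ is locally full and the identities together with the composites $Y\Dprocomp X$, $X\Dprocomp Y$ are quasi-functors, the invertible unit and counit already live in $\DBimodrqr$, making $X$ an equivalence there. To see that $Y$ is right quasi-representable, recall that this is equivalent to $\bbLT_Y$ preserving quasi-representables. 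Given $N\in\qucl{\catB}$, the essential surjectivity of the restriction $\res{\bbLT_X}{\qucl{\catA}}$ supplied by (ii) produces $M\in\qucl{\catA}$ with $\bbLT_X(M)\cong N$, whence $\bbLT_Y(N)\cong\bbLT_Y\bbLT_X(M)\cong M\in\qucl{\catA}$. Thus $\bbLT_Y$ maps $\qucl{\catB}$ into $\qucl{\catA}$, so $Y$ is a quasi-functor.

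I expect the main obstacle to be exactly this last step. An equivalence $\bbLT_X$ of full derived categories need not restrict to an equivalence on quasi-representables --- this is the distinction between a derived Morita equivalence and a quasi-equivalence --- so condition (ii) cannot be weakened to asking only that $\bbLT_X$ be an equivalence. It is precisely the essential surjectivity of $\res{\bbLT_X}{\qucl{\catA}}$ that forces the $\DBimod$-quasi-inverse $Y$ (equivalently $\DIsbellL(X)$, by \cref{prop:H_X_preserves_coproducts}~(3)) to be right quasi-representable, and hence to lie in $\DBimodrqr$.
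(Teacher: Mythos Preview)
Your argument is correct and follows essentially the same approach as the paper: both use \cref{thm:equivalence_in_DBimod} to obtain a quasi-inverse in $\DBimod$ (the paper names it $\DIsbellL(X)$, you take a generic $Y$), and then use essential surjectivity of $\res{\bbLT_X}{\qucl{\catA}}$ to show this quasi-inverse is right quasi-representable, hence lies in $\DBimodrqr$. The paper leaves (i) $\Rightarrow$ (ii) to the reader and compresses (ii) $\Rightarrow$ (i) into two sentences, but your expanded version matches it step for step.
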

\footnotetext{Note that a dg bimodule satisfying the condition (ii) is called a \emph{quasi-equivalence} in \cite[\S7.2]{Keller:1994Deriving}.}
\begin{proof}
	(i) $\Rightarrow$ (ii): The proof is easy and left to the reader.

	(ii) $\Rightarrow$ (i): Since $X$ is an equivalence in $\DBimod$ as well, $\DIsbellL(X)$ is its quasi-inverse.  The fact that $\bbLT_X$ induces an equivalence $\qucl{\catA}\xrightarrow{\simeq} \qucl{\catB}$ implies that $\DIsbellL(X)$ is also a quasi-functor. Hence $X$ is an equivalence of $\DBimodrqr$.
\end{proof}

For a dg functor $F\colon \catA\to \catB$, the associated dg bimodule $F_\equipsubAst\colon \catA\slashedrightarrow\catB$ is clearly a quasi-functor. When $F$ is a quasi-equivalence, $F_\equipsubAst$ is an equivalence in $\DBimodrqr$; see \cite[\S7.2, Example]{Keller:1994Deriving}. The next proposition says that the converse holds, which is also mentioned at the end of \cite[\S2.3]{Genovese-Lowen-VandenBergh:2022derived_deformation:arXiv} without proof.

\begin{proposition}\label{prop:quasi-equivalence_is_just_equivalence_in_DBimodrqr}
	Let $\catA$ and $\catB$ be locally h-projective dg categories and $F\colon \catA\to \catB$ be a dg functor. For the associated quasi-functor $F_\equipsubAst\colon\catA\slashedrightarrow\catB$, the following are equivalent.
	\begin{enumerate}[label=\equivitem]
		\item $F$ is a quasi-equivalence (in the sense of \cref{def:quasi-equivalence}).
		\item $F_\equipsubAst$ is an equivalence in the bicategory $\DBimodrqr$ of quasi-functors.
		\item Both the left derived functor $\bbLT_{F_\equipsubAst}\colon \Dom(\catA)\to\Dom(\catB)$ and the restricted one $\res{\bbLT_{F_\equipsubAst}}{\qucl{\catA}}\colon \qucl{\catA}\to \qucl{\catB}$ are equivalences of categories.
	\end{enumerate}
\end{proposition}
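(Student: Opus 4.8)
The plan is to split the three conditions into two halves. The equivalence of (ii) and (iii) is nothing but \cref{thm:equivalence_in_DBimodrqr} applied to the quasi-functor $X=F_\equipsubAst$, so the genuine content lies in the equivalence of (i) and (iii). The bridge between the dg-functorial and the derived pictures is the identification of the restricted functor with $H^0(F)$: under the canonical equivalences $H^0(\catA)\simeq\qucl{\catA}$ and $H^0(\catB)\simeq\qucl{\catB}$ sending an object $A$ to the representable $\catA(\mplaceholder,A)$, the functor $\res{\bbLT_{F_\equipsubAst}}{\qucl{\catA}}$ is naturally isomorphic to $H^0(F)$. This is because $\bbLT_{F_\equipsubAst}$ carries $\catA(\mplaceholder,A)$ to $F_\equipsubAst(\mplaceholder,A)=\catB(\mplaceholder,FA)$ and is compatible with the Yoneda action on morphisms. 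First I would record this identification; it is routine, but it is exactly what makes the comparison with \cref{def:quasi-equivalence} possible.

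\textbf{The direction (iii) $\Rightarrow$ (i).} This is purely formal. By \cref{cor:Morita_equivalence_functor_criterion}, the hypothesis that $\bbLT_{F_\equipsubAst}$ is an equivalence already forces $F$ to be quasi-fully faithful. The hypothesis that $\res{\bbLT_{F_\equipsubAst}}{\qucl{\catA}}$ is an equivalence says, via the identification above, that $H^0(F)$ is an equivalence, hence in particular essentially surjective; that is, $F$ is quasi-essentially surjective. Together these two properties mean precisely that $F$ is a quasi-equivalence.

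\textbf{The direction (i) $\Rightarrow$ (iii).} Assume $F$ is a quasi-equivalence. The restriction condition is immediate: $H^0(F)$ is then an equivalence of categories, so the same holds for $\res{\bbLT_{F_\equipsubAst}}{\qucl{\catA}}$. To show that $\bbLT_{F_\equipsubAst}$ is an equivalence I would again invoke \cref{cor:Morita_equivalence_functor_criterion}. Quasi-full faithfulness of $F$ is part of the hypothesis, so it only remains to verify that $\{\catB(\mplaceholder,FA)\}_{A\in\catA}$ compactly generates $\Dom(\catB)$. Each $\catB(\mplaceholder,FA)$ is compact by \cref{prop:Dom(catA)_is_compactly_generated}, so the point is generation. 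Since $H^0(F)$ is essentially surjective, every object $B\in\catB$ satisfies $B\cong FA$ in $H^0(\catB)$ for some $A$, and the Yoneda embedding $H^0(\catB)\hookrightarrow\Dom(\catB)$ carries this to an isomorphism $\catB(\mplaceholder,B)\cong\catB(\mplaceholder,FA)$ in $\Dom(\catB)$; hence the families $\{\catB(\mplaceholder,FA)\}_{A}$ and $\{\catB(\mplaceholder,B)\}_{B}$ have the same right orthogonal. As the latter compactly generates $\Dom(\catB)$ by \cref{prop:Dom(catA)_is_compactly_generated}, so does the former, which gives (iii).

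\textbf{Main obstacle.} The only steps with genuine content are the compact-generation argument in the last direction and making the identification of $\res{\bbLT_{F_\equipsubAst}}{\qucl{\catA}}$ with $H^0(F)$ precise; everything else is a bookkeeping application of \cref{thm:equivalence_in_DBimodrqr} and \cref{cor:Morita_equivalence_functor_criterion}.
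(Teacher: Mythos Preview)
Your proof is correct and follows essentially the same route as the paper's: both use \cref{thm:equivalence_in_DBimodrqr} for (ii) $\Leftrightarrow$ (iii) and reduce (i) $\Leftrightarrow$ (iii) to \cref{cor:Morita_equivalence_functor_criterion} together with the identification of $\res{\bbLT_{F_\equipsubAst}}{\qucl{\catA}}$ with $H^0(F)$. Your write-up is simply more explicit, spelling out the compact-generation argument (via $\catB(\mplaceholder,B)\cong\catB(\mplaceholder,FA)$ in $\Dom(\catB)$) that the paper compresses into a single clause.
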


\begin{proof}
	(ii) $\Leftrightarrow$ (iii): It follows from \cref{thm:equivalence_in_DBimodrqr}.

	(i) $\Leftrightarrow$ (iii): Quasi-essential surjectivity of $F$ is equivalent to that of $\res{\bbLT_{F_\equipsubAst}}{\qucl{\catA}}\colon \qucl{\catA}\to\qucl{\catB}$. When $F$ is quasi-essentially surjective, $\{\catB(\mplaceholder,FA)\}_{A\in\catA}$ compactly generates $\Dom(\catB)$. Hence in such a case, we can see from \cref{cor:Morita_equivalence_functor_criterion} that $F$ is quasi-fully faithful if and only if $\bbLT_X$ is an equivalence.
\end{proof}

Let $X\colon \catA\slashedrightarrow\catB$ be a quasi-functor between locally h-projective small dg categories. We define $X$ to be \emph{fully faithful} if the unit of the adjunction $X \dashv \DIsbellL(X)$ is invertible.

\begin{proposition}\label{prop:fully_faithful_quasi-functor}
	Let $\catA$ and $\catB$ be locally h-projective dg categories and $X\colon \catA\slashedrightarrow\catB$ be a quasi-functor. Then the following are equivalent.
	\begin{enumerate}[label=\equivitem]
		\item $X$ is fully faithful as a quasi-functor.
		\item The left derived functor $\bbLT_{X}\colon \Dom(\catA)\to\Dom(\catB)$ is fully faithful.
	\end{enumerate}
\end{proposition}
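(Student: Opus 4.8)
The plan is to transport the defining condition of (i) through the locally conservative pseudo-functor $\bbLtil\colon \DBimod\to\CAT$, and to combine this with the elementary fact that a left adjoint is fully faithful precisely when its unit is invertible. Since $X$ is a quasi-functor, \cref{cor:quasi-functor_has_right_adjoint} supplies a right adjoint $\DIsbellL(X)$ of $X$ in $\DBimod$; write $\eta\colon I_\catA\Rightarrow \DIsbellL(X)\Dprocomp X$ for the unit of this adjunction. By definition, condition (i) says exactly that $\eta$ is invertible.

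First I would apply $\bbLtil$ to the adjunction $X\dashv\DIsbellL(X)$. As $\bbLtil$ is a pseudo-functor, it carries this to an adjunction $\bbLT_X\dashv\bbLtil(\DIsbellL(X))$ in $\CAT$ whose unit is $\bbLtil(\eta)$ up to the (invertible) coherence constraints of $\bbLtil$. Comparing with the adjunction $\bbLT_X\dashv\bbRH_X$ and using uniqueness of adjoints, we have $\bbLtil(\DIsbellL(X))\cong\bbRH_X$, and the two units correspond under this isomorphism; in particular the unit of $\bbLT_X\dashv\bbRH_X$ is invertible if and only if $\bbLtil(\eta)$ is.

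Next I would invoke the standard characterization: for an adjunction $L\dashv R$, the functor $L$ is fully faithful if and only if the unit is a natural isomorphism. Applied to $\bbLT_X\dashv\bbRH_X$, this says that (ii) holds if and only if the unit of $\bbLT_X\dashv\bbRH_X$, equivalently $\bbLtil(\eta)$, is invertible. Finally, local conservativity of $\bbLtil$ (\cref{cor:bbLtil_is_locally_conservative}) gives that $\bbLtil(\eta)$ is invertible if and only if $\eta$ itself is. Chaining these equivalences yields (i) $\iff$ $\eta$ invertible $\iff$ $\bbLtil(\eta)$ invertible $\iff$ (ii).

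The only delicate point is the bookkeeping in the second step: identifying the unit of the image adjunction with $\bbLtil(\eta)$ through the coherence isomorphisms of the pseudo-functor, and matching it with the unit of $\bbLT_X\dashv\bbRH_X$. Since all the comparison isomorphisms involved are invertible and we only ever need to track invertibility, this causes no genuine difficulty; no new computation with (co)ends or resolutions is required beyond what \cref{cor:bbLtil_is_locally_conservative} already provides.
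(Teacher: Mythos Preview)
Your proof is correct and follows essentially the same approach as the paper: apply the pseudo-functor $\bbLtil$ to the adjunction $X\dashv\DIsbellL(X)$, use that a left adjoint is fully faithful iff its unit is invertible, and invoke local conservativity of $\bbLtil$ (\cref{cor:bbLtil_is_locally_conservative}). The detour through comparing $\bbLtil(\DIsbellL(X))$ with $\bbRH_X$ is unnecessary---the paper works directly with the adjunction $\bbLT_X\dashv\bbLtil(\DIsbellL(X))$---but it does no harm.
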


\begin{proof}
	Let $\eta$ be the unit of $X \dashv \DIsbellL(X)$. Then $\bbLtil(\eta)$ is the unit of the adjunction $\bbLT_X=\bbLtil(X)\dashv \bbLtil(\DIsbellL(X))$. By \cref{cor:bbLtil_is_locally_conservative}, $\eta$ is invertible if and only if $\bbLtil(\eta)$ is a natural isomorphism, which implies that $X$ is fully faithful if and only if the left adjoint $\bbLT_X$ is fully faithful.
\end{proof}

\begin{proposition}
	Let $\catA$ and $\catB$ be locally h-projective dg categories and $F\colon \catA\to \catB$ be a dg functor. For the associated quasi-functor $F_\equipsubAst\colon\catA\slashedrightarrow\catB$, the following are equivalent.
	\begin{enumerate}[label=\equivitem]
		\item $F$ is quasi-fully faithful.
		\item $F_\equipsubAst$ is fully faithful as a quasi-functor.
		\item The left derived functor $\bbLT_{F_\equipsubAst}\colon \Dom(\catA)\to\Dom(\catB)$ is fully faithful.
	\end{enumerate}
\end{proposition}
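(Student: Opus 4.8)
The plan is to prove the two equivalences $\text{(ii)}\Leftrightarrow\text{(iii)}$ and $\text{(i)}\Leftrightarrow\text{(iii)}$ separately, the former being immediate and the latter reducing to a Yoneda computation. The equivalence $\text{(ii)}\Leftrightarrow\text{(iii)}$ is nothing but \cref{prop:fully_faithful_quasi-functor} applied to the quasi-functor $X=F_\equipsubAst$, which is manifestly right quasi-representable; no further work is needed there.

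For $\text{(i)}\Leftrightarrow\text{(iii)}$, I would appeal to \cref{prop:lemma_for_compactly_generated_tria_cat}. Recall that $\Dom(\catA)$ is compactly generated by the representables $\{\catA(\mplaceholder,A)\}_{A\in\catA}$ by \cref{prop:Dom(catA)_is_compactly_generated}, that $\bbLT_{F_\equipsubAst}$ is a triangulated functor preserving coproducts (being a left adjoint), and that, since the representable $\catA(\mplaceholder,A)$ is h-projective, one has $\bbLT_{F_\equipsubAst}(\catA(\mplaceholder,A))=F_\equipsubAst(\mplaceholder,A)=\catB(\mplaceholder,FA)$, which is compact. Hence \cref{prop:lemma_for_compactly_generated_tria_cat}~(2), together with the trivial converse that a fully faithful functor restricts to a bijection on Hom-groups of generators, shows that $\bbLT_{F_\equipsubAst}$ is fully faithful if and only if, for all $A,A'\in\catA$ and all $n\in\Z$, the map
\[ \bbLT_{F_\equipsubAst}\colon \Hom_{\Dom(\catA)}\big(\catA(\mplaceholder,A),\catA(\mplaceholder,A')[n]\big) \to \Hom_{\Dom(\catB)}\big(\catB(\mplaceholder,FA),\catB(\mplaceholder,FA')[n]\big) \]
is bijective.

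It then remains to identify this map. Exactly as in the proof of \cref{prop:bbLT_X_is_equiv}, h-projectivity of the representables furnishes natural isomorphisms $\Hom_{\Dom(\catA)}(\catA(\mplaceholder,A),\catA(\mplaceholder,A')[n])\cong H^n(\catA(A,A'))$ and $\Hom_{\Dom(\catB)}(\catB(\mplaceholder,FA),\catB(\mplaceholder,FA')[n])\cong H^n(\catB(FA,FA'))$; moreover, since the dg Yoneda embedding is dg fully faithful, the functor $\bbLT_{F_\equipsubAst}$ restricted to representables agrees with $F$ under these identifications, so the displayed map becomes $H^n(F_{AA'})$. Consequently its bijectivity for all $n$ is equivalent to $F_{AA'}\colon \catA(A,A')\to\catB(FA,FA')$ being a quasi-isomorphism for all $A,A'$, which is precisely condition (i).

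The only step requiring care is this last identification of the induced map with $H^n(F_{AA'})$, but it is a direct consequence of the dg Yoneda lemma together with the fact that $T_{F_\equipsubAst}$ sends representables to representables compatibly with $F$; I do not expect any genuine obstacle. In effect, the statement is the ``fully faithful'' half of \cref{cor:Morita_equivalence_functor_criterion}, stripped of the essential-surjectivity (compact generation) hypothesis, and the whole argument is a light repackaging of machinery already assembled in the excerpt.
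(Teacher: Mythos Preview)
Your proof is correct and follows essentially the same route as the paper: both establish $\text{(ii)}\Leftrightarrow\text{(iii)}$ via \cref{prop:fully_faithful_quasi-functor}, and both reduce $\text{(i)}\Leftrightarrow\text{(iii)}$ to the Yoneda computation $\Hom_{\Dom(\catA)}(\catA(-,A),\catA(-,A')[n])\cong H^n(\catA(A,A'))$ together with \cref{prop:lemma_for_compactly_generated_tria_cat}. The paper separates the two directions of $\text{(i)}\Leftrightarrow\text{(iii)}$ slightly more explicitly, but the content is identical.
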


\begin{proof}
	(ii) $\Leftrightarrow$ (iii): It follows from \cref{prop:fully_faithful_quasi-functor}.
	
	(iii) $\Rightarrow$ (i): Since the representables $\catA(-,A)$ are h-projective, we have
	\begin{align*}
		\Hom_{\Dom(\catA)}\big( \catA(-,A),\catA(-,A')[n] \big) &\cong \Hom_{\Kom(\catA)}\big( \catA(-,A),\catA(-,A')[n] \big) \\
		&\cong H^n\Big( \Comdg(\catA)\big( \catA(-,A),\catA(-,A') \big) \Big) \\
		&\cong H^n(\catA(A,A')).
	\end{align*}
	Thus since $\bbLT_{F_\equipsubAst}(\catA(-,A))=\catB(-,FA)$, the fully faithfulness of $\bbLT_{F_\equipsubAst}$ implies that all $H^n(\catA(A,A')) \to H^n(\catB(FA,FA'))$ are isomorphisms.
	
	(i) $\Rightarrow$ (iii): It follows from \cref{prop:lemma_for_compactly_generated_tria_cat} and the above isomorphisms.
\end{proof}

\subsection{Adjunctions of quasi-functors}\label{subsection:adjunction_of_quasi-functors}

In this subsection, we assume that all dg categories are locally h-projective, and $\dgCat$ and $\Bimod$ denote the full sub-(bi)categories of locally h-projective ones.

We refer to an adjunction in the bicategory $\DBimodrqr$ of quasi-functors as an \emph{adjunction of quasi-functors}; see \cite[\S7]{Genovese:2017}. If a quasi-functor $X$ is left adjoint to a quasi-functor $Y$, then we have an adjunction $X \dashv Y$ in the bicategory $\DBimod$. Therefore $X\dashv Y$ holds in $\DBimodrqr$ if and only if $\DIsbellL(X) \cong Y$ holds in $\DBimod$. Summarizing, we have:

\begin{proposition}[{\cite[Proposition 7.1]{Genovese:2017}}]\label{prop:characterization_of_adjointness_for_quasi-functors}
	A quasi-functor $X$ has a right adjoint in $\DBimodrqr$ if and only if the dg bimodule $\DIsbellL(X)$ is right quasi-representable.
	A quasi-functor $Y$ has a left adjoint in $\DBimodrqr$ if and only if $Y$ is left quasi-representable.
\end{proposition}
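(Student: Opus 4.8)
The plan is to reduce both claims to statements about adjunctions in the ambient bicategory $\DBimod$, exploiting that $\DBimodrqr$ is a \emph{locally full} sub-bicategory: its Hom categories $\Dom(\catA,\catB)^\rqr$ are full subcategories of $\Dom(\catA,\catB)$, it contains all identities $I_\catA$, and it is closed under the derived composition $\Dprocomp$ because composites of quasi-functors are again quasi-functors. First I would record the resulting transfer principle: a pair of quasi-functors $X,Y$ forms an adjunction $X\dashv Y$ in $\DBimodrqr$ if and only if it does so in $\DBimod$. The forward implication is automatic, since the inclusion $\DBimodrqr\hookrightarrow\DBimod$ is a pseudo-functor and pseudo-functors preserve adjunctions (equivalently, the unit, counit, and triangle identities of a $\DBimodrqr$-adjunction are literally the same data in $\DBimod$). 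For the converse, given $X\dashv Y$ in $\DBimod$ with both $X$ and $Y$ quasi-functors, the unit $\eta\colon I_\catA\Rightarrow Y\Dprocomp X$ and counit $\varepsilon\colon X\Dprocomp Y\Rightarrow I_\catB$ have source and target that are composites of quasi-functors, hence quasi-functors; local fullness then places $\eta,\varepsilon$ in $\DBimodrqr$, and the triangle identities, being equations of $2$-cells, persist.

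With this in hand the first statement is short. Since $X$ is a quasi-functor, \cref{cor:quasi-functor_has_right_adjoint} gives $X\dashv \DIsbellL(X)$ in $\DBimod$. If $X$ has a right adjoint $Y$ in $\DBimodrqr$, then $X\dashv Y$ in $\DBimod$ by the transfer principle, so uniqueness of right adjoints yields $Y\cong\DIsbellL(X)$; as $Y$ is a quasi-functor, $\DIsbellL(X)$ is right quasi-representable. Conversely, if $\DIsbellL(X)$ is right quasi-representable, then the adjunction $X\dashv\DIsbellL(X)$ in $\DBimod$ lies between quasi-functors and so restricts to $\DBimodrqr$, exhibiting $\DIsbellL(X)$ as a right adjoint of $X$ there.

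For the second statement I would invoke the corollary recorded just above the proposition, namely that a dg bimodule $Y$ is left quasi-representable if and only if it is right adjoint to a quasi-functor in $\DBimod$. Indeed, $Y$ admits a left adjoint in $\DBimodrqr$ precisely when there is a quasi-functor $Z$ with $Z\dashv Y$ in $\DBimodrqr$, which by the transfer principle is equivalent to $Z\dashv Y$ in $\DBimod$ for some quasi-functor $Z$ --- that is, to $Y$ being right adjoint to a quasi-functor in $\DBimod$. Alternatively one can argue dually to the first statement, using \cref{thm:left_compact_bimodules_has_left_adjoint} to produce the left adjoint $\DIsbellR(Y)$ in $\DBimod$ together with the preceding proposition \cite[Corollary 5.9]{Genovese:2017}, which ensures $\DIsbellR(Y)$ is a quasi-functor when $Y$ is left quasi-representable.

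The only genuinely non-formal input is \cref{cor:quasi-functor_has_right_adjoint} (equivalently \cref{thm:right_compact_bimodules_are_right_adjoint}), which guarantees that the ambient right adjoint exists and is computed by derived Isbell duality; everything else is the bookkeeping of the transfer principle and uniqueness of adjoints. Accordingly, I expect the main point to verify carefully is the closure of quasi-functors under $\Dprocomp$ together with the local fullness of $\DBimodrqr$, so that the unit and counit of the $\DBimod$-adjunction really are $2$-cells of $\DBimodrqr$; this is precisely what legitimizes restricting the adjunction rather than merely asserting it.
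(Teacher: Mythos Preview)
Your proposal is correct and follows essentially the same approach as the paper. The paper's own argument is just the sentence immediately preceding the proposition: it observes that an adjunction $X\dashv Y$ in $\DBimodrqr$ is the same as one in $\DBimod$ (your ``transfer principle''), and therefore $X\dashv Y$ in $\DBimodrqr$ holds if and only if $\DIsbellL(X)\cong Y$, using \cref{cor:quasi-functor_has_right_adjoint} and uniqueness of adjoints; your write-up simply makes the local-fullness step and the second statement's appeal to the preceding corollary on left quasi-representables explicit.
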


The notion of adjoint dg functors is defined by adjoint morphisms in the $2$-category $\dgCat$. In this subsection, we investigate the relationship between adjoint quasi-functors and adjoint dg functors which has not been explored in \cite{Genovese:2017}.

Recall that the diagonal dg bimodule $I_\catA$ is the identity in both $\Bimod$ and $\DBimod$. Also we recall that the composition $\Dprocomp$ of $\DBimod$ is given by the total right derived functor of the composition $\procomp$ of $\Bimod$, as follows.
\[
\begin{tikzcd}
	\Com(\catB,\catC) \times \Com(\catA,\catB) \arrow{r}{\procomp}[name=B2,below,pos=0.4]{} \arrow{d}[swap]{} & \Com(\catA,\catC) \arrow{d}{} \\
	\Kom(\catB,\catC) \times \Kom(\catA,\catB) \arrow{r}{\procomp}
	\arrow{r}[name=B1,below,pos=0.45]{}[name=A2,above,pos=0.4]{} \arrow{d}[swap]{} & \Kom(\catA,\catC) \arrow{d}{} \\
	\Dom(\catB,\catC) \times \Dom(\catA,\catB) \arrow[dashed]{r}{\Dprocomp}[name=A1,above,pos=0.45]{} & \Dom(\catA,\catC)\rlap{.}
	\arrow[Rightarrow,from=A1,to=B1,shorten <=2ex,shorten >=1.5ex," "]
	\arrow[phantom,from=A2,to=B2,"\circlearrowleft"]
\end{tikzcd}
\]
%Note that the commutative square in the above diagram becomes an absolute Kan extension.
Hence there is a natural morphism $\mu\colon Y\Dprocomp X =\hp Y \procomp \hp X \to Y\procomp X$ in $\Dom(\catA,\catC)$, which is given by using h-projective resolutions of dg bimodules.

\begin{lemma}
	The following data define a normal lax functor $\Gamma\colon \Bimod\to\DBimod$:
	\begin{itemize}
		\item the map $\obj(\Bimod)\to \obj(\DBimod)$ is the identity;
		\item for each small dg categories $\catA,\catB$, $\Gamma_{\catA,\catB}\colon\Com(\catA,\catB)\to \Dom(\catA,\catB)$ is defined to be the composite $\Com(\catA,\catB)\to \Kom(\catA,\catB) \xrightarrow{\localization}\Dom(\catA,\catB)$;
		\item the lax functoriality constraint $\Gamma(Y)\Dprocomp\Gamma(X) \Rightarrow \Gamma(Y\procomp X)$ is defined as the natural morphism $\mu\colon Y\Dprocomp X =\hp Y \procomp \hp X \to Y\procomp X$ obtained above;
		\item the lax unity constraint is the identity.
	\end{itemize}
\end{lemma}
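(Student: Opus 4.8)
The plan is to verify directly the axioms for a normal lax functor (in the sense of \cite{Johnson-Yau:2021}): that the hom-assignments are functors, that the composition constraint $\mu$ is natural, and that the unit and associativity coherence diagrams commute. Normality is immediate, since the unit constraint is the identity $2$-cell and $\Gamma(I_\catA)=\localization(I_\catA)=I_\catA$. Each $\Gamma_{\catA,\catB}=\localization\colon\Com(\catA,\catB)\to\Dom(\catA,\catB)$ is a functor by construction. To handle the constraint, I would write $p_X\colon\hp X\to X$ for the component at $X$ of the counit of the adjunction $\hp\dashv\localization$ (the h-projective resolution), which is natural in $X$. Under the identification $\Gamma(Y)\Dprocomp\Gamma(X)=\localization(\hp Y\procomp\hp X)$, the constraint is $\mu_{Y,X}=\localization(p_Y\procomp p_X)$; its naturality in the pair $(Y,X)$ follows at once from the naturality of $p$ together with the bifunctoriality of $\procomp$ on $\Kom$.

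For the unit coherence I would use this factorization of $\mu$. Since the unit constraint is the identity and $I_\catA$ is strictly the identity $1$-cell in both $\Bimod$ and $\DBimod$, the left unit triangle for a $1$-cell $X\colon\catA\slashedrightarrow\catB$ collapses to the assertion that $\localization(\lambda^{\Bimod}_X)\circ\mu_{I_\catB,X}$ equals the left unitor $\lambda^{\DBimod}$ of $\DBimod$, where $\lambda^{\Bimod}$ is the (co-Yoneda) unitor of $\Bimod$. Recalling that $\lambda^{\DBimod}$ is built from $p$ and $\lambda^{\Bimod}$ exactly as in the earlier proposition establishing $I_\catB\Dprocomp X\cong X$, both sides unwind to $\localization$ of a chain map out of $\hp I_\catB\procomp\hp X$, and their equality is precisely the naturality of $\lambda^{\Bimod}$ with respect to $p_X$. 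The right unit triangle is symmetric.

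The associativity coherence is the crux, and the enabling fact is \cref{lem:tensor_of_h-proj_is_h-proj}: since all dg categories here are locally h-projective, a tensor product of h-projective bimodules is again h-projective. This has two consequences. First, the derived composites that appear need no re-resolution: one may take $\hp(\Gamma Y\Dprocomp\Gamma X)=\hp Y\procomp\hp X$, and likewise for longer strings, so both ways around the associativity diagram have source $\localization\big((\hp Z\procomp\hp Y)\procomp\hp X\big)$. Second, by the very construction of the associativity isomorphism of $\DBimod$ recalled earlier, $a^{\DBimod}$ is nothing but $\localization(a^{\Bimod})$ on these h-projective representatives, where $a^{\Bimod}$ is the associator of $\procomp$. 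Granting these, I would chase both legs: using the bifunctoriality of $\procomp$ and the functoriality of the resolution $\hp$ (which yields $p_{W}\circ\hp(\mu)\simeq$ the corresponding tensor of resolutions), the left-bottom leg reduces to $\localization\big(a^{\Bimod}_{Z,Y,X}\circ((p_Z\procomp p_Y)\procomp p_X)\big)$ and the top-right leg to $\localization\big((p_Z\procomp(p_Y\procomp p_X))\circ a^{\Bimod}_{\hp Z,\hp Y,\hp X}\big)$. These agree by the naturality of $a^{\Bimod}$ applied to the triple $(p_Z,p_Y,p_X)$, completing the check.

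The main obstacle will be the bookkeeping around the mixed terms such as $\Gamma(Z\procomp Y)\Dprocomp\Gamma X$. Here the underived product $Z\procomp Y$ must be resolved by some $\hp(Z\procomp Y)$, which is genuinely different from $\hp Z\procomp\hp Y$, since $\mu$ is not invertible in general (the bimodules $X,Y,Z$ themselves need not be h-flat, even over locally h-projective categories). The point that reconciles the two legs is the homotopy $p_{Z\procomp Y}\circ\hp(\mu_{Z,Y})\simeq p_Z\procomp p_Y$, that is, the functoriality of the left-adjoint resolution $\hp$ applied to the $2$-cell $\mu_{Z,Y}$; once this and the analogous identity for $\mu_{Y,X}$ are in hand, the remaining steps are the formal naturality and bifunctoriality statements above. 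Everything else is routine, so I would present the naturality and unit checks briefly and devote the bulk of the argument to pinning down these resolution choices and the identification $a^{\DBimod}=\localization(a^{\Bimod})$.
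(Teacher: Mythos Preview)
Your approach is correct and is precisely what the paper expects: its own ``proof'' reads in full ``The proof is left to the reader.'' You have carried out that exercise, and the key observations you isolate---that $\mu_{Y,X}=\localization(p_Y\procomp p_X)$, that \cref{lem:tensor_of_h-proj_is_h-proj} lets you take $\hp(\hp Z\procomp\hp Y)=\hp Z\procomp\hp Y$, and that the counit naturality $p_{Z\procomp Y}\circ\hp\localization(p_Z\procomp p_Y)=(p_Z\procomp p_Y)\circ p_{\hp Z\procomp\hp Y}$ then collapses the mixed terms---are exactly the right ones.
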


\begin{proof}
	The proof is left to the reader.
\end{proof}

Moreover the lax functor $\Gamma$ clearly induces a lax functor $\gamma= \Gamma\rvert_{\dgCat} \colon \dgCat \to \DBimodrqr$, which sends a dg functor $F$ to the quasi-functor $F_\equipsubAst$.

In general the constraint $\mu\colon Y\Dprocomp X \to Y\procomp X$ is not invertible, but we have the following result.

\begin{lemma}\label{lem:when_the_constraint_is_invertible}
	Let $X\colon\catA\slashedrightarrow\catB$ and $Y\colon \catB\slashedrightarrow\catC$ be dg bimodules between (locally h-projective) dg categories. If either $X$ is right h-flat or $Y$ is left h-flat, then $\mu\colon Y\Dprocomp X \to Y\procomp X$ is invertible in $\Dom(\catA,\catC)$.
\end{lemma}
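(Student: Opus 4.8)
The plan is to recognize the constraint $\mu$ as the image of a pair of h-projective resolutions under the composition bifunctor $\procomp$ and to split it into two one-variable factors, each of which is controlled by \cref{lem:precomp_with_right_h-flat_preserves_qism}. Write $p_X\colon \hp X\to X$ and $p_Y\colon \hp Y\to Y$ for the chosen h-projective resolutions; these are quasi-isomorphisms, and by construction $\mu$ is their image $p_Y\procomp p_X$ under $\procomp$. By functoriality of $\procomp$ this $2$-cell factors in the two ways
\[ \hp Y\procomp\hp X \xrightarrow{p_Y\procomp\hp X} Y\procomp\hp X \xrightarrow{Y\procomp p_X} Y\procomp X \]
and
\[ \hp Y\procomp\hp X \xrightarrow{\hp Y\procomp p_X} \hp Y\procomp X \xrightarrow{p_Y\procomp X} Y\procomp X, \]
so it suffices to exhibit one factorization in which both arrows are quasi-isomorphisms; then $\mu$ is a quasi-isomorphism, hence invertible in $\Dom(\catA,\catC)$.

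First I would record two facts that need no hypothesis on $X$ or $Y$. Since $\hp X$ is h-projective it is h-flat by \cref{prop:h-projective_implies_h-flat}, and as $\catA$ is locally h-projective (hence locally h-flat), part~(1) of \cref{lem:h-flat_with_locally_h-flat_domain_is_right_h-flat} shows that $\hp X$ is right h-flat; dually $\hp Y$ is left h-flat. Therefore $\mplaceholder\procomp\hp X$ and $\hp Y\procomp\mplaceholder$ both preserve quasi-isomorphisms by \cref{lem:precomp_with_right_h-flat_preserves_qism}, which makes the \emph{first} arrow of each of the two factorizations a quasi-isomorphism unconditionally.

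It then remains only to treat the respective second arrows using the hypothesis. If $Y$ is left h-flat, then $Y\procomp\mplaceholder$ preserves quasi-isomorphisms by part~(2) of \cref{lem:precomp_with_right_h-flat_preserves_qism}, so $Y\procomp p_X$ is a quasi-isomorphism and the first factorization does the job. If instead $X$ is right h-flat, then $\mplaceholder\procomp X$ preserves quasi-isomorphisms by part~(1) of \cref{lem:precomp_with_right_h-flat_preserves_qism}, so $p_Y\procomp X$ is a quasi-isomorphism and the second factorization does the job. Either way $\mu$ is a composite of quasi-isomorphisms and hence invertible in $\Dom(\catA,\catC)$. I do not expect a genuine obstacle: the argument is essentially bookkeeping, and the only point demanding care is the order reversal built into $\procomp$, which dictates that fixing $X$ in the left slot invokes \emph{right} h-flatness while fixing $Y$ in the right slot invokes \emph{left} h-flatness, together with the observation that local h-projectivity of the ambient categories is exactly what upgrades the h-projective resolutions to one-sided h-flat bimodules.
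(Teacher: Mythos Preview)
Your proof is correct and follows essentially the same approach as the paper: factor $\mu$ into two one-variable steps and show each is a quasi-isomorphism via \cref{lem:precomp_with_right_h-flat_preserves_qism}. The only cosmetic difference is that the paper obtains left h-flatness of $\hp Y$ via \cref{lem:h-proj_implies_right_h-proj} (left h-projective, hence left h-flat) rather than via \cref{lem:h-flat_with_locally_h-flat_domain_is_right_h-flat} as you do, and the paper writes out only one of the two cases; your treatment of both cases and both factorizations is a bit more explicit but not substantively different.
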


\begin{proof}
	We prove only the first case. Suppose $X$ is right h-flat. The constraint $\mu\colon Y\Dprocomp X \to Y\procomp X$ factors as the composite
	\[ Y\Dprocomp X = \hp Y\procomp \hp X \xrightarrow{\hp Y \procomp q_X} \hp Y \procomp X \xrightarrow{q_Y \procomp X} Y\procomp X, \]
	where $q_X$ and $q_Y$ are h-projective resolutions of $X$ and $Y$, respectively. 
	Since $\hp Y$ is h-projective and $\catC$ is locally h-projective, \cref{lem:h-proj_implies_right_h-proj} implies that $\hp Y$ is left h-projective and hence left h-flat. Therefore, by \cref{lem:precomp_with_right_h-flat_preserves_qism}, the first morphism is a quasi-isomorphism. Since $X$ is right h-flat, the second morphism is also a quasi-isomorphism again by \cref{lem:precomp_with_right_h-flat_preserves_qism}.
	Thus their composite is sent to an isomorphism in $\Dom(\catA,\catC)$.
\end{proof}

For a dg functor $F\colon\catA\to\catB$, the associated dg bimodule $F_\equipsubAst\colon \catA\slashedrightarrow\catB$ is right h-flat. Hence the above lemma shows that the constraint $\mu\colon Y\Dprocomp F_\equipsubAst \to Y\procomp F_\equipsubAst$ is invertible. From this we obtain the following.

\begin{corollary}
	The restricted lax functor $\gamma\colon \dgCat\to \DBimodrqr$ is a pseudo-functor.
\end{corollary}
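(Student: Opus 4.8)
The plan is to use the standard recognition criterion: a normal lax functor is a pseudo-functor exactly when all of its lax functoriality constraints are invertible. Since $\gamma$ is normal, its unity constraints are already identities, so the whole task reduces to checking invertibility of the functoriality constraints. I would exploit the factorization $\gamma = \Gamma\circ\equipAst$, that is $\dgCat\xrightarrow{\equipAst}\Bimod\xrightarrow{\Gamma}\DBimod$ with image in $\DBimodrqr$, and analyze the constraint of this composite of the pseudo-functor $\equipAst$ with the normal lax functor $\Gamma$.

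Concretely, for composable dg functors $F\colon\catA\to\catB$ and $G\colon\catB\to\catC$ the functoriality constraint of $\gamma$ decomposes, by the usual formula for the composite of a lax functor with a pseudo-functor, as
\[
	G_\equipsubAst\Dprocomp F_\equipsubAst \xrightarrow{\ \mu\ } G_\equipsubAst\procomp F_\equipsubAst \xrightarrow{\ \cong\ } (G\circ F)_\equipsubAst,
\]
where the first arrow is the lax constraint $\mu$ of $\Gamma$ evaluated at the pair $(F_\equipsubAst,G_\equipsubAst)$, and the second is $\Gamma$ applied to the pseudo-functoriality isomorphism of $\equipAst$. I would establish the latter isomorphism by a short co-Yoneda computation (\cref{prop:(co)Yoneda_lemma}): since $B\mapsto\catC(C,GB)$ is a left dg $\catB$-module,
\[
	(G_\equipsubAst\procomp F_\equipsubAst)(C,A)=\int^{B\in\catB}\catB(B,FA)\otimes_\basek\catC(C,GB)\cong\catC(C,GFA)=(G\circ F)_\equipsubAst(C,A),
\]
which exhibits $\equipAst$ as a pseudo-functor; as $\Gamma$ acts on $2$-cells through the functor $\Com(\catA,\catC)\to\Kom(\catA,\catC)\xrightarrow{\localization}\Dom(\catA,\catC)$, it carries this isomorphism to an isomorphism, so the second arrow is invertible.

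The key step is the invertibility of the first arrow $\mu$, and here I would invoke \cref{lem:when_the_constraint_is_invertible}. Because $F_\equipsubAst(\mplaceholder,A)=\catB(\mplaceholder,FA)$ is representable, it is h-projective, hence h-flat by \cref{prop:h-projective_implies_h-flat}; thus $F_\equipsubAst$ is right h-flat, and the lemma gives that $\mu\colon Y\Dprocomp F_\equipsubAst\to Y\procomp F_\equipsubAst$ is invertible for every $Y$, in particular for $Y=G_\equipsubAst$. Combining the two factors, the constraint $G_\equipsubAst\Dprocomp F_\equipsubAst\Rightarrow(G\circ F)_\equipsubAst$ is an isomorphism, so $\gamma$ is a pseudo-functor.

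I do not expect a genuine obstacle here: once the right h-flatness of $F_\equipsubAst$ is in hand (which is recorded just before the statement), the argument is essentially bookkeeping. The only point requiring a little care is confirming that the constraint of the composite lax functor $\gamma=\Gamma\circ\equipAst$ really separates into the $\Gamma$-constraint $\mu$ followed by $\Gamma$ applied to the $\equipAst$-constraint; verifying this coherence is routine, and both factors have already been shown to be invertible.
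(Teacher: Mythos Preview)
Your proof is correct and follows essentially the same approach as the paper: the paper's one-line proof simply invokes \cref{lem:when_the_constraint_is_invertible}, relying on the observation stated just before the corollary that $F_\equipsubAst$ is right h-flat. Your version spells out the decomposition of the constraint of $\gamma=\Gamma\circ\equipAst$ and the reason $F_\equipsubAst$ is right h-flat more explicitly, but the essential content is the same.
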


\begin{proof}
	It follows from \cref{lem:when_the_constraint_is_invertible}.
\end{proof}

Since a pseudo-functor preserves adjunctions, we have:

\begin{proposition}\label{prop:adj_of_dg_functors_becomes_those_of_quasi-functors}
	For an adjunction $F\dashv G$ of dg functors, we have an adjunction $F_\equipsubAst \dashv G_\equipsubAst$ of quasi-funcors.
	%If a dg functors $F$ is left adjoint to a dg functor $G$, then $F_\equipsubAst$ is left adjoint to $G_\equipsubAst$ as quasi-functors.
\end{proposition}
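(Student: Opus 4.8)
The plan is to deduce the proposition from the general bicategorical principle that a homomorphism of bicategories (a pseudo-functor) preserves adjunctions, applied to the pseudo-functor $\gamma\colon \dgCat\to\DBimodrqr$ just constructed. Recall that an adjunction $F\dashv G$ of dg functors is, by definition, an adjunction inside the $2$-category $\dgCat$: it consists of a unit $\eta\colon \id_\catA\Rightarrow G\circ F$ and a counit $\varepsilon\colon F\circ G\Rightarrow\id_\catB$, both (closed, degree-zero) dg natural transformations, subject to the two triangle identities. Since $\gamma$ sends $F$ to the quasi-functor $F_\equipsubAst$ and $G$ to $G_\equipsubAst$ (and these automatically lie in $\DBimodrqr$, being quasi-functors), it suffices to show that $\gamma$ carries this adjunction to an adjunction $F_\equipsubAst\dashv G_\equipsubAst$ in $\DBimodrqr$.

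Concretely, I would record the images $\gamma(\eta)$ and $\gamma(\varepsilon)$ as $2$-cells of $\DBimodrqr$, and then assemble the candidate unit of $F_\equipsubAst\dashv G_\equipsubAst$ as the composite
\[
I_\catA \cong \gamma(\id_\catA) \Rightarrow \gamma(G\circ F) \Rightarrow \gamma(G)\Dprocomp\gamma(F) = G_\equipsubAst\Dprocomp F_\equipsubAst,
\]
where the first arrow is $\gamma(\eta)$, the normality of $\gamma$ supplies the leftmost isomorphism, and the second arrow is the inverse $\mu^{-1}$ of the functoriality constraint; the counit is built dually from $\mu$ followed by $\gamma(\varepsilon)$. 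The crucial point, and the only place where anything must genuinely be checked, is that the constraint $\mu$ is invertible on the relevant $1$-cells: this is exactly where pseudo- (rather than merely lax-) functoriality of $\gamma$ enters. It holds because $F_\equipsubAst$ is right h-flat, so $\mu$ is an isomorphism by \cref{lem:when_the_constraint_is_invertible}. With the constraints invertible, verifying the two triangle identities for the assembled unit and counit is a purely formal diagram chase using only the triangle identities of $F\dashv G$ together with the coherence axioms of the pseudo-functor $\gamma$; no dg-specific computation is required.

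I expect no genuine obstacle beyond this. Once the preceding corollary guarantees that $\gamma$ is a pseudo-functor, the proposition is an instance of the standard principle that pseudo-functors preserve adjunctions, and all the conceptual content has already been absorbed into establishing invertibility of $\mu$ via \cref{lem:when_the_constraint_is_invertible}. The remaining work is mere bookkeeping of the composition order in $\Dprocomp$ and of the coherence cells of $\gamma$. If one prefers a self-contained argument, one can bypass the abstract principle and instead write out $\gamma(\eta)$ and $\gamma(\varepsilon)$ explicitly and chase the two triangles by hand, but this only reproves the general lemma in the present special case.
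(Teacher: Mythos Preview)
Your proposal is correct and follows exactly the paper's approach: the paper's proof consists of the single observation that a pseudo-functor preserves adjunctions, applied to the pseudo-functor $\gamma\colon\dgCat\to\DBimodrqr$ established in the preceding corollary. Your write-up simply unpacks this general principle in detail, including the explicit construction of the unit and counit via the invertible constraints, which the paper leaves implicit.
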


In \cref{subsection:the_tensor_and_Hom_functors_associated_with_dg_bimodules}, we have constructed the pseudo-functor $\bbLtil\colon \DBimod\to \CAT$, which sends $X\colon 
\catA\slashedrightarrow\catB$ to $\bbLT_X\colon \Dom(\catA)\to\Dom(\catB)$. For a quasi-functor $X$, the functor $\bbLtil(X)=\bbLT_X$ restricts to a functor $\bbLT_X\rvert_{\qucl{\catA}}\colon \qucl{\catA}\to \qucl{\catB}$, where $\qucl{\catA}$ denotes the full subcategory of $\Dom(\catA)$ consisting of quasi-representable dg modules. Putting $\Htil(\catA)\coloneqq \qucl{\catA}$ and $\Htil(X)\coloneqq \bbLT_{X}\rvert_{\qucl{\catA}}\colon \Htil(\catA) \to \Htil(\catB)$ for quasi-functors $X\colon \catA\slashedrightarrow\catB$, we obtain a new pseudo-functor
\[ \Htil\colon \DBimodrqr \to \Cat. \]
Therefore we see that an adjunction $X\dashv Y\colon \catA\slashedrightarrow\catB$ of quasi-functors induces an adjunction $\Htil(X) \dashv \Htil(Y)\colon \Htil(\catA)\to\Htil(\catB)$ of ordinary functors. Since $\Htil(\catA)= \qucl{\catA}$ is equivalent to the homotopy category $H^0(\catA)$, we end up with an adjunction between homotopy categories.
In fact, we can observe that the equivalences $H^0(\catA)\simeq \Htil(\catA)$ give rise to a pseudonatural equivalence $H^0 \xRightarrow{\simeq} \Htil\circ \gamma$ from the homotopy category $2$-functor $H^0\colon \dgCat\to \Cat$.

Pseudo-functors preserve adjunctions, while lax functors do not necessarily do so. However one can find the following result.

\begin{proposition}[{The dual of \cite[Proposition 2.7 and 2.9]{Dawson-Pare-Pronk:2004}}]\label{lem:condition_lax_functor_preserves_adjunctions}
	Let $\Phi\colon\bicatK\to\bicatL$ be a normal lax functor between bicategories. Let $\varphi_{g,f}$ and $\varphi_A$ denote the lax constraints of $\Phi$. Suppose that we are given an adjunction $(f\dashv u\colon A \to B, \eta,\varepsilon)$ in $\bicatK$. 
	\begin{enumerate}
		\item The following are equivalent:
		\begin{enumerate}[label=\equivitem]
			\item There is a $2$-cell $\overline{\eta}\colon \id_{\Phi(A)} \rightarrow \Phi(u)\circ\Phi(f)$ in $\bicatL$ such that
			\[\begin{tikzcd}
				\Phi(u\circ f) & \Phi(\id_A) \arrow{l}[swap]{\Phi(\eta)} \\
				\Phi(u)\circ \Phi(f) \arrow{u}{\varphi_{u,f}} & \id_{\Phi(A)} \arrow{u}[swap]{\varphi_A} \arrow[dashed]{l}[swap]{\overline{\eta}}
			\end{tikzcd}\]
			commutes.
			
			\item The constraint $\varphi_{g,f}\colon \Phi(g)\circ\Phi(f) \to \Phi(g\circ f)$ is an isomorphism for any composable morphism $g$.
		\end{enumerate}
		
		\item When (1) holds, we have $\Phi(f)\dashv\Phi(u)$ with unit $\overline{\eta}$.
	\end{enumerate}
\end{proposition}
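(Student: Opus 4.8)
The plan is to establish three implications, namely (ii)$\Rightarrow$(i), (i)$\Rightarrow$(2), and (2)$\Rightarrow$(ii); taken together these give both the equivalence asserted in part (1) and the adjunction asserted in part (2). The key preliminary observation is that a candidate \emph{counit} is available unconditionally: since $\Phi$ is normal, the unit constraints $\varphi_A,\varphi_B$ are invertible (identities in the strict normalization), so I may always define
\[ \overline{\varepsilon}\colon \Phi(f)\circ\Phi(u)\xrightarrow{\ \varphi_{f,u}\ }\Phi(fu)\xrightarrow{\ \Phi(\varepsilon)\ }\Phi(\id_B)\xrightarrow{\ \varphi_B^{-1}\ }\id_{\Phi(B)}. \]
The implication (ii)$\Rightarrow$(i) is then immediate: taking $g=u$ in (ii) shows $\varphi_{u,f}$ is invertible, so $\overline{\eta}\coloneqq \varphi_{u,f}^{-1}\circ\Phi(\eta)\circ\varphi_A$ makes the required square commute by construction.

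For (i)$\Rightarrow$(2), I assume a $2$-cell $\overline{\eta}$ with $\varphi_{u,f}\circ\overline{\eta}=\Phi(\eta)\circ\varphi_A$ and verify that $(\overline{\eta},\overline{\varepsilon})$ satisfy the two triangle identities for $\Phi(f)\dashv\Phi(u)$. For the identity $(\overline{\varepsilon}\ast\Phi(f))\circ(\Phi(f)\ast\overline{\eta})=\id_{\Phi(f)}$, the strategy is to use the defining square to trade $\overline{\eta}$ for $\Phi(\eta)$, then use the associativity and unit coherence of the lax constraints $\varphi$ to re-associate the whiskered constraints and the naturality of $\varphi$ with respect to the $2$-cells $\eta$ and $\varepsilon$, until the left-hand side is exhibited as $\Phi$ applied to the Kan composite $(\varepsilon\ast f)\circ(f\ast\eta)$ in $\bicatK$. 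The latter equals $\id_f$ by the triangle identity of $f\dashv u$, and $\Phi$ preserves identities, giving $\id_{\Phi(f)}$. The second triangle identity is dual. This yields $\Phi(f)\dashv\Phi(u)$ with unit $\overline{\eta}$ and counit $\overline{\varepsilon}$, which is exactly part (2).

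For (2)$\Rightarrow$(ii), I work with the adjunction $\Phi(f)\dashv\Phi(u)$ just produced and, for each $g\colon B\to C$ composable with $f$, exhibit an explicit inverse to $\varphi_{g,f}$ as the mate
\[ \psi_{g,f}\colon \Phi(gf)\xrightarrow{\ \Phi(gf)\ast\overline{\eta}\ }\Phi(gf)\circ\Phi(u)\circ\Phi(f)\xrightarrow{\ \varphi_{gf,u}\ast\Phi(f)\ }\Phi(gfu)\circ\Phi(f)\xrightarrow{\ \Phi(g\ast\varepsilon)\ast\Phi(f)\ }\Phi(g)\circ\Phi(f) \]
(with associators and unitors suppressed). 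The composite $\varphi_{g,f}\circ\psi_{g,f}=\id_{\Phi(gf)}$ is obtained from the associativity coherence of $\varphi$ applied to the triple $gf,u,f$, naturality of $\varphi$, the square (i), the right-unit coherence, and finally the triangle identity $(\varepsilon\ast f)\circ(f\ast\eta)=\id_f$ in $\bicatK$. The composite $\psi_{g,f}\circ\varphi_{g,f}=\id_{\Phi(g)\circ\Phi(f)}$ is handled symmetrically, this time re-associating so that the middle collapses to the counit $\overline{\varepsilon}$ and then invoking the triangle identity $(\overline{\varepsilon}\ast\Phi(f))\circ(\Phi(f)\ast\overline{\eta})=\id_{\Phi(f)}$ of the adjunction established in part (2). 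Hence $\varphi_{g,f}$ is invertible for all such $g$, which is (ii).

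The genuinely delicate part is not conceptual but bookkeeping: every step pushes whiskered $2$-cells through the lax constraints, so one must repeatedly apply their associativity/unit coherence and naturality and keep careful track of the associators and unitors suppressed above, arranging matters so that the two triangle identities of $f\dashv u$ surface cleanly at the very end. Normality enters precisely twice, to make $\varphi_A,\varphi_B$ invertible so that $\overline{\varepsilon}$ and $\psi_{g,f}$ are well defined. None of these manipulations is deep, but the organization is where the care lies; this is exactly the content of Dawson--Par\'e--Pronk, here transported to the dual (lax rather than oplax) setting.
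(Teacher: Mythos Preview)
Your proof plan is sound. The paper does not supply its own proof of this proposition; it simply cites the dual of \cite[Propositions~2.7 and~2.9]{Dawson-Pare-Pronk:2004} and moves on. Your argument---building the counit $\overline{\varepsilon}$ unconditionally from normality, verifying the triangle identities by pushing the lax constraints through via their coherence axioms until the triangle identities of $f\dashv u$ surface, and then constructing the inverse $\psi_{g,f}$ to $\varphi_{g,f}$ as the mate under $\Phi(f)\dashv\Phi(u)$---is precisely the content of the cited reference transported to the lax setting. The circular organisation $(\mathrm{ii})\Rightarrow(\mathrm{i})\Rightarrow(2)\Rightarrow(\mathrm{ii})$ is a sensible way to package both assertions at once, and you are right that the only genuine labour is the coherence bookkeeping, which you have outlined accurately.
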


Note that (i) above holds when $\varphi_{u,f}$ is an isomorphism.
In \cite{Dawson-Pare-Pronk:2004} it is said that $\Phi$ \emph{preserves the adjunction} $f\dashv u$ if the condition (i) of \cref{lem:condition_lax_functor_preserves_adjunctions}~(1) is satisfied.

\begin{corollary}\label{cor:adjunction_of_dg_functors_becomes_those_of_quasi-functors}
	The lax functor $\Gamma\colon\Bimod\to\DBimod$ preserves adjunctions of the form $F_\equipsubAst \dashv F^\equipsubAst$ for some dg functor $F$.
\end{corollary}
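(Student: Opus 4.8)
The plan is to read this off \cref{lem:condition_lax_functor_preserves_adjunctions} applied to the normal lax functor $\Gamma$. Given a dg functor $F\colon\catA\to\catB$, the adjunction in question is $F_\equipsubAst\dashv F^\equipsubAst$ in $\Bimod$, so in the notation of \cref{lem:condition_lax_functor_preserves_adjunctions} I would set $f=F_\equipsubAst$ (the left adjoint) and $u=F^\equipsubAst$ (the right adjoint). By the remark immediately following that proposition, condition (i) is automatic once the single lax constraint $\varphi_{u,f}$ is invertible; hence it suffices to check that $\varphi_{F^\equipsubAst,F_\equipsubAst}$ is an isomorphism in $\DBimod$.

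First I would unwind what this constraint is. By the definition of $\Gamma$, the constraint $\varphi_{F^\equipsubAst,F_\equipsubAst}$ is exactly the natural morphism $\mu\colon F^\equipsubAst\Dprocomp F_\equipsubAst\to F^\equipsubAst\procomp F_\equipsubAst$ in $\Dom(\catA,\catA)$ obtained from h-projective resolutions. Thus the entire statement reduces to the invertibility of this one $\mu$.

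This invertibility is already within reach. The bimodule $F_\equipsubAst=\catB(\mplaceholder,F\mplaceholder)$ is right h-flat, since each component $F_\equipsubAst(\mplaceholder,A)=\catB(\mplaceholder,FA)$ is representable, hence h-projective and therefore h-flat by \cref{prop:h-projective_implies_h-flat}. Taking $X=F_\equipsubAst$ and $Y=F^\equipsubAst$ in \cref{lem:when_the_constraint_is_invertible} then shows that $\mu\colon F^\equipsubAst\Dprocomp F_\equipsubAst\to F^\equipsubAst\procomp F_\equipsubAst$ is invertible. Consequently $\varphi_{u,f}$ is an isomorphism, condition (i) of \cref{lem:condition_lax_functor_preserves_adjunctions} holds, and $\Gamma$ preserves the adjunction $F_\equipsubAst\dashv F^\equipsubAst$.

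There is essentially no genuine obstacle here: the argument is a bookkeeping assembly of \cref{lem:condition_lax_functor_preserves_adjunctions} and \cref{lem:when_the_constraint_is_invertible}. The only point demanding care is to match the variance correctly --- namely that the relevant constraint in \cref{lem:condition_lax_functor_preserves_adjunctions} is $\varphi_{u,f}$ with $f$ the \emph{left} adjoint $F_\equipsubAst$, so that it is precisely the right-h-flatness of $F_\equipsubAst$ (and not any property of $F^\equipsubAst$) that we must invoke.
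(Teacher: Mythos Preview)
Your proof is correct and follows essentially the same approach as the paper: both invoke \cref{lem:when_the_constraint_is_invertible} using the right h-flatness of $F_\equipsubAst$ to see that the constraint $\varphi_{F^\equipsubAst,F_\equipsubAst}$ is invertible, and then appeal to \cref{lem:condition_lax_functor_preserves_adjunctions}. Your write-up simply spells out more explicitly why $F_\equipsubAst$ is right h-flat (via representability and \cref{prop:h-projective_implies_h-flat}) and is careful about the variance in identifying which constraint needs to be checked.
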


\begin{proof}
	By \cref{lem:when_the_constraint_is_invertible} the constraint $F^\equipsubAst \Dprocomp F_\equipsubAst \to F^\equipsubAst \procomp F_\equipsubAst$ (precisely, $\Gamma(F^\equipsubAst) \Dprocomp \Gamma(F_\equipsubAst) \to \Gamma(F^\equipsubAst \procomp F_\equipsubAst)$) of $\Gamma$ is an isomorphism, because $F_\equipsubAst$ is right h-flat. Hence \cref{lem:condition_lax_functor_preserves_adjunctions} proves the assertion.
\end{proof}

In other words, for a dg functor $F\colon \catA\to\catB$, the associated dg bimodule $F_\equipsubAst\colon \catA\slashedrightarrow\catB$ is left adjoint to the dg bimodule $F^\equipsubAst\colon \catB \slashedrightarrow\catA$ in $\DBimod$, as well as in $\Bimod$.
\Cref{lem:A_ast_is_left_adjoint_to_A^star} is a special case of this observation.

\section{The homotopy category theory of dg categories over a field}\label{section:the_homotopy_category_theory_of_dg_categories_over_a_field}

In this section we assume for simplicity that $\basek$ is a field, over which all dg categories are locally h-projective, though the following will work for locally h-projective dg categories over a commutative ring.

We will use straight arrows $\to$ for quasi-functors instead of slashed arrows $\slashedrightarrow$ to describe it being a morphism of $\DBimodrqr$.
A quasi-functor $f\colon \catJ\to \catA$ is written by $f_\equipsubStar$ when regarded as a morphism of $\DBimod$, and let $f^\equipsubStar$ be its right adjoint $\DIsbellL(f_\equipsubStar)$. Let us write the inclusion $\DBimodrqr\hookrightarrow \DBimod$ by $\equipStar$.

Note that a dg bimodule is isomorphic to $f_\equipsubStar$ for some quasi-functor $f$ if and only if it is right quasi-representable and that a dg bimodule is isomorphic to $f^\equipsubStar$ for some quasi-functor $f$ if and only if it is left quasi-representable.
In the following, for a right quasi-representable dg bimodule $X$, we mean by a \emph{representing quasi-functor} of $X$ a quasi-functor $f$ such that $f_\equipsubStar\cong X$. For a left quasi-representable $X$, we call a quasi-functor $f$ such that $f^\equipsubStar\cong X$ as a \emph{corepresenting quasi-functor} of $X$.

Now we state the following theorem, which captures a fact we have already observed.

\begin{theorem}\label{thm:proarrow_equipment_for_the_HCT_of_dg_cat}
	 The inclusion pseudo-functor $\equipStar\colon \DBimodrqr\hookrightarrow\DBimod$ is a \emph{proarrow equipment} in the sense of Wood (see \cref{def:proarrow_equipment}). That is, it satisfies:
	\begin{enumerate}
		\item $\equipStar$ is an identity on objects;
		\item $\equipStar$ is locally fully faithful;
		\item for every quasi-functor $f$, the dg bimodule $f_\equipsubStar$ has a right adjoint in $\DBimod$ (\cref{cor:quasi-functor_has_right_adjoint}).
	\end{enumerate}
\end{theorem}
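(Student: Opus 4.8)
The plan is to verify the three defining conditions of a proarrow equipment (\cref{def:proarrow_equipment}) one at a time, observing that each is already available: conditions (1) and (2) are formal consequences of the way $\DBimodrqr$ is carved out of $\DBimod$, while the only substantive input is \cref{cor:quasi-functor_has_right_adjoint}, which supplies condition (3).

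First I would treat (1) and (2) together. By definition $\DBimodrqr$ is the sub-bicategory of $\DBimod$ whose $1$-morphisms are the quasi-functors; as recorded just above, this really is a sub-bicategory, since the identities $I_\catA$ and the derived composites $Y\Dprocomp X$ of quasi-functors are again quasi-functors. Consequently $\DBimodrqr$ has exactly the same objects as $\DBimod$, namely the locally h-projective small dg categories, so the inclusion $\equipStar$ is the identity on objects---this is (1). For each pair $\catA,\catB$ the Hom-category $\DBimodrqr(\catA,\catB)=\Dom(\catA,\catB)^\rqr$ is by construction the \emph{full} subcategory of $\Dom(\catA,\catB)=\DBimod(\catA,\catB)$ spanned by the quasi-functors; hence the functor induced on Hom-categories is fully faithful, which is precisely local full faithfulness of $\equipStar$---this is (2). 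I would also note in passing that $\equipStar$ is a genuine pseudo-functor, its coherence constraints being inherited as identities from those of $\DBimod$.

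Next I would dispatch condition (3), the one carrying actual content. This is exactly \cref{cor:quasi-functor_has_right_adjoint}: for any quasi-functor $f$ the dg bimodule $f_\equipsubStar\colon\catA\slashedrightarrow\catB$ is right compact---quasi-representable dg modules being compact---so \cref{thm:right_compact_bimodules_are_right_adjoint} furnishes a right adjoint, explicitly $\DIsbellL(f_\equipsubStar)=f^\equipsubStar$, in $\DBimod$. This establishes (3) and hence the theorem.

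I do not anticipate any genuine obstacle: the statement is essentially a repackaging of results proved earlier, with all the weight resting on \cref{cor:quasi-functor_has_right_adjoint} (and, beneath it, \cref{thm:right_compact_bimodules_are_right_adjoint}). The only point meriting a moment's care is the bookkeeping that $\DBimodrqr$ is closed under identities and derived composition, so that ``sub-bicategory'' is legitimate and $\equipStar$ is a well-defined pseudo-functor; but this too has already been observed above.
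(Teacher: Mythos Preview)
Your proposal is correct and matches the paper's approach exactly: the paper does not even supply a separate proof block, since the theorem statement itself records that (1) and (2) are immediate from the construction of $\DBimodrqr$ as a full sub-bicategory on the same objects, while (3) is precisely \cref{cor:quasi-functor_has_right_adjoint}. Your write-up simply makes explicit the bookkeeping the paper leaves tacit.
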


A proarrow equipment serves as a general framework for formal category theory; references on this subject are \cite{Wood:1982proarrow1,Wood:1985proarrow2} (We review it in \cref{section:proarrow_equipment}).
Formal category theory abstracts and generalizes the basic notions of category theory using 2-categorical and axiomatic methods.
For example, the pseudo-functor $\equipAst\colon\dgCat \rightarrow \Bimod$, sending dg functors $F\colon \catA\to\catB$ to the dg bimodules $F_\equipsubAst=\catB(\mplaceholder,F\mplaceholder)\colon \catA\slashedrightarrow\catB$, forms a proarrow equipment, which provides $\Chk$-enriched category theory.
The above theorem indicates that we can develop formal category theory in $\DBimodrqr$.
We may name category theory done in the proarrow equipment $\equipStar\colon \DBimodrqr\hookrightarrow\DBimod$ as the \emph{homotopy category theory of dg categories}.

In a proarrow equipment, we can define several notions in category theory, such as (co)limits, Cauchy completeness, and pointwise Kan extensions.
Let us apply them to the homotopy category theory of dg categories.
In the following, we translate these notions to fit our context. For the definitions in the general setting, refer to \cref{section:proarrow_equipment} or \cite{Wood:1982proarrow1}.

\subsection{Homotopical (co)limits in a dg category}\label{subsection:homotopical_(co)limits_in_a_dg_category}

In this subsection, we define homotopical (co)limits in a dg category, including homotopical initial and terminal objects, (co)products, shifts, and cones. They are not just conical but weighted ones, as weighted (co)limits are the right notion of (co)limits in enriched category theory.

\begin{definition}\label{def:homotopical_colimits_in_a_dg_category}
	For a dg bimodule $W\colon \catM\slashedrightarrow \catJ$ and a quasi-functor $f\colon \catJ\to \catA$, a \emph{$W$-weighted colimit} of $f$ is a quasi-functor $\colim^W f \colon \catM \to \catA$ together with a $2$-cell in $\DBimod$
	\[
		\begin{tikzcd}
			& \catM \arrow{d}{W}[sloped,marking]{\mapstochar}
			\arrow[Rightarrow,d,"\iota",pos=0.7,shift right=3.4ex,shorten <=1.7ex,shorten >=-0.3ex] \\
			\catA \arrow[bend left=15]{ru}{(\colim^W f)^\equipsubStar}[sloped,marking]{\mapstochar} \arrow{r}[swap]{f^\equipsubStar}[sloped,marking]{\mapstochar} & \catJ
		\end{tikzcd}
	\]
	such that $\iota$ is a right Kan lifting of $f^\equipsubStar$ along $W$. In other words, when the right Kan lifting $\DRift_{W} f^\equipsubStar$ in $\DBimod$ is left quasi-representable, we call its corepresenting quasi-functor as a \emph{colimit}.

	Dually, for a dg bimodule $V\colon \catJ\slashedrightarrow \catM$ and a quasi-functor $f\colon \catJ\to \catA$, a \emph{$V$-weighted limit} of $f$ is a quasi-functor $\lim^V f \colon \catM \to \catA$ together with a $2$-cell in $\DBimod$
	\[
		\begin{tikzcd}
			\catM \arrow[bend left=15]{rd}{(\lim^V f)_\equipsubStar}[sloped,marking]{\mapstochar}
			\arrow[Rightarrow,shift left=3.4ex,shorten <=1.7ex,shorten >=-0.3ex,"\pi",swap,pos=0.7]{d} & \\
			\catJ \arrow{u}{V}[sloped,marking]{\mapstochar} \arrow{r}[swap]{f_\equipsubStar}[sloped,marking]{\mapstochar} & \catA
		\end{tikzcd}
	\]
	such that $\pi$ is a right Kan extension of $f_\equipsubStar$ along $V$. In other words, when the right Kan extension $\DRan_{V} f_\equipsubStar$ in $\DBimod$ is right quasi-representable, we call its representing quasi-functor as a \emph{limit}.
\end{definition}

The notion of (co)limit in the sense of \cref{def:homotopical_colimits_in_a_dg_category} is often referred to as an \emph{h-(co)limit}, to distinguish it from strict enriched (co)limits.

\begin{remark}
	As observed in \cref{prop:DBimod_is_closed_bicategory}, the right Kan lifting and Kan extension in $\DBimod$ are given as
	\[ \DRift_Y Z = Y_{\Dddag} Z = (\hp Y)_{\ddag} Z, \quad \DRan_X Z = X^{\Dddag} Z=(\hp X)^\ddag Z. \]
	Hence, a $W$-weighted colimit of $f$ exists precisely when for all $m \in \catM$ the left dg $\catA$-module
	\begin{align*}
		(\DRift_W f^\equipsubStar)(m,\mplaceholder) & = ((\hp W)_\ddag f^\equipsubStar) (m,\mplaceholder)                                   \\
		                                            & = \Fun_\dg(\catJ^\op,\Comdgk)\big((\hp W)(?,m), f^\equipsubStar(?,\mplaceholder)\big)
	\end{align*}
	is quasi-corepresentable, or more explicitly, there are an object $(\colim^W f)(m)$ of $\catA$ and a quasi-isomorphism
	\[ \catA\big((\colim^W f)(m),\mplaceholder\big) \to \Fun_\dg(\catJ^\op,\Comdgk)\big((\hp W)(?,m), f^\equipsubStar(?,\mplaceholder)\big) \]
	of left dg $\catA$-modules. Dually, a $V$-weighted limit of $f$ exists precisely when for all $m \in \catM$ the right dg $\catA$-module
	\begin{align*}
		(\DRan_V f_\equipsubStar)(\mplaceholder,m) & = ((\hp V)^\ddag f_\equipsubStar) (\mplaceholder,m)                               \\
		                                           & = \Fun_\dg(\catJ,\Comdgk)\big((\hp V)(m,?), f_\equipsubStar(\mplaceholder,?)\big)
	\end{align*}
	is quasi-representable, or more explicitly, there are an object $(\lim^V f)(m)$ of $\catA$ and a quasi-isomorphism
	\[ \catA\big(\mplaceholder,(\lim\nolimits^V f)(m)\big) \to \Fun_\dg(\catJ,\Comdgk)\big((\hp V)(m,?), f_\equipsubStar(\mplaceholder,?)\big) \]
	of right dg $\catA$-modules.
\end{remark}

% 例:initial object, coproduct, shift (tensor), coneのweight

The base field $\basek$ is identified with the dg category with one object. We here give some examples of (co)limits which are module-weighted, instead of bimodule-weighted ones; in other words, we shall consider the case of $\catM=\basek$.

Let $\catA$ be a dg category. Let us first consider the homotopical version of initial objects and coproducts.

%%% initial object

\begin{example}[h-initial object]\label{example:h-initial_object}
	Consider $\catJ =\emptyset$, the empty dg category, and the quasi-functor $f=F_\equipsubAst\colon \emptyset\to\catA$ induced by the unique dg functor $F \colon \emptyset\to \catA$. Note that $f_\equipsubStar=F_\equipsubAst$ is h-projective since $\Comdg(\emptyset,\catA)\cong \Fun_\dg(\emptyset,\Comdgk)\cong 0$, and we have $f^\equipsubStar=\bbL\IsbellL(F_\equipsubAst)=\hp(F_\equipsubAst)_\ddag I_\catA= (F_\equipsubAst)_\ddag I_\catA \cong F^\equipsubAst$.

	Let $\Winit\colon \basek\slashedrightarrow \emptyset$ be the unique functor $\emptyset^\op\otimes \basek\cong \emptyset \to \Comdgk$, which is h-projective in $\Comdg(\emptyset,\basek)\cong \Fun_\dg(\emptyset,\Comdgk)\cong 0$.
	In this case, we have an isomorphism of left dg modules
	\[ (\DRift_{\Winit} f^\equipsubStar)(\mplaceholder) = \Fun_\dg(\emptyset^\op,\Comdgk)\big(\Winit(?), F^\equipsubAst(?,\mplaceholder)\big) \cong 0. \]
	Hence the colimit $\colim^{\Winit} f$ exists if and only if there is an object $i\in \catA$ such that $\catA(i,\mplaceholder)$ is quasi-isomorphic to $0$, or equivalently $\catA(i,\mplaceholder) \cong 0$ in $\Dom(\catA^\op)$. Such a colimit is called an \emph{h-initial object} of $\catA$.

	Similarly, let $\Vterm \colon \emptyset\slashedrightarrow\basek$ be the unique bimodule $\basek^\op\otimes\emptyset\cong \emptyset\to \Comdgk$. Then
	\[ (\DRan_\Vterm f_\equipsubStar)(\mplaceholder) \cong 0 \]
	and the limit $\lim^\Vterm f$ exists if and only if there is an object $t \in \catA$ such that $\catA(\mplaceholder,t)$ is quasi-isomorphic to $0$, or equivalently $\catA(\mplaceholder,t) \cong 0$ in $\Dom(\catA)$. Such a limit is called an \emph{h-terminal object} of $\catA$.
\end{example}

A dg category $\catA$ is said to have a \emph{contractible object} if $H^0(\catA)$ has a zero object.

\begin{proposition}
	Let $\catA$ be a dg category.
	\begin{enumerate}
		\item $\catA$ has an h-initial object if and only if it has a contractible object.
		\item $\catA$ has an h-terminal object if and only if it has a contractible object.
	\end{enumerate}
\end{proposition}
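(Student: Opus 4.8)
The plan is to reduce both equivalences to a single object-level characterization of contractibility. I would first isolate the following lemma: for an object $z\in\catA$, the following are equivalent: (a) $z$ is a zero object of $H^0(\catA)$; (b) $\id_z$ is null-homotopic, i.e. $[\id_z]=0$ in $H^0(\catA(z,z))$; (c) the complex $\catA(z,z)$ is acyclic; (d) $\catA(z,\mplaceholder)\cong 0$ in $\Dom(\catA^\op)$; and (e) $\catA(\mplaceholder,z)\cong 0$ in $\Dom(\catA)$. Granting the lemma, part (1) is immediate: by \cref{example:h-initial_object}, $\catA$ has an h-initial object precisely when some $z$ satisfies (d), which by the lemma amounts to the existence of a $z$ satisfying (a), i.e. to $\catA$ having a contractible object; part (2) follows identically with (e) in place of (d).

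The easy implications I would dispatch quickly. The equivalence (a) $\Leftrightarrow$ (b) is pure ring theory: $H^0(\catA)(z,z)=H^0(\catA(z,z))$ is a unital ring with unit $[\id_z]$, and in a preadditive category an object is a zero object exactly when its endomorphism ring vanishes, which happens iff $[\id_z]=0$. The implications (d) $\Rightarrow$ (c) and (e) $\Rightarrow$ (c) are trivial specializations (take the component at $A=z$), and (c) $\Rightarrow$ (b) is clear since acyclicity of $\catA(z,z)$ forces $H^0(\catA(z,z))=0$.

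The only real content is the propagation step (b) $\Rightarrow$ (d) (and symmetrically (b) $\Rightarrow$ (e)), which I expect to be the heart of the argument even though it is short. Writing $\id_z=d(h)$ for some $h\in\catA(z,z)^{-1}$, take any $A\in\catA$ and any cocycle $f\in\catA(z,A)^{n}$ with $df=0$. The graded Leibniz rule $d(f\circ h)=(df)\circ h+(-1)^{n}f\circ(dh)$ then gives $d(f\circ h)=(-1)^{n}f\circ\id_z=(-1)^{n}f$, so $f$ is a coboundary. Hence $\catA(z,A)$ is acyclic for every $A$, which is precisely (d); the dual computation, precomposing $h$ from the other side, yields (e). This is exactly the step that uses that the differential of $\catA$ is a derivation with respect to composition.

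I do not anticipate a serious obstacle: once the lemma is in place, both halves of the proposition are formal consequences of \cref{example:h-initial_object}. The one point to watch is the sign in the Leibniz rule, but as the computation shows it only rescales the chosen coboundary and does not affect acyclicity, so it is harmless.
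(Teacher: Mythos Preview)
Your proof is correct. The approach differs from the paper's in the converse direction: the paper observes that the embedding $H^0(\yoneda)\colon H^0(\catA^\op)\hookrightarrow \Kom(\catA^\op)$ is additive and hence preserves zero objects, so a contractible $i$ is sent to $\catA(i,\mplaceholder)\cong 0$ in $\Kom(\catA^\op)$ and thus in $\Dom(\catA^\op)$. You instead unpack ``zero object in $H^0(\catA)$'' as ``$\id_z$ is null-homotopic'' and then propagate the null-homotopy by hand via the Leibniz rule to show every $\catA(z,A)$ is acyclic. Your route is more elementary and self-contained, and your five-way lemma makes explicit that h-initial, h-terminal, and contractible are characterized by the \emph{same} object-level condition, which unifies (1) and (2) rather than treating them as parallel arguments; the paper's route is shorter to state once one is willing to invoke preservation of zero objects by additive functors as a black box.
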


\begin{proof}
	(1) If $i\in\catA$ is h-initial, then we have $H^0(\catA)(i,\mplaceholder) = H^0(\catA(i,\mplaceholder))\cong 0$, which shows that $i$ is a zero object in $H^0(\catA)$. Conversely, suppose $\catA$ has a contractible object denoted by $i$. Then it is preserved by the additive functor $H^0(\yoneda)\colon H^0(\catA^\op)\to H^0(\Comdg(\catA^\op))=\Kom(\catA^\op)$, and so we have $\catA(i,\mplaceholder) \cong 0$ in $\Kom(\catA^\op)$, and hence in $\Dom(\catA^\op)$. (2) Similar.
\end{proof}

Let $S(\basek)$ denote $\basek$ viewed as a complex concentrated in degree $0$, which is called the \emph{sphere complex}.

%%% coproduct

\begin{example}[h-coproduct]\label{example:h-coproduct}
	Consider a set $J$ and identify it with the free dg category $\coprod_J \basek$ generated by the discrete category $J$.
	Giving a dg functor $F \colon J\to \catA$ is equivalent to giving a family $\{A_j\}_{j \in J}$ of objects. Let $f=F_\equipsubAst\colon J\to\catA$ be the quasi-functor induced by $F$. Note that $f_\equipsubStar=F_\equipsubAst$ is h-projective since it corresponds to $(\catA(\mplaceholder,A_j))_j$ under the isomorphisms $\Comdg(J,\catA)\cong \Fun_\dg(J,\Comdg(\catA))\cong \prod_J \Comdg(\catA)$, and hence we have $f^\equipsubStar=\bbL\IsbellL(F_\equipsubAst)\cong F^\equipsubAst$.

	Let $\Wcoprod\colon \basek\slashedrightarrow J$ be the dg functor $J^\op\otimes \basek\cong J \to \Comdgk$ corresponding to $(S(\basek))_{j} \in \prod_J\Comdgk$, which is h-projective.
	In this case, we have isomorphisms of left dg modules
	\begin{align*}
		(\DRift_\Wcoprod f^\equipsubStar)(\mplaceholder) & = \Fun_\dg(J^\op,\Comdgk)\big(\Wcoprod(?), F^\equipsubAst(?,\mplaceholder)\big)            \\
		                                          & \cong \mleft(\prod\nolimits_j\Comdgk\mright)  \Big((S(\basek))_j, (\catA(A_j,\mplaceholder))_j\Big) \\
		                                          & \cong \prod\nolimits_j \catA(A_j,\mplaceholder).
	\end{align*}
	Hence the colimit $\colim^\Wcoprod f$ exists if and only if there is an object $C \in \catA$ such that $\catA(C,\mplaceholder)$ is quasi-isomorphic to $\prod_j \catA(A_j,\mplaceholder)$, or equivalently $\catA(C,\mplaceholder) \cong \prod_j \catA(A_j,\mplaceholder)$ in $\Dom(\catA^\op)$. Such a colimit is called an \emph{h-coproduct} of $\{A_j\}_{j \in J}$.

	Similarly, let $\Vprod \colon J \slashedrightarrow \basek$ be the bimodule corresponding to $(S(\basek))_{j} \in \prod_J\Comdgk$. Then
	\[ (\DRan_\Vprod f_\equipsubStar)(\mplaceholder) \cong \prod\nolimits_j \catA(\mplaceholder,A_j) \]
	and the limit $\lim^\Vprod f$ exists if and only if there is an object $D \in \catA$ such that $\catA(\mplaceholder,D)$ is quasi-isomorphic to $\prod_j \catA(\mplaceholder,A_j)$, or equivalently $\catA(\mplaceholder,D) \cong \prod_j \catA(\mplaceholder,A_j)$ in $\Dom(\catA)$. Such a limit is called an \emph{h-product} of $\{A_j\}_{j\in J}$.
\end{example}

Notice that products in $\Com(\catA)$ becomes products in $\Dom(\catA)$.

\begin{proposition}\label{prop:characterization_of_h-coproduct}
	Let $\catA$ be a dg category and $\{A_j\}_j$ a family of objects.
	\begin{enumerate}
		\item $\catA$ has an h-coproduct of $\{A_j\}_j$ if and only if $H^0(\catA)$ has a coproduct of $\{A_j\}_j$ which is preserved by the functor $H^0(\catA)^\op=H^0(\catA^\op) \to \Dom(\catA^\op)$.
		\item $\catA$ has an h-product if and only if $H^0(\catA)$ has a product of $\{A_j\}_j$ which is preserved by the functor $H^0(\catA) \to \Dom(\catA)$.
	\end{enumerate}
\end{proposition}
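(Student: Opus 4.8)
The plan is to reduce both equivalences to the explicit criterion established in \cref{example:h-coproduct} together with the derived Yoneda lemma. Recall that for a small dg category $\catB$ the representable modules $\catB(\mplaceholder,B)$ are h-projective, so that $\Dom(\catB)(\catB(\mplaceholder,B),M)\cong H^0(M(B))$ for every $M\in\Dom(\catB)$; in particular the Yoneda embedding $H^0(\catB)\simeq\qucl{\catB}\hookrightarrow\Dom(\catB)$ is fully faithful. I will apply this with $\catB=\catA^\op$ in part (1) and with $\catB=\catA$ in part (2), and I will use the observation, noted just before the proposition, that products of complexes compute categorical products in the derived category.

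For part (1), \cref{example:h-coproduct} says that $\catA$ admits an h-coproduct of $\{A_j\}_j$ precisely when there is an object $C$ together with an isomorphism $\catA(C,\mplaceholder)\cong\prod_j\catA(A_j,\mplaceholder)$ in $\Dom(\catA^\op)$, the product being the objectwise product of complexes. In the forward direction I would compose this isomorphism with the projections to obtain maps $\catA(C,\mplaceholder)\to\catA(A_j,\mplaceholder)$ in $\Dom(\catA^\op)$; full faithfulness of the Yoneda embedding converts these into coprojections $A_j\to C$ in $H^0(\catA)$. Applying $\Dom(\catA^\op)(\catA(B,\mplaceholder),\mplaceholder)$ and using the identification $\Dom(\catA^\op)(\catA(B,\mplaceholder),\catA(C,\mplaceholder))\cong H^0(\catA)(C,B)$ turns the universal property of the product into a natural bijection $H^0(\catA)(C,B)\cong\prod_j H^0(\catA)(A_j,B)$, exhibiting $C$ as the coproduct in $H^0(\catA)$; the same isomorphism in $\Dom(\catA^\op)$ is exactly the statement that this coproduct is preserved by $H^0(\catA^\op)\to\Dom(\catA^\op)$. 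Conversely, if $H^0(\catA)$ has a coproduct $C=\coprod_j A_j$ preserved by $H^0(\catA^\op)\to\Dom(\catA^\op)$, then preservation is by definition the isomorphism $\catA(C,\mplaceholder)\cong\prod_j\catA(A_j,\mplaceholder)$ in $\Dom(\catA^\op)$, which is the criterion of \cref{example:h-coproduct}.

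Part (2) is entirely dual, replacing $\catA^\op$ by $\catA$, left modules by right modules, and coproducts by products: one works with $\catA(\mplaceholder,D)\cong\prod_j\catA(\mplaceholder,A_j)$ in $\Dom(\catA)$ and invokes the derived Yoneda lemma for $\catA$ to pass between the universal property of the product in $H^0(\catA)$ and the corresponding isomorphism in $\Dom(\catA)$.

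The only point requiring genuine care, rather than a formal unwinding of definitions, is verifying in the forward direction that the abstract isomorphism supplied by \cref{example:h-coproduct} induces the \emph{correct} comparison map, i.e.\ that the coprojections extracted via Yoneda full faithfulness are precisely the structural maps of the colimit cocone. This is exactly what the derived Yoneda lemma delivers: it identifies morphisms out of representables with cohomology classes of Hom complexes and thereby matches the comparison map in $H^0(\catA)$ with the canonical comparison map for the product in $\Dom(\catA^\op)$. Everything else is a direct translation through the equivalence $H^0(\catA)\simeq\qucl{\catA}$ and the fact that products of complexes remain products in the derived category.
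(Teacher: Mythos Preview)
Your proof is correct and follows essentially the same route as the paper: both arguments invoke the criterion of \cref{example:h-coproduct} and then translate the isomorphism $\catA(C,\mplaceholder)\cong\prod_j\catA(A_j,\mplaceholder)$ in $\Dom(\catA^\op)$ into the coproduct universal property in $H^0(\catA)$ via representables. The paper does this in one line by applying $H^0$ and using that $H^0$ commutes with products of complexes, whereas you phrase the same computation as mapping out of $\catA(B,\mplaceholder)$ and invoking the derived Yoneda identification $\Dom(\catA^\op)(\catA(B,\mplaceholder),M)\cong H^0(M(B))$; these are identical once one recalls that $\catA(B,\mplaceholder)$ is h-projective. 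Your additional care about the comparison map being the canonical one is a point the paper simply leaves implicit.
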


\begin{proof}
	(1) If $C\in\catA$ is an h-coproduct of $\{A_j\}_j$, then we have $H^0(\catA)(C,\mplaceholder) = H^0(\catA(C,\mplaceholder))\cong  H^0\mleft( \prod_j\catA(A_j,\mplaceholder) \mright) \cong \prod_j H^0(\catA)(A_j,\mplaceholder)$, which shows that $C$ is a coproduct of $\{A_j\}_j$ in $H^0(\catA)$.
	The converse is immediate. (2) Similar.
\end{proof}

%% idempotent splitting

%\begin{example}[idempotent splitting]\label{example:h-idempotent_splitting}
%\end{example}

Before proceeding to homotopical shifts and cones, we recall ordinary shifts and cones for complexes.

\begin{remark}\label{rmk:def_of_S(k)_and _D(k)_plus_cone_as_weighted_limit}
	Recall $S(\basek)$ denotes the sphere complex.
	We define the \emph{disk complex} $D(\basek)$ to be such a complex that $D(\basek)^i=0$ for $i\neq -1,0$ and the non-trivial differential $d^{-1}\colon D(\basek)^{-1}\to D(\basek)^0$ is the identity $\id\colon \basek\to\basek$; these complexes look like:
	\begin{alignat*}{2}
		S(\basek)\colon \qquad & \cdots\to 0 \to 0 \to \basek \to 0 \to \cdots,                     & \qquad & \\
		D(\basek)\colon \qquad & \cdots \to 0 \to \basek \xrightarrow{\id} \basek \to 0 \to \cdots. &        &
	\end{alignat*}
	We have the canonical injection $\iota\colon S(\basek) \hookrightarrow D(\basek)$. Notice that $D(\basek)$ is equal to the cone $\Cone(\id\colon S(\basek)\to S(\basek))$ of the identity.

	Put $S^n(\basek)\coloneqq S(\basek)[-n]$. Then it is easy to see that there is an isomorphism of complexes
	\[ \Homcpx(S^n(\basek),X) = \Homcpx(S(\basek)[-n],X) \cong X[n] \]
	for a complex $X$ of $\basek$-modules.

	Also, write the poset $\{0<1\}$ and its free dg category by the same symbol $\warrcat$. Let $W\colon \warrcat\to \Comdgk$ be the dg functor which chooses the morphism $\iota\colon S(\basek) \hookrightarrow D(\basek)$ in $\Chk$. Then we can verify
	\[ \Fun_\dg(\warrcat,\Comdgk)(W,F) \cong \Cone(f)[-1] = \Cone(-f[-1]) \eqqcolon \Cocone(f) \]
	for a dg functor $F\colon \warrcat\to \Comdg$ which corresponds to a morphism $f\colon X\to Y$ in $\Chk$.
\end{remark}

%%% tensor product

\begin{example}[h-tensor and h-coshift]\label{example:h-tensor}
	Consider $\catJ=\basek$ and the quasi-functor $f=F_\equipsubAst\colon \basek\to\catA$ induced by an object $A \colon \basek \to \catA$ . Note that $f_\equipsubStar=F_\equipsubAst$ is h-projective since it corresponds to $\catA(\mplaceholder,A)$ under $\Comdg(\basek,\catA)\cong \Comdg(\catA)$, and hence we have $f^\equipsubStar=\bbL\IsbellL(F_\equipsubAst)\cong F^\equipsubAst$.

	Let $S\in\Comdgk$ be a complex and $W\colon \basek\slashedrightarrow \basek$ be the corresponding dg bimodule $\basek^\op\otimes \basek\cong \basek \xrightarrow{S} \Comdgk$. In this case, we have an isomorphism of left dg modules
	\begin{align*}
		(\DRift_W f^\equipsubStar)(\mplaceholder)
		 & = \Fun_\dg(\basek^\op,\Comdgk)\big((\hp W)(?), F^\equipsubAst(?,\mplaceholder)\big) \\
		 & \cong {\Homcpx}\big(\hp S, \catA(A,\mplaceholder)\big).
	\end{align*}
	Hence the colimit $\colim^W f$ exists if and only if there is an object $C \in \catA$ such that $\catA(C,\mplaceholder)$ is quasi-isomorphic to ${\Homcpx}\big(\hp S, \catA(A,\mplaceholder)\big)$, or equivalently $\catA(C,\mplaceholder) \cong {\Homcpx}\big(\hp S, \catA(A,\mplaceholder)\big)$ in $\Dom(\catA^\op)$. Such a colimit is called an \emph{h-tensor} of $A$ with $S$.

	In particular, let us take $S= S^n(\basek)\coloneqq S(\basek)[-n]$, the $(-n)$-shift of the sphere complex (see \cref{rmk:def_of_S(k)_and _D(k)_plus_cone_as_weighted_limit}).
	%; the corresponding weight will be written by $\Wcoshift{n}$.
	Then it is h-projective and we have an isomorphism of complexes
	\[ (\DRift_W f^\equipsubStar)(\mplaceholder)\cong {\Homcpx}\big(S^n(\basek), \catA(A,\mplaceholder)\big) \cong \catA(A,\mplaceholder)[n]. \]
	Therefore the h-tensor of $A$ with $S^n(\basek)$ is called a \emph{homotopical $n$-coshift} of $A$. 
	We write the weight for h-$n$-coshifts by $\Wcoshift{n}$.
	%We will write the dg bimodule corresponding to $S^n(\basek)$ by $\Wcoshift{n}\colon \basek\slashedrightarrow\basek$.

	%the h-tensor of $A$ with $S^n(\basek)$ exists if and only if there is an object $C\in \catA$ such that $\catA(C,\mplaceholder)$ is quasi-isomorphic to $\catA(A,\mplaceholder)[n]$, or equivalently $\catA(C,\mplaceholder) \cong \catA(A,\mplaceholder)[n]$ in $\Dom(\catA^\op)$. Such an h-tensor is called an \emph{h-$n$-coshift} of $A$.

	Similarly, let $V \colon \basek\slashedrightarrow \basek$ be the dg bimodule corresponding to $S\in\Comdgk$. Then
	\[ (\DRan_V f_\equipsubStar)(\mplaceholder) \cong {\Homcpx}\big(\hp S, \catA(\mplaceholder,A)\big) \]
	and the limit $\lim^V f$ exists if and only if there is an object $D \in \catA$ such that $\catA(\mplaceholder,D)$ is quasi-isomorphic to ${\Homcpx}\big(\hp S, \catA(\mplaceholder,A)\big)$, or equivalently $\catA(\mplaceholder,D) \cong {\Homcpx}\big(\hp S, \catA(\mplaceholder,A)\big)$ in $\Dom(\catA)$. Such a limit is called an \emph{h-cotensor} of $A$ with $S$. In particular, the h-cotensor of $A$ with $S^n(\basek)$ is called a \emph{homotopical $n$-shift} of $A$.
	We write the weight for h-$n$-shifts by $\Vshift{n}$.
	%We will also write the dg bimodule corresponding to $S^n(\basek)$ by $\Vshift{n}\colon \basek\slashedrightarrow\basek$.
\end{example}

Recall that $H^0(\catA)$ can be regarded as a (non-replete) full subcategory of the triangulated category $\Dom(\catA)$ via $H^0(\yoneda)\colon H^0(\catA) \hookrightarrow \Dom(\catA)$.

\begin{proposition}
	Let $\catA$ be a dg category.
	\begin{enumerate}
		\item $\catA$ has all homotopical $n$-coshifts if and only if the subcategory $H^0(\catA^\op) \subseteq \Dom(\catA^\op)$ is closed (up to isomorphism) under $n$-shifts.
		\item $\catA$ has all homotopical $n$-shifts if and only if the subcategory $H^0(\catA) \subseteq \Dom(\catA)$ is closed (up to isomorphism) under $n$-shifts.
	\end{enumerate}
\end{proposition}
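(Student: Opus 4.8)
The plan is to read off both equivalences directly from the description of homotopical shifts and coshifts obtained in \cref{example:h-tensor}, translating the existence of a (co)limit into a closure property of the representable objects under the dg Yoneda embedding.

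First I would dispose of part (2). Specializing \cref{example:h-tensor} to the weight $\Vshift{n}$ (that is, to $S = S^n(\basek)$), the homotopical $n$-shift of an object $A \in \catA$ exists precisely when there is an object $D \in \catA$ and an isomorphism $\catA(\mplaceholder, D) \cong \catA(\mplaceholder, A)[n]$ in $\Dom(\catA)$. Now every object of the subcategory $H^0(\catA) \subseteq \Dom(\catA)$ is, by construction of the embedding $H^0(\yoneda)$, of the form $\catA(\mplaceholder, A)$; hence the displayed condition asserts exactly that $\catA(\mplaceholder, A)[n]$ again belongs to $H^0(\catA)$ up to isomorphism. Letting $A$ range over all objects of $\catA$ yields the desired equivalence.

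Part (1) is the same argument carried out for $\catA^\op$. By \cref{example:h-tensor} the homotopical $n$-coshift of $A$ exists if and only if there is $C \in \catA$ with $\catA(C, \mplaceholder) \cong \catA(A, \mplaceholder)[n]$ in $\Dom(\catA^\op)$. Since the Yoneda embedding $H^0(\catA^\op) \hookrightarrow \Dom(\catA^\op)$ sends $A$ to $\catA^\op(\mplaceholder, A) = \catA(A, \mplaceholder)$, this is precisely the condition that the shift of this representable remain a representable, i.e.\ that $H^0(\catA^\op) \subseteq \Dom(\catA^\op)$ be closed under $n$-shifts.

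Since the proof is essentially a translation of \cref{example:h-tensor}, no genuine difficulty arises; the only point deserving care is the bookkeeping of variance. One must read the left $\catA$-modules $\catA(A, \mplaceholder)$ appearing in the coshift as representable right $\catA^\op$-modules, so that the opposite Yoneda embedding applies, and one must note that the (co)limiting quasi-functor $\basek \to \catA$ corresponds, via the equivalence $H^0(\catA) \simeq \qucl{\catA}$ recorded in \cref{subsection:the_bicategory_of_quasi-functors}, to an honest object of $\catA$ rather than merely to a quasi-representable module. Both observations are immediate from the setup.
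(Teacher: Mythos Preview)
Your proposal is correct and is precisely the unpacking the paper has in mind: the paper's own proof reads only ``Straightforward,'' and your argument---reading off from \cref{example:h-tensor} that the relevant right Kan extension/lifting is (co)representable exactly when the shifted representable remains representable in $\Dom(\catA)$ (resp.\ $\Dom(\catA^\op)$)---is exactly that straightforward verification.
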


\begin{proof}
	Straightforward.
\end{proof}

%%% cone

Recall that $\warrcat$ denotes the poset $\{ 0< 1 \}$ and is identified with its free dg category. Giving a dg functor $F\colon \warrcat \to \catA$ is equivalent to giving a morphism $a\colon A \to B$ of $Z^0(\catA)$. We now prepare a lemma.

\begin{lemma}\label{lem:weight_for_cone_is_h-projective}
	Let $W\colon \warrcat\to\Comdgk$ be the dg functor corresponding to the morphism $\iota\colon S(\basek) \to D(\basek)$ from the sphere complex to the disk complex (see \cref{rmk:def_of_S(k)_and _D(k)_plus_cone_as_weighted_limit}).
	\begin{enumerate}
		\item $W$ is h-projective in $\Fun_\dg(\warrcat,\Comdgk)=\Comdg(\warrcat^\op)$.
		\item $W$ is compact in $\Dom(\warrcat^\op)$.

		      %\item Let $\catA$ be a dg category and $a\colon A \to B$ a morphism of $Z^0(\catA)$. Then the dg functor $\warrcat \xrightarrow{a} \catA \hookrightarrow \Comdg(\catA)$ is h-projective in $\Fun_\dg(\warrcat,\Comdg(\catA))\cong \Comdg(\warrcat,\catA)$.\mymemo{（成り立たないかも）}
	\end{enumerate}
\end{lemma}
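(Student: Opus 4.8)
The plan is to realize $W$ explicitly as the mapping cone of a morphism between the two representable dg modules of $\warrcat^\op$, and then to invoke the stability of h-projectivity and of compactness under cones, both of which are automatic from the triangulated structures already set up.

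First I would identify the relevant objects concretely. Under the identification $\Fun_\dg(\warrcat,\Comdgk)=\Comdg(\warrcat^\op)$, a module is the same datum as a morphism $W(0)\to W(1)$ of complexes, and $W$ corresponds to $\iota\colon S(\basek)\to D(\basek)$. The two representables are $h_0=\warrcat^\op(\mplaceholder,0)$ and $h_1=\warrcat^\op(\mplaceholder,1)$; a direct computation of the Hom-sets in the poset $\warrcat$ gives $h_0\cong (S(\basek)\xrightarrow{\id}S(\basek))$ and $h_1\cong (0\to S(\basek))$. Both are h-projective, since representables always are, and both are compact by \cref{prop:Dom(catA)_is_compactly_generated}.

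Next I would write down the morphism $f\colon h_1\to h_0$ whose components are $(0,\id\colon S(\basek)\to S(\basek))$ --- this is the only nonzero choice, and naturality is automatic because $h_1(0)=0$ --- and compute its mapping cone in $\Com(\warrcat^\op)$. Componentwise one finds $\Cone(f)(0)=\Cone(0\to S(\basek))=S(\basek)$ and $\Cone(f)(1)=\Cone(\id\colon S(\basek)\to S(\basek))=D(\basek)$, while the induced transition map $S(\basek)\to D(\basek)$ works out to be exactly $\iota$. Hence $W\cong\Cone(f)$ already at the level of $\Com(\warrcat^\op)$, not merely in the derived category.

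With this identification in hand, both assertions are immediate. For (1), the full subcategory $H^0(\hproj(\warrcat^\op))\subseteq\Kom(\warrcat^\op)$ is triangulated and therefore closed under cones, so the cone of a morphism between the h-projective representables $h_0,h_1$ is again h-projective; thus $W$ is h-projective. For (2), the compact objects form a thick subcategory, so the cone of $f$ between the compact objects $h_0,h_1$ is compact; equivalently $W\in\tria{\{h_0,h_1\}}\subseteq\thick{\{h_0,h_1\}}=\Perf(\warrcat^\op)$. The only genuinely computational point --- and the step I would be most careful about --- is verifying that the transition map of $\Cone(f)$ is precisely $\iota$ rather than some other map $S(\basek)\to D(\basek)$; this is a short check using the standard cone differential, where one must keep track of the signs and of the degree shift $S(\basek)[1]$ sitting inside $D(\basek)=\Cone(\id_{S(\basek)})$.
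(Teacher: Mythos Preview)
Your proof is correct. For part (1) it is essentially identical to the paper's argument: both realize $W$ as the cone of the morphism $\Hom(1,-)\to\Hom(0,-)$ between representables and invoke closure of h-projectives under cones. For part (2) you take a different route: you use the same cone description together with the fact that compact objects form a thick subcategory, so $W\in\tria{\{h_0,h_1\}}\subseteq\Perf(\warrcat^\op)$. The paper instead computes $\Hom_{\Dom(\warrcat^\op)}(W,-)$ directly via the formula $\Fun_\dg(\warrcat,\Comdgk)(W,F)\cong\Cone(f)[-1]$ from \cref{rmk:def_of_S(k)_and _D(k)_plus_cone_as_weighted_limit} and observes that $H^0(\Cone(-)[-1])$ preserves coproducts. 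Your argument is cleaner and more uniform with (1); the paper's argument has the side benefit of exhibiting the explicit Hom formula, which it already needed for the subsequent example anyway.
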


\begin{proof}
	(1) We observe that the representable dg functor $\Hom(0,-)\colon \warrcat\to\Comdgk$ for the object $0\in \warrcat$ corresponds to the identity morphism $\id\colon S(\basek) \to S(\basek)$, and $\Hom(1,-)\colon \warrcat\to\Comdgk$ corresponds to the zero morphism $0 \to S(\basek)$.
	It is then easy to see that $W$ is the cone of $\Hom(1,-)\to \Hom(0,-)$ in $\Fun_\dg(\warrcat,\Comdgk)$. Since representable dg functors are h-projective, so is $W$.

	(2) As seen in \cref{rmk:def_of_S(k)_and _D(k)_plus_cone_as_weighted_limit}, we have $\Fun_\dg(\warrcat,\Comdgk)(W,F) \cong \Cone(f)[-1]$ for a dg functor $F\colon \warrcat\to \Comdg$. Thus by using h-projectiveness of $W$, we have
	\begin{align*}
		\Hom_{\Dom(\warrcat^\op)}(W,-)
		 & \cong \Hom_{\Kom(\warrcat^\op)}(W,-)                 \\
		 & \cong H^0\Big( \Fun_\dg(\warrcat,\Comdgk)(W,-) \Big) \\
		 & \cong H^0(\Cone(-)[-1]),
	\end{align*}
	which preserves coproducts. Therefore $W$ is compact in $\Dom(\warrcat^\op)$.
	%Let $F$ be the dg functor $\warrcat \xrightarrow{a} \catA \hookrightarrow \Comdg(\catA)$. 	It can be checked by a direct calculation that 	\[ \Fun_\dg(\warrcat,\Comdg(\catA))(F,N) \cong  \]
\end{proof}

\begin{example}[h-cone]\label{example:h-cocone}
	Consider $\catJ=\warrcat$ and a morphism $a\colon A\to B$ in $Z^0(\catA)$. Let $f=a_\equipsubAst\colon \warrcat\to\catA$ be the quasi-functor induced by $a\colon \warrcat\to \catA$. 
	By \cref{cor:adjunction_of_dg_functors_becomes_those_of_quasi-functors}, $a^\equipsubAst$ is the right adjoint of $f_\equipsubStar=a_\equipsubAst$ in $\DBimod$, and hence $f^\equipsubStar=a^\equipsubAst$.
	%Also let $g^\equipsubStar=a^\equipsubAst\colon \catA\slashedrightarrow\warrcat$ be the left representable dg bimodule induced by $a\colon \warrcat\to \catA$ and $g_\equipsubStar\colon \warrcat\slashedrightarrow \catA$ be its left adjoint dg bimodule, which we view as a quasi-functor $g\colon \warrcat\to\catA$.

	Let $\Wcone\colon \basek \slashedrightarrow \warrcat$ be the dg bimodule $\warrcat^\op\otimes\basek \cong \warrcat^\op\to \Comdgk$ obtained from the morphism of complexes $\iota\colon S(\basek) \to D(\basek)$. In this case, $\Wcone$ is h-projective and for the quasi-functor $f$ we have an isomorphism of left dg modules
	\begin{align*}
		(\DRift_\Wcone f^\equipsubStar)(\mplaceholder)
		 & = \Fun_\dg(\warrcat^\op,\Comdgk)\big(\Wcone(?), a^\equipsubAst(?,\mplaceholder)\big)               \\
		 & \cong \Cone\big(a^\equipsubAst\colon \catA(B,\mplaceholder)\to\catA(A,\mplaceholder)\big)[-1]
		%= \Cone(-a^\equipsubAst[-1])
		\eqqcolon \Cocone(a^\equipsubAst).
	\end{align*}
	Hence the colimit $\colim^\Wcone g$ exists if and only if there is an object $C \in \catA$ such that $\catA(C,\mplaceholder)$ is quasi-isomorphic to $\Cocone(a^\equipsubAst)$, or equivalently $\catA(C,\mplaceholder) \cong \Cocone(a^\equipsubAst)$ in $\Dom(\catA^\op)$. Such a colimit is called an \emph{h-cone} of $a\colon A\to B$.
	In general, we refer to the colimit of a quasi-functor $f\colon \warrcat\to\catA$ weighted by the above dg bimodule $\Wcone$ as an \emph{h-cone} of $f$.

	Similarly, let $\Vcocone \colon \warrcat\slashedrightarrow \basek$ be the dg bimodule $\basek^\op\otimes\warrcat\cong\warrcat\to\Comdgk$ obtained from the morphism of complexes $\iota\colon S(\basek) \to D(\basek)$. Then
	\begin{align*}
		(\DRan_\Vcocone f_\equipsubStar)(\mplaceholder)
		 & = \Fun_\dg(\warrcat,\Comdgk)\big(\Vcocone(?), a_\equipsubAst(?,\mplaceholder)\big)                   \\
		 & \cong \Cone\big(a_\equipsubAst\colon \catA(\mplaceholder,A)\to\catA(\mplaceholder,B)\big)[-1]
		%= \Cone(-a_\equipsubAst[-1])
		\eqqcolon \Cocone(a_*),
	\end{align*}
	and hence the limit $\lim^\Vcocone f$ exists if and only if there is an object $D \in \catA$ such that $\catA(\mplaceholder,D)$ is quasi-isomorphic to $\Cocone(a_*)$, or equivalently $\catA(\mplaceholder,D) \cong \Cocone(a_*)$ in $\Dom(\catA)$. Such a limit is called an \emph{h-cocone} of $a\colon A \to B$.
	In general, we refer to the limit of a quasi-functor $f\colon \warrcat\to\catA$ weighted by the above dg bimodule $\Vcocone$ as an \emph{h-cocone} of $f$.
\end{example}

Take a quasi-functor $f\colon \warrcat\to \catA$. As it is right quasi-representable, the right dg $\catA$-modules $f_\equipsubStar(\mplaceholder,i)$ for $i=0,1$ are quasi-representable, i.e.\ isomorphic to $\catA(\mplaceholder,A_i)$ for some $A_i\in \catA$ in the derived category $\Dom(\catA)$. Then we refer to $f$ as a \emph{quasi-morphism} form $A_0$ to $A_1$. A quasi-morphism is just a morphism of $\qucl{\catA}$.

\begin{proposition}
	Let $\catA$ be a dg category.
	\begin{enumerate}
		\item $\catA$ has all h-cones if and only if the subcategory $H^0(\catA^\op) \subseteq \Dom(\catA^\op)$ is closed (up to isomorphism) under cocones.
		\item $\catA$ has all h-cocones if and only if the subcategory $H^0(\catA) \subseteq \Dom(\catA)$ is closed (up to isomorphism) under cocones.
	\end{enumerate}
\end{proposition}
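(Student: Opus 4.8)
The plan is to reduce both statements to the explicit weighted-(co)limit computations already carried out in \cref{example:h-cocone}, combined with the single structural fact that the non-replete full embedding $H^0(\catA)\hookrightarrow\Dom(\catA)$ identifies the morphisms of $H^0(\catA)$ with the morphisms of $\Dom(\catA)$ between representables (fullness being exactly the isomorphism $\Hom_{\Dom(\catA)}(\catA(\mplaceholder,A),\catA(\mplaceholder,A'))\cong H^0(\catA(A,A'))$). I would prove (2) in full and then obtain (1) by running the symmetric argument inside $\Dom(\catA^\op)$, or via the $2$-equivalence $\DBimod\simeq\DBimod^\op$, $\catA\mapsto\catA^\op$, from the proof of \cref{thm:left_compact_bimodules_has_left_adjoint}, which swaps limits with colimits.

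For (2), I would first recall that a quasi-functor $f\colon\warrcat\to\catA$ is the same datum as a morphism of $\qucl{\catA}\simeq H^0(\catA)$: the bimodule $f_\equipsubStar$ has quasi-representable components $f_\equipsubStar(\mplaceholder,i)\cong\catA(\mplaceholder,A_i)$ for $i=0,1$, and the image of the arrow $0\to1$ of $\warrcat$ is a transition morphism $g\colon\catA(\mplaceholder,A_0)\to\catA(\mplaceholder,A_1)$ in $\Dom(\catA)$. Since the weight $\Vcocone$ is h-projective (\cref{lem:weight_for_cone_is_h-projective}) and $\warrcat$ is locally h-projective, \cref{prop:DBimod_is_closed_bicategory} lets me compute $\DRan_\Vcocone f_\equipsubStar$ without first resolving $f_\equipsubStar$, and the mapping-cone description of $\Fun_\dg(\warrcat,\Comdgk)(\Vcocone,\mplaceholder)$ yields
\[ (\DRan_\Vcocone f_\equipsubStar)(\mplaceholder)\cong\Cocone(g)=\Cone(g)[-1] \]
in $\Dom(\catA)$, exactly as in \cref{example:h-cocone}. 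Hence the h-cocone of $f$ exists if and only if $\Cone(g)[-1]$ is quasi-representable, i.e. isomorphic in $\Dom(\catA)$ to an object of $H^0(\catA)$.

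I would then run the quantifiers. As $f$ ranges over all quasi-functors $\warrcat\to\catA$, the transition morphism $g$ ranges over all morphisms of $\Dom(\catA)$ between representables: surjectivity is witnessed by taking $f=a_\equipsubAst$ for a strict representative $a\in Z^0(\catA(A_0,A_1))$ of a prescribed class, and by fullness these are precisely the morphisms of $H^0(\catA)$. Therefore ``$\catA$ has all h-cocones'' is equivalent to ``$\Cone(g)[-1]$ lies in $H^0(\catA)$ up to isomorphism for every morphism $g$ of $H^0(\catA)$'', which is the assertion that $H^0(\catA)\subseteq\Dom(\catA)$ is closed under cocones. This gives (2); the identical computation with $\Wcone$ and $\DRift$ from \cref{example:h-cocone} shows that the h-cone of a quasi-morphism is governed by $\Cocone(a^\equipsubAst)=\Cone(g')[-1]$ for the induced morphism $g'\colon\catA(B,\mplaceholder)\to\catA(A,\mplaceholder)$ between corepresentables in $\Dom(\catA^\op)$, and since the corepresentables span $H^0(\catA^\op)\subseteq\Dom(\catA^\op)$ this is precisely (2) read in $\catA^\op$, yielding (1).

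The main obstacle I anticipate is the first computational step: justifying that the derived weighted limit of a \emph{general} quasi-functor $f$ — whose components are only quasi-representable and whose transition morphism lives in $\Dom(\catA)$ rather than in $Z^0(\catA)$ — is still given by the strict mapping-cone formula of \cref{example:h-cocone}, which was stated for a genuine arrow of $Z^0(\catA)$. Making this rigorous requires either (i) observing that the strict cocone $\Cone(f_\equipsubStar(\mplaceholder,0)\to f_\equipsubStar(\mplaceholder,1))[-1]$ of the underlying chain-level transition represents $\Cone(g)[-1]$ in $\Dom(\catA)$, using homotopy invariance of the mapping cone, or (ii) establishing outright that $f_\equipsubStar\cong a_\equipsubAst$ in $\Dom(\warrcat,\catA)$ for a strict representative $a$, thereby reducing literally to \cref{example:h-cocone}. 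Once this identification is secured, the remainder is bookkeeping: tracking the correspondence between quasi-morphisms and morphisms of $H^0(\catA)$ and reading off the quantifiers.
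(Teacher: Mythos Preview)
Your proposal is correct and matches the paper's approach, which it records in one line as ``easy to check by definition.'' The obstacle you flag is not serious: the cocone formula $(\DRan_{\Vcocone} f_\equipsubStar)(\mplaceholder)\cong\Cocone\bigl(f_\equipsubStar(\mplaceholder,0)\to f_\equipsubStar(\mplaceholder,1)\bigr)$ from \cref{rmk:def_of_S(k)_and _D(k)_plus_cone_as_weighted_limit} holds for \emph{any} bimodule $f_\equipsubStar$ (no representability is used there), and since cones in the triangulated category $\Dom(\catA)$ depend only on the isomorphism class of the morphism, your option~(i) goes through immediately.
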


\begin{proof}
	It is easy to check by definition.
\end{proof}

In particular, we obtain a formal characterization of being pretriangulated.

\begin{corollary}\label{cor:pretriangulated_iff_homotopy_cone_and_homotopy_shift}
	A dg category $\catA$ has all h-shifts and h-cocones if and only if it is pretriangulated, i.e.\ $H^0(\catA)$ is closed (up to isomorphism) under shifts and cones of $\Dom(\catA)$.
\end{corollary}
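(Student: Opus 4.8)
The plan is to deduce this corollary formally from the two preceding propositions that characterise, respectively, the existence of homotopical $n$-shifts and of h-cocones in terms of closure properties of the subcategory $H^0(\catA)\subseteq\Dom(\catA)$. By part~(2) of the proposition on shifts, $\catA$ has all homotopical $n$-shifts (for every $n\in\Z$) if and only if $H^0(\catA)$ is closed, up to isomorphism, under the shift functors $[n]$ of $\Dom(\catA)$; by part~(2) of the proposition on cocones, $\catA$ has all h-cocones if and only if $H^0(\catA)$ is closed under cocones in $\Dom(\catA)$. Thus the left-hand condition of the corollary is equivalent to requiring that $H^0(\catA)$ be closed under all shifts and under cocones, and the task reduces to matching this with the definition of pretriangulatedness, namely closure of $H^0(\catA)$ under shifts and cones.

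It then remains to interchange cones and cocones. First I would recall the elementary relation $\Cocone(a)\cong\Cone(a)[-1]$, equivalently $\Cone(a)\cong\Cocone(a)[1]$, in $\Dom(\catA)$, valid for any morphism $a$ between objects of $H^0(\catA)$; this is precisely the defining identity for $\Cocone$ recorded in \cref{rmk:def_of_S(k)_and _D(k)_plus_cone_as_weighted_limit}. In the presence of closure under shifts these two closure conditions coincide: if $H^0(\catA)$ is closed under shifts, then $\Cone(a)$ lies in $H^0(\catA)$ (up to isomorphism) exactly when $\Cocone(a)=\Cone(a)[-1]$ does. Combining this observation with the reduction of the previous paragraph immediately yields both implications of the corollary.

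The argument is essentially a bookkeeping exercise built on the two characterisations and the shift identity, so I do not anticipate any serious obstacle. The only point requiring mild care is to keep track of the direction of the shift and to apply the cone/cocone interchange only after closure under shifts has been secured, so that one is genuinely free to pass between $\Cone(a)$ and $\Cocone(a)$ within $H^0(\catA)$; with closure under shifts in hand, the conditions ``closed under cones'' and ``closed under cocones'' are equivalent, and the two stated hypotheses become identical to pretriangulatedness.
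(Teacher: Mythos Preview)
Your proposal is correct and is exactly the argument the paper has in mind: the corollary is stated without proof, immediately after the two propositions characterising h-shifts and h-cocones, and your reduction via those propositions together with the identity $\Cocone(a)\cong\Cone(a)[-1]$ (so that, once closure under shifts is in hand, closure under cones and under cocones are equivalent) is precisely the intended deduction.
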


We have seen some examples of module-weighted colimits. 
We can reduce the existence of bimodule-weighted colimits to that of module-weighted ones.

\begin{proposition}\label{prop:reduction_of_bimodule-weighted_colimit_to_module-weighted_one}
	Let $W\colon \catM\slashedrightarrow\catJ$ be a dg bimodule and $f\colon \catJ \to \catA$ a quasi-functor. Then the colimit $\colim^W f$ exists if and only if the colimits $\colim^{W(\mplaceholder,M)} f$ exist for all $M \in \catM$.
\end{proposition}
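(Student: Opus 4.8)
The plan is to reduce the existence of the bimodule-weighted colimit $\colim^W f$ to a condition that is pointwise in $\catM$, and then identify each pointwise condition with the existence of the corresponding module-weighted colimit. By definition $\colim^W f$ exists exactly when the right Kan lifting $\DRift_W f^\equipsubStar$ in $\DBimod$ is left quasi-representable, i.e.\ when the left dg $\catA$-module $(\DRift_W f^\equipsubStar)(M,\mplaceholder)$ is quasi-representable as a dg $\catA^\op$-module (equivalently, quasi-corepresentable) for every $M\in\catM$. Since a left quasi-representable bimodule $\catA\slashedrightarrow\catM$ is precisely one of the form $g^\equipsubStar$ for a quasi-functor $g\colon\catM\to\catA$, this pointwise condition automatically assembles the colimit into a genuine quasi-functor, so that no separate coherence step is needed; the whole content is therefore the per-$M$ comparison below.

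First I would make the Kan lifting explicit. Since $\basek$ is a field, all dg categories are locally h-projective, so \cref{prop:DBimod_is_closed_bicategory} gives $\DRift_W f^\equipsubStar=(\hp W)_{\ddag} f^\equipsubStar$, and unwinding the definition of $(\mplaceholder)_{\ddag}(\mplaceholder)$ yields, for each $M$,
\[ (\DRift_W f^\equipsubStar)(M,\mplaceholder)=\int_{J\in\catJ}\Homcpx\big((\hp W)(J,M),f^\equipsubStar(J,\mplaceholder)\big)=\big((\hp W)(\mplaceholder,M)\big)_{\ddag} f^\equipsubStar \]
as complexes. On the other hand, for the module weight $W(\mplaceholder,M)\colon\basek\slashedrightarrow\catJ$ the same proposition gives $\DRift_{W(\mplaceholder,M)}f^\equipsubStar=\big(\hp(W(\mplaceholder,M))\big)_{\ddag} f^\equipsubStar$.

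The heart of the argument is the identification $(\DRift_W f^\equipsubStar)(M,\mplaceholder)\cong\DRift_{W(\mplaceholder,M)}f^\equipsubStar$ in $\Dom(\catA^\op)$. For this I would show that $(\hp W)(\mplaceholder,M)$ is itself an h-projective resolution of $W(\mplaceholder,M)$: the comparison map $(\hp W)(\mplaceholder,M)\to W(\mplaceholder,M)$ is the restriction of the bimodule quasi-isomorphism $\hp W\to W$ at the fixed object $M$, hence a quasi-isomorphism of dg $\catJ$-modules; and $(\hp W)(\mplaceholder,M)$ is h-projective by \cref{lem:h-proj_implies_right_h-proj}, since $\hp W$ is h-projective and $\catM$ is locally h-projective, so $\hp W$ is right h-projective. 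As both $(\hp W)(\mplaceholder,M)$ and $\hp(W(\mplaceholder,M))$ are h-projective resolutions of the same module they are homotopy equivalent, and applying the dg functor $(\mplaceholder)_{\ddag} f^\equipsubStar$ (contravariant in its first slot) carries this homotopy equivalence to one between the two values; hence they agree in $\Dom(\catA^\op)$.

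With the isomorphism in hand the proposition is immediate: $(\DRift_W f^\equipsubStar)(M,\mplaceholder)$ is quasi-corepresentable if and only if $\DRift_{W(\mplaceholder,M)}f^\equipsubStar$ is, and the latter is exactly the condition that $\colim^{W(\mplaceholder,M)}f$ exists. Therefore $\DRift_W f^\equipsubStar$ is left quasi-representable (equivalently, $\colim^W f$ exists) if and only if $\colim^{W(\mplaceholder,M)}f$ exists for every $M\in\catM$, and both implications follow at once. The only delicate point—and the one I would flag as the main obstacle—is the bookkeeping guaranteeing that the chosen resolution $\hp W$ of the \emph{whole} bimodule restricts at each $M$ to a legitimate one-sided h-projective resolution; this is precisely what right h-projectivity via \cref{lem:h-proj_implies_right_h-proj} supplies, and it is where the field (local h-projectivity) hypothesis is used.
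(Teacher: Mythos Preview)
Your proof is correct, but it takes a different route from the paper. Both arguments reduce to the pointwise identification $(\DRift_W f^\equipsubStar)(M,\mplaceholder)\cong\DRift_{W(\mplaceholder,M)}f^\equipsubStar$ in $\Dom(\catA^\op)$; the difference is how this is established. The paper works bicategorically: it identifies evaluation at $M$ with post-composition by $M^\equipsubAst$ using the adjunction $M_\equipsubAst\dashv M^\equipsubAst$ of \cref{lem:A_ast_is_left_adjoint_to_A^star}, then uses that $M^\equipsubAst=\DRift_{M_\equipsubAst}I_\catM$ is an \emph{absolute} right Kan lifting to commute it past $\DRift_W$, obtaining $M^\equipsubAst\Dprocomp\DRift_W f^\equipsubStar\cong\DRift_{W\Dprocomp M_\equipsubAst}f^\equipsubStar\cong\DRift_{W(\mplaceholder,M)}f^\equipsubStar$. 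Your approach is resolution-based: you unpack $\DRift_W f^\equipsubStar=(\hp W)_\ddag f^\equipsubStar$ explicitly, observe that its value at $M$ is $((\hp W)(\mplaceholder,M))_\ddag f^\equipsubStar$, and then argue that $(\hp W)(\mplaceholder,M)$ is already an h-projective resolution of $W(\mplaceholder,M)$ via right h-projectivity (\cref{lem:h-proj_implies_right_h-proj}), so it may replace $\hp(W(\mplaceholder,M))$ up to homotopy. The paper's argument is cleaner with respect to the formal-category-theory theme and never mentions resolutions; yours is more elementary and makes transparent exactly where local h-projectivity enters (namely, to pass from h-projectivity of $\hp W$ to h-projectivity of its one-sided restriction).
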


\begin{proof}
	We may assume $W$ to be h-projective. Notice that $W(\mplaceholder,M) \cong W \procomp M_\equipsubAst \cong W \Dprocomp M_\equipsubAst$ in $\Dom(\catJ)$ since $M_\equipsubAst$ is h-projective. The colimit $\colim^W f$ exists if and only if $\DRift_W f^\equipsubStar$ is left quasi-representable, i.e.\ $(\DRift_W f^\equipsubStar)(M,\mplaceholder)$ is quasi-representable for each $M \in \catM$. Because $M^\equipsubAst=\DRift_{M_\equipsubAst} I_\catM$ is an absolute right Kan lifting in $\DBimod$ (see \cref{lem:A_ast_is_left_adjoint_to_A^star}), we have $M^\equipsubAst \Dprocomp \DRift_W f^\equipsubStar \cong  \DRift_{M_\equipsubAst} \DRift_W f^\equipsubStar \cong \DRift_{W\Dprocomp M_\equipsubAst} f^\equipsubStar \cong \DRift_{W(\mplaceholder,M)} f^\equipsubStar$. Thus $(\DRift_W f^\equipsubStar)(M,\mplaceholder)\cong M^\equipsubAst\Dprocomp \DRift_W f^\equipsubStar$ is quasi-representable if and only if the colimit $\colim^{W(\mplaceholder,M)} f$ exists, which proves the proposition.
\end{proof}

We can furthermore reduce homotopy colimits to h-coshifts, h-cones, and h-coproducts in the following way.

\begin{proposition}\label{prop:h-cocomplete_iff_having_coshifts_cones_coproducts}
	A dg category $\catA$ has all weighted colimits if and only if it has all h-coshifts, h-cones, and h-coproducts.
\end{proposition}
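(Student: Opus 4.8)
The \emph{only if} direction is immediate: h-coshifts, h-cones, and h-coproducts are themselves weighted colimits, as exhibited in \cref{example:h-tensor,example:h-cocone,example:h-coproduct}, so they exist whenever all weighted colimits do. The content is the \emph{if} direction. By \cref{prop:reduction_of_bimodule-weighted_colimit_to_module-weighted_one} it suffices to produce all \emph{module}-weighted colimits, i.e.\ to show that $\colim^W f$ exists for every small dg category $\catJ$, every weight $W\in\Dom(\catJ)$, and every quasi-functor $f\colon\catJ\to\catA$. The plan is to fix $\catJ$ and $f$ and let $\mathcal{E}\subseteq\Dom(\catJ)$ be the strictly full, replete subcategory of those $W$ for which $\colim^W f$ exists. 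I would then prove $\mathcal{E}=\Dom(\catJ)$ by showing that $\mathcal{E}$ is a localizing subcategory containing the representables and invoking $\Dom(\catJ)=\Loc{\{\catJ(\mplaceholder,j)\}_{j}}$ (\cref{prop:Dom(catA)_is_compactly_generated,prop:Brown_representability_theorem}).

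The representables lie in $\mathcal{E}$ by co-Yoneda: since $\catJ(\mplaceholder,j)=j_\equipsubAst$ and $j_\equipsubAst\dashv j^\equipsubAst$ (\cref{lem:A_ast_is_left_adjoint_to_A^star}), composing the adjunctions $j_\equipsubAst\dashv j^\equipsubAst$ and $f_\equipsubStar\dashv f^\equipsubStar$ gives $\DRift_{\catJ(\mplaceholder,j)} f^\equipsubStar\cong j^\equipsubAst\Dprocomp f^\equipsubStar\cong (f\circ j)^\equipsubStar$, which is left quasi-representable; hence $\colim^{\catJ(\mplaceholder,j)}f\cong f(j)$, which exists because $f$ is a quasi-functor. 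The three closure properties form the heart of the argument, and each matches exactly one elementary colimit. The key point I would establish is that, via the formula of the Remark following \cref{def:homotopical_colimits_in_a_dg_category}, namely $\DRift_W f^\equipsubStar\cong\Fun_\dg(\catJ^\op,\Comdgk)\big(\hp W, f^\equipsubStar(\mplaceholder,-)\big)$, the assignment $W\mapsto\DRift_W f^\equipsubStar\in\Dom(\catA^\op)$ is a derived Hom in the weight variable; consequently it is contravariant and triangulated, and sends coproducts of weights to products. Concretely, writing $\DRift_{W}f^\equipsubStar\cong\catA(c,\mplaceholder)$ and $\DRift_{W_i}f^\equipsubStar\cong\catA(c_i,\mplaceholder)$, one gets $\DRift_{W[n]}f^\equipsubStar\cong\catA(c,\mplaceholder)[-n]$, $\DRift_{\coprod_i W_i}f^\equipsubStar\cong\prod_i\catA(c_i,\mplaceholder)$, and a distinguished triangle of weights is carried to a triangle whose connecting map $\catA(c_2,\mplaceholder)\to\catA(c_1,\mplaceholder)$ is, by $H^0(\catA^\op)\simeq\qucl{\catA^\op}$, induced by a quasi-morphism $a\colon c_1\to c_2$.

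Left quasi-representability of these three modules---that is, membership of $W[n]$, of $\coprod_i W_i$, and of the cone of $W_1\to W_2$ in $\mathcal{E}$---is then precisely the existence of an h-coshift of $c$ (\cref{example:h-tensor}), of an h-coproduct of $\{c_i\}_i$ (\cref{example:h-coproduct}), and of an h-cone of the quasi-morphism $a\colon c_1\to c_2$ (\cref{example:h-cocone}), all of which are assumed. Thus $\mathcal{E}$ is closed under (de)shifts, cones, and coproducts, hence is a localizing subcategory; note that $0=\Cone(\id_{\catJ(\mplaceholder,j)})\in\mathcal{E}$ corresponds to the empty h-coproduct, i.e.\ to an h-initial object, which is covered by the hypothesis on h-coproducts. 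Since $\mathcal{E}$ is localizing and contains the representables, the generation statement forces $\mathcal{E}=\Dom(\catJ)$, completing the proof.

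The main obstacle I anticipate is the bookkeeping of variances and opposite categories. One must track that $\DRift_{(-)}f^\equipsubStar$ genuinely lands in $\Dom(\catA^\op)$ (left $\catA$-modules), verify carefully that it is exact and contravariant in the weight and turns coproducts into products---which I would extract from the displayed $\Homcpx$/end formula together with exactness of $\Homcpx$ and the fact that ends are limits---and confirm that the connecting map between quasi-representables produced in the cone step is actually realized by a quasi-morphism, so that the h-cone of \cref{example:h-cocone} applies. Once these identifications between ``operations on the weight'' and ``elementary h-colimits of the resulting objects'' are pinned down, the localizing-subcategory argument closes everything.
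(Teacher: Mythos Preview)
Your proposal is correct and follows the same overall strategy as the paper: reduce to module weights via \cref{prop:reduction_of_bimodule-weighted_colimit_to_module-weighted_one}, fix $f$, and show that the class of weights $W\in\Dom(\catJ)$ for which $\colim^W f$ exists is a localizing subcategory containing the representables.

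The difference lies in how the closure properties are verified. You propose to use that $W\mapsto\DRift_W f^\equipsubStar$ is a contravariant triangulated functor $\Dom(\catJ)^\op\to\Dom(\catA^\op)$ sending coproducts to products, and then to recognise the resulting left $\catA$-modules as corepresentable via the hypothesised h-coshifts, h-coproducts, and h-cones. The paper instead factorises the operations on weights as derived compositions, namely $W[-n]\cong W\Dprocomp\Wcoshift{n}$, $\Cone(\theta)\cong\theta\Dprocomp_{\warrcat}\Wcone$, and $\coprod_i W_i\cong (W_i)_i\Dprocomp_I\Wcoprod$ (see \cref{rmk:summury_on_h-shift_h-cone_h-coproduct}), and then applies the iterated-lifting identity $\DRift_{X\Dprocomp Y}\cong\DRift_Y\DRift_X$. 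This buys something at the cone step: the paper obtains the required quasi-morphism directly as the corepresentative of the \emph{bimodule} $\DRift_\theta f^\equipsubStar\colon\catA\slashedrightarrow\warrcat$, whose left quasi-representability is checked componentwise, so no lifting from $\Dom(\catA^\op)$ is needed. Your route requires the extra step you yourself flagged---realising the induced map $\catA(c_2,\mplaceholder)\to\catA(c_1,\mplaceholder)$ in $\Dom(\catA^\op)$ as $g^\equipsubStar$ for some quasi-functor $g\colon\warrcat\to\catA$ and identifying $\DRift_C f^\equipsubStar$ with $\DRift_{\Wcone} g^\equipsubStar$---which is doable (corepresentables are h-projective, so the map lifts to $\Com(\catA^\op)$ and hence to a left quasi-representable bimodule), but the paper's packaging avoids it entirely.
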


\begin{proof}
	The necessity is obvious. To prove the sufficiency, assume that $\catA$ has h-coshifts, h-cones, and h-coproducts.
	By \cref{prop:reduction_of_bimodule-weighted_colimit_to_module-weighted_one}, we only need to show that $\catA$ has the $W$-weighted colimits of a quasi-functor $f\colon \catJ\to\catA$ for all modules $W\colon \basek\slashedrightarrow\catJ$. Consider the full subcategory $\catC\subseteq \Dom(\basek,\catJ)=\Dom(\catJ)$ consisting of those $W$ for which the colimit $\colim^W f$ exists, i.e.,
	\[ \catC=\{ W\in\Dom(\catJ) \mid \text{$\DRift_W f^\equipsubStar\in\Dom(\catA,\basek)$ is left quasi-representable} \}. \]
	Then let us verify that $\catC$ is a localizing triangulated subcategory of $\Dom(\catJ)$ containing all representable modules.
	\begin{itemize}
		\item For $j\in\catJ$, recall from \cref{lem:A_ast_is_left_adjoint_to_A^star} that  $j_\equipsubAst=\catJ(\mplaceholder,j)$ is left adjoint to $j^\equipsubAst$ in $\DBimod$. Hence we have $\DRift_{j_\equipsubAst} f^\equipsubStar \cong j^\equipsubAst\Dprocomp f^\equipsubStar\cong f^\equipsubStar(j,\mplaceholder)$, which is quasi-representable because $f^\equipsubStar$ is left quasi-representable. Therefore $j_\equipsubAst\in \catC$.
		
		\item For $W\in \catC$, we show that $W[-n]\in \catC$ for $n\in \Z$. Noting that $W[-n] \cong X\Dprocomp \Wcoshift{n}$ in $\Dom(\basek,\catJ)$ from \cref{rmk:summury_on_h-shift_h-cone_h-coproduct} below, we have
		\[ \DRift_{W[-n]} f^\equipsubStar \cong \DRift_{W\Dprocomp \Wcoshift{n}} f^\equipsubStar \cong \DRift_{\Wcoshift{n}} \DRift_{W} f^\equipsubStar. \]
		Now, $\DRift_{W} f^\equipsubStar$ is left quasi-representable. Since $\catA$ has h-coshifts, $\DRift_{\Wcoshift{n}} \DRift_{W} f^\equipsubStar$, and hence $\DRift_{W[-n]} f^\equipsubStar$, is left quasi-representable. Thus $W[-n]\in\catC$.
		
		\item For a morphism $\theta\colon W\to W'$ in $\catC$, we show that $\Cone(\theta)\in \catC$. We may assume that $\theta$ is represented by a morphism $W\to W'$ of $\Comdg(\catJ)$, which we write by the same symbol $\theta$. Then $\theta$ determines a dg functor $\warrcat\to\Comdg(\catJ)$ and we identify it with the corresponding dg bimodule $\theta\colon \warrcat\slashedrightarrow\catJ$. Noting that $\Cone(\theta) \cong \theta \Dprocomp_{\warrcat} \Wcone$ in $\Dom(\basek,\catJ)$ from \cref{rmk:summury_on_h-shift_h-cone_h-coproduct} below, we have
		\[ \DRift_{\Cone(\theta)} f^\equipsubStar \cong \DRift_{\theta \Dprocomp_{\warrcat} \Wcone} f^\equipsubStar \cong \DRift_{\Wcone} \DRift_{\theta} f^\equipsubStar. \]
		Because $W=\theta(\mplaceholder,0)$ and $W'=\theta(\mplaceholder,1)$ are in $\catC$ and so $\DRift_{\theta(\mplaceholder,0)} f^\equipsubStar$ and $\DRift_{\theta(\mplaceholder,1)} f^\equipsubStar$ are left quasi-representable, $\DRift_{\theta} f^\equipsubStar$ is also left quasi-representable, as in the proof of \cref{prop:reduction_of_bimodule-weighted_colimit_to_module-weighted_one}. Since $\catA$ has h-cones, $\DRift_{\Wcone} \DRift_{\theta} f^\equipsubStar$, and hence $\DRift_{\Cone(\theta)} f^\equipsubStar$, is left quasi-representable. Thus $\Cone(\theta)\in\catC$.
		
		\item For a family $\{W_i\}_{i\in I}$ of objects of $\catC$, we show that $\coprod_i W_i \in \catC$. The family $\{W_i\}_i$ determines a dg functor $I \to \Comdg(\catJ)$ and we identify it with the corresponding dg bimodule $(W_i)_i \colon I \slashedrightarrow\catJ$. Noting that $\coprod_i W_i \cong (W_i)_i \Dprocomp_{I} \Wcoprod$ in $\Dom(\basek,\catJ)$ from \cref{rmk:summury_on_h-shift_h-cone_h-coproduct} below, we have
		\[ \DRift_{\coprod_i W_i} f^\equipsubStar \cong \DRift_{(W_i)_i \Dprocomp_{I} \Wcoprod} f^\equipsubStar \cong \DRift_{\Wcoprod} \DRift_{(W_i)_i} f^\equipsubStar. \]
		Now, by \cref{prop:reduction_of_bimodule-weighted_colimit_to_module-weighted_one}, $\DRift_{(W_i)_i} f^\equipsubStar$ is left quasi-representable, as $W_i\in \catC$. Since $\catA$ has h-coproducts, $\DRift_{\Wcoprod} \DRift_{(W_i)_i} f^\equipsubStar$, and hence $\DRift_{\coprod_i W_i} f^\equipsubStar$, is left quasi-representable. Thus $\coprod_i W_i\in\catC$.
	\end{itemize}
	Recall that $\Dom(\catJ)$ is a triangulated category compactly generated by the representable modules. Therefore it follows from \cref{prop:Brown_representability_theorem} that $\catC=\Dom(\catJ)$, which implies that $\catA$ has all weighted colimits of $f$.
\end{proof}

Note that a dg bimodule $W\colon \basek\slashedrightarrow\catJ$ from the unit dg category is h-projective if and only if it is right h-projective.

\begin{remark}\label{rmk:summury_on_h-shift_h-cone_h-coproduct}
	\begin{itemize}
		\item The weight $\Wcoshift{n}\colon \basek\slashedrightarrow\basek$ for homotopical $n$-coshifts is given by $S^n(\basek)=S(\basek)[-n]$, which is (right) h-projective. In particular $\Wcoshift{n}$ is right h-flat, so we have
		\[ X\Dprocomp_\basek \Wcoshift{n} \cong X\procomp_\basek \Wcoshift{n} = S^n(\basek)\otimes_\basek X \cong X[-n] \]
		in $\Dom(\catJ)$ for a dg bimodule $X\colon \basek\slashedrightarrow\catJ$.
		
		\item The weight $\Wcone\colon \basek\slashedrightarrow\warrcat$ for h-cones is given by the dg functor $\warrcat^\op\to\Comdg(\basek)$ choosing the inclusion $\iota\colon S(\basek)\hookrightarrow D(\basek)$, which is (right) h-projective from \cref{lem:weight_for_cone_is_h-projective}. In particular $\Wcone$ is right h-flat, so we have
		\[ X\Dprocomp_\warrcat \Wcone \cong X\procomp_\warrcat \Wcone = \Wcone\otimes_\warrcat X \cong \Cone(\theta) \]
		in $\Dom(\catJ)$ (see \cite[Example 4.2.1]{Loregian:2021coend_calculus} for the last isomorphism). Here $X\colon \warrcat\slashedrightarrow\catJ$ is a dg bimodule and $\theta\colon \warrcat\to\Comdg(\catJ)$ is the corresponding dg functor.
		
		\item The weight $\Wcoprod\colon\basek\slashedrightarrow I$ for h-coproducts is given by the dg functor $(S(\basek))_i\colon I\to \Comdg(\basek)$, which is (right) h-projective. In particular $\Wcoprod$ is right h-flat, so we have
		\[ X\Dprocomp_I \Wcoprod \cong X\procomp_I \Wcoprod = (S(\basek))_i \otimes_I X \cong {\textstyle \coprod_i X_i} \]
		in $\Dom(\catJ)$. Here $X\colon I \slashedrightarrow\catJ$ is a dg bimodule and $X_i$ is the value of the corresponding dg functor $I\to\Comdg(\catJ)$ at $i\in I$.
	\end{itemize}
\end{remark}

\subsection{Absolute (co)limits and homotopy Cauchy complete dg categories}\label{subsection:homotopy_Cauchy_complete_dg_categories}
%% absolute colimit and Cauchy completeness

We can also define the preservation of (co)limits.

\begin{definition}
	Let $W\colon \catM\slashedrightarrow\catJ$ be a dg bimodule and $f\colon \catJ \to \catA$ a quasi-functor. Suppose the colimit $\colim^W f$ exists. We say that a quasi-functor $g\colon \catA \to \catB$ \emph{preserves} $\colim^W f$ if $\DRift_W (gf)^\equipsubStar \cong (\DRift_W f^\equipsubStar) \Dprocomp g^\equipsubStar = (\colim^W f)^\equipsubStar \Dprocomp g^\equipsubStar$. Dually we define the preservation of limits by a quasi-functor.
\end{definition}

\begin{example}\label{example:preservation_of_h-coproducts}
	Let $g\colon \catA\to \catB$ be a quasi-functor.
	Suppose that $W=\Wcoprod$ is the weight for h-coproducts and $f\colon J \to \catA$ corresponds to a family $\{A_j\}_j$ of objects of $\catA$. Since $g$ is a quasi-functor, $\bbLT_{g_\equipsubStar}((A_j)_\equipsubAst) =g_\equipsubStar(\mplaceholder,A_j)$ is quasi-representable and so isomorphic to $(B_j)_\equipsubAst$ for some $B_j\in \catB$.
	If $C\in\catA$ is the h-coproduct of $\{A_j\}_j$, then $\DRift_\Wcoprod f^\equipsubStar\cong C^\ast$ and hence $(\DRift_\Wcoprod f^\equipsubStar) \Dprocomp g^\equipsubStar \cong C^\ast\Dprocomp g^\equipsubStar\cong C^\ast\procomp g^\equipsubStar\cong g^\equipsubStar(C,\mplaceholder)$. 
	Since $\bbLT_{g_\equipsubStar}(C_\equipsubAst) =g_\equipsubStar(\mplaceholder,C)$ is quasi-representable, there exists $D\in\catB$ such that $g_\equipsubStar(\mplaceholder,C)\cong D_\equipsubAst$ and $g^\equipsubStar(C,\mplaceholder) \cong D^\equipsubAst$.
	Therefore $g$ preserves the h-coproduct $C$ if and only if $D$ is the h-coproduct of $\{B_j\}_j$ in $\catB$, i.e., $D$ is the coproduct of $\{B_j\}_j$ in $H^0(\catB)$ which is preserved by the embedding $H^0(\catB)^\op \hookrightarrow \Dom(\catB^\op)$ (see \cref{prop:characterization_of_h-coproduct}). In particular it follows that if $g$ preserves h-coproducts, then $\res{\bbLT_{g_\equipsubStar}}{\qucl{\catA}}\colon \qucl{\catA}\to\qucl{\catB}$ preserves coproducts.
\end{example}

\begin{proposition}
	Left adjoint quasi-functors preserve colimits. Dually, right adjoint quasi-functors preserve limits.
\end{proposition}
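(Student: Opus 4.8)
The plan is to prove the colimit statement directly and deduce the limit statement from the $2$-equivalence $\DBimod\xrightarrow{\simeq}\DBimod^\op$, $\catA\mapsto\catA^\op$, used in \cref{thm:left_compact_bimodules_has_left_adjoint}. So let $g\colon\catA\to\catB$ be a quasi-functor admitting a right adjoint $h\colon\catB\to\catA$ in $\DBimodrqr$, and let $\colim^W f$ be a colimit of a quasi-functor $f\colon\catJ\to\catA$ weighted by $W\colon\catM\slashedrightarrow\catJ$, so that $(\colim^W f)^\equipsubStar\cong\DRift_W f^\equipsubStar$ is left quasi-representable. First I would record two consequences of $g\dashv h$. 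Since $X\dashv Y$ holds in $\DBimodrqr$ if and only if $\DIsbellL(X)\cong Y$ in $\DBimod$ (\cref{prop:characterization_of_adjointness_for_quasi-functors} and the discussion preceding it), we get $g^\equipsubStar=\DIsbellL(g_\equipsubStar)\cong h_\equipsubStar$; combined with \cref{cor:quasi-functor_has_right_adjoint} (so $h_\equipsubStar\dashv h^\equipsubStar$), this shows that $g^\equipsubStar$ is itself a \emph{left} adjoint $1$-cell in $\DBimod$, with right adjoint $h^\equipsubStar$. Secondly, since right adjoints compose contravariantly and $(gf)_\equipsubStar\cong g_\equipsubStar\Dprocomp f_\equipsubStar$, uniqueness of right adjoints yields $(gf)^\equipsubStar\cong f^\equipsubStar\Dprocomp g^\equipsubStar$.

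The heart of the argument is the following formal lemma, valid in any bicategory: if $r$ is a left adjoint $1$-cell, say $r\dashv s$, then postcomposing a right Kan lifting by $r$ again yields a right Kan lifting,
\[ (\DRift_W Z)\Dprocomp r\;\cong\;\DRift_W(Z\Dprocomp r). \]
I would prove this by the universal property of right Kan liftings (\cref{prop:DBimod_is_closed_bicategory}), using that $r\dashv s$ induces an adjunction $(\mplaceholder\Dprocomp s)\dashv(\mplaceholder\Dprocomp r)$ between precomposition functors. Concretely, for a test $1$-cell $V$ of the appropriate type,
\begin{align*}
\Dom(\catB,\catM)\big(V,(\DRift_W Z)\Dprocomp r\big)
&\cong\Dom(\catA,\catM)\big(V\Dprocomp s,\DRift_W Z\big)\\
&\cong\Dom(\catA,\catJ)\big(W\Dprocomp(V\Dprocomp s),Z\big)\\
&\cong\Dom(\catA,\catJ)\big((W\Dprocomp V)\Dprocomp s,Z\big)\\
&\cong\Dom(\catB,\catJ)\big(W\Dprocomp V,Z\Dprocomp r\big)\\
&\cong\Dom(\catB,\catM)\big(V,\DRift_W(Z\Dprocomp r)\big),
\end{align*}
where the middle step is associativity of $\Dprocomp$ in $\DBimod$, the second and fifth are the universal property of the respective liftings, and the first and fourth are the precomposition adjunction; the claim follows by the bicategorical Yoneda lemma.

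Finally I would assemble these. Taking $r=g^\equipsubStar$ (a left adjoint, by the first paragraph) and $Z=f^\equipsubStar$, the lemma together with $(gf)^\equipsubStar\cong f^\equipsubStar\Dprocomp g^\equipsubStar$ gives
\[ \DRift_W(gf)^\equipsubStar\;\cong\;\DRift_W\big(f^\equipsubStar\Dprocomp g^\equipsubStar\big)\;\cong\;(\DRift_W f^\equipsubStar)\Dprocomp g^\equipsubStar\;=\;(\colim^W f)^\equipsubStar\Dprocomp g^\equipsubStar. \]
By contravariant pseudofunctoriality of $(\mplaceholder)^\equipsubStar$ the right-hand side is $\big(g\circ\colim^W f\big)^\equipsubStar$, which is left quasi-representable, corepresented by the quasi-functor $g\circ\colim^W f$. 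Hence $\DRift_W(gf)^\equipsubStar$ is left quasi-representable, so $\colim^W(gf)$ exists and equals $g\circ\colim^W f$; in particular $g$ preserves $\colim^W f$ in the sense of the definition.

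The main obstacle---and the only place the adjunction hypothesis is genuinely used---is the direction of the precomposition adjunction in the lemma: to pull $g^\equipsubStar$ out of a right Kan lifting through a hom \emph{into} it, one needs $(\mplaceholder\Dprocomp g^\equipsubStar)$ to possess a \emph{left} adjoint, i.e.\ $g^\equipsubStar$ must be a left adjoint $1$-cell. The automatic adjunction $g_\equipsubStar\dashv g^\equipsubStar$, valid for every quasi-functor, points the wrong way and does \emph{not} suffice (which is exactly why an arbitrary quasi-functor fails to preserve colimits); what rescues the argument is precisely that, for a left adjoint quasi-functor, $g^\equipsubStar\cong h_\equipsubStar$ acquires its own right adjoint $h^\equipsubStar$. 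A secondary point to verify carefully is that the resulting lifting is again left quasi-representable, so that the colimit genuinely exists inside $\catB$ rather than merely in $\DBimod$.
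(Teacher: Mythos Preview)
Your proof is correct and follows essentially the same approach as the paper. The paper's proof is a one-line appeal to \cref{prop:LAPC_in_a_proarrow_equipment} (the general proarrow-equipment fact that left adjoints preserve weighted colimits), whose proof in turn observes that $g\dashv h$ in $\bicatK$ yields $g^{\equipsubStar}\dashv h^{\equipsubStar}$ in $\bicatM$ and then invokes \cref{prop:adjoint_respects_Kan_extension/lifting}; you unpack exactly this chain of reasoning concretely in $\DBimod$, proving the ``left adjoints commute with right Kan liftings'' lemma via the closed-bicategory hom-set calculation rather than deferring it to the appendix.
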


\begin{proof}
	It follows from \cref{prop:LAPC_in_a_proarrow_equipment}.
\end{proof}

As a corollary, h-(co)limits are invariant under quasi-equivalence because, according to \cref{prop:quasi-equivalence_is_just_equivalence_in_DBimodrqr}, quasi-equivalences become equivalences in $\DBimodrqr$, and equivalences are both left and right adjoints.

\begin{proposition}\label{prop:right_compact_weight_is_absolute}
	All existing colimits weighted by left adjoint dg bimodules are preserved by any quasi-functors.
	Dually, all existing limits weighted by right adjoint dg bimodules are preserved by any quasi-functors. Here, adjointness is understood in $\DBimod$.
\end{proposition}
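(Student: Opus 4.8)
The plan is to exploit the standard principle of formal category theory that colimits weighted by a left adjoint proarrow are absolute, applied to the proarrow equipment $\equipStar\colon \DBimodrqr \hookrightarrow \DBimod$. I will treat only the colimit case, since the limit case is dual. Throughout, let $W\colon \catM \slashedrightarrow \catJ$ be a left adjoint dg bimodule in $\DBimod$, let $f\colon \catJ \to \catA$ be a quasi-functor whose colimit $\colim^W f$ exists (so that $(\colim^W f)^\equipsubStar \cong \DRift_W f^\equipsubStar$), and let $g\colon \catA \to \catB$ be an arbitrary quasi-functor.

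First I would record the lemma that makes the whole computation work: a right Kan lifting along a left adjoint is computed by postcomposition with the right adjoint, and is therefore absolute. Write $W^\dagger$ for the right adjoint of $W$ (by \cref{thm:right_compact_bimodules_are_right_adjoint} one may take $W^\dagger = \DIsbellL(W)$). The adjunction $W \dashv W^\dagger$ furnishes, for every $Z\colon \catC \slashedrightarrow \catJ$ and every $Y\colon \catC \slashedrightarrow \catM$, a natural isomorphism
\[ \DBimod(\catC,\catJ)(W \Dprocomp Y, Z) \cong \DBimod(\catC,\catM)(Y, W^\dagger \Dprocomp Z), \]
so that $W^\dagger \Dprocomp Z$ satisfies the universal property of $\DRift_W Z$; hence $\DRift_W Z \cong W^\dagger \Dprocomp Z$ for all such $Z$. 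This is essentially \cref{prop:adjunction_as_Kan_extension/lifting} from the appendix, and it is the precise form of absoluteness I need.

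Next I would identify the right adjoint of the composite quasi-functor $gf$. Since $f$ and $g$ are quasi-functors, $f_\equipsubStar \dashv f^\equipsubStar$ and $g_\equipsubStar \dashv g^\equipsubStar$ in $\DBimod$ by \cref{cor:quasi-functor_has_right_adjoint}. Adjunctions compose, so $(g_\equipsubStar \Dprocomp f_\equipsubStar) \dashv (f^\equipsubStar \Dprocomp g^\equipsubStar)$; as $(gf)_\equipsubStar = g_\equipsubStar \Dprocomp f_\equipsubStar$, uniqueness of right adjoints yields $(gf)^\equipsubStar \cong f^\equipsubStar \Dprocomp g^\equipsubStar$. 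Combining this with the previous lemma and the associativity of $\Dprocomp$ (valid because all the dg categories are locally h-projective over the field $\basek$), the chain
\[ \DRift_W (gf)^\equipsubStar \cong W^\dagger \Dprocomp (f^\equipsubStar \Dprocomp g^\equipsubStar) \cong (W^\dagger \Dprocomp f^\equipsubStar) \Dprocomp g^\equipsubStar \cong (\DRift_W f^\equipsubStar) \Dprocomp g^\equipsubStar \]
is exactly the defining condition for $g$ to preserve $\colim^W f$.

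I do not anticipate a genuine obstacle; the argument is formal once the absoluteness lemma is in hand. The only points demanding care are bookkeeping of variance and object-types in the hom-isomorphism for $W \dashv W^\dagger$ (the content being that precomposition with a left adjoint is itself left adjoint to precomposition with its right adjoint), and a clean invocation of the associativity coherence for $\Dprocomp$. If the appendix \cref{subsection:absolute_limits_and_Cauchy_completeness} already states the general fact that adjoint-weighted (co)limits are absolute in any proarrow equipment, this proposition collapses to a one-line application of that result together with \cref{cor:quasi-functor_has_right_adjoint}.
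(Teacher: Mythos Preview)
Your proof is correct and follows essentially the same route as the paper: both reduce to the fact that $\DRift_W Z \cong V \Dprocomp Z$ when $W \dashv V$ (the paper cites \cref{prop:Kan_extension/lifting_along_adjoint}), then compute $\DRift_W (gf)^\equipsubStar \cong V \Dprocomp f^\equipsubStar \Dprocomp g^\equipsubStar \cong (\DRift_W f^\equipsubStar) \Dprocomp g^\equipsubStar$. The paper also explicitly notes, as you anticipated, that this is an instance of the general appendix result \cref{prop:left_adjoint_weight_is_absolute}.
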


\begin{proof}
	This is a consequence of \cref{prop:left_adjoint_weight_is_absolute}; however, we provide the proof here for the reader's convenience.
	Let $W\colon \catM\slashedrightarrow\catJ$ and $V\colon \catJ \slashedrightarrow\catM$ be dg bimodules, and suppose they form an adjunction $W\dashv V$ in $\DBimod$. Take a quasi-functor $f\colon \catJ \to \catA$ and assume the colimit $\colim^W f$ exists. Then $\DRift_W f^\equipsubStar$ is left quasi-representable. Since $\DRift_W f^\equipsubStar \cong V\Dprocomp f^\equipsubStar$ by \cref{prop:Kan_extension/lifting_along_adjoint}, we have
	\[ \DRift_W (gf)^\equipsubStar \cong \DRift_W (f^\equipsubStar \Dprocomp g^\equipsubStar) \cong V\Dprocomp f^\equipsubStar\Dprocomp g^\equipsubStar \cong (\DRift_W f^\equipsubStar)\Dprocomp g^\equipsubStar \]
	for any quasi-functor $g\colon \catA \to \catB$. It follows that $\DRift_W (gf)^\equipsubStar$ is left quasi-representable and $g$ preserves $\colim^W f$.
\end{proof}

(Co)limits which are preserved by any quasi-functors are called \emph{absolute}. The proposition above means that colimits weighted by left adjoints and limits weighted by right adjoints are absolute.
Among h-(co)limits introduced in the previous subsection, h-initial and h-terminal objects, finite h-(co)products, h-(co)shifts, and h-(co)cones are absolute (see \cref{example:absolute_h-colimit}).

\begin{proposition}\label{prop:right-compact-weighted_colimit_is_equal_to_left-compact-weighted_limit}
	Suppose that a dg bimodule $W\colon \catM\slashedrightarrow \catJ$ is left adjoint to a dg bimodule $V\colon \catJ\slashedrightarrow \catM$ in $\DBimod$. For quasi-functors $f\colon \catJ\to \catA$ and $z\colon \catM\to \catA$, $z$ is the (absolute) $W$-weighted colimit of $f$ if and only if $z$ is the (absolute) $V$-weighted limit of $f$.
\end{proposition}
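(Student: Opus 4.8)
The plan is to reduce the statement to the uniqueness of adjoints in $\DBimod$, using the formulas that compute Kan liftings and extensions along adjoint weights by composition. First I would unwind the definitions. By \cref{def:homotopical_colimits_in_a_dg_category}, the quasi-functor $z$ is the $W$-weighted colimit of $f$ precisely when $z^\equipsubStar \cong \DRift_W f^\equipsubStar$ (i.e. $z$ corepresents the left quasi-representable bimodule $\DRift_W f^\equipsubStar$), and $z$ is the $V$-weighted limit of $f$ precisely when $z_\equipsubStar \cong \DRan_V f_\equipsubStar$. Since $W \dashv V$ by hypothesis, \cref{prop:Kan_extension/lifting_along_adjoint} supplies the identifications $\DRift_W f^\equipsubStar \cong V \Dprocomp f^\equipsubStar$ and $\DRan_V f_\equipsubStar \cong f_\equipsubStar \Dprocomp W$. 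Hence the two conditions become $z^\equipsubStar \cong V \Dprocomp f^\equipsubStar$ and $z_\equipsubStar \cong f_\equipsubStar \Dprocomp W$ respectively.

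The key observation is that these two bimodules form an adjunction. Because $f$ is a quasi-functor, $f_\equipsubStar \dashv f^\equipsubStar$ in $\DBimod$ by \cref{cor:quasi-functor_has_right_adjoint}, and $W \dashv V$ by hypothesis. Adjoints compose, and since $\Dprocomp$ reverses the order of the factors relative to the direction of the bimodules, this yields $f_\equipsubStar \Dprocomp W \dashv V \Dprocomp f^\equipsubStar$ in $\DBimod$. On the other hand, for the quasi-functor $z$ we always have $z_\equipsubStar \dashv z^\equipsubStar$. I would then invoke the uniqueness of adjoints: if $z^\equipsubStar \cong V \Dprocomp f^\equipsubStar$, then $z_\equipsubStar$ and $f_\equipsubStar \Dprocomp W$ are both left adjoint to the same bimodule, hence isomorphic; conversely, if $z_\equipsubStar \cong f_\equipsubStar \Dprocomp W$, then $z^\equipsubStar$ and $V \Dprocomp f^\equipsubStar$ are both right adjoint to the same bimodule, hence isomorphic. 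This establishes the equivalence of the two conditions and identifies the witnessing quasi-functor $z$ on both sides.

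Finally I would dispatch the parenthetical \emph{absolute} qualifier, which is automatic here. Since $W$ is a left adjoint weight and $V$ a right adjoint weight, \cref{prop:right_compact_weight_is_absolute} guarantees that any $W$-weighted colimit and any $V$-weighted limit is preserved by every quasi-functor. Thus ``$W$-weighted colimit'' and ``absolute $W$-weighted colimit'' coincide, and likewise on the limit side, so no additional argument is required.

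I do not anticipate a serious obstacle. The one point demanding care is the variance bookkeeping when composing the two adjunctions $W \dashv V$ and $f_\equipsubStar \dashv f^\equipsubStar$, since $Y \Dprocomp X$ denotes ``first $X$, then $Y$'' and therefore the composite of left adjoints $f_\equipsubStar \Dprocomp W$ acquires the right adjoint $V \Dprocomp f^\equipsubStar$ in the opposite order. Once this is set up correctly, the whole proposition is a formal consequence of the uniqueness of adjoints together with the already-established Kan-along-adjoint formulas.
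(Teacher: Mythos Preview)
Your proposal is correct and follows essentially the same route as the paper: both arguments use \cref{prop:Kan_extension/lifting_along_adjoint} to identify $\DRift_W f^\equipsubStar \cong V \Dprocomp f^\equipsubStar$ and $\DRan_V f_\equipsubStar \cong f_\equipsubStar \Dprocomp W$, observe that these form an adjoint pair, and conclude via uniqueness of adjoints against $z_\equipsubStar \dashv z^\equipsubStar$. Your write-up is slightly more explicit (spelling out the composition of adjunctions and the absoluteness qualifier), but the underlying argument is the same.
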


\begin{proof}
	The result follows from \cref{prop:left-adj-weighted_colimit_is_equal_to_right-adj-weighted_limit}, but we include the proof for clarity.
	By \cref{prop:Kan_extension/lifting_along_adjoint}, we have
	\[ \DRift_W f^\equipsubStar \cong V\Dprocomp f^\equipsubStar,\qquad \DRan_V f_\equipsubStar \cong f_\equipsubStar\Dprocomp W. \]
	In particular, we have an adjunction $\DRan_V f_\equipsubStar \dashv \DRift_W f^\equipsubStar$. Hence, $\DRift_W f^\equipsubStar$ is corepresentable by $z$ if and only if $\DRan_V f_\equipsubStar$ is representable by $z$.
\end{proof}

Roughly speaking, \cref{prop:right-compact-weighted_colimit_is_equal_to_left-compact-weighted_limit} says that $W$-weighted colimits and $V$-weighted limits coincide, where $W$ is left adjoint to $V$ in $\DBimod$. A dg category admits $W$-weighted colimits if it admits $V$-weighted limits, and vice versa.

\begin{example}\label{example:absolute_h-colimit}
	Let $X\colon \basek\slashedrightarrow\catJ$ be an h-projective dg bimodule from the unit dg category. We recall that $\DIsbellL(X)\cong \IsbellL(X)\colon \catJ\slashedrightarrow \basek$ is given by the dg functor
	\[ \catJ\to \Comdgk,\quad j \mapsto \Fun_\dg(\catJ^\op,\Comdgk)(X(\mplaceholder), \catJ(\mplaceholder,j)). \]
	When $X$ is (right) compact, we have $X\dashv \DIsbellL(X)$ in $\DBimod$ from \cref{thm:right_compact_bimodules_are_right_adjoint}.
	\begin{enumerate}
		\item Take $\catJ=\emptyset$ and $X=\Winit\colon \basek\slashedrightarrow\emptyset$, as in \cref{example:h-initial_object}. Then since $\Winit$ is h-projective and compact, it holds that $\DIsbellL(\Winit)=\Vterm\colon \emptyset\slashedrightarrow\basek$ and so we have $\Winit\dashv \Vterm$. Hence, by \cref{prop:right-compact-weighted_colimit_is_equal_to_left-compact-weighted_limit}, h-initial objects and h-terminal objects are equivalent, and they are absolute.
		
		\item Take $\catJ$ as a finite set $J$ and $X=\Wcoprod=(S(\basek))_i\colon\basek\slashedrightarrow J$, as in \cref{example:h-coproduct}. Then $\Wcoprod$ is h-projective and 
		\begin{align*}
			\DIsbellL(\Wcoprod)(j) &= \left(\prod_{i\in J} \Comdgk \right) \big( (S(\basek))_i, \catJ(i,j) \big) \\
			&= \Comdgk \big( S(\basek), \catJ(j,j) \big) = S(\basek),
		\end{align*}
		which implies that $\DIsbellL(\Wcoprod) = (S(\basek))_j = \Vprod$. Since $J$ is finite, $\Wcoprod$ is right compact and hence we have $\Wcoprod\dashv \Vprod$.
		Thus, by \cref{prop:right-compact-weighted_colimit_is_equal_to_left-compact-weighted_limit}, finite h-coproducts and finite h-products are equivalent, and they are absolute.
		
		\item Take $\catJ=\basek$ and $X=\Wcoshift{n}=S^n(\basek)\colon \basek\slashedrightarrow \basek$ for $n \in \Z$, as in \cref{example:h-tensor}. Then $\Wcoshift{n}$ is h-projective and 
		\begin{align*}
			\DIsbellL(\Wcoshift{n})(\ast) &= \Fun_\dg(\basek,\Comdgk) \big( \Wcoshift{n} , \basek(\mplaceholder,\ast) \big) \\
			&= \Comdgk \big( S^n(\basek), S(\basek) \big) = S^{-n}(\basek),
		\end{align*}
		which implies that $\DIsbellL(\Wcoshift{n})=S^{-n}(\basek) = \Vshift{(-n)}$. Since $\Wcoshift{n}$ is right compact, we have $\Wcoshift{n}\dashv \Vshift{(-n)}$.
		Thus, by \cref{prop:right-compact-weighted_colimit_is_equal_to_left-compact-weighted_limit}, homotopical $n$-coshifts and $(-n)$-shifts are equivalent, and they are absolute.
		From now on, we will identify h-$(-n)$-coshifts with h-$n$-shift.
		
		\item Take $\catJ=\warrcat$ and $X=\Wcone\colon \basek\slashedrightarrow\warrcat$, as in \cref{example:h-cocone}. Then $\Wcone$ is h-projective and 
		\begin{align*}
			\DIsbellL(\Wcone)(0) &= \Fun_\dg(\warrcat^\op,\Comdgk) \big( \Wcone , \warrcat(\mplaceholder,0) \big) \\
			&= \Cocone(0\to S(\basek)) = S(\basek)[-1],\\
			\DIsbellL(\Wcone)(1) &= \Fun_\dg(\warrcat^\op,\Comdgk) \big( \Wcone , \warrcat(\mplaceholder,1) \big) \\
			&= \Cocone(S(\basek)\xrightarrow{\id} S(\basek)) = D(\basek)[-1].
		\end{align*}
		Therefore we see that
		\begin{align*}
			\DIsbellL(\Wcone)&=(\iota[-1]\colon S(\basek)[-1]\to D(\basek)[-1])\\
			&= \Vcocone[-1]=\Vshift{1}\Dprocomp_\basek \Vcocone.
		\end{align*}
		Since $\Wcone$ is right compact, we have $\Wcone\dashv \Vshift{1}\Dprocomp_\basek \Vcocone$.
		Thus, by \cref{prop:right-compact-weighted_colimit_is_equal_to_left-compact-weighted_limit}, h-cones and $1$-shifts of h-cocones are equivalent. Similarly, $(-1)$-shifts of h-cones and h-cocones are equivalent.
		Also, h-cones and h-cocones are absolute.
	\end{enumerate}
\end{example}

The absoluteness of the h-cone and h-shift ensures that any quasi-functor between pretriangulated dg categories preserves these (co)limits, and hence it yields a triangulated functor on the $H^0$-categories.

The following is a homotopical analog of Cauchy completeness.

\begin{definition}
	A dg category $\catA$ is called \emph{h-Cauchy complete} if all left adjoint dg bimodules in $\DBimod$ with codomain $\catA$ are right quasi-representable.
\end{definition}

\begin{proposition}\label{prop:characterization_of_h-Cauchy_complete}
	For a dg category $\catA$, the following are equivalent.
	\begin{enumerate}[label=\equivitem]
		\item $\catA$ is h-Cauchy complete.
		\item $\catA$ has all limits weighted by right adjoint dg bimodules.
		\item $\catA$ has all colimits weighted by left adjoint dg bimodules.
		\item The inclusion $H^0(\catA)\hookrightarrow \Perf(\catA)\coloneqq \Dom(\catA)^\cpt$ is an equivalence of categories.

		      %\item $\catA$ has homotopical shifts, h-cones and h-idempotent splittings.
	\end{enumerate}
\end{proposition}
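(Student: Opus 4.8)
The plan is to route all four conditions through a single reformulation of (i): \emph{$\catA$ is h-Cauchy complete if and only if every compact object of $\Dom(\catA)$ is quasi-representable.} First I would establish this dictionary using \cref{thm:right_compact_bimodules_are_right_adjoint}. A left adjoint dg bimodule $X\colon \catJ\slashedrightarrow\catA$ is, by that theorem, precisely a right compact one, so each fiber $X(\mplaceholder,J)$ is a compact object of $\Dom(\catA)$, and being right quasi-representable means each $X(\mplaceholder,J)$ is quasi-representable. Conversely, any compact $M\in\Dom(\catA)$, viewed as a bimodule $M\colon \basek\slashedrightarrow\catA$, is right compact and hence a left adjoint with codomain $\catA$. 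Thus the condition "all left adjoint bimodules with codomain $\catA$ are right quasi-representable" is literally "all compact objects of $\Dom(\catA)$ are quasi-representable."

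With this reformulation in hand, (i) $\Leftrightarrow$ (iv) becomes essentially formal. Since representables are h-projective and compact, the equivalence $H^0(\catA)\simeq\qucl{\catA}$ gives a fully faithful functor $H^0(\catA)\hookrightarrow\Perf(\catA)=\Dom(\catA)^\cpt$, and a fully faithful inclusion is an equivalence exactly when it is essentially surjective, i.e.\ when every compact object is quasi-representable, which is the reformulation of (i).

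The substantive link is (i) $\Leftrightarrow$ (ii), and here the key step is to read $V$-weighted limits for right adjoint weights in terms of left adjoint bimodules with codomain $\catA$. For $V\colon\catJ\slashedrightarrow\catM$ right adjoint with $W\dashv V$ and a quasi-functor $f\colon\catJ\to\catA$, \cref{prop:Kan_extension/lifting_along_adjoint} gives $\DRan_V f_\equipsubStar\cong f_\equipsubStar\Dprocomp W$, and composing $f_\equipsubStar\dashv f^\equipsubStar$ with $W\dashv V$ shows that $f_\equipsubStar\Dprocomp W\colon\catM\slashedrightarrow\catA$ is a left adjoint bimodule \emph{with codomain exactly $\catA$}. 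Hence (i) forces it to be right quasi-representable, so the limit exists, proving (i) $\Rightarrow$ (ii). For the converse I would use the identity quasi-functor: given an arbitrary left adjoint $X\colon\catJ\slashedrightarrow\catA$ with $X\dashv Y$, take $f=\id_\catA$ (so $f_\equipsubStar=I_\catA$) and the right adjoint weight $V=Y$, so that $\DRan_Y f_\equipsubStar\cong I_\catA\Dprocomp X\cong X$; condition (ii) makes this limit exist, i.e.\ $X$ is right quasi-representable, giving (ii) $\Rightarrow$ (i). The remaining equivalence (ii) $\Leftrightarrow$ (iii) I would obtain directly from \cref{prop:right-compact-weighted_colimit_is_equal_to_left-compact-weighted_limit}: whenever $W\dashv V$, the $W$-weighted colimit of $f$ exists iff its $V$-weighted limit exists, and left and right adjoint weights correspond bijectively under the adjunction.

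The main obstacle I anticipate is not any single computation but keeping the variances and adjunction bookkeeping straight—in particular verifying that $f_\equipsubStar\Dprocomp W$ has codomain precisely $\catA$ (so that h-Cauchy completeness of $\catA$ is genuinely the hypothesis being applied) and that the identity quasi-functor realizes \emph{every} left adjoint bimodule with codomain $\catA$ as a limit of the required form. Once the dictionary between left adjoint bimodules, right compact bimodules, and compact objects of $\Dom(\catA)$ is set up carefully through \cref{thm:right_compact_bimodules_are_right_adjoint}, all four implications are formal consequences.
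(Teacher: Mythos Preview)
Your proposal is correct and follows essentially the same approach as the paper. The paper's proof is terser: it cites \cref{thm:right_compact_bimodules_are_right_adjoint} for (i) $\Leftrightarrow$ (iv), uses \cref{prop:right-compact-weighted_colimit_is_equal_to_left-compact-weighted_limit} for (ii) $\Leftrightarrow$ (iii) and for (i) $\Rightarrow$ (ii) (observing that $\DRan_V f_\equipsubStar\cong f_\equipsubStar\Dprocomp W$ is a left adjoint), and closes the cycle via (iii) $\Rightarrow$ (i) with the identity quasi-functor and the colimit $\colim^W \id_\catA$ rather than your dual (ii) $\Rightarrow$ (i) via $\lim^Y \id_\catA$---but this is the same argument on the other side of the adjunction, and your explicit reformulation of (i) as ``every compact object of $\Dom(\catA)$ is quasi-representable'' just unpacks what the paper leaves implicit.
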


\begin{proof}
	(i) $\Rightarrow$ (ii): Let $W\dashv V$ be an adjunction in $\DBimod$ and $f$ a quasi-functor.
	By the proof of \cref{prop:right-compact-weighted_colimit_is_equal_to_left-compact-weighted_limit}, $\DRan_V f_\equipsubStar$ is a left adjoint, and hence it is right quasi-representable. Thus $\catA$ has all limits weighted by right adjoint dg bimodules.

	(ii) $\Leftrightarrow$ (iii): By \cref{prop:right-compact-weighted_colimit_is_equal_to_left-compact-weighted_limit}.

	(iii) $\Rightarrow$ (i): Let $W\colon \catD \slashedrightarrow \catA$ be a left adjoint in $\DBimod$. Then the colimit $\colim^W \id_\catA$ exists, which means that $\DRift_W I_\catA$ is left quasi-representable. Since $\DIsbellL(W)=\DRift_W I_\catA$ is right adjoint to $W$, it follows that $W$ is right quasi-representable.

	(i) $\Leftrightarrow$ (iv): It follows from \cref{thm:right_compact_bimodules_are_right_adjoint}.
	%(iv) $\Leftrightarrow$ (v): It easily follows, because $\Perf(\catA)= \Dom(\catA)^\cpt$ is the thick closure of $H^0(\catA)$ in $\Dom(\catA)$ by \cref{prop:Brown_representability_theorem}.
\end{proof}

\subsection{Application to the gluing}\label{subsection:application}

In \cite{Kuznetsov-Lunts:2015}, the gluing construction of two dg categories along a dg bimodule is introduced. That is, from a dg bimodule $\varphi\colon \catA\slashedrightarrow\catB$ we can make a new dg category $\int\varphi$ together with canonical dg functors $p\colon \int\varphi\to \catA$ and $q\colon \int\varphi\to \catB$. It is essentially observed in that paper that the gluing has a weak universality in the following sense.

\begin{proposition}[{\cite[Proposition A.7 (i)]{Kuznetsov-Lunts:2015}}]
	\label{prop:gluing_is_weak_tabulator}
	Let $\varphi\colon \catA \slashedrightarrow\catB$ be a dg bimodule. Then, for a dg category $\catC$, the functor
	\[ \Dom(\catC,\textstyle\int\varphi) \to \Dom(\catC,\catB)\downarrow \Dom(\catC,\varphi) \]
	is \emph{weakly smothering}, i.e.\ essentially surjective, full, and conservative. Here $\Dom(\catC,\catB)\downarrow \Dom(\catC,\varphi)$ denotes the comma category of functors
	\[ \Dom(\catC,\catB) \xrightarrow{\id} \Dom(\catC,\catB) \xleftarrow{\varphi\Dprocomp-} \Dom(\catC,\catA). \]
	Moreover, a dg bimodule $\alpha\colon \catC \slashedrightarrow\int\varphi$ is a quasi-functor if and only if both $p_{*}\Dprocomp\alpha$ and $q_*\Dprocomp\alpha$ are quasi-functors.
\end{proposition}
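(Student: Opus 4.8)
The plan is to prove the two implications of the ``Moreover'' clause separately, reducing the quasi-functor property to a pointwise statement about modules so that the weakly smothering functor $\Phi\colon\Dom(\catC,\int\varphi)\to\Dom(\catC,\catB)\downarrow\Dom(\catC,\varphi)$ from the first assertion (due to \cite{Kuznetsov-Lunts:2015}) can be applied. For the easy implication, note that the projections $p\colon\int\varphi\to\catA$ and $q\colon\int\varphi\to\catB$ are genuine dg functors, so the associated bimodules $p_*$ and $q_*$ are quasi-functors; since identities and derived composites of quasi-functors are again quasi-functors, if $\alpha$ is a quasi-functor then so are $p_*\Dprocomp\alpha$ and $q_*\Dprocomp\alpha$.

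For the converse, I would first observe that right quasi-representability is an objectwise condition: $\alpha\colon\catC\slashedrightarrow\int\varphi$ is a quasi-functor exactly when each $\alpha(\mplaceholder,C)\in\Dom(\int\varphi)$ is quasi-representable, and $(p_*\Dprocomp\alpha)(\mplaceholder,C)\cong\bbLT_{p_*}(\alpha(\mplaceholder,C))$, and similarly for $q$. This lets me take $\catC=\basek$ and work with a single module $M\coloneqq\alpha(\mplaceholder)\in\Dom(\int\varphi)$ whose images $X\coloneqq\bbLT_{p_*}M$ and $Y\coloneqq\bbLT_{q_*}M$ are quasi-representable, say $X\cong\catA(\mplaceholder,A_0)$ and $Y\cong\catB(\mplaceholder,B_0)$.

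The key step is to produce a single object $Z_0\in\int\varphi$ with $\int\varphi(\mplaceholder,Z_0)\cong M$. Writing $\Phi(M)=(Y,X,\xi)$ for the image of $M$, with structure map $\xi\colon Y\to\varphi\Dprocomp X$, I transport $\xi$ along the chosen isomorphisms and use $\varphi\Dprocomp\catA(\mplaceholder,A_0)\cong\varphi(\mplaceholder,A_0)$ (over a field representables are h-projective, so the derived composite is computed directly) to obtain a morphism $\catB(\mplaceholder,B_0)\to\varphi(\mplaceholder,A_0)$ in $\Dom(\catB)$. By the dg Yoneda lemma this is a class in $H^0(\varphi(B_0,A_0))$, which I lift to a degree-zero cocycle $\mu_0\in Z^0(\varphi(B_0,A_0))$. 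The triple $Z_0\coloneqq(A_0,B_0,\mu_0)$ is then an object of the tabulator $\int\varphi$. Because $p_*$ and $q_*$ come from the dg functors $p,q$, one has $\bbLT_{p_*}(\int\varphi(\mplaceholder,Z_0))\cong\catA(\mplaceholder,A_0)$ and $\bbLT_{q_*}(\int\varphi(\mplaceholder,Z_0))\cong\catB(\mplaceholder,B_0)$, so the two representable components of $\Phi(\int\varphi(\mplaceholder,Z_0))$ match those of $\Phi(M)$; a direct computation of the tabulator structure 2-cell shows its structure map is the one induced by $\mu_0$, which equals $\xi$ by construction, giving $\Phi(\int\varphi(\mplaceholder,Z_0))\cong\Phi(M)$ in the comma category.

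Finally I would lift this comparison: since $\Phi$ is full, the isomorphism $\Phi(\int\varphi(\mplaceholder,Z_0))\cong\Phi(M)$ has a preimage $\gamma\colon\int\varphi(\mplaceholder,Z_0)\to M$ in $\Dom(\int\varphi)$, and since $\Phi$ is conservative and $\Phi(\gamma)$ is invertible, $\gamma$ is itself an isomorphism; hence $M$ is quasi-representable, which completes the converse. The main obstacle is the verification in the previous paragraph: pinning down the explicit form of the tabulator structure 2-cell, hence of $\Phi$ on representables, and confirming that the class extracted by Yoneda really reproduces $\xi$. This forces one to unwind the Hom-complexes of $\int\varphi$ and their interaction with $\varphi\Dprocomp-$, though the ``over a field'' hypothesis removes all h-projective and h-flat replacement bookkeeping and keeps the computation manageable.
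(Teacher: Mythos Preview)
Your proposal is correct, and in fact it is considerably more detailed than what the paper provides. The paper's own proof is essentially just a pointer: it notes that the original statement in \cite{Kuznetsov-Lunts:2015} is a bijection on isomorphism classes, and then asserts that ``by refining the proof of \cite[Lemma~2.5]{Kuznetsov-Lunts:2015}, we obtain the assertion,'' without spelling out either the weakly smothering upgrade or the Moreover clause.

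Your approach to the Moreover clause---reducing to $\catC=\basek$ objectwise, building the candidate object $Z_0=(A_0,B_0,\mu_0)$ of $\int\varphi$ via Yoneda, and then using only the full and conservative parts of the weakly smothering property to lift the isomorphism $\Phi(\int\varphi(\mplaceholder,Z_0))\cong\Phi(M)$---is exactly the natural way to extract this conclusion once the first assertion is in hand. It is worth noting that you never invoke essential surjectivity of $\Phi$: fullness and conservativity suffice, which is a clean observation. This is presumably what the paper means by ``refining'' the Kuznetsov--Lunts argument, but you have made the mechanism explicit rather than leaving it to the reader.
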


\begin{proof}
	The original statement is a bijective correspondance between isoclasses of objects of $\Dom(\catC,\textstyle\int\varphi)$ and of $\Dom(\catC,\catB)\downarrow \Dom(\catC,\varphi)$. This is owing to \cite[Lemma 2.5]{Kuznetsov-Lunts:2015}. By refining the proof of \cite[Lemma 2.5]{Kuznetsov-Lunts:2015}, we obtain the assertion.
\end{proof}

The notion of \emph{smothering} functors was first introduced in \cite{Riehl-Verity:2015The_2-category}.
The notion of weakly smothering functors is due to \cite[``\href{https://ncatlab.org/nlab/show/smothering+functor}{smothering functor}'']{nLab} or \cite{Arlin:2020phd_thesis}.

As shown in \cite[Lemma 4.3]{Kuznetsov-Lunts:2015}, the gluing of pretriangulated dg categories is again pretriangulated. The next proposition gives a slight generalization and an alternative, more conceptual proof of this result.

\begin{proposition}\label{prop:gluing_has_right-adjoint-weighted_limits}
	Let $\varphi\colon \catA \slashedrightarrow\catB$ be a dg bimodule. Let $V\colon \catJ\slashedrightarrow\catM$ be a right adjoint dg bimodule and suppose that $\catA$ and $\catB$ have $V$-weighted limits. Then the gluing $\int \varphi$ also has $V$-weighted limits.
\end{proposition}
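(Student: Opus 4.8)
The plan is to reduce the existence of $V$-weighted limits in $\int\varphi$ to their existence in $\catA$ and $\catB$ via the detection criterion in the final sentence of \cref{prop:gluing_is_weak_tabulator}. First I would fix a left adjoint $W\colon\catM\slashedrightarrow\catJ$ of $V$ in $\DBimod$, so that by \cref{prop:Kan_extension/lifting_along_adjoint} the right Kan extension along a right adjoint weight is computed by precomposition with $W$: for any quasi-functor $f$ one has $\DRan_V f_\equipsubStar\cong f_\equipsubStar\Dprocomp W$. Hence, for a given quasi-functor $f\colon\catJ\to\int\varphi$, showing that $\lim^V f$ exists amounts to proving that the dg bimodule $f_\equipsubStar\Dprocomp W\colon\catM\slashedrightarrow\int\varphi$ is right quasi-representable. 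The decisive feature of a right adjoint weight is that $\DRan_V(-)\cong(-)\Dprocomp W$ is a \emph{precomposition}, and therefore commutes with the postcompositions $p_\equipsubAst\Dprocomp(-)$ and $q_\equipsubAst\Dprocomp(-)$ up to the associativity constraints of $\DBimod$, which are available since $\basek$ is a field and all categories in sight are locally h-projective.

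Concretely, I would first record that $g\coloneqq pf$ and $h\coloneqq qf$ are quasi-functors $\catJ\to\catA$ and $\catJ\to\catB$, with $g_\equipsubStar\cong p_\equipsubAst\Dprocomp f_\equipsubStar$ and $h_\equipsubStar\cong q_\equipsubAst\Dprocomp f_\equipsubStar$, using only that quasi-functors are closed under derived composition. Associativity then yields
\[ p_\equipsubAst\Dprocomp(f_\equipsubStar\Dprocomp W)\cong (p_\equipsubAst\Dprocomp f_\equipsubStar)\Dprocomp W\cong g_\equipsubStar\Dprocomp W\cong\DRan_V g_\equipsubStar, \]
and likewise $q_\equipsubAst\Dprocomp(f_\equipsubStar\Dprocomp W)\cong\DRan_V h_\equipsubStar$. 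Since $\catA$ and $\catB$ have $V$-weighted limits and $g,h$ are quasi-functors, both $\DRan_V g_\equipsubStar$ and $\DRan_V h_\equipsubStar$ are right quasi-representable, i.e.\ quasi-functors. Applying the last assertion of \cref{prop:gluing_is_weak_tabulator} with $\catC=\catM$ and $\alpha=f_\equipsubStar\Dprocomp W$, we conclude that $\alpha$ is itself a quasi-functor; that is, $\DRan_V f_\equipsubStar\cong f_\equipsubStar\Dprocomp W$ is right quasi-representable, and its representing quasi-functor is the desired limit $\lim^V f$. As $f$ was arbitrary, $\int\varphi$ has all $V$-weighted limits.

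The argument is essentially formal once the weak universal property of the gluing is in hand, so the genuine content sits entirely in its two inputs: the adjoint-weight identity $\DRan_V(-)\cong(-)\Dprocomp W$ and the componentwise detection of right quasi-representability. I expect the only point requiring care---rather than a true obstacle---to be the commutation of $\DRan_V$ past $p_\equipsubAst$ and $q_\equipsubAst$; this would fail for a general weight, since $\DRan_V$ does not commute with postcomposition, and it is precisely this failure that forces the hypothesis that $V$ be a right adjoint.
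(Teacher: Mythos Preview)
Your proposal is correct and follows essentially the same route as the paper: both arguments set $\alpha=\DRan_V f_\equipsubStar$, invoke the detection criterion from \cref{prop:gluing_is_weak_tabulator} to reduce to showing $p_\equipsubAst\Dprocomp\alpha$ and $q_\equipsubAst\Dprocomp\alpha$ are quasi-functors, and then identify these with $\DRan_V(p_\equipsubAst\Dprocomp f_\equipsubStar)$ and $\DRan_V(q_\equipsubAst\Dprocomp f_\equipsubStar)$. The only cosmetic difference is that the paper phrases the key commutation as ``$\alpha$ is an \emph{absolute} right Kan extension since $V$ is right adjoint'', whereas you unpack this via the explicit formula $\DRan_V(-)\cong(-)\Dprocomp W$ and associativity of $\Dprocomp$; these are two ways of saying the same thing (cf.\ \cref{prop:Kan_extension/lifting_along_adjoint}).
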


\begin{proof}
	Take a quasi-functor $f\colon \catJ\to \int\varphi$ and put $\alpha= \DRan_V f_\equipsubStar \colon \catM \slashedrightarrow \int\varphi$, which is an absolute right Kan extension as $V$ is right adjoint. We want to prove that $\alpha$ is a quasi-functor. By \cref{prop:gluing_is_weak_tabulator}, it suffices to show that $p_*\Dprocomp\alpha$ and $q_*\Dprocomp \alpha$ are quasi-functors. Since $\alpha= \DRan_V f_\equipsubStar$ is absolute, $p_*\Dprocomp\alpha$ is a right Kan extension $\DRan_V (p_*\Dprocomp f_\equipsubStar)$. Now that $\catA$ has $V$-weighted limits, $p_*\Dprocomp\alpha$ is a quasi-functor. The same arguments imply that $q_*\Dprocomp \alpha$ is also a quasi-functor.
\end{proof}

\subsection{Reflection of adjoints and colimits}\label{subsection:reflection_of_adjoints_and_colimits}

We have already observed that adjunctions of quasi-functors induce ordinary adjunctions between $H^0$-categories, and that $H^0$-categories of dg categories with h-coproducts admit coproducts in the $1$-categorical sense.
We study when such adjunctions and colimits are reflected under the $H^0$-construction.

The following is an analog of the fact that a natural transformation is invertible if its components are so.

\begin{proposition}\label{prop:termwise_criterion_of_invertible_2-cell_of_DBimod}
	Let $\alpha\colon X\Rightarrow Y$ be a morphism in $\Dom(\catA,\catB)$.
	\begin{enumerate}
		\item If $\alpha A_\equipsubAst \coloneqq \alpha \Dprocomp A_\equipsubAst \colon X\Dprocomp A_\equipsubAst \Rightarrow Y\Dprocomp A_\equipsubAst$ is invertible in $\Dom(\basek,\catB)=\Dom(\catB)$ for each $A\in \catA$, then $\alpha$ is invertible in $\Dom(\catA,\catB)$.
		
		\item If $B^\equipsubAst \alpha \coloneqq B^\equipsubAst \Dprocomp \alpha \colon B^\equipsubAst \Dprocomp X \Rightarrow B^\equipsubAst \Dprocomp Y$ is invertible in $\Dom(\catA,\basek)=\Dom(\catA^\op)$ for each $B\in \catB$, then $\alpha$ is invertible in $\Dom(\catA,\catB)$.
	\end{enumerate}
\end{proposition}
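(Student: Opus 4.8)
The plan is to reduce the invertibility of $\alpha$ in $\Dom(\catA,\catB)$ to a componentwise check via the pseudo-functor $\bbLtil\colon \DBimod\to\CAT$, and then to recognize that the relevant components are exactly the maps $\alpha\Dprocomp A_\equipsubAst$. First I would invoke \cref{cor:bbLtil_is_locally_conservative}: since $\bbLtil$ is locally conservative, $\alpha$ is invertible in $\Dom(\catA,\catB)$ if and only if $\bbLtil(\alpha)=\bbLT_\alpha\colon \bbLT_X\Rightarrow\bbLT_Y$ is a natural isomorphism of functors $\Dom(\catA)\to\Dom(\catB)$. Both $\bbLT_X$ and $\bbLT_Y$ are triangulated and preserve coproducts (being left adjoints), and $\Dom(\catA)$ is compactly generated by the representables $\{\catA(\mplaceholder,A)\}_{A\in\catA}$ by \cref{prop:Dom(catA)_is_compactly_generated}. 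Hence \cref{prop:lemma_for_compactly_generated_tria_cat}(4) shows that $\bbLT_\alpha$ is a natural isomorphism if and only if its component
\[ (\bbLT_\alpha)_{\catA(\mplaceholder,A)}\colon \bbLT_X(\catA(\mplaceholder,A))\to \bbLT_Y(\catA(\mplaceholder,A)) \]
is invertible in $\Dom(\catB)$ for every $A\in\catA$.

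The crux is then to identify this component with $\alpha\Dprocomp A_\equipsubAst$. Recall that $A_\equipsubAst\colon \basek\slashedrightarrow\catA$ corresponds to the representable $\catA(\mplaceholder,A)$, which is h-projective; consequently $X\Dprocomp A_\equipsubAst$ is computed by the underived composite, i.e.\ $X\Dprocomp A_\equipsubAst \cong \hp(X)\procomp A_\equipsubAst\cong X(\mplaceholder,A)$ in $\Dom(\catB)$, while $\bbLT_X(\catA(\mplaceholder,A))=T_X(\catA(\mplaceholder,A))=\catA(\mplaceholder,A)\otimes_\catA X\cong X(\mplaceholder,A)$ by the co-Yoneda lemma. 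These isomorphisms are natural in the bimodule variable, so under them the map $(\bbLT_\alpha)_{\catA(\mplaceholder,A)}$ is carried to $\alpha\Dprocomp A_\equipsubAst$. Combining with the previous paragraph yields statement (1): $\alpha$ is invertible precisely when $\alpha\Dprocomp A_\equipsubAst$ is invertible for all $A$. Alternatively one can argue hands-on: replace $X,Y$ by h-projective bimodules and $\alpha$ by a genuine morphism $\mu$ of dg bimodules (legitimate because $p_X\Dprocomp A_\equipsubAst$ is always invertible), so that $\mu\Dprocomp A_\equipsubAst\cong \mu(\mplaceholder,A)$; the hypothesis then says each $\mu(\mplaceholder,A)$ is a quasi-isomorphism of h-projective $\catB$-modules, hence each $\mu(B,A)$ is a quasi-isomorphism, so $\mu$ is a quasi-isomorphism and thus invertible in $\Dom(\catA,\catB)$ by \cref{prop:qism_induces_iso_btw_derived_func}.

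Statement (2) is formally dual: the $2$-equivalence $\DBimod\xrightarrow{\simeq}\DBimod^\op$, $\catA\mapsto\catA^\op$, used in \cref{thm:left_compact_bimodules_has_left_adjoint}, interchanges left and right one-sided representables and turns precomposition by $A_\equipsubAst$ into postcomposition by $B^\equipsubAst$; applying the duality to (1) gives (2). I expect the only genuinely delicate point to be the naturality claim in the second paragraph — namely verifying that the coherence isomorphisms relating $\bbLtil(X)(\catA(\mplaceholder,A))$, $X\Dprocomp A_\equipsubAst$, and $X(\mplaceholder,A)$ are compatible with $\alpha$, so that the component of $\bbLT_\alpha$ at the representable really is $\alpha\Dprocomp A_\equipsubAst$. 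Once this bookkeeping is pinned down, the remainder is a direct appeal to the cited results.
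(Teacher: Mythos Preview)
Your proposal is correct and follows essentially the same approach as the paper: reduce invertibility of $\alpha$ to that of $\bbLtil(\alpha)$ via \cref{cor:bbLtil_is_locally_conservative}, check on the compact generators $\catA(\mplaceholder,A)$ via \cref{prop:lemma_for_compactly_generated_tria_cat}(4), and identify the resulting components with $\alpha\Dprocomp A_\equipsubAst$; part (2) is then obtained by duality.

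One small difference worth noting: the paper dispenses with the naturality bookkeeping you flag by instead applying local conservativity of $\bbLtil$ a second time, now at the level of $\Dom(\basek,\catB)$. Concretely, $\alpha A_\equipsubAst$ is invertible iff $\bbLtil(\alpha A_\equipsubAst)$ is, and by pseudo-functoriality of $\bbLtil$ the latter equals $\bbLtil(\alpha)\circ\bbLT_{A_\equipsubAst}$; evaluating this whiskered transformation on the generator $\basek\in\Dom(\basek)$ gives precisely $\bbLtil(\alpha)_{\catA(\mplaceholder,A)}$. This yields the identification you need without chasing coherence isomorphisms by hand.
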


\begin{proof}
	(1) We remark that since $A_\equipsubAst$ is right h-flat, we have $X\Dprocomp A_\equipsubAst \cong X \procomp A_\equipsubAst = X(\mplaceholder,A)$ in $\Dom(\catB)$. Then, from \cref{cor:bbLtil_is_locally_conservative}, we can see
	\begin{align*}
		& \text{$\alpha A_\equipsubAst$ is invertible in $\Dom(\basek,\catB)$} \\
		&\iff \text{$\bbLtil(\alpha A_\equipsubAst)\colon \bbLT_{X(\mplaceholder,A)} \Rightarrow \bbLT_{Y(\mplaceholder,A)}\colon \Dom(\basek)\to \Dom(\catB)$ is an isomorphism,} \\
		&\iff \text{$\bbLtil(\alpha) \bbLT_{A_\equipsubAst}$ is an isomorphism,} \\
		&\iff \text{$\bbLtil(\alpha)_A\colon X(\mplaceholder,A)\to Y(\mplaceholder,A)$ is an isomorphism.}
	\end{align*}
	Thus
	\[ \text{$\alpha A_\equipsubAst$ is invertible in $\Dom(\catB)$ for each $A\in \catA$} \iff \text{$\bbLtil(\alpha)$ is an isomorphism.} \]
	Therefore \cref{cor:bbLtil_is_locally_conservative} implies that $\alpha$ is invertible.
	
	(2) Under the equation $\Dom(\catA,\catB)=\Dom(\catA^\op\otimes\catB)=\Dom( (\catB^\op)^\op,\catA^\op)=\Dom(\catB^\op,\catA^\op)$, we can identify $B^\equipsubAst \Dprocomp_\catB \alpha = \alpha \Dprocomp_{\catB^\op} B_\equipsubAst$. Therefore we obtain the assertion from (1).
\end{proof}

In the following, via $\DBimod(\basek,\basek)=\Dom(\basek)$, we regard an object $S\in \Dom(\basek)$ and morphism $\alpha\colon S\to T$ in $\Dom(\basek)$ as a morhism $S\colon \basek\slashedrightarrow\basek$ and a $2$-cell $\alpha\colon S\Rightarrow T \colon \basek\slashedrightarrow\basek$ in $\DBimod$, respectively. Also we recall that there are the cohomology functors $H^n\colon \Dom(\basek) \to \Ab$ from the derived category of complexes.

\begin{lemma}\label{lem:isom_in_DBimod(kk)}
	Let $\alpha$ be as above and $n\in \Z$ be an integer. Then $H^n(\alpha)\colon H^n(S)\to H^n(T)$ is an isomorphism if and only if the post-composition map
	\[ \alpha\circ\mplaceholder \colon \DBimod(\basek,\basek)(S^n(\basek),S) \to \DBimod(\basek,\basek)(S^n(\basek),T) \]
	is a bijection.
\end{lemma}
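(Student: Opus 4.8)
The plan is to reduce the lemma to the standard natural isomorphism $\Dom(\basek)(S^n(\basek),X)\cong H^n(X)$ and then invoke naturality. First I would observe that $S^n(\basek)=S(\basek)[-n]$ is h-projective, so that for every complex $X$ the localization functor induces $\Dom(\basek)(S^n(\basek),X)\cong \Kom(\basek)(S^n(\basek),X)=H^0\big(\Homcpx(S^n(\basek),X)\big)$. Combining this with the isomorphism $\Homcpx(S^n(\basek),X)\cong X[n]$ recorded in \cref{rmk:def_of_S(k)_and _D(k)_plus_cone_as_weighted_limit}, I obtain a chain of isomorphisms $\Dom(\basek)(S^n(\basek),X)\cong H^0(X[n])=H^n(X)$. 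The essential point is that each of these isomorphisms is natural in the complex $X$, so they assemble into a natural isomorphism $\Dom(\basek)(S^n(\basek),\mplaceholder)\cong H^n$ of functors $\Dom(\basek)\to\Ab$.

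Second, I would apply this naturality to the morphism $\alpha\colon S\to T$. This produces a commutative square in which the top horizontal arrow is the post-composition map $\alpha\circ\mplaceholder\colon \DBimod(\basek,\basek)(S^n(\basek),S)\to \DBimod(\basek,\basek)(S^n(\basek),T)$, the bottom horizontal arrow is $H^n(\alpha)\colon H^n(S)\to H^n(T)$, and the two vertical arrows are the isomorphisms from the previous step (using the identification $\DBimod(\basek,\basek)=\Dom(\basek)$). Since the vertical maps are bijections, the top arrow is a bijection if and only if the bottom arrow is. Finally, $H^n(\alpha)$ is a homomorphism of $\basek$-modules, so it is a bijection exactly when it is an isomorphism; this yields the claimed equivalence.

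The argument is entirely formal, so there is no genuine obstacle; the only step requiring care is verifying that the identification $\Dom(\basek)(S^n(\basek),\mplaceholder)\cong H^n$ is natural in the complex variable rather than merely a pointwise isomorphism, since it is precisely this naturality that allows the post-composition map $\alpha\circ\mplaceholder$ to be identified with $H^n(\alpha)$.
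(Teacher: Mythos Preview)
Your proposal is correct and follows essentially the same approach as the paper: both establish the natural isomorphism $\DBimod(\basek,\basek)(S^n(\basek),\mplaceholder)\cong H^n(\mplaceholder)$ via h-projectivity of $S^n(\basek)$ and the identification $\Homcpx(S^n(\basek),X)\cong X[n]$, then deduce the claim from naturality. The paper's proof is slightly terser, simply writing out the chain of isomorphisms and stating that this shows the assertion, whereas you make the naturality step and the resulting commutative square explicit.
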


\begin{proof}
	For $S\in \Dom(\basek)$, using the h-projectivity of the sphere complex $S^n(\basek)$, we have
	\begin{align*}
		\DBimod(\basek,\basek)(S^n(\basek),S)
		&= \Hom_{\Dom(\basek)}(S^n(\basek),S) \\
		&\cong \Hom_{\Kom(\basek)}(S^n(\basek),S) \\
		&= H^0\big(\Homcpx(S^n(\basek),S)\big) \\
		&\cong H^0(S[n]) \cong H^n(S).
	\end{align*}
	Hence $\DBimod(\basek,\basek)(S^n(\basek),\mplaceholder) \cong H^n(\mplaceholder)$, which shows the assertion.
\end{proof}

\begin{lemma}\label{lem:right_quasi-representable_can_be_checked_pointwise}
	Let $X\colon \catA\slashedrightarrow\catB$ be a dg bimodule.
	\begin{enumerate}
		\item $X$ is right quasi-representable if and only if for all $A\in\catA$, $X\Dprocomp A_\equipsubAst\colon \basek\slashedrightarrow \catB$ is quasi-representable.
		\item $X$ is left quasi-representable if and only if for all $B\in\catB$, $B^\equipsubAst \Dprocomp X \colon \catA\slashedrightarrow \basek$ is quasi-corepresentable.
	\end{enumerate}
\end{lemma}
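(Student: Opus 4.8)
The plan is to reduce both statements to the single identification $X\Dprocomp A_\equipsubAst \cong X(\mplaceholder,A)$ in $\Dom(\catB)$ (and its dual), after which each equivalence holds by the very definition of right (resp.\ left) quasi-representability. Recall that under the isomorphism $\Comdg(\basek,\catA)\cong\Comdg(\catA)$ the bimodule $A_\equipsubAst\colon \basek\slashedrightarrow\catA$ is the representable $\catA(\mplaceholder,A)$; in particular it is h-projective as a two-sided bimodule, so $\hp(A_\equipsubAst)=A_\equipsubAst$, and it is right h-flat, since $\catA(\mplaceholder,A)$ is h-projective and hence h-flat by \cref{prop:h-projective_implies_h-flat}.

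For (1), I would first compute the derived composition underived. By definition $X\Dprocomp A_\equipsubAst=\hp(X)\procomp\hp(A_\equipsubAst)=\hp(X)\procomp A_\equipsubAst$. Applying the precomposition functor $\mplaceholder\procomp A_\equipsubAst$, which preserves quasi-isomorphisms by \cref{lem:precomp_with_right_h-flat_preserves_qism}~(1) because $A_\equipsubAst$ is right h-flat, to the h-projective resolution $\hp(X)\to X$ shows $\hp(X)\procomp A_\equipsubAst\cong X\procomp A_\equipsubAst$ in $\Dom(\catB)$. The co-Yoneda lemma (\cref{prop:(co)Yoneda_lemma}) then gives
\[ (X\procomp A_\equipsubAst)(B)\cong\int^{A'\in\catA} \catA(A',A)\otimes_\basek X(B,A')\cong X(B,A), \]
so that $X\Dprocomp A_\equipsubAst\cong X(\mplaceholder,A)$ in $\Dom(\basek,\catB)=\Dom(\catB)$. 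Since a bimodule $\basek\slashedrightarrow\catB$ is quasi-representable exactly when the corresponding dg $\catB$-module is, and $X$ is right quasi-representable precisely when $X(\mplaceholder,A)$ is quasi-representable for every $A\in\catA$, the equivalence in (1) follows at once.

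Part (2) is the mirror image. Here $B^\equipsubAst\colon \catB\slashedrightarrow\basek$ is the representable left dg $\catB$-module $\catB(B,\mplaceholder)$, which is again h-projective as a bimodule and left h-flat. Using $\hp(B^\equipsubAst)=B^\equipsubAst$ together with the postcomposition statement \cref{lem:precomp_with_right_h-flat_preserves_qism}~(2), the same argument yields $B^\equipsubAst\Dprocomp X\cong B^\equipsubAst\procomp X\cong X(B,\mplaceholder)$ in $\Dom(\catA,\basek)=\Dom(\catA^\op)$ via co-Yoneda. As $X$ is left quasi-representable exactly when each $X(B,\mplaceholder)$ is a quasi-representable dg $\catA^\op$-module, i.e.\ exactly when each $B^\equipsubAst\Dprocomp X$ is quasi-corepresentable, this finishes the proof.

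The computations are entirely routine; the only point that needs care—and which I would flag as the main (mild) obstacle—is the bookkeeping around the derived composition. One must verify that the one-sided representables $A_\equipsubAst$ and $B^\equipsubAst$ are h-projective as two-sided bimodules (so that no resolution is required on that factor) and possess the appropriate one-sided flatness (so that the derived and underived compositions agree after applying the relevant functor to $\hp(X)\to X$). This is precisely the identification already used implicitly in the proofs of \cref{prop:termwise_criterion_of_invertible_2-cell_of_DBimod} and \cref{prop:reduction_of_bimodule-weighted_colimit_to_module-weighted_one}, so the present lemma simply records it in the form most convenient for checking quasi-representability pointwise.
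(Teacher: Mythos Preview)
Your proof is correct and follows essentially the same route as the paper: the paper invokes \cref{lem:when_the_constraint_is_invertible} (which packages precisely the argument you spell out via h-projectivity of $A_\equipsubAst$, $B^\equipsubAst$ and \cref{lem:precomp_with_right_h-flat_preserves_qism}) to obtain $X\Dprocomp A_\equipsubAst\cong X\procomp A_\equipsubAst=X(\mplaceholder,A)$ and $B^\equipsubAst\Dprocomp X\cong B^\equipsubAst\procomp X=X(B,\mplaceholder)$, then concludes from the definitions. Your version simply unpacks that lemma's proof inline.
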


\begin{proof}
	By \cref{lem:when_the_constraint_is_invertible}, $X\Dprocomp A_\equipsubAst\cong X\procomp A_\equipsubAst = X(\mplaceholder,A)$ and $B^\equipsubAst \Dprocomp X \cong B^\equipsubAst \procomp X = X(B,\mplaceholder)$. Hence the lemma follows from the definitions.
\end{proof}

We can now prove the reflection theorem for left adjoints of quasi-functors.

\begin{theorem}\label{thm:reflection_theorem_of_left_adjoint_for_quasi-functor}
	Let $f\colon \catA\to\catB$ be a quasi-functor. Suppose that $\catA$ and $\catB$ have h-shifts. Then $f$ admits a left adjoint as a quasi-functor if and only if the induced functor $\Htil(f)=\res{\bbLT_{f_\equipsubStar}}{\qucl{\catA}} \colon \qucl{\catA}\to \qucl{\catB}$ has a left adjoint.
\end{theorem}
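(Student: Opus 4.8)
The plan is to reformulate both conditions in terms of left quasi-representability and then reflect an objectwise adjunction isomorphism up to the derived category. First I would record the two translations. By \cref{prop:characterization_of_adjointness_for_quasi-functors}, the quasi-functor $f$ admits a left adjoint in $\DBimodrqr$ if and only if $f_\equipsubStar$ is left quasi-representable, and by \cref{lem:right_quasi-representable_can_be_checked_pointwise}~(2) this holds if and only if, for every $B\in\catB$, the left dg $\catA$-module $B^\equipsubAst\Dprocomp f_\equipsubStar\cong f_\equipsubStar(B,\mplaceholder)$ is quasi-representable in $\Dom(\catA^\op)$. Throughout I would use that, since $f$ is a quasi-functor, $f_\equipsubStar(\mplaceholder,A)\cong\catB(\mplaceholder,\Htil(f)A)$ in $\Dom(\catB)$, whence $H^0\big(f_\equipsubStar(B,A)\big)\cong\qucl{\catB}\big(B,\Htil(f)A\big)$ by the dg Yoneda lemma and the h-projectivity of representables.

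The direction ``$f$ has a left adjoint $\Rightarrow$ $\Htil(f)$ has a left adjoint'' is immediate: the assignment $\Htil\colon\DBimodrqr\to\Cat$ of \cref{subsection:adjunction_of_quasi-functors} is a pseudo-functor, hence preserves adjunctions, so an adjunction $g\dashv f$ of quasi-functors yields $\Htil(g)\dashv\Htil(f)$. No hypothesis on h-shifts is needed here.

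For the converse I would argue as follows. Assume $\Htil(f)\colon\qucl{\catA}\to\qucl{\catB}$ has a left adjoint $L$, and fix $B\in\catB$. Put $A_0:=LB$, viewed as an object of $\catA$ via $\qucl{\catA}\simeq H^0(\catA)$. The unit of $L\dashv\Htil(f)$ at $B$ is an element $\eta_B\in\qucl{\catB}(B,\Htil(f)A_0)\cong H^0\big(f_\equipsubStar(B,A_0)\big)$, and by the dg Yoneda lemma (\cref{prop:(co)Yoneda_lemma}) it corresponds to a morphism $\psi\colon\catA(A_0,\mplaceholder)\to f_\equipsubStar(B,\mplaceholder)$ in $\Dom(\catA^\op)$. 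It remains to prove $\psi$ is invertible; then $f_\equipsubStar(B,\mplaceholder)$ is quasi-representable and, $B$ being arbitrary, $f_\equipsubStar$ is left quasi-representable as required. Since $\Dom(\catA^\op)$ is compactly generated by the left representables $\{\catA(A,\mplaceholder)\}_{A\in\catA}$ (\cref{prop:Dom(catA)_is_compactly_generated} applied to $\catA^\op$), it suffices to check that $\psi$ induces bijections $\Hom_{\Dom(\catA^\op)}\big(\catA(A,\mplaceholder),\psi[n]\big)$ for all $A\in\catA$ and $n\in\Z$.

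The hard part will be precisely this last check, and it is where the hypothesis on h-shifts is essential. Because $\catA$ has all h-shifts, the subcategory $H^0(\catA^\op)\subseteq\Dom(\catA^\op)$ is closed (up to isomorphism) under shifts, so for each $A$ and $n$ there is an object $A'\in\catA$ with $\catA(A,\mplaceholder)[-n]\cong\catA(A',\mplaceholder)$; thus $\Hom_{\Dom(\catA^\op)}\big(\catA(A,\mplaceholder),M[n]\big)\cong H^0\big(M(A')\big)$ for $M=\catA(A_0,\mplaceholder)$ and $M=f_\equipsubStar(B,\mplaceholder)$. This reduces every one of the checks to the single family of maps $H^0(\psi_{A'})\colon\qucl{\catA}(A_0,A')\to\qucl{\catB}(B,\Htil(f)A')$, with $A'$ ranging over the objects of $\catA$. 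Since $\psi$ was manufactured from the unit of $L\dashv\Htil(f)$, $H^0(\psi_{A'})$ is exactly the adjunction bijection $u\mapsto\Htil(f)(u)\circ\eta_B$, hence bijective, so $\psi$ is an isomorphism and the proof concludes. The essential difficulty is that one has adjunction data only in degree $0$ (at the level of $H^0$, equivalently $\qucl{}$), whereas quasi-representability is an assertion in $\Dom(\catA^\op)$ involving all degrees; the h-shift assumption is precisely what converts each degree-$n$ comparison into a degree-$0$ comparison against a suitable representable, and thereby lets the objectwise adjunction isomorphism propagate to an honest derived isomorphism.
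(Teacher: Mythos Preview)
Your proof is correct and follows essentially the same strategy as the paper: construct a comparison morphism $\psi\colon \catA(A_0,\mplaceholder)\to f_\equipsubStar(B,\mplaceholder)$ in $\Dom(\catA^\op)$ from the unit of $L\dashv\Htil(f)$, and use the h-shift hypothesis to reduce each degree-$n$ check to a degree-$0$ one where the adjunction bijection applies.

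The packaging differs. The paper routes the argument through the proarrow-equipment formalism: it views the comparison as a $2$-cell $\alpha\colon C^\equipsubAst\Rightarrow B^\equipsubAst\Dprocomp X$ in $\DBimod$, invokes \cref{prop:termwise_criterion_of_invertible_2-cell_of_DBimod} and \cref{lem:isom_in_DBimod(kk)} to reduce to $H^n(\alpha A_\equipsubAst)$, and then uses the formal machinery of $\DRan_{S^n(\basek)}$, absoluteness of h-shifts, and commutation of right adjoints with right Kan extensions to replace $A$ by $S^n(\basek)\pitchfork A$. Your argument is more direct: you appeal to compact generation of $\Dom(\catA^\op)$ by left representables and replace $\catA(A,\mplaceholder)[-n]$ by $\catA(A',\mplaceholder)$ outright. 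Your $A'$ is exactly the paper's $S^n(\basek)\pitchfork A$; the two reductions are the same move, but yours avoids the bicategorical scaffolding. One small point worth making explicit in your write-up: closure of $H^0(\catA^\op)\subseteq\Dom(\catA^\op)$ under shifts is literally the h-\emph{coshift} condition, which you are entitled to via the identification of h-$n$-coshifts with h-$(-n)$-shifts in \cref{example:absolute_h-colimit}. Also note that, as in the paper's argument, your proof only uses the h-shift hypothesis on $\catA$.
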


\begin{proof}
	The only-if part has been proved. We will show the if part. Put $X=f_\equipsubStar\colon \catA\slashedrightarrow\catB$. By \cref{prop:characterization_of_adjointness_for_quasi-functors}, it suffices to show that $X$ is left quasi-representable, i.e., that $B^\equipsubAst \Dprocomp X \colon \catA\slashedrightarrow \basek$ is quasi-corepresentable in $\Dom(\catA,\basek)=\Dom(\catA^\op)$ for all $B\in\catB$ (see \cref{lem:right_quasi-representable_can_be_checked_pointwise}). Let $F\coloneqq \Htil(f)=\res{\bbLT_{f_\equipsubStar}}{\qucl{\catA}}$. Now $H^0(\catA)\simeq \qucl{\catA} \xrightarrow{F} \qucl{\catB}$ has a left adjoint, and hence, for each $B\in\catB$, there exists a $C\in \catA$ together with a natural isomorphism
	\[ \Hom_{\qucl{\catB}}(B_\equipsubAst, F(\mplaceholder)) \cong \Hom_{H^0(\catA)}(C,\mplaceholder) \colon H^0(\catA)\to \Ab.  \]
	We calculate the left hand side as
	\begin{align*}
		\Hom_{\qucl{\catB}}(B_\equipsubAst, F(\mplaceholder)) &= \Hom_{\Dom(\catB)}(\catB(?,B) , \bbLT_X(\mplaceholder)(?)) \\
		&\cong \res{H^0(\bbLT_X(\mplaceholder)(B))}{H^0(\catA)} \\
		&= H^0(X(B,\mplaceholder)).
	\end{align*}
	Therefore the above isomorphism states that
	\begin{equation}
		\Hom_{H^0(\catA)}(C,\mplaceholder) = H^0(\catA(C,\mplaceholder)) \cong H^0(X(B,\mplaceholder)). \label{equ:H^0(X(B,-))_is_representable}
	\end{equation}
	It follows from the Yoneda lemma that
	\begin{align*}
		\Nat(\Hom_{H^0(\catA)}(C,\mplaceholder), H^0(X(B,\mplaceholder))) &\cong H^0(X(B,C)) \\
		&\cong \Hom_{\Dom(\catA^\op)}(\catA(C,\mplaceholder), X(B,\mplaceholder)).
	\end{align*}
	Thus, we obtain a morphism in $\Dom(\catA^\op)$
	\[ \alpha\colon \catA(C,\mplaceholder) \to X(B,\mplaceholder) \]
	which corresponds to the isomorphism~\eqref{equ:H^0(X(B,-))_is_representable}. Regarding $\alpha$ as a $2$-cell $C^\equipsubAst\Rightarrow B^\equipsubAst\Dprocomp X\colon \catA\slashedrightarrow \basek$ in $\DBimod$, we can observe that
	\[ H^0(\alpha\Dprocomp A_\equipsubAst)\colon H^0(C^\equipsubAst\Dprocomp A_\equipsubAst) \to H^0(B^\equipsubAst \Dprocomp X \Dprocomp A_\equipsubAst) \]
	is an isomorphism for all $A\in \catA$.
	
	In order to show $\alpha$ is invertible in $\Dom(\catA^\op)$, it is sufficient by \cref{prop:termwise_criterion_of_invertible_2-cell_of_DBimod} to prove that
	\[ \alpha A_\equipsubAst \coloneqq \alpha\Dprocomp A_\equipsubAst \colon C^\equipsubAst\Dprocomp A_\equipsubAst \Rightarrow B^\equipsubAst\Dprocomp X \Dprocomp A_\equipsubAst\colon \basek\slashedrightarrow\basek \]
	is invertible in $\Dom(\basek)$ for all $A\in \catA$. To do so, we only have to verify that $H^n(\alpha A_\equipsubAst)$ is an isomorphism for all $n \in \Z$, which is equivalent by \cref{lem:isom_in_DBimod(kk)} to showing that
	\[ \alpha A_\equipsubAst \circ \mplaceholder \colon \DBimod(\basek,\basek)(S^n(\basek), C^\equipsubAst\Dprocomp A_\equipsubAst) \to \DBimod(\basek,\basek)(S^n(\basek), B^\equipsubAst\Dprocomp X \Dprocomp A_\equipsubAst) \]
	is bijective.
	
	Since $\catA$ has h-shifts, it has the h-cotensor product $S\pitchfork A$ with the sphere complex $S\coloneqq S^n(\basek)\in \Dom(\basek)$: in other words, it holds that $(S\pitchfork A)_\equipsubAst \cong \DRan_{S} A_\equipsubAst$ in $\DBimod$. Since any quasi-functor preserves h-shifts (see \cref{example:absolute_h-colimit}), we have $\DRan_{S}(X \Dprocomp A_\equipsubAst) \cong X \Dprocomp (S\pitchfork A)_\equipsubAst$. Since right adjoints commute with right Kan extensions, $\DRan_{S}(C^\equipsubAst\Dprocomp A_\equipsubAst) \cong C^\equipsubAst\Dprocomp (S\pitchfork A)_\equipsubAst$ and $\DRan_{S}(B^\equipsubAst\Dprocomp X \Dprocomp A_\equipsubAst) \cong B^\equipsubAst\Dprocomp \DRan_{S}(X \Dprocomp A_\equipsubAst) \cong B^\equipsubAst\Dprocomp X \Dprocomp (S\pitchfork A)_\equipsubAst$. As a consequence, we can see that $\DRan_{S}(\alpha A_\equipsubAst) \cong \alpha \Dprocomp (S\pitchfork A)_\equipsubAst \eqqcolon \alpha (S\pitchfork A)_\equipsubAst$. Therefore, recalling $S^n(\basek) \cong S^0(\basek)\Dprocomp S$, we obtain the commutative diagram
	\[
	\begin{tikzcd}[column sep=5.4em]
		\DBimod(\basek,\basek)(S^n(\basek), C^\equipsubAst A_\equipsubAst) \arrow{r}{\alpha A_\equipsubAst \circ \mplaceholder} \arrow[equal]{d} & \DBimod(\basek,\basek)(S^n(\basek), B^\equipsubAst X A_\equipsubAst) \arrow[equal]{d} \\
		\DBimod(\basek,\basek)(S^0(\basek), \DRan_{S}(C^\equipsubAst A_\equipsubAst)) \arrow{r}{\DRan_{S}(\alpha A_\equipsubAst) \circ \mplaceholder} \arrow[equal]{d} & \DBimod(\basek,\basek)(S^0(\basek), \DRan_{S}(B^\equipsubAst X A_\equipsubAst)) \arrow[equal]{d} \\
		\DBimod(\basek,\basek)(S^0(\basek), C^\equipsubAst (S\pitchfork A)_\equipsubAst) \arrow{r}{\alpha(S\pitchfork A)_\equipsubAst \circ \mplaceholder} \arrow[equal]{d} & \DBimod(\basek,\basek)(S^0(\basek), B^\equipsubAst X (S\pitchfork A)_\equipsubAst) \arrow[equal]{d} \\
		H^0(C^\equipsubAst (S\pitchfork A)_\equipsubAst) \arrow{r}{H^0(\alpha(S\pitchfork A)_\equipsubAst)} & H^0(B^\equipsubAst X (S\pitchfork A)_\equipsubAst).
	\end{tikzcd}
	\]
	Because the bottom arrow $H^0(\alpha (S\pitchfork A)_\equipsubAst)$ is an isomorphism from the definition of $\alpha$, it follows that the top arrow $\alpha A_\equipsubAst \circ \mplaceholder$ is so. Thus $\alpha$ is invertibe in $\Dom(\catA^\op)$, and hence $X=f_\equipsubStar$ is left quasi-representable.
\end{proof}

In \cite[Remark 3.9]{Lowen-RamosGonzalez:2022tensor_product_of_well_generated}, it is shown that a \textit{dg Bousfiled localization} (a quasi-fully faithful functor between pretriangulated dg categories which induces a Bousfield localization at the $H^0$-level) admits a left adjoint quasi-functor.
\Cref{thm:reflection_theorem_of_left_adjoint_for_quasi-functor} above generalizes the arguments of \cite[Remark 3.9]{Lowen-RamosGonzalez:2022tensor_product_of_well_generated}.

Using the same idea as in the proof of \cref{thm:reflection_theorem_of_left_adjoint_for_quasi-functor}, we also obtain the reflection theorem for h-coproducts.

\begin{theorem}\label{thm:reflection_theorem_of_h-coproduct}
	Let $\catA$ be a dg category with h-shifts and $\{A_j\}_{j\in J}$ a family of objects of $\catA$. Then $\catA$ admits an h-coproduct of $\{A_j\}_j$ if and only if $H^0(\catA)$ has a coproduct of $\{A_j\}_j$.
\end{theorem}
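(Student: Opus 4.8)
The forward implication requires no hypothesis on shifts: if $\catA$ has an h-coproduct of $\{A_j\}_j$, then \cref{prop:characterization_of_h-coproduct} immediately yields a coproduct of $\{A_j\}_j$ in $H^0(\catA)$. So the plan is to establish the converse, where the assumption that $\catA$ has h-shifts is essential. Suppose $H^0(\catA)$ has a coproduct $C$ of $\{A_j\}_j$ with inclusions $\iota_j\colon A_j\to C$. By \cref{example:h-coproduct} (equivalently \cref{prop:characterization_of_h-coproduct}) it suffices to prove $\catA(C,\mplaceholder)\cong\prod_j\catA(A_j,\mplaceholder)$ in $\Dom(\catA^\op)$, i.e.\ that the $H^0$-coproduct is preserved by $H^0(\catA^\op)\to\Dom(\catA^\op)$. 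The inclusions $\iota_j$, assembled over $j$, induce a canonical morphism $\alpha\colon C^\equipsubAst\Rightarrow \prod_j A_j^\equipsubAst$ in $\Dom(\catA,\basek)=\Dom(\catA^\op)$ (precomposition with each $\iota_j$), and the whole task reduces to showing that $\alpha$ is invertible.

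To check invertibility I would argue pointwise, exactly as in \cref{thm:reflection_theorem_of_left_adjoint_for_quasi-functor}. By \cref{prop:termwise_criterion_of_invertible_2-cell_of_DBimod} it is enough to show that $\alpha\Dprocomp A_\equipsubAst\colon C^\equipsubAst\Dprocomp A_\equipsubAst \Rightarrow (\prod_j A_j^\equipsubAst)\Dprocomp A_\equipsubAst$ is invertible in $\Dom(\basek)$ for every $A\in\catA$. Since $A_\equipsubAst$ is right h-flat, \cref{lem:when_the_constraint_is_invertible} identifies $C^\equipsubAst\Dprocomp A_\equipsubAst\cong\catA(C,A)$, and evaluation (co-Yoneda) gives $(\prod_j A_j^\equipsubAst)\Dprocomp A_\equipsubAst\cong\prod_j\catA(A_j,A)$, so $\alpha\Dprocomp A_\equipsubAst$ is the comparison map $\catA(C,A)\to\prod_j\catA(A_j,A)$. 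Its $H^0$ is precisely the coproduct comparison $H^0(\catA)(C,A)\to\prod_j H^0(\catA)(A_j,A)$, a bijection because $C$ is the coproduct in $H^0(\catA)$. The content of the theorem is to upgrade this from $H^0$ to all $H^n$.

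This upgrade is where h-shifts enter and is the step I expect to be the crux. For fixed $n$, I would use \cref{lem:isom_in_DBimod(kk)} to rephrase ``$H^n(\alpha\Dprocomp A_\equipsubAst)$ is an isomorphism'' as bijectivity of postcomposition out of $S^n(\basek)$, and then transport along the $n$-shift as in the commutative diagram of \cref{thm:reflection_theorem_of_left_adjoint_for_quasi-functor}. Concretely, $C^\equipsubAst$ and each $A_j^\equipsubAst$ are right adjoints (\cref{lem:A_ast_is_left_adjoint_to_A^star}), hence commute with the right Kan extension $\DRan_{S^n(\basek)}$; moreover $\DRan_{S^n(\basek)}$, being itself a right adjoint, preserves the product; and since $\catA$ has h-shifts, $\DRan_{S^n(\basek)} A_\equipsubAst\cong (S^n(\basek)\pitchfork A)_\equipsubAst$ is again representable, by the homotopical $n$-shift of $A$. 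Combining these identifications carries the postcomposition map for $\alpha\Dprocomp A_\equipsubAst$ against $S^n(\basek)$ to $H^0\big(\alpha\Dprocomp(S^n(\basek)\pitchfork A)_\equipsubAst\big)$, i.e.\ to the $H^0$-coproduct comparison tested against the object $S^n(\basek)\pitchfork A$, which is a bijection because $C$ is the coproduct of $\{A_j\}_j$ in $H^0(\catA)$. Hence $H^n(\alpha\Dprocomp A_\equipsubAst)$ is an isomorphism for all $n$, so $\alpha\Dprocomp A_\equipsubAst$ is invertible, and therefore so is $\alpha$.

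The main obstacle, and the reason the h-shift hypothesis cannot be dropped, is exactly this passage from $H^0$ to arbitrary $H^n$: without the objects $S^n(\basek)\pitchfork A$ lying inside $\catA$ one controls only the degree-zero comparison, while the higher cohomology of $\catA(C,A)$ and $\prod_j\catA(A_j,A)$ need not agree. A secondary point to treat with care is the behaviour of the infinite product $\prod_j A_j^\equipsubAst$ under $\DRan_{S^n(\basek)}$ and under derived composition; this is handled by the fact that $\DRan_{S^n(\basek)}$ is a right adjoint (so preserves products) together with the representability of each $\DRan_{S^n(\basek)} A_\equipsubAst$.
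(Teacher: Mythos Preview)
Your proposal is correct and follows essentially the same route as the paper's proof: construct the comparison $\alpha\colon C^\equipsubAst\Rightarrow\prod_j A_j^\equipsubAst$, check invertibility pointwise via \cref{prop:termwise_criterion_of_invertible_2-cell_of_DBimod}, and use h-shifts together with \cref{lem:isom_in_DBimod(kk)} to reduce the $H^n$-isomorphism at $A$ to the $H^0$-isomorphism at $S^n(\basek)\pitchfork A$, which holds by the coproduct property in $H^0(\catA)$. The only cosmetic differences are that the paper builds $\alpha$ via the Yoneda lemma rather than directly from the inclusions, and that the paper justifies $(\prod_j A_j^\equipsubAst)\Dprocomp A_\equipsubAst\cong\prod_j(A_j^\equipsubAst\Dprocomp A_\equipsubAst)$ by noting that $\mplaceholder\Dprocomp A_\equipsubAst$ is a right adjoint (since $A_\equipsubAst$ is a left adjoint) rather than by your co-Yoneda/evaluation argument; both are valid, and you correctly flag this product compatibility as the point deserving extra care.
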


\begin{proof}
	The only-if part is immediate. We will show the if part, i.e., that the product $\prod_j \catA(A_j,\mplaceholder)$ in $\Dom(\catA^\op)$ is quasi-corepresentable. Now $H^0(\catA)$ has a coproduct of $\{A_j\}_j$, and hence there exists a $C\in H^0(\catA)$ together with a natural isomorphism
	\begin{equation}
		\Hom_{H^0(\catA)}(C,\mplaceholder)=H^0(\catA(C,\mplaceholder)) \cong {\textstyle \prod_j \Hom_{H^0(\catA)}(A_j,\mplaceholder)} = H^0({\textstyle \prod_j \catA(A_j,\mplaceholder)}). \label{equ:H^0(prod-A(A_j,-))_is_representable}
	\end{equation}
	It follows from the Yoneda lemma that
	\begin{align*}
		\Nat\big(\Hom_{H^0(\catA)}(C,\mplaceholder), H^0({\textstyle \prod_j \catA(A_j,\mplaceholder)})\big) &\cong H^0({\textstyle \prod_j \catA(A_j,C)}) \\
		&\cong \Hom_{\Dom(\catA^\op)}(\catA(C,\mplaceholder), {\textstyle \prod_j \catA(A_j,\mplaceholder)}).
	\end{align*}
	Thus, we obtain a morphism in $\Dom(\catA^\op)$
	\[ \alpha\colon \catA(C,\mplaceholder) \to {\textstyle\prod_j \catA(A_j,\mplaceholder)} \]
	which corresponds to the isomorphism \eqref{equ:H^0(prod-A(A_j,-))_is_representable}. Regarding $\alpha$ as a $2$-cell $C^\equipsubAst\Rightarrow {\textstyle\prod_j A_j^\equipsubAst}\colon \catA\slashedrightarrow\basek$ in $\DBimod$, we can observe that
	\[ H^0(\alpha\Dprocomp A_\equipsubAst)\colon H^0(C^\equipsubAst\Dprocomp A_\equipsubAst) \to H^0({(\textstyle\prod_j A_j^\equipsubAst)} \Dprocomp A_\equipsubAst) \]
	is an isomorphism for all $A\in \catA$.
	
	In order to show $\alpha$ is invertible in $\Dom(\catA^\op)$, it is sufficient by \cref{prop:termwise_criterion_of_invertible_2-cell_of_DBimod} to prove that
	\[ \alpha A_\equipsubAst \coloneqq \alpha\Dprocomp A_\equipsubAst \colon C^\equipsubAst\Dprocomp A_\equipsubAst \Rightarrow {(\textstyle\prod_j A_j^\equipsubAst)} \Dprocomp A_\equipsubAst \colon \basek\slashedrightarrow\basek \]
	is invertible in $\Dom(\basek)$ for all $A\in \catA$. To do so, we only have to verify that $H^n(\alpha A_\equipsubAst)$ is an isomorphism for all $n \in \Z$, which is equivalent by \cref{lem:isom_in_DBimod(kk)} to showing that
	\[ \alpha A_\equipsubAst \circ \mplaceholder \colon \DBimod(\basek,\basek)(S^n(\basek), C^\equipsubAst\Dprocomp A_\equipsubAst) \to \DBimod(\basek,\basek)(S^n(\basek), {(\textstyle\prod_j A_j^\equipsubAst)} \Dprocomp A_\equipsubAst) \]
	is bijective.
	
	Since $\catA$ has h-shifts, it has the h-cotensor product $S\pitchfork A$ with the sphere complex $S\coloneqq S^n(\basek)\in \Dom(\basek)$: in other words, it holds that $(S\pitchfork A)_\equipsubAst \cong \DRan_{S} A_\equipsubAst$ in $\DBimod$.
	Since right adjoints commute with right Kan extensions, $\DRan_{S}(C^\equipsubAst\Dprocomp A_\equipsubAst) \cong C^\equipsubAst\Dprocomp (S\pitchfork A)_\equipsubAst$. 
	Since $A_\equipsubAst\colon \basek\slashedrightarrow\catA$ is left adjoint in $\DBimod$ (see \cref{lem:A_ast_is_left_adjoint_to_A^star}), $\mplaceholder \Dprocomp A_\equipsubAst$ is right adjoint, and hence preserves limits. Therefore it holds that
	\( {(\textstyle\prod_j A_j^\equipsubAst)} \Dprocomp A_\equipsubAst \cong {\textstyle\prod_j (A_j^\equipsubAst \Dprocomp A_\equipsubAst)}. \)
	By using this and the fact that the right Kan extension commutes with limits, we have
	\begin{align*}
		\DRan_S({(\textstyle\prod_j A_j^\equipsubAst)\Dprocomp A_\equipsubAst})
		&\cong \DRan_S ({\textstyle\prod_j (A_j^\equipsubAst \Dprocomp A_\equipsubAst)}) \\
		&\cong {\textstyle\prod_j \DRan_S (A_j^\equipsubAst \Dprocomp A_\equipsubAst)} \\
		&\cong {\textstyle\prod_j (A_j^\equipsubAst \Dprocomp (S\pitchfork A)_\equipsubAst)} \\
		&\cong {(\textstyle\prod_j A_j^\equipsubAst)} \Dprocomp (S\pitchfork A)_\equipsubAst.
	\end{align*}
	As a consequence, we can see that $\DRan_{S}(\alpha A_\equipsubAst) \cong \alpha \Dprocomp (S\pitchfork A)_\equipsubAst \eqqcolon \alpha (S\pitchfork A)_\equipsubAst$. Therefore, recalling $S^n(\basek) \cong S^0(\basek)\Dprocomp S$, we obtain the commutative diagram
	\[
	\begin{tikzcd}[column sep=5.4em]
		\DBimod(\basek,\basek)(S^n(\basek), C^\equipsubAst A_\equipsubAst) \arrow{r}{\alpha A_\equipsubAst \circ \mplaceholder} \arrow[equal]{d} & \DBimod(\basek,\basek)(S^n(\basek), {(\textstyle\prod_j A_j^\equipsubAst)} A_\equipsubAst) \arrow[equal]{d} \\
		\DBimod(\basek,\basek)(S^0(\basek), \DRan_{S}(C^\equipsubAst A_\equipsubAst)) \arrow{r}{\DRan_{S}(\alpha A_\equipsubAst) \circ \mplaceholder} \arrow[equal]{d} & \DBimod(\basek,\basek)(S^0(\basek), \DRan_{S}({(\textstyle\prod_j A_j^\equipsubAst)} A_\equipsubAst)) \arrow[equal]{d} \\
		\DBimod(\basek,\basek)(S^0(\basek), C^\equipsubAst (S\pitchfork A)_\equipsubAst) \arrow{r}{\alpha(S\pitchfork A)_\equipsubAst \circ \mplaceholder} \arrow[equal]{d} & \DBimod(\basek,\basek)(S^0(\basek), {(\textstyle\prod_j A_j^\equipsubAst)} (S\pitchfork A)_\equipsubAst) \arrow[equal]{d} \\
		H^0(C^\equipsubAst (S\pitchfork A)_\equipsubAst) \arrow{r}{H^0(\alpha(S\pitchfork A)_\equipsubAst)} & H^0({(\textstyle\prod_j A_j^\equipsubAst)} (S\pitchfork A)_\equipsubAst).
	\end{tikzcd}
	\]
	Because the bottom arrow $H^0(\alpha (S\pitchfork A)_\equipsubAst)$ is an isomorphism from the definition of $\alpha$, it follows that the top arrow $\alpha A_\equipsubAst \circ \mplaceholder$ is so. Thus $\alpha$ is invertibe in $\Dom(\catA^\op)$, and hence $\prod_j \catA(A_j,\mplaceholder)$ is quasi-corepresentable.
\end{proof}

Let $\kappa$ be a cardinal.
In \cite[Definition 6.1]{Porta:2010} and \cite[Remark 3.9]{Lowen-RamosGonzalez:2022tensor_product_of_well_generated}, a dg category $\catA$ is called \emph{homotopically $\kappa$-cocomplete} if $H^0(\catA)$ has all $\kappa$-small coproducts.
This notion relates to our one, as follows.

\begin{corollary}\label{cor:cocompleteness_for_pretri_dg_cat}
	For a pretriangulated dg category $\catA$, it has all weighted h-colimits if and only if $H^0(\catA)$ has all coproducts.
\end{corollary}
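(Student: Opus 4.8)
The plan is to reduce the statement to the three building blocks of weighted colimits identified earlier and then invoke the reflection theorem for h-coproducts. By \cref{prop:h-cocomplete_iff_having_coshifts_cones_coproducts}, a dg category has all weighted h-colimits precisely when it has all h-coshifts, h-cones, and h-coproducts, so throughout I would work with these three classes rather than with arbitrary weights.

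For the only-if direction, I would argue as follows. If $\catA$ has all weighted h-colimits, then in particular it has all h-coproducts by \cref{prop:h-cocomplete_iff_having_coshifts_cones_coproducts}. Applying \cref{prop:characterization_of_h-coproduct} (equivalently, the easy half of \cref{thm:reflection_theorem_of_h-coproduct}), each h-coproduct descends to a coproduct in $H^0(\catA)$, so $H^0(\catA)$ has all coproducts. This implication needs no use of pretriangulatedness.

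The substantive direction is the converse. Assume $H^0(\catA)$ has all coproducts. Since $\catA$ is pretriangulated, \cref{cor:pretriangulated_iff_homotopy_cone_and_homotopy_shift} gives that $\catA$ has all h-shifts and all h-cocones. First, the existence of h-shifts together with \cref{example:absolute_h-colimit}(3) yields all h-coshifts, since an h-$n$-coshift coincides with an h-$(-n)$-shift. Next, \cref{example:absolute_h-colimit}(4) identifies an h-cone with the $1$-shift of an h-cocone; as $\catA$ has both h-cocones and h-shifts, it therefore has all h-cones. Finally, because $\catA$ has h-shifts and $H^0(\catA)$ has all coproducts, \cref{thm:reflection_theorem_of_h-coproduct} reflects these coproducts to h-coproducts in $\catA$. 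Collecting h-coshifts, h-cones, and h-coproducts and invoking \cref{prop:h-cocomplete_iff_having_coshifts_cones_coproducts} once more, I conclude that $\catA$ has all weighted h-colimits.

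The only delicate point is bookkeeping: one must ensure that the hypothesis of \cref{thm:reflection_theorem_of_h-coproduct} (the presence of h-shifts) is already in hand before reflecting coproducts, which is exactly what pretriangulatedness supplies via \cref{cor:pretriangulated_iff_homotopy_cone_and_homotopy_shift}. Beyond that, the corollary is a formal assembly of the reduction in \cref{prop:h-cocomplete_iff_having_coshifts_cones_coproducts}, the absoluteness identifications of \cref{example:absolute_h-colimit}, and the reflection theorem; the genuine work has already been carried out in proving those results, so I expect no real obstacle here.
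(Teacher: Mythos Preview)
Your proof is correct and follows the same approach as the paper, which simply cites \cref{prop:h-cocomplete_iff_having_coshifts_cones_coproducts} and \cref{thm:reflection_theorem_of_h-coproduct}. You have spelled out the intermediate steps the paper leaves implicit, namely how pretriangulatedness supplies h-coshifts and h-cones via \cref{cor:pretriangulated_iff_homotopy_cone_and_homotopy_shift} and \cref{example:absolute_h-colimit}, which is entirely appropriate.
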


\begin{proof}
	It follows from \cref{prop:h-cocomplete_iff_having_coshifts_cones_coproducts} and \cref{thm:reflection_theorem_of_h-coproduct}.
\end{proof}

\begin{corollary}\label{cor:cocontinuity_for_quasi-functor_bw_pretri_dg_cat}
	A quasi-functor $f\colon \catA\to \catB$ between pretriangulated dg categories preserves all h-colimits if and only if the induced functor $\Htil(f)=\res{\bbLT_{f_\equipsubStar}}{\qucl{\catA}}\colon \qucl{\catA}\to\qucl{\catB}$ preserves all coproducts.
\end{corollary}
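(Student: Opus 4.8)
The plan is to reduce the preservation of arbitrary h-colimits to the preservation of h-coproducts alone, exploiting that h-cones and h-coshifts are \emph{absolute}, and then to match preservation of h-coproducts by $f$ with preservation of coproducts by $\Htil(f)$ by means of the reflection theorem \cref{thm:reflection_theorem_of_h-coproduct}.

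\emph{Forward direction.} If $f$ preserves all h-colimits, then it preserves every existing h-coproduct, so the last assertion of \cref{example:preservation_of_h-coproducts} shows that $\Htil(f)=\res{\bbLT_{f_\equipsubStar}}{\qucl{\catA}}$ carries each such h-coproduct to a coproduct in $\qucl{\catB}$. To upgrade this to \emph{every} coproduct of $\qucl{\catA}$, I would invoke \cref{thm:reflection_theorem_of_h-coproduct}: since $\catA$ is pretriangulated it has h-shifts, so any family $\{A_j\}$ admitting a coproduct in $H^0(\catA)\simeq\qucl{\catA}$ already admits an h-coproduct in $\catA$. Thus every coproduct of $\qucl{\catA}$ arises from an h-coproduct and is preserved by $\Htil(f)$.

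\emph{Converse direction.} Assume $\Htil(f)$ preserves all coproducts. First I would check that $f$ preserves every existing h-coproduct: if $C$ is an h-coproduct of $\{A_j\}$ in $\catA$, then by \cref{prop:characterization_of_h-coproduct} it is a coproduct of $\{A_j\}$ in $\qucl{\catA}$; applying $\Htil(f)$ yields a coproduct of $\{\Htil(f)(A_j)\}$ in $\qucl{\catB}\simeq H^0(\catB)$, and since $\catB$ is pretriangulated this coproduct is realized by the h-coproduct in $\catB$ (again via \cref{thm:reflection_theorem_of_h-coproduct} and \cref{prop:characterization_of_h-coproduct}); the characterization in \cref{example:preservation_of_h-coproducts} then shows that $f$ preserves $C$. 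Moreover $f$ automatically preserves h-coshifts and h-cones, as these are absolute (\cref{example:absolute_h-colimit}).

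It remains to propagate preservation of these three basic h-colimits to all h-colimits. By \cref{prop:reduction_of_bimodule-weighted_colimit_to_module-weighted_one} I may restrict to module weights $W\colon\basek\slashedrightarrow\catJ$, and for a fixed quasi-functor $d\colon\catJ\to\catA$ I would rerun the argument of \cref{prop:h-cocomplete_iff_having_coshifts_cones_coproducts} with the class $\catC$ of those $W$ for which $\colim^W d$ exists \emph{and} is preserved by $f$. The factorizations $\DRift_{W[-n]}d^{\equipsubStar}\cong\DRift_{\Wcoshift{n}}\DRift_{W}d^{\equipsubStar}$, $\DRift_{\Cone(\theta)}d^{\equipsubStar}\cong\DRift_{\Wcone}\DRift_{\theta}d^{\equipsubStar}$, and $\DRift_{\coprod_i W_i}d^{\equipsubStar}\cong\DRift_{\Wcoprod}\DRift_{(W_i)_i}d^{\equipsubStar}$ of \cref{rmk:summury_on_h-shift_h-cone_h-coproduct} exhibit $\catC$ as a localizing triangulated subcategory: the shift and cone steps are automatic by absoluteness, while the coproduct step uses the preservation of h-coproducts just established. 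As $\catC$ contains the representables (colimits weighted by which are absolute) and $\Dom(\catJ)$ is compactly generated by them, \cref{prop:Brown_representability_theorem} forces $\catC=\Dom(\catJ)$, whence $f$ preserves every h-colimit of $d$. The main obstacle is exactly this last step: one must verify that the preservation identity $\DRift_{W}(fd)^{\equipsubStar}\cong(\DRift_{W}d^{\equipsubStar})\Dprocomp f^{\equipsubStar}$ is inherited through the triangulated and localizing closure, which is routine but demands care in chasing the Kan-lifting factorizations and in handling weights whose colimits need not all exist.
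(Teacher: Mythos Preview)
Your proposal is correct and follows essentially the same route as the paper's one-line proof, which simply cites \cref{prop:h-cocomplete_iff_having_coshifts_cones_coproducts}, \cref{thm:reflection_theorem_of_h-coproduct}, and \cref{example:preservation_of_h-coproducts}; you have faithfully unpacked what those references are meant to do, including the absoluteness of h-shifts and h-cones from \cref{example:absolute_h-colimit} and the localizing-subcategory propagation. Your closing caveat about ``handling weights whose colimits need not all exist'' is well placed: the class $\catC=\{W:\colim^W d \text{ exists and is preserved}\}$ is only forced to be localizing when $\catA$ has the relevant h-coproducts, a point the paper's terse proof equally glosses over.
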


\begin{proof}
	It follows from \cref{prop:h-cocomplete_iff_having_coshifts_cones_coproducts} and \cref{thm:reflection_theorem_of_h-coproduct} together with \cref{example:preservation_of_h-coproducts}.
\end{proof}

This corollary provides some justification for the terminology used in \cite[Definition 2.5]{Lowen-RamosGonzalez:2022tensor_product_of_well_generated}.

%% compact iff A(A,-) preserves coproduct

\section{A universal property of the homotopy category theory}\label{section:universality_of_htpy_cat_theory}

We will continue to assume that $\basek$ is a field. We finally observe that the proarrow equipment $\equipStar\colon \DBimodrqr\to \DBimod$ has a universal property among proarrow equipments.

We have seen in \cref{section:quasi-functors} that there are a normal lax functor $\Gamma\colon\Bimod\to\DBimod$ and a pseudo-functor $\gamma\colon \dgCat\to \DBimodrqr$ such that the diagram
\[
\begin{tikzcd}
	\dgCat \arrow[hook]{r}{\equipAst} \arrow{d}[swap]{\gamma} & \Bimod \arrow{d}{\Gamma} \\
	\DBimodrqr \arrow[hook]{r}{\equipStar} & \DBimod\vspace{-1ex}
\end{tikzcd}
\]
commutes.
%Moreover we can see that the pair $(\Gamma,\gamma)$ behaves like a ``morphism of proarrow equipments'' in a sense, by using the following proposition.
We have also seen in \cref{cor:adjunction_of_dg_functors_becomes_those_of_quasi-functors} that $\Gamma$ preserves adjunctions of the form $F_\equipsubAst \dashv F^\equipsubAst$ for some dg functor $F$.
These indicate that the pair $(\Gamma,\gamma)$ forms an \emph{equipment morphism} in the sense of \cite[Proposition 1.5.13]{Verity:2011phd_thesis_reprints};
but we will not address that notion in this paper.
%but we do not explore this structure further.

\begin{theorem}\label{thm:universality_1}
	Let $\Theta\colon \Bimod\to \bicatM$ be a lax functor which sends quasi-isomorphism $2$-cells to invertible ones. Then there is a lax functor $\widetilde{\Theta}\colon \DBimod\to\bicatM$ such that the following commutes:
	\[\begin{tikzcd}
		\Bimod \arrow{r}{\Theta} \arrow{d}[swap]{\Gamma} & \bicatM\rlap{.} \\
		\DBimod \arrow[bend right=15]{ru}[swap]{\widetilde{\Theta}}
	\end{tikzcd}\]
	Moreover, $\widetilde{\Theta}$ is normal if so is $\Theta$.
\end{theorem}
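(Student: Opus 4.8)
The plan is to realise $\widetilde{\Theta}$ as the factorisation of $\Theta$ through the local localisations $\Com(\catA,\catB)\to\Dom(\catA,\catB)$, transporting the lax constraints of $\Theta$ across them by means of h-projective resolutions. On objects I would set $\widetilde{\Theta}(\catA):=\Theta(\catA)$, which is forced because $\Gamma$ is the identity on objects. For the Hom-functors, recall that the composite $\Com(\catA,\catB)\to\Kom(\catA,\catB)\xrightarrow{\localization}\Dom(\catA,\catB)$ exhibits $\Dom(\catA,\catB)$ as the localisation of $\Com(\catA,\catB)$ at the class $\Qism$ of quasi-isomorphisms, and is the identity on objects. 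Since $\Theta$ inverts quasi-isomorphism $2$-cells by hypothesis, each $\Theta_{\catA,\catB}\colon\Com(\catA,\catB)\to\bicatM(\Theta\catA,\Theta\catB)$ factors uniquely through this localisation as $\widetilde{\Theta}_{\catA,\catB}$. As the localisation is the identity on objects, $\widetilde{\Theta}$ coincides with $\Theta$ on $1$-cells, i.e.\ $\widetilde{\Theta}(X)=\Theta(X)$ for every dg bimodule $X$, whereas on $2$-cells $\widetilde{\Theta}_{\catA,\catB}$ is the induced functor on the localised Hom-category. I would then take the unit constraint of $\widetilde{\Theta}$ to be that of $\Theta$, namely $\theta_\catA\colon\id_{\Theta\catA}\Rightarrow\Theta(I_\catA)=\widetilde{\Theta}(I_\catA)$, using that $I_\catA$ is the identity $1$-cell in both $\Bimod$ and $\DBimod$; in particular $\widetilde{\Theta}$ is normal as soon as $\Theta$ is, which disposes of the final clause.

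Next I would build the lax compositor. For $1$-cells $X\colon\catA\slashedrightarrow\catB$ and $Y\colon\catB\slashedrightarrow\catC$ of $\DBimod$ the composite $Y\Dprocomp X$ is, as an object of $\Dom(\catA,\catC)$, literally the dg bimodule $\hp Y\procomp\hp X$, so $\widetilde{\Theta}(Y\Dprocomp X)=\Theta(\hp Y\procomp\hp X)$. Writing $p_X\colon\hp X\to X$, $p_Y\colon\hp Y\to Y$ for the h-projective resolutions and $\theta_{Y,X}\colon\Theta(Y)\circ\Theta(X)\Rightarrow\Theta(Y\procomp X)$ for the compositor of $\Theta$, I would set
\[ \widetilde{\theta}_{Y,X}:=\theta_{\hp Y,\hp X}\bullet\bigl(\Theta(p_Y)^{-1}\ast\Theta(p_X)^{-1}\bigr)\colon\widetilde{\Theta}(Y)\circ\widetilde{\Theta}(X)\Longrightarrow\widetilde{\Theta}(Y\Dprocomp X), \]
where $\Theta(p_X),\Theta(p_Y)$ are invertible because $p_X,p_Y$ are quasi-isomorphisms, $\ast$ denotes horizontal and $\bullet$ vertical composition in $\bicatM$. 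Naturality of $\widetilde{\theta}$ follows from naturality of $\theta$ together with the functoriality of $\hp\colon\Dom\to\Kom$ and of the resolution maps (which become genuinely natural after $\Theta$, since $\Theta$ inverts homotopy equivalences). The asserted equality $\widetilde{\Theta}\circ\Gamma=\Theta$ then reduces, on compositors, to $\Theta(\mu_{Y,X})\bullet\widetilde{\theta}_{Y,X}=\theta_{Y,X}$ for dg bimodules $X,Y$, where $\mu_{Y,X}=p_Y\procomp p_X$ is the compositor of $\Gamma$; this is precisely the naturality square of $\theta$ for the pair $(p_Y,p_X)$, so it holds automatically.

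The principal obstacle is the associativity coherence axiom for $\widetilde{\theta}$ (the two unit axioms being analogous but easier). Here I would exploit that over a field every dg category is locally h-projective, so by \cref{lem:tensor_of_h-proj_is_h-proj} the bimodule $\hp Y\procomp\hp X$ is already h-projective; consequently the associator of $\DBimod$ on a triple $(Z,Y,X)$ is, under the identifications $W\Dprocomp V=\hp W\procomp\hp V$, induced by the associator of $\procomp$ on $(\hp Z,\hp Y,\hp X)$ together with the comparison resolutions. The pentagon for $\widetilde{\theta}$ should then unwind into the associativity coherence of $\theta$ evaluated at $(\hp Z,\hp Y,\hp X)$, conjugated by the invertible $2$-cells $\Theta(p_{(-)})$; keeping track of these comparison quasi-isomorphisms and of the discrepancy between the associators of $\procomp$ and of $\Dprocomp$ is the genuinely delicate bookkeeping, and that is where I expect the real work to lie. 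The unit axioms are treated the same way, invoking the conditional unitality $I_\catB\Dprocomp X\cong X$ and the unit coherence of $\Theta$.
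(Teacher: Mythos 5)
Your construction is exactly the paper's: factor each $\Theta_{\catA,\catB}$ through the localization $\Com(\catA,\catB)\to\Dom(\catA,\catB)$ using the hypothesis that $\Theta$ inverts quasi-isomorphism $2$-cells, keep the unit constraint of $\Theta$, and define the compositor as $\theta_{\hp Y,\hp X}$ conjugated by the invertible $2$-cells $\Theta(p_X),\Theta(p_Y)$. The paper simply asserts that the coherence axioms and the equality $\widetilde{\Theta}\circ\Gamma=\Theta$ follow "from the construction," whereas you spell out the naturality-square argument for the latter; your proposal is correct and follows the same route.
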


\begin{proof}
	On objects, define $\widetilde{\Theta}(\catA)=\Theta(\catA)$. As the functor $\Gamma_{\catA,\catB}\colon \Com(\catA,\catB)\to \Dom(\catA,\catB)$ is a localization with respect to quasi-isomorphisms, $\Theta_{\catA,\catB}$ factors as
	\[\begin{tikzcd}
		\Com(\catA,\catB) \arrow{r}{\Theta_{\catA,\catB}} \arrow{d}[swap]{\Gamma_{\catA,\catB}} & \bicatM(\Theta(\catA),\Theta(\catB))\rlap{.} \\
		\Dom(\catA,\catB) \arrow[bend right=15,dashed]{ru}[swap]{\widetilde{\Theta}_{\catA,\catB}}
	\end{tikzcd}\]
	These data determine a lax functor $\widetilde{\Theta}\colon \DBimod\to\bicatM$ whose constraints are 
	\begin{align*}
		\widetilde{\Theta}(\Gamma(Y)) \circ \widetilde{\Theta}(\Gamma(X)) \cong \Theta(\hp Y) \circ \Theta(\hp X) &\to \Theta(\hp Y \procomp \hp X) =\widetilde{\Theta}(\Gamma(Y\Dprocomp X))\\
		\intertext{and}
		\id_{\widetilde{\Theta}(\catA)}= \id_{\Theta(\catA)} &\to \Theta(I_\catA) = \widetilde{\Theta}(\Gamma(I_\catA)).
	\end{align*}
	From the construction we can verify $\widetilde{\Theta}\circ \Gamma=\Theta$.
\end{proof}

Applying \cref{thm:universality_1} to the lax functor $\bbL\colon \Bimod\to \CAT$, $X\mapsto \bbLT_X$, we get $\bbLtil\colon \DBimod\to \CAT$, which is obtained in \cref{subsection:the_tensor_and_Hom_functors_associated_with_dg_bimodules}.

\begin{lemma}\label{lem:roof_lemma_for_quasi-functor}
	For any right quasi-representable dg bimodule $X\colon \catA \slashedrightarrow \catB$ between (locally h-projective) dg categories, there are a dg functor $F\colon \catC \to\catB$ and a quasi-equivalence $W\colon \catC\to \catA$ such that $X \Dprocomp W_\equipsubAst \cong F_\equipsubAst$ in $\Dom(\catC,\catB)$.
\end{lemma}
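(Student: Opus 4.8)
The plan is to realize $X$ by an explicit \emph{homotopy comma} (iso-comma) dg category $\catC$ sitting over $\catA$ and $\catB$, with $W\colon\catC\to\catA$ the quasi-equivalence and $F\colon\catC\to\catB$ recording the quasi-representing objects. Since the assertion only concerns the class of $X$ in $\Dom(\catA,\catB)$ and right quasi-representability is invariant under quasi-isomorphism, I would first replace $X$ by its h-projective resolution and so assume $X$ is h-projective; by \cref{lem:h-proj_implies_right_h-proj} it is then right h-projective, so each $X(\mplaceholder,A)$ is an h-projective $\catB$-module and $A\mapsto X(\mplaceholder,A)$ defines a genuine dg functor $\widehat X\colon \catA\to\hproj(\catB)$. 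Right quasi-representability then lets me choose, for every $A\in\catA$, an object $B_A\in\catB$ together with a quasi-isomorphism $\phi_A\colon\catB(\mplaceholder,B_A)\to X(\mplaceholder,A)$ of dg $\catB$-modules, which exists because an isomorphism in $\Dom(\catB)$ between h-projectives is realized by an honest quasi-isomorphism.

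\textbf{Construction of $\catC$.} I would build $\catC$ as the homotopy pullback of $\widehat X\colon\catA\to\hproj(\catB)$ and the Yoneda embedding $\yoneda\colon\catB\to\hproj(\catB)$: objects are triples $(A,B,\phi)$ with $\phi\colon\catB(\mplaceholder,B)\to X(\mplaceholder,A)$ a degree-$0$ closed quasi-isomorphism, and the Hom complex from $(A,B,\phi)$ to $(A',B',\phi')$ is the mapping cocone
\[ \catC\big((A,B,\phi),(A',B',\phi')\big)=\Cocone\Big(\catA(A,A')\oplus\catB(B,B')\xrightarrow{\,(a,b)\mapsto\widehat X(a)\phi-\phi'\yoneda(b)\,}\Comdg(\catB)\big(\catB(\mplaceholder,B),X(\mplaceholder,A')\big)\Big), \]
whose degree-$n$ part records a pair $(a,b)$ of morphisms together with a degree-$(n-1)$ homotopy $h$ witnessing commutativity of the comparison square. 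This is a standard construction yielding a strictly associative, unital dg category, and the coordinate projections $W\colon\catC\to\catA$, $(A,B,\phi)\mapsto A$, and $F\colon\catC\to\catB$, $(A,B,\phi)\mapsto B$, are honest dg functors.

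\textbf{$W$ is a quasi-equivalence.} Quasi-essential surjectivity is immediate: for each $A$ the triple $(A,B_A,\phi_A)$ lies in $\catC$ and maps to $A$, so $W$ is even surjective on objects. For quasi-full-faithfulness, observe that the second leg $b\mapsto\phi'\yoneda(b)$ is a quasi-isomorphism (it is $\yoneda$ followed by postcomposition with the quasi-isomorphism $\phi'$, and $\catB(\mplaceholder,B)$ is h-projective), so the homotopy fibre of the defining map over each $a$ is acyclic; hence the projection $\catC(c,c')\to\catA(A,A')$ is a quasi-isomorphism. This homotopy-pullback computation is exactly where the coherence homotopies $h$ in the Hom complexes are indispensable---the strict fibre product would in general fail to be quasi-fully faithful---and I expect it to be the main technical point of the argument.

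\textbf{The comparison isomorphism.} Finally I would produce $X\Dprocomp W_\equipsubAst\cong F_\equipsubAst$ in $\Dom(\catC,\catB)$. Since $W_\equipsubAst$ is h-projective, $X\Dprocomp W_\equipsubAst\cong X\procomp W_\equipsubAst$, and by the (co)Yoneda lemma (\cref{prop:(co)Yoneda_lemma}) one computes $(X\procomp W_\equipsubAst)(\mplaceholder,c)\cong X(\mplaceholder,A)$ while $F_\equipsubAst(\mplaceholder,c)=\catB(\mplaceholder,B)$, so the complex of bimodule maps $F_\equipsubAst\to X\procomp W_\equipsubAst$ reduces to the end $\int_{c}X(FC,WC)$. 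The chosen family $(\phi_c)_c$ is a degree-$0$ closed element of this end precisely because the homotopies built into the morphisms of $\catC$ make it dg-natural, and thus defines a morphism $\Phi\colon F_\equipsubAst\to X\Dprocomp W_\equipsubAst$; as each component $\phi_c$ is a quasi-isomorphism, \cref{prop:qism_induces_iso_btw_derived_func} (equivalently \cref{prop:termwise_criterion_of_invertible_2-cell_of_DBimod}) shows $\Phi$ is invertible. Beyond the homotopy-pullback step, the only point demanding care is checking that $(\phi_c)_c$ is genuinely closed in the end complex, i.e. that the $h$-components assemble coherently; this is a bookkeeping verification with Koszul signs rather than a conceptual obstacle.
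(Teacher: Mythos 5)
Your route is genuinely different from the paper's: the paper takes a cofibrant resolution $W\colon \cofQ(\catA)\to\catA$ and invokes To\"en's theorem (the bijection between $\Hom_\HodgCat(\catC,\catB)$ and isomorphism classes of right quasi-representable bimodules, so that over a cofibrant source every such bimodule is isomorphic to $F_\equipsubAst$ for an honest dg functor $F$), whereas you try to prove that strictification statement by hand via a homotopy-comma category. The construction of $\catC$ and the verification that $W$ is a quasi-equivalence are sound (the second leg being a quasi-isomorphism because $\catB(\mplaceholder,B)$ is h-projective is exactly the right point).

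The gap is in the final step, and it is not Koszul-sign bookkeeping but a genuine obstruction: the family $(\phi_c)_c$ is \emph{not} a closed degree-zero element of the end $\int_c X(Fc,Wc)$. That end is an equalizer in $\Chk$, so membership is an equality of maps of graded modules: one needs $X(Fc,W\xi)(\phi_c)=X(F\xi,Wc')(\phi_{c'})$ for \emph{every} element $\xi=(a,b,h)$ of $\catC(c,c')$, not only for closed ones. Since $W\xi=a$ and $F\xi=b$, taking $\xi=(\id_A,0,0)\in\catC(c,c)^0$ (a legitimate, non-closed element of the graded Hom module) forces $\phi_c=0$; and even on closed degree-zero morphisms the two sides differ by $dh$, which is nonzero in general. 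This is precisely the nature of the homotopy-comma construction: the comparison square commutes only up to the specified homotopies, so the tautological transformation is homotopy-coherent rather than strictly dg-natural, and it does not define a morphism $F_\equipsubAst\to X\Dprocomp W_\equipsubAst$ of dg bimodules. The argument can likely be repaired by a zig-zag: for instance, set
\[ P(B',c)=\Cocone\Big(\catB(B',Fc)\oplus X(B',Wc)\xrightarrow{(\beta,x)\mapsto \phi_c(\beta)-x} X(B',Wc)\Big), \]
let a morphism $(a,b,h)$ of $\catC$ act on the shifted component through $h$ so that $P$ becomes strictly functorial in $c$, and check that the two projections $P\to F_\equipsubAst$ and $P\to X\procomp W_\equipsubAst$ are quasi-isomorphisms of bimodules; this yields the required isomorphism in $\Dom(\catC,\catB)$. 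Alternatively, quote To\"en's theorem as the paper does. As written, however, the proof is incomplete at this step.
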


\begin{proof}
	This statement is attributed to \cite{Toen:2007}, in which it is shown that there exists a bijection between the set of morphisms $\catC\to\catB$ in $\HodgCat$ and the set of isomorphism classes of right quasi-representable dg bimodules in $\Dom(\catB \otimes^\bbL \catC^\op)=\Dom(\cofQ(\catC),\catB)$ (see \cite[Corollary 4.8]{Toen:2007} or \cite{Canonaco-Stellari:2015Internal_Homs}). When $\catC$ is cofibrant, morphisms $\catC\to\catB$ in $\HodgCat$ is homotopy classes of dg functors. In such a case, we can thus verify that any right quasi-representable dg bimodule $\catC\slashedrightarrow\catB$ is isomorphic to $F_\equipsubAst$ for some dg functor $F$.
	
	Take a cofibrant resolution $W\colon \cofQ(\catA)\to\catA$ of $\catA$ and set $\catC=\cofQ(\catA)$. Then $X\Dprocomp W_\equipsubAst$ is right quasi-representable. Since $\cofQ(\catA)$ is cofibrant, it is isomorphic to $F_\equipsubAst$ in $\Dom(\catC,\catB)$ for some $F\colon \catC\to\catB$.
\end{proof}

\begin{theorem}\label{thm:universality_2}
	Consider another proarrow equipment $\equipBullet\colon \bicatK\hookrightarrow\bicatM$. Let $\Theta\colon \Bimod\to\bicatM$ be a normal lax functor that preserves adjunctions of the form $F_\equipsubAst \dashv F^\equipsubAst$ for dg functors $F$ (i.e.\ it satisfies the equivalent conditions in \cref{lem:condition_lax_functor_preserves_adjunctions} for $F_\equipsubAst \dashv F^\equipsubAst$), and $\theta\colon \dgCat\to\bicatK$ be a pseudo-functor such that $\Theta\circ \equipAst=\equipBullet\circ \theta$.
	If $\Theta$ sends quasi-isomorphism $2$-cells to invertible ones and $\theta$ sends quasi-equivalences to equivalences, then $(\Theta,\theta)$ \emph{factors through} the pair $(\Gamma,\gamma)$: namely, there exist a normal lax functor $\widetilde{\Theta}\colon \DBimod\to\bicatM$ and a pseudo-functor $\widetilde{\theta}\colon \dgCat\to \bicatK$ such that $\widetilde{\Theta}\circ \equipStar=\equipBullet\circ \widetilde{\theta}$, $\widetilde{\Theta}\circ \Gamma = \Theta$, and $\widetilde{\theta}\circ \gamma = \theta$.
	\[\begin{tikzcd}[column sep=small,row sep=small]
		\dgCat \arrow{dd}{\gamma} \arrow{rr}{\theta} \arrow[hookrightarrow]{rd}{\equipAst} &[-2ex] &[-3ex] \bicatK \arrow[hookrightarrow]{rd}{\equipBullet} &[3ex] \\
		& \phantom{bimod} \arrow{dd}{\Gamma} \arrow{rr}[swap]{\Theta} & & \bicatM\rlap{.} \\
		\DBimodrqr \arrow[hookrightarrow]{rd}[swap]{\equipStar}
		\arrow[bend right=10,dashed]{rruu}[swap,very near start]{\widetilde{\theta}}[description,pos=0.55]{\textstyle\Bimod} 
		%\arrow[bend right=10,dash,dashed,shorten >=8ex]{rruu}[swap,near start]{\widetilde{\theta}} \arrow[bend right=10,dashed,shorten <=8ex]{rruu}
		 & & & \\
		& \DBimod \arrow[bend right=10,dashed]{rruu}[swap,near start]{\widetilde{\Theta}} & &
	\end{tikzcd}\]
	%\dgCat \arrow{dd}{\gamma} \arrow{rr}{\theta} \arrow[hookrightarrow]{rd}{\equipAst} &[-2ex] &[-3ex] \bicatK \arrow[hookrightarrow]{rd}{\equipBullet} &[3ex] \\
	%& \Bimod \arrow{dd}{\Gamma} \arrow{rr}[swap]{\Theta} & & \bicatM \\
	%\DBimodrqr \arrow[hookrightarrow]{rd}[swap]{\equipStar} \arrow[bend right=10,dashed]{rruu}[swap,very near start]{\widetilde{\theta}} & & & \\
	%& \DBimod \arrow[bend right=10,dashed]{rruu}[swap]{\widetilde{\Theta}} & &
\end{theorem}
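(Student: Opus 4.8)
The functor $\widetilde{\Theta}$ is immediate: since $\Theta$ is a normal lax functor carrying quasi-isomorphism $2$-cells to invertible ones, \cref{thm:universality_1} furnishes a normal lax functor $\widetilde{\Theta}\colon\DBimod\to\bicatM$ with $\widetilde{\Theta}\circ\Gamma=\Theta$, which already settles one of the three required identities. All the real work goes into the pseudo-functor $\widetilde{\theta}$. On objects I put $\widetilde{\theta}(\catA)=\theta(\catA)$; this is consistent with $\equipBullet\circ\widetilde{\theta}=\widetilde{\Theta}\circ\equipStar$ because $\equipAst,\equipStar$ are identities on objects and $\Theta\circ\equipAst=\equipBullet\circ\theta$, so that $\equipBullet(\theta(\catA))=\Theta(\catA)=\widetilde{\Theta}(\catA)$. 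For $1$-cells I will use the roof presentation of \cref{lem:roof_lemma_for_quasi-functor}: every quasi-functor $X\colon\catA\to\catB$ fits into $X\Dprocomp W_\equipsubAst\cong F_\equipsubAst$ with $F\colon\catC\to\catB$ a dg functor and $W\colon\catC\to\catA$ a quasi-equivalence, so that $X\cong F_\equipsubAst\Dprocomp(W_\equipsubAst)^{-1}$ in $\DBimodrqr$ (the inverse existing by \cref{prop:quasi-equivalence_is_just_equivalence_in_DBimodrqr}), and I set $\widetilde{\theta}(X)\coloneqq\theta(F)\circ\theta(W)^{-1}$, which is legitimate because $\theta(W)$ is an equivalence in $\bicatK$ by hypothesis.

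The decisive step is to prove that $\widetilde{\Theta}\circ\equipStar\colon\DBimodrqr\to\bicatM$ is a genuine pseudo-functor, i.e.\ that the lax constraints $\varphi^{\widetilde{\Theta}}_{X_\equipsubStar,Y_\equipsubStar}$ are invertible for quasi-functors $X,Y$. By \cref{lem:condition_lax_functor_preserves_adjunctions} this is equivalent to $\widetilde{\Theta}$ preserving every adjunction $f_\equipsubStar\dashv f^\equipsubStar$. I would first dispatch the dg-functor case $f=F_\equipsubAst$: expanding the constraint of the composite $\Theta=\widetilde{\Theta}\circ\Gamma$ as $\varphi^{\Theta}_{B,F_\equipsubAst}=\widetilde{\Theta}(\mu_{B,F_\equipsubAst})\circ\varphi^{\widetilde{\Theta}}_{\Gamma(B),\Gamma(F_\equipsubAst)}$, the left-hand side is invertible since $\Theta$ preserves $F_\equipsubAst\dashv F^\equipsubAst$, while $\mu_{B,F_\equipsubAst}$ is invertible by \cref{lem:when_the_constraint_is_invertible} ($F_\equipsubAst$ being right h-flat) and hence so is $\widetilde{\Theta}(\mu_{B,F_\equipsubAst})$; as every $1$-cell of $\DBimod$ is isomorphic to some $\Gamma(B)$, this shows $\varphi^{\widetilde{\Theta}}_{g,F_\equipsubAst}$ is invertible for all $g$, i.e.\ $\widetilde{\Theta}$ preserves $F_\equipsubAst\dashv F^\equipsubAst$. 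For a general quasi-functor $f$ I would substitute the roof $f_\equipsubStar\Dprocomp W_\equipsubAst\cong F_\equipsubAst$ into the associativity (cocycle) identity for the constraints of $\widetilde{\Theta}$ on the triple $(g,f_\equipsubStar,W_\equipsubAst)$: the constraints $\varphi_{f_\equipsubStar,W_\equipsubAst}$, $\varphi_{g\Dprocomp f_\equipsubStar,\,W_\equipsubAst}$, and $\varphi_{g,\,f_\equipsubStar\Dprocomp W_\equipsubAst}\cong\varphi_{g,F_\equipsubAst}$ are all invertible by the dg-functor case applied to $W$ and $F$, and since $\widetilde{\Theta}(W_\equipsubAst)=\equipBullet(\theta(W))$ is an equivalence, whiskering by it reflects invertibility and forces $\varphi^{\widetilde{\Theta}}_{g,f_\equipsubStar}$ to be invertible.

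Granting this, the value $\widetilde{\theta}(X)$ is pinned down conceptually: applying the now-invertible constraint together with $\widetilde{\Theta}\circ\Gamma=\Theta$ to the roof gives $\widetilde{\Theta}(X_\equipsubStar)\circ\equipBullet(\theta(W))\cong\widetilde{\Theta}(X_\equipsubStar\Dprocomp W_\equipsubAst)\cong\equipBullet(\theta(F))$, whence $\widetilde{\Theta}(X_\equipsubStar)\cong\equipBullet\big(\theta(F)\circ\theta(W)^{-1}\big)=\equipBullet(\widetilde{\theta}(X))$. Thus $\widetilde{\Theta}\circ\equipStar$ takes values, up to isomorphism, in the essential image of $\equipBullet$, which---$\equipBullet$ being locally fully faithful and the identity on objects---is a local equivalence onto a replete, locally-full sub-bicategory of $\bicatM$. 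Lifting the pseudo-functor $\widetilde{\Theta}\circ\equipStar$ along this local equivalence (choosing, on the image of $\gamma$, the lifts forced by the trivial roofs $(F,\id)$, so that $\widetilde{\theta}(\gamma(F))=\theta(F)\circ\theta(\id)^{-1}=\theta(F)$ and hence $\widetilde{\theta}\circ\gamma=\theta$ on the nose) produces the required pseudo-functor $\widetilde{\theta}$ together with $\equipBullet\circ\widetilde{\theta}\cong\widetilde{\Theta}\circ\equipStar$; its well-definedness and pseudo-functoriality come for free from the uniqueness of $2$-cell lifts across the faithful functors $\equipBullet_{\theta\catA,\theta\catB}$. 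I expect the main obstacle to be exactly this last step: organizing the roof assignment into a coherent pseudo-functor and promoting the compatibility $\equipBullet\circ\widetilde{\theta}=\widetilde{\Theta}\circ\equipStar$ from a pseudonatural equivalence to the stated equality, which is where the local full faithfulness of $\equipBullet$ and the adjunction-preservation established in the second step must be used most carefully.
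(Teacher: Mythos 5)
Your proposal is correct and follows essentially the same route as the paper: obtain $\widetilde{\Theta}$ from \cref{thm:universality_1}, use \cref{lem:when_the_constraint_is_invertible} together with the adjunction-preservation hypothesis to invert the constraints against $\Gamma(F_\equipsubAst)$, then apply the roof presentation of \cref{lem:roof_lemma_for_quasi-functor} both to factor $\widetilde{\Theta}\circ\equipStar$ through the locally fully faithful $\equipBullet$ and (via the associativity axiom and whiskering by the equivalence $\theta(W)_\equipsubBullet$) to invert the constraints for general quasi-functors. The only difference is one of ordering and that you are somewhat more explicit about the final lifting step along $\equipBullet$, which the paper treats tersely.
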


\begin{proof}
	By \cref{thm:universality_1} we have a normal lax functor $\widetilde{\Theta}\colon \DBimod\to \bicatM$ through which $\Theta$ factors. 
	First let us prove that $\widetilde{\Theta}$ preserves the adjunction $\Gamma(F_\equipsubAst) \dashv \Gamma(F^\equipsubAst)$ for a dg functor $F$. It suffices by \cref{lem:condition_lax_functor_preserves_adjunctions} to show that the constraint $\widetilde{\Theta}(X)\circ  \widetilde{\Theta}(\Gamma(F_\equipsubAst)) \to \widetilde{\Theta}(X\Dprocomp \Gamma(F_\equipsubAst))$ is invertible for any composable morphism $X$ in $\DBimod$. It is given by
	\[ \widetilde{\Theta}(\Gamma(X))\circ  \widetilde{\Theta}(\Gamma(F_\equipsubAst)) = \Theta(X) \circ \Theta(F_\equipsubAst) \to \Theta(X\procomp F_\equipsubAst) = \widetilde{\Theta}(\Gamma(X\procomp F_\equipsubAst)) \cong \widetilde{\Theta}(\Gamma(X)\Dprocomp \Gamma(F_\equipsubAst)), \]
	where the last isomorphism is due to \cref{lem:when_the_constraint_is_invertible}. Hence it is invertible because so is $\Theta(X) \circ \Theta(F_\equipsubAst) \to \Theta(X\procomp F_\equipsubAst)$ by the assumption.
	
	Secondly take a quasi-functor $f$.
	By \cref{lem:roof_lemma_for_quasi-functor}, we have $f_\equipsubStar \Dprocomp \Gamma(W_\equipsubAst) \cong \Gamma(F_\equipsubAst)$ in $\DBimod$ for some dg functor $F$ and some quasi-equivalence $W$. Hence we have 
	\[ \widetilde{\Theta}(f_\equipsubStar \Dprocomp \Gamma(W_\equipsubAst)) \cong \widetilde{\Theta}(\Gamma(F_\equipsubAst)) =\Theta(F_\equipsubAst)=\theta(F)_\equipsubBullet. \]
	The above argument implies that
	\[ \widetilde{\Theta}(\Gamma(f_\equipsubStar) \Dprocomp \Gamma(W_\equipsubAst)) \cong \widetilde{\Theta}(\Gamma(f_\equipsubStar)) \circ \widetilde{\Theta}(\Gamma(W_\equipsubAst)) = \Theta(f_\equipsubStar) \circ \Theta(W_\equipsubAst) = \widetilde{\Theta}(f_\equipsubStar) \circ \theta(W)_\equipsubBullet. \]
	Therefore we have $\widetilde{\Theta}(f_\equipsubStar) \circ \theta(W)_\equipsubBullet \cong \theta(F)_\equipsubBullet$. Now we see from the assumption that $\theta(W)$ is an equivalence in $\bicatK$ and $\theta(W)_\equipsubBullet$ has the inverse $\theta(W)^\equipsubBullet=(\theta(W)^{-1})_\equipsubBullet$. Thus we obtain $\widetilde{\Theta}(f_\equipsubStar) \cong \theta(F)_\equipsubBullet\circ (\theta(W)^{-1})_\equipsubBullet \cong (\theta(F) \circ \theta(W)^{-1})_\equipsubBullet$, which shows that the lax functor $\widetilde{\Theta}\circ \equipStar$ factors through $\bicatK$ and induces a lax functor $\widetilde{\theta}\colon \DBimodrqr \to \bicatK$.
	
	We finally check that $\widetilde{\theta}$ is in fact a pseudo-functor. For this purpose, we only need to prove that the constraint $\widetilde{\Theta}(X) \circ \widetilde{\Theta}(f_\equipsubStar) \to \widetilde{\Theta}(X\Dprocomp f_\equipsubStar)$ is invertible for any quasi-functor $f$ and any dg bimodule $X$. Take again a dg functor $F$ and a quasi-equivalence $W$ such that $f_\equipsubStar \Dprocomp \Gamma(W_\equipsubAst) \cong \Gamma(F_\equipsubAst)$. Since $\widetilde{\Theta}(\Gamma(W_\equipsubAst))=\Theta(W_\equipsubAst)=\theta(W)_\equipsubBullet$ is an equivalence in $\bicatM$, it is sufficient to show that \[ \alpha\colon \widetilde{\Theta}(X) \circ \widetilde{\Theta}(f_\equipsubStar) \circ \widetilde{\Theta}(\Gamma(W_\equipsubAst)) \to \widetilde{\Theta}(X\Dprocomp f_\equipsubStar) \circ \widetilde{\Theta}(\Gamma(W_\equipsubAst)) \]
	is invertible. By the associativity axiom for the lax functor $\widetilde{\Theta}$, 
	\[\begin{tikzcd}
		\widetilde{\Theta}(X) \circ \widetilde{\Theta}(f_\equipsubStar) \circ \widetilde{\Theta}(\Gamma(W_\equipsubAst)) \arrow{r} \arrow{d}[swap]{\alpha} & \widetilde{\Theta}(X) \circ \widetilde{\Theta}(f_\equipsubStar \Dprocomp \Gamma(W_\equipsubAst)) \arrow[phantom]{r}{\cong} &[-3ex] \widetilde{\Theta}(X) \circ \widetilde{\Theta}(\Gamma(F_\equipsubAst)) \arrow{d} \\
		\widetilde{\Theta}(X\Dprocomp f_\equipsubStar) \circ \widetilde{\Theta}(\Gamma(W_\equipsubAst)) \arrow{r} & \widetilde{\Theta}(X\Dprocomp f_\equipsubStar \Dprocomp \Gamma(W_\equipsubAst)) \arrow[phantom]{r}{\cong} & \widetilde{\Theta}(X\Dprocomp \Gamma(F_\equipsubAst))\rlap{.}
	\end{tikzcd}\]
	commutes and the three morphisms there except $\alpha$ are invertible by the discussion in the first paragraph. Therefore so is $\alpha$, which is the desired conclusion.
\end{proof}

\begin{example}
	The $0$-th cohomology functor $H^0\colon \Ch(\basek)\to\Mod(\basek)$ becomes a lax monoidal functor. This induces not only a $2$-functor $H^0\colon \dgCat=\Ch(\basek)\text{-}\Cat \to \Mod(\basek)\text{-}\Cat$, but also a normal lax functor $\overline{H}^0 \colon \Bimod=\Ch(\basek)\text{-}\Prof \to \Mod(\basek)\text{-}\Prof$, which sends a dg bimodule $X\colon \catB^\op\otimes \catA \to \Ch_\dg(\basek)$ to
	\[ H^0(\catB)^\op\otimes H^0(\catA) \to H^0(\catB^\op\otimes\catA) \xrightarrow{H^0(X)} H^0(\Ch_\dg(\basek)) \xrightarrow{H^0} \Mod(\basek). \]
	It is immediate to see that $\overline{H}^0$ sends quasi-isomorphisms to isomorphisms and $H^0$ sends quasi-equivalences to equivalences. Moreover, since $X\procomp F_\equipsubAst$ is isomorphic to the dg functor $\catC^\op\otimes\catA \xrightarrow{\id\otimes F} \catC^\op \otimes\catB \xrightarrow{X} \Ch_\dg(\basek)$, we can verify that $\overline{H}^0(X\procomp F_\equipsubAst) \cong \overline{H}^0(X) \procomp H^0(F)_\equipsubAst \cong \overline{H}^0(X) \procomp \overline{H}^0(F_\equipsubAst)$. Therefore, by \cref{thm:universality_2}, $(\overline{H}^0,H^0)$ factors through $(\Gamma,\gamma)$.
\end{example}

\begin{remark}
	The universal property of the proarrow equipment $\equipStar\colon \DBimodrqr\hookrightarrow\DBimod$ as in \cref{thm:universality_2} is like that of localizations of categories. It is a future work to establish the theory of localizations for proarrow equipments and show that $\DBimodrqr\hookrightarrow\DBimod$ is a localization of $\dgCat\hookrightarrow\Bimod$.
\end{remark}

%%% Appendix用にカウンターの変更 %%%
\setcounter{section}{0}
\renewcommand{\thesection}{\Alph{section}}%
\renewcommand{\thesubsection}{\Alph{section}.\arabic{subsection}}

\section{Reviews on proarrow equipments}\label[appendix]{section:proarrow_equipment}

Proarrow equipments are one of the frameworks for formal category theory. The purpose of formal category theory is to abstract and synthesize the theory of ordinary categories.
With the structure of a proarrow equipment, we can interpret many concepts in category theory, such as (co)limits, Cauchy completeness, and pointwise Kan extensions, in the $2$-categorical way.
In this appendix, we review the theory of proarrow equipments from \cite{Wood:1982proarrow1}.

\subsection{Basic bicategory theory}\label{subsection:basic_bicategory_theory}

In this subsection, we collect some basic facts and fix some notation on bicategories.

A \emph{bicategory} is a notion of weak $2$-category. Namely, a bicategory is like a $2$-category such that the associativity and unity axioms hold \emph{only} up to coherent isomorphism.
Basic references of $2$-categories and bicategories are \cite{Benabou:1967bicategories} and \cite{Johnson-Yau:2021}.
In the following, we will omit the coherent isomorphisms for simplicity.

Let $\bicatK$ be a bicategory.
Objects of Hom categories $\bicatK(A,B)$ are called \emph{$1$-morphisms} or simply \emph{morphisms} of $\bicatK$, and morphisms of Hom categories are called \emph{$2$-morphisms} or \emph{$2$-cells}.
We let $\bicatK^\op$ denote the bicategory obtained by reversing morphisms of $\bicatK$ and $\bicatK^\co$ denote the bicategory obtained by reversing $2$-cells of $\bicatK$. Put $\bicatK^\coop=(\bicatK^\op)^\co=(\bicatK^\co)^\op$, which is obtained by reversing both $1$-morphisms and $2$-morphisms of $\bicatK$.

% 垂直合成の記号、水平合成の記号
As in a 2-category, we can define equivalences and adjunctions in a bicategory as well. 

\begin{definition}
	Let $\bicatK$ be a bicategory.
	\begin{enumerate}
		\item An \emph{equivalence} in $\bicatK$ is a pair of morphisms $f\colon A \to B$ and $u\colon B \to A$ such that $u\circ f \cong \id_A$ and $u\circ f \cong \id_B$.
		
		\item An \emph{adjunction} in $\bicatK$ consists of a pair of morphisms $f\colon A \to B$ and $u\colon B \to A$ together with $2$-cells $\eta\colon \id_A\Rightarrow u\circ f$ and $\varepsilon\colon f\circ u \Rightarrow \id_B$ that satisfy the following triangle identities:
		\[\begin{tikzcd}[column sep=small]
			& B \arrow{rr}{\id_B}[name=B2,below,pos=0.5]{} \arrow{rd}[description]{u} & & B \\
			A \arrow{rr}[swap]{\id_A}[name=A1,above,pos=0.5]{} \arrow{ru}{f} & & A \arrow{ru}[swap]{f} & 
			\arrow[Rightarrow,from=A1,to=1-2,shorten <=0ex,shorten >=1.5ex,"\eta",pos=0.3]
			\arrow[Rightarrow,from=2-3,to=B2,shorten <=1.3ex,shorten >=0ex,"\varepsilon",swap,pos=0.6]
		\end{tikzcd} = \id_f, \qquad\quad
		\begin{tikzcd}[column sep=small]
			B \arrow{rr}{\id_B}[name=B2,below,pos=0.5]{} \arrow{rd}[swap]{u} & & B \arrow{rd}{u}& \\
			& A \arrow{rr}[swap]{\id_A}[name=A1,above,pos=0.5]{} \arrow{ru}[description]{f} & & A 
			\arrow[Rightarrow,from=A1,to=1-3,shorten <=0ex,shorten >=1.5ex,"\eta",pos=0.3]
			\arrow[Rightarrow,from=2-2,to=B2,shorten <=1.3ex,shorten >=0ex,"\varepsilon",pos=0.6]
		\end{tikzcd} = \id_u. \]
		In this case, we call $f$ the \emph{left adjoint}, $u$ the \emph{right adjoint}, $\eta$ the \emph{unit}, and $\varepsilon$ the \emph{counit}.
	\end{enumerate}
\end{definition}

\begin{example}
	\begin{enumerate}
		\item In the $2$-category $\Cat$ of categories, an  equivalence is just an equivalence of categories, and an adjunction is just a pair of adjoint functors.
		\item Consider the $2$-category $\VCat$ of enriched categories over a cosmos $\moncatV$\footnotemark. An adjunction in $\VCat$ is equivalently a pair of $\moncatV$-functors $F\colon\catA\to\catB$ and $G\colon \catB \to \catA$ together with a natural isomorphism $\catB(FA,B) \cong \catA(A,GB)$.
	\end{enumerate}
\end{example}
\footnotetext{A \emph{(B\'enabou) cosmos} means a complete and cocomplete symmetric monoidal closed category.}

Now we introduce the notion of Kan extensions and Kan liftings.

\begin{definition}
	Let $\bicatK$ be a bicategory.
	\begin{enumerate}
		\item Let $f\colon A \to B$ and $k\colon A\to D$ be morphisms of $\bicatK$. A \emph{left Kan extension} of $f$ along $k$ is a morphism $\Lan_k f\colon D \to B$ together with a $2$-cell $\theta\colon f\Rightarrow \Lan_k f\circ k$ such that the map
		      \[ \bicatK(D,B)(\Lan_k f,h) \to \bicatK(A,B)(f,h\circ k), \quad \chi \mapsto (\chi k) \circ \theta \]
		      is bijective for all morphisms $h\colon D \to B$. A left Kan extension in $\bicatK^\co$, $\bicatK^\op$, and $\bicatK^\coop$ are called a \emph{right Kan extension}, a \emph{left Kan lifting}, and a \emph{right Kan lifting}, and denoted by $\Ran$, $\Lift$, and $\Rift$, respectively.

		\item Let $g\colon B\to C$ be another morphism of $\bicatK$. Then we say that $g$ \emph{commutes with} the left Kan extension $\Lan_k f$ (resp.\ the right Kan extension $\Ran_k f$) if $g\circ \Lan_k f \cong \Lan_k(g\circ f)$ (resp.\ $g\circ \Ran_k f \cong \Ran_k(g\circ f)$).

		\item Also we say that $g$ \emph{commutes with} the left Kan liftings $\Lift_k f$ (resp.\ the right Kan liftings $\Rift_k f$) if $(\Lift_k f) \circ g \cong \Lift_k (f\circ g)$ (resp.\ $(\Rift_k f) \circ g \cong \Rift_k (f\circ g)$).

		\item A left Kan extension is called \emph{absolute} if it is commuted with by all morphisms. We define absolute right Kan extensions and absolute Kan liftings in a similar way.
	\end{enumerate}
\end{definition}

\begin{proposition}
	Consider morphisms $f\colon A \to B$, $k\colon A\to D$, and $l\colon D\to E$ in a bicategory $\bicatK$. If the left Kan extension $\Lan_k f$ exists, then we have an isomorphism
	\[ \Lan_l \Lan_k f \cong \Lan_{lk} f, \]
	either side existing if the other does.
\end{proposition}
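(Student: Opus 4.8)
The plan is to prove the statement by a direct Yoneda-style argument, splicing together the natural bijections that define the Kan extensions involved. Throughout I assume, as the hypothesis grants, that $\Lan_k f$ exists together with its universal $2$-cell $\theta\colon f \Rightarrow \Lan_k f\circ k$; following the convention fixed in \cref{subsection:basic_bicategory_theory} I suppress the associativity coherence isomorphisms.

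First I would record the three defining representability properties. For any $h'\colon E \to B$, the object $\Lan_l\Lan_k f$ (when it exists) is characterized by a natural bijection $\bicatK(E,B)(\Lan_l\Lan_k f, h') \cong \bicatK(D,B)(\Lan_k f, h'\circ l)$; the object $\Lan_{lk}f$ (when it exists) by $\bicatK(E,B)(\Lan_{lk}f, h') \cong \bicatK(A,B)(f, h'\circ(lk))$; and $\Lan_k f$ itself by $\bicatK(D,B)(\Lan_k f, g)\cong \bicatK(A,B)(f, g\circ k)$ for every $g\colon D\to B$.

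The key step is to chain these. Applying the universal property of $\Lan_k f$ with $g = h'\circ l$ gives, naturally in $h'$,
\[ \bicatK(D,B)(\Lan_k f, h'\circ l) \cong \bicatK(A,B)(f, (h'\circ l)\circ k) \cong \bicatK(A,B)(f, h'\circ(lk)). \]
Composing with the characterization of $\Lan_l\Lan_k f$ shows that whenever $\Lan_l\Lan_k f$ exists it represents the functor $h'\mapsto \bicatK(A,B)(f,h'\circ(lk))$, which is precisely the defining property of $\Lan_{lk}f$. Reading the same chain backwards shows that whenever $\Lan_{lk}f$ exists it represents $h'\mapsto \bicatK(D,B)(\Lan_k f, h'\circ l)$, the defining property of $\Lan_l\Lan_k f$. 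Hence either representing object exists as soon as the other does, and the bicategorical Yoneda lemma promotes the natural bijection to the asserted isomorphism $\Lan_l\Lan_k f\cong \Lan_{lk}f$ of $1$-morphisms, compatibly with the comparison $2$-cells.

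I do not expect a serious obstacle, as the argument is purely formal. The only points needing care are (i) verifying that each displayed bijection is genuinely natural in $h'$, so that Yoneda yields an isomorphism of $1$-morphisms rather than a mere bijection of hom-sets, and (ii) bookkeeping the unit $2$-cells: the pasted $2$-cell exhibiting $\Lan_l\Lan_k f$ as $\Lan_{lk}f$ is the composite of $\theta$ with the whiskering by $k$ of the unit of $\Lan_l\Lan_k f$, and one should check that this composite is the image of the identity under the spliced bijection. Both are routine given the conventions already in place.
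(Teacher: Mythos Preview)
Your argument is correct and is the standard Yoneda-style proof of this fact. The paper does not actually supply a proof for this proposition; it is stated as a basic result in the review appendix and left unproven, so there is nothing to compare your approach against beyond noting that yours is exactly the expected one.
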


\begin{proposition}\label{prop:adjunction_as_Kan_extension/lifting}
	Consider morphisms $f\colon A\to B$, $g\colon B \to A$ in a bicategory $\bicatK$.
	\begin{enumerate}
		\item For a $2$-cell $\eta \colon \id_A \Rightarrow g\circ f$, the following are equivalent:
		      \begin{enumerate}[label=\equivitem]
			      \item $\eta$ is the unit of the adjunction $f\dashv g$.
			      \item The pair $(f,\eta)$ is the absolute left Kan lifting $\Lift_g \id_A$.
			      \item The pair $(f,\eta)$ is the left Kan lifting $\Lift_g \id_A$, and $g$ commutes with it.
			      \item The pair $(g,\eta)$ is the absolute left Kan extension $\Lan_f \id_A$.
			      \item The pair $(g,\eta)$ is the left Kan extension $\Lan_f \id_A$, and $f$ commutes with it.
		      \end{enumerate}

		\item For a $2$-cell $\varepsilon\colon f\circ g \Rightarrow \id_B$, the following are equivalent:
		      \begin{enumerate}[label=\equivitem]
			      \item $\varepsilon$ is the counit of the adjunction $f\dashv g$.
			      \item The pair $(g,\varepsilon)$ is the absolute right Kan lifting $\Rift_f \id_B$.
			      \item The pair $(g,\varepsilon)$ is the right Kan lifting $\Rift_f \id_B$, and $f$ commutes with it.
			      \item The pair $(f,\varepsilon)$ is the absolute right Kan extension $\Ran_g \id_B$.
			      \item The pair $(f,\varepsilon)$ is the right Kan extension $\Ran_g \id_B$, and $g$ commutes with it.
		      \end{enumerate}
	\end{enumerate}
\end{proposition}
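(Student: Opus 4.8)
The plan is to cut the ten implications down to a single non-formal argument. First I would record that the whole proposition is stable under the three dualities of \cref{subsection:basic_bicategory_theory}. Passing to $\bicatK^{\co}$ reverses $2$-cells, hence turns the unit $\eta$ into a counit and simultaneously turns left Kan liftings/extensions into right ones; this identifies part (1) for $\bicatK$ with part (2) for $\bicatK^{\co}$. Passing to $\bicatK^{\op}$ reverses $1$-cells, interchanges the roles of $f$ and $g$, and (by the very definition of Kan liftings as Kan extensions in $\bicatK^{\op}$) interchanges the lifting conditions (ii), (iii) with the extension conditions (iv), (v), while sending the adjunction $f\dashv g$ to the reversed adjunction; this identifies (i) $\Leftrightarrow$ (ii) $\Leftrightarrow$ (iii) with (i) $\Leftrightarrow$ (iv) $\Leftrightarrow$ (v). Consequently it suffices to prove, in part (1), the cycle (i) $\Rightarrow$ (ii) $\Rightarrow$ (iii) $\Rightarrow$ (i).

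\textbf{(i) $\Rightarrow$ (ii).} Assume $f\dashv g$ with unit $\eta$ and counit $\varepsilon$. To see that $(f,\eta)$ is the \emph{absolute} left Kan lifting $\Lift_g \id_A$, I would show directly that for every morphism $q$ the whiskered pair $(f\circ q,\eta q)$ is the left lifting of $q$ through $g$. For a test morphism $p$ one sets
\[ \Phi\colon \chi \longmapsto (g\chi)\cdot(\eta q), \qquad \Psi\colon \psi \longmapsto (\varepsilon p)\cdot(f\psi), \]
and checks $\Psi\Phi=\id$ and $\Phi\Psi=\id$ by a short interchange computation: naturality of $\varepsilon$ rewrites $(\varepsilon p)\cdot(fg\chi)$ as $\chi\cdot(\varepsilon fq)$, after which the triangle identity $(\varepsilon f)\cdot(f\eta)=\id_f$ collapses the remaining factor, and dually for $\Phi\Psi$ using the other triangle identity. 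As this holds for all $q$, the lifting is absolute.

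\textbf{(ii) $\Rightarrow$ (iii) and (iii) $\Rightarrow$ (i).} The first is immediate, since an absolute lifting is in particular commuted with by $g$. The substance is (iii) $\Rightarrow$ (i). Here the hypothesis that $g$ commutes with $\Lift_g\id_A = f$ means exactly that $f\circ g$, equipped with the unit $\eta g$, is the left lifting $\Lift_g g$ of $g$ through $g$. I would then feed the identity $2$-cell $\id_g\colon g\Rightarrow g\circ\id_B$ into the universal property of this lifting to obtain a unique $\varepsilon\colon f\circ g\Rightarrow\id_B$ with $(g\varepsilon)\cdot(\eta g)=\id_g$; this is one of the two triangle identities. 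For the other, $(\varepsilon f)\cdot(f\eta)=\id_f$, I would invoke the universal property of $\Lift_g\id_A=f$ itself: it suffices to check that $(\varepsilon f)\cdot(f\eta)$ and $\id_f$ have the same image under $\chi\mapsto (g\chi)\cdot\eta$, and this equality follows from the first triangle identity together with the naturality (interchange) of $\eta$. Thus $\eta$ and $\varepsilon$ satisfy both triangle identities and exhibit $f\dashv g$.

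\textbf{Main obstacle.} The only step requiring real care is (iii) $\Rightarrow$ (i): one must correctly read ``$g$ commutes with the lifting'' as the isomorphism $f\circ g\cong\Lift_g g$ of liftings, use it to manufacture the counit, and then reduce the second triangle identity to the first through the universal property of $\Lift_g\id_A$ and the interchange law. The duality bookkeeping in the first paragraph is routine but must be set up precisely so that the single proved cycle really does yield all of (1) and (2).
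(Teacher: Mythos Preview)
Your argument is correct and is the standard proof of this classical result. The paper, however, does not supply a proof of this proposition at all: it is stated in the appendix as a known fact from bicategory theory and then simply invoked to deduce \cref{prop:Kan_extension/lifting_along_adjoint}. So there is nothing to compare; your duality reduction followed by the explicit construction of the counit from the universal property of $\Lift_g g$ and the verification of the second triangle identity via the universal property of $\Lift_g \id_A$ is precisely the argument one would expect, and it fills in what the paper leaves implicit.
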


\begin{proposition}\label{prop:Kan_extension/lifting_along_adjoint}
	Let $\bicatK$ be a bicategory.
	\begin{enumerate}
		\item A right Kan lifting along a left adjoint is a post-composite with the right adjoint: that is, for morphisms $h\colon A\to C$ and $f\colon B\to C$ where $f$ has a right adjoint $g$, we have
		      \[ \Rift_f h \cong g\circ h. \]
		      In particular, such a right Kan lifting is absolute.
		\item A right Kan extension along a right adjoint is a pre-composite with the left adjoint: that is, for morphisms $f\colon A\to B$ and $h\colon A \to C$ where $f$ has a left adjoint $g$, we have
		      \[ \Ran_f h \cong h\circ g. \]
		      In particular, such a right Kan extension is absolute.
	\end{enumerate}
\end{proposition}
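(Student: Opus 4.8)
The plan is to deduce both halves of the proposition from \cref{prop:adjunction_as_Kan_extension/lifting}, which already identifies a right adjoint as an \emph{absolute} right Kan lifting (respectively, right Kan extension) of an identity morphism. The key observation is that once such an absolute Kan lifting/extension against $\id$ is in hand, both the asserted formula and its absoluteness come out purely formally by specializing the absoluteness property at $h$; there is no need to verify any universal property by hand. I will suppress coherence isomorphisms throughout, exactly as the excerpt does.

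For part (1), I would write $f\dashv g$ with counit $\varepsilon\colon f\circ g\Rightarrow\id_C$ (here $f\colon B\to C$, $g\colon C\to B$). First I would invoke clause (b) of \cref{prop:adjunction_as_Kan_extension/lifting}~(2), which says that $(g,\varepsilon)$ is the absolute right Kan lifting $\Rift_f\id_C$. By the definition of absoluteness for right Kan liftings, every morphism $p$ with codomain $C$ commutes with it, that is $(\Rift_f\id_C)\circ p\cong\Rift_f(\id_C\circ p)\cong\Rift_f p$. Specializing to $p=h\colon A\to C$ yields the desired $\Rift_f h\cong(\Rift_f\id_C)\circ h\cong g\circ h$. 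Running the very same computation with $h\circ p$ in place of $h$ gives $(\Rift_f h)\circ p\cong g\circ(h\circ p)\cong\Rift_f(h\circ p)$ for every $p$, which is exactly the statement that $\Rift_f h$ is itself absolute.

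Part (2) is the formal dual, and I would obtain it by the mirror-image argument using clause (d) of \cref{prop:adjunction_as_Kan_extension/lifting}~(2). Here $f\colon A\to B$ has a \emph{left} adjoint $g\colon B\to A$, so the relevant adjunction is $g\dashv f$; applying the cited proposition to this adjunction (its ``$f$'' being our $g$ and its ``$g$'' being our $f$) exhibits $g$ as the absolute right Kan extension $\Ran_f\id_A$. Absoluteness of a right Kan extension is phrased via post-composition, so $p\circ\Ran_f\id_A\cong\Ran_f(p\circ\id_A)\cong\Ran_f p$; taking $p=h$ gives $\Ran_f h\cong h\circ(\Ran_f\id_A)\cong h\circ g$, together with its absoluteness by the same trick as before. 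Alternatively, one could bypass the letter-swapping and simply apply part (1) inside $\bicatK^{\op}$, where right Kan extensions become right Kan liftings and the left adjoint of $f$ becomes a right adjoint.

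The one genuinely error-prone point — and thus the step I would handle most carefully — is the bookkeeping of conventions: the definition of ``commutes with'' uses pre-composition for Kan liftings but post-composition for Kan extensions, and the adjunction in part (2) is the reverse ($g\dashv f$ rather than $f\dashv g$), so one must track the letter swap when quoting clause (d). Everything else is automatic once the correct variance is lined up, so the substance of the proof is entirely contained in \cref{prop:adjunction_as_Kan_extension/lifting}.
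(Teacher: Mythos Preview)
Your proof is correct and follows exactly the paper's approach: the paper's proof is simply the one-liner ``This follows from \cref{prop:adjunction_as_Kan_extension/lifting},'' and you have spelled out in detail how that deduction goes, including the bookkeeping of which clause applies and how absoluteness of the Kan lifting/extension of the identity transfers to absoluteness of $\Rift_f h$ and $\Ran_f h$.
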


\begin{proof}
	This follows from \cref{prop:adjunction_as_Kan_extension/lifting}.
\end{proof}

\begin{proposition}\label{prop:adjoint_respects_Kan_extension/lifting}
	Left adjoints commute with all right Kan liftings. Dually, right adjoints commute with all right Kan extensions.
\end{proposition}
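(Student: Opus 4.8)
The plan is to prove the first assertion---that a left adjoint commutes with every right Kan lifting---directly from the representing property, and then to obtain the second assertion by duality. So let $g\colon X\to A$ be a left adjoint in $\bicatK$, say $g\dashv u$ with $u\colon A\to X$, and let $\Rift_k f$ be a right Kan lifting, where $k\colon B\to C$ and $f\colon A\to C$, so that $\Rift_k f\colon A\to B$. I must produce an isomorphism $(\Rift_k f)\circ g\cong\Rift_k(f\circ g)$, and the cleanest route is to show that $(\Rift_k f)\circ g$ enjoys the universal property defining $\Rift_k(f\circ g)$, rather than to manipulate units and counits by hand.

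The first step I would record is purely formal: precomposition reverses adjunctions. Concretely, from $g\dashv u$ one gets, for every object $Z$, an adjunction $(-)\circ u\dashv(-)\circ g$ between the hom-categories $\bicatK(X,Z)$ and $\bicatK(A,Z)$, whose unit and counit are the whiskerings of those of $g\dashv u$ and whose triangle identities transport verbatim. Granting this, I would assemble the following chain of bijections, natural in $h\colon X\to B$:
\begin{align*}
	\bicatK(X,B)\big(h,(\Rift_k f)\circ g\big)
	&\cong \bicatK(A,B)\big(h\circ u,\Rift_k f\big) \\
	&\cong \bicatK(A,C)\big(k\circ(h\circ u),f\big) \\
	&= \bicatK(A,C)\big((k\circ h)\circ u,f\big) \\
	&\cong \bicatK(X,C)\big(k\circ h,f\circ g\big).
\end{align*}
Here the first and last isomorphisms are instances of the precomposition adjunction $(-)\circ u\dashv(-)\circ g$ (on the hom-categories with target $B$ and $C$ respectively), the middle isomorphism is the defining universal property of the right Kan lifting $\Rift_k f$, and the equality is reassociation.

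Since the composite bijection is natural in $h$, the object $(\Rift_k f)\circ g$ represents the functor $h\mapsto\bicatK(X,C)(k\circ h,f\circ g)$, which is exactly the functor represented by $\Rift_k(f\circ g)$; by uniqueness of representing objects this yields $(\Rift_k f)\circ g\cong\Rift_k(f\circ g)$, and tracing the identity $2$-cell through the chain shows that the universal $2$-cell of $(\Rift_k f)\circ g$ is the whiskering by $g$ of that of $\Rift_k f$. Finally, the dual statement follows by applying what has just been proved to $\bicatK^\op$: a left adjoint in $\bicatK^\op$ is a right adjoint in $\bicatK$, and a right Kan lifting in $\bicatK^\op$ is a right Kan extension in $\bicatK$, so ``left adjoints commute with right Kan liftings'' read in $\bicatK^\op$ becomes precisely ``right adjoints commute with right Kan extensions'' in $\bicatK$. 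I expect the only real friction to be bookkeeping: checking that the displayed chain is natural in $h$ and that the resulting isomorphism is compatible with the universal $2$-cells, together with the one-time verification that $(-)\circ u$ is left adjoint to $(-)\circ g$.
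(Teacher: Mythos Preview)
Your proof is correct. The paper itself leaves this as an ``easy exercise on diagram chasing'' and gives no details, so there is nothing to compare at the level of specific steps. Your approach---verifying the universal property of $\Rift_k(f\circ g)$ via the precomposition adjunction $(-)\circ u\dashv(-)\circ g$---is a clean alternative to a direct manipulation of units and counits, and the duality reduction to $\bicatK^\op$ for the second assertion is exactly right.
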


\begin{proof}
	Easy exercises on diagram chasing.
\end{proof}

\begin{definition}
	A bicategory $\bicatM$ is said to be \emph{closed} if both the pre-composition functor and the post-composition functor have right adjoints.
	In other words, for morphisms $X\colon A\to B$, $Y\colon B \to C$, and $Z\colon A \to C$ in $\bicatM$, there exist morphisms $Y_\ddag Z$ and $X^\ddag Z$ together with natural bijections
	\begin{align*}
		\bicatM(A,C)(Y\circ X, Z) & \cong \bicatM(A,B)(X, Y_\ddag Z), \\
		\bicatM(A,C)(Y\circ X, Z) & \cong \bicatM(B,C)(Y, X^\ddag Z).
	\end{align*}
\end{definition}

\begin{proposition}\label{prop:closed_bicat_has_Ran_and_Rift}
	Let $X\colon A\to B$, $Y\colon B \to C$, $Z\colon A \to C$ be morphisms in a bicategory $\bicatM$ where $Y\circ\mplaceholder$ and $\mplaceholder\circ X$ have right adjoints $Y_\ddag(\mplaceholder)$ and $X^\ddag(\mplaceholder)$ respectively. Then the counits of these adjunctions
	\[
		\begin{tikzcd}
			& B \arrow{d}{Y}
			\arrow[Rightarrow,d,"",shift right=3.4ex,shorten <=1.7ex,shorten >=-0.3ex] \\
			A \arrow[bend left=15]{ru}{Y_\ddag Z} \arrow{r}[swap]{Z} & C\rlap{,}
		\end{tikzcd}
		\qquad\qquad
		\begin{tikzcd}
			B \arrow[bend left=15]{rd}{X^\ddag Z}
			\arrow[Rightarrow,shift left=3.4ex,shorten <=1.7ex,shorten >=-0.3ex]{d} & \\
			A \arrow{u}{X} \arrow{r}[swap]{Z} & C
		\end{tikzcd}
	\]
	are respectively a right Kan lifting and a right Kan extension in $\bicatM$. Furthermore, $\bicatM$ being closed is equivalent to the existence of all right Kan liftings and right Kan extensions in $\bicatM$.
\end{proposition}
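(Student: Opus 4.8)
The plan is to recognize that the universal properties of right Kan liftings and right Kan extensions are, verbatim, the universal-arrow characterization of the counits of the two adjunctions in the hypothesis. First I would fix an object $A$ and treat the post-composition functor $Y\circ\mplaceholder\colon \bicatM(A,B)\to\bicatM(A,C)$ together with its right adjoint $Y_\ddag\mplaceholder$. The counit at $Z$ is a $2$-cell $\varepsilon_Z\colon Y\circ(Y_\ddag Z)\Rightarrow Z$, and the adjunction bijection $\bicatM(A,B)(g,Y_\ddag Z)\cong\bicatM(A,C)(Y\circ g,Z)$ is, by the usual formula for an adjoint transpose, given by $\chi\mapsto \varepsilon_Z\circ(Y\chi)$. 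Comparing this with the unwound definition of a right Kan lifting — a $2$-cell $Y\circ(\Rift_Y Z)\Rightarrow Z$ through which every $2$-cell $Y\circ g\Rightarrow Z$ factors uniquely — shows that $(Y_\ddag Z,\varepsilon_Z)$ is precisely $\Rift_Y Z$.

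I would then run the dual argument for the pre-composition functor $\mplaceholder\circ X\colon\bicatM(B,C)\to\bicatM(A,C)$ with right adjoint $X^\ddag\mplaceholder$: its counit $\varepsilon_Z\colon (X^\ddag Z)\circ X\Rightarrow Z$ realizes the bijection $\bicatM(B,C)(h,X^\ddag Z)\cong\bicatM(A,C)(h\circ X,Z)$ via $\chi\mapsto\varepsilon_Z\circ(\chi X)$, which is exactly the universal property of the right Kan extension $\Ran_X Z$. The only delicate point here is the variance bookkeeping: the post-composition adjoint yields a \emph{lifting} whereas the pre-composition adjoint yields an \emph{extension}, matching the two triangles displayed in the statement.

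For the final equivalence, the forward implication is immediate: if $\bicatM$ is closed then both composition functors have right adjoints by hypothesis, so the first part produces all right Kan liftings $\Rift_Y Z\cong Y_\ddag Z$ and right Kan extensions $\Ran_X Z\cong X^\ddag Z$. For the converse I would appeal to the standard fact that a functor admits a right adjoint precisely when every object of its codomain possesses a universal arrow, the resulting right adjoint then being automatically functorial with a natural comparison bijection. Hence, if all right Kan liftings exist, then for each $Z$ the pair $(\Rift_Y Z,\varepsilon)$ is a universal arrow from $Y\circ\mplaceholder$ to $Z$, so $Y\circ\mplaceholder$ has a right adjoint; symmetrically, existence of all right Kan extensions makes every $\mplaceholder\circ X$ have a right adjoint, and therefore $\bicatM$ is closed.

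The main — and essentially only — obstacle is bookkeeping rather than mathematics: one must keep straight which of the four variances produces each notion, so that post-composition lands on liftings and pre-composition on extensions, and one must write the adjoint transpose with the counit on the correct side. The genuinely content-bearing inputs, namely that counits are universal arrows and that pointwise universal arrows assemble into an adjoint, are entirely standard once the definitions are unwound, so beyond verifying these matchings there is nothing deep to establish.
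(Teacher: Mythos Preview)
Your proposal is correct and is precisely the detailed unwinding that the paper omits: the paper's own proof reads, in its entirety, ``This follows from the definitions.'' Your argument supplies exactly that unfolding, matching the counit-as-universal-arrow characterization to the Kan lifting/extension universal property and invoking the standard assembly of pointwise universal arrows into an adjoint for the converse.
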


\begin{proof}
	This follows from the definitions.
\end{proof}

\begin{example}[The closed bicategory of profunctors]
	For small categories $\catA$ and $\catB$, a \emph{profunctor} $X\colon \catA\slashedrightarrow\catB$ is simply an ordinary functor $X\colon \catB^\op\times\catA \to \Set$. For profunctors $X\colon \catA\slashedrightarrow\catB$ and $Y\colon \catB \slashedrightarrow \catC$, their ``composition'' $Y\odot X$ is given by the coend
	\[ (Y\odot X)(c,a) = \int^{b\in\catB} Y(c,b)\times X(b,a). \]
	This composition makes profunctors into a bicategory $\Prof$. The Hom set functor $\Hom_\catA(\mplaceholder,\mplaceholder)\colon \catA^\op\times\catA\to \Set$ regarded as a profunctor $I_\catA\colon \catA\slashedrightarrow\catA$ performs the identity morphism of $\catA$ in $\Prof$.

	Furthermore, the bicategory $\Prof$ is closed. For profunctors $X\colon \catA\slashedrightarrow\catB$, $Y\colon \catB\slashedrightarrow\catC$, and $Z\colon \catA\slashedrightarrow\catC$, the profunctor $X^\ddag Z\colon \catB\slashedrightarrow\catC$ is defined by
	\[ X^\ddag Z(C,B) = \Fun(\catA,\Set)(X(B,\mplaceholder), Z(C,\mplaceholder)) = \int_{A\in\catA} {\Hom}\big(X(B,A),Z(C,A)\big), \]
	and the profunctor $Y_\ddag Z\colon \catA\slashedrightarrow\catB$ is defined by
	\[ Y_\ddag Z (B,A) = \Fun(\catC^\op,\Set)(Y(\mplaceholder,B),Z(\mplaceholder,A)) = \int_{C\in\catC} {\Hom}\big(Y(C,B),Z(C,A)\big). \]
	Then there are natural bijections
	\begin{align*}
		\Prof(\catA,\catC)(Y\odot X, Z) & \cong \Prof(\catB,\catC)(Y,X^\ddag Z), \\
		\Prof(\catA,\catC)(Y\odot X,Z)  & \cong \Prof(\catA,\catB)(X,Y_\ddag Z).
	\end{align*}
\end{example}

\begin{proposition}
	Let $\bicatM$ be a closed bicategory. Then for a morphism $X\colon A\to B$, we have
	\begin{align*}
		\text{$X$ is a left adjoint}  & \iff \text{$X$ commutes with all right Kan liftings,}   \\
		\text{$X$ is a right adjoint} & \iff \text{$X$ commutes with all right Kan extensions.}
	\end{align*}
\end{proposition}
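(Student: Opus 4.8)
The plan is to establish the first equivalence carefully and then deduce the second one formally by passing to the opposite bicategory. For the first equivalence, one direction is already available: that a left adjoint commutes with all right Kan liftings is precisely \cref{prop:adjoint_respects_Kan_extension/lifting}. So the substance lies entirely in the converse, and the idea is to use closedness to manufacture a canonical candidate for the right adjoint of $X$ and then let \cref{prop:adjunction_as_Kan_extension/lifting} recognize the adjunction for us.

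Concretely, for the converse I would argue as follows. Since $\bicatM$ is closed, \cref{prop:closed_bicat_has_Ran_and_Rift} guarantees the existence of all right Kan liftings; in particular the right Kan lifting $g \coloneqq \Rift_X \id_B \colon B \to A$ exists, equipped with its defining $2$-cell $\varepsilon \colon X \circ g \Rightarrow \id_B$. By hypothesis $X$ commutes with every right Kan lifting, so in particular it commutes with this distinguished one. This is exactly condition (iii) of \cref{prop:adjunction_as_Kan_extension/lifting}~(2), taking $f = X$: the pair $(g,\varepsilon)$ is the right Kan lifting $\Rift_X \id_B$ and $X$ commutes with it. The equivalence (iii)~$\Leftrightarrow$~(i) of that proposition then says that $\varepsilon$ is the counit of an adjunction $X \dashv g$, whence $X$ is a left adjoint.

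For the second equivalence I would invoke duality rather than repeat the argument. Passing to $\bicatM^{\op}$, one observes that closedness is preserved under reversing $1$-cells, that a morphism is a right adjoint in $\bicatM$ if and only if it is a left adjoint in $\bicatM^{\op}$, and that a right Kan extension in $\bicatM$ is the same thing as a right Kan lifting in $\bicatM^{\op}$ (both being a left Kan extension in $\bicatM^{\coop}$, per the conventions fixed after the definition of Kan extensions). Applying the already-proved first equivalence in $\bicatM^{\op}$ and translating back through this dictionary yields precisely the statement that $X$ is a right adjoint in $\bicatM$ iff it commutes with all right Kan extensions.

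I do not anticipate a genuine obstacle in the main argument, which is a short deduction from the quoted results; the only place demanding care is the duality bookkeeping in the last step, where one must confirm that $(-)^{\op}$ interchanges ``left adjoint'' with ``right adjoint'' \emph{and} simultaneously ``right Kan lifting'' with ``right Kan extension,'' so that the two clauses are matched up correctly rather than being crossed with the wrong variance. Once that correspondence is verified, the second equivalence follows with no further work.
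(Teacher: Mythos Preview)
Your proposal is correct and follows essentially the same approach as the paper: the paper's proof is just a terser version of yours, citing \cref{prop:adjoint_respects_Kan_extension/lifting} for the forward direction, \cref{prop:adjunction_as_Kan_extension/lifting} (together with closedness) for the converse, and dismissing the second equivalence as ``dual.'' You have correctly unpacked what that terse argument amounts to, including the duality bookkeeping.
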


\begin{proof}
	We show the first statement; the second is dual. The ($\Rightarrow$) direction follows from \cref{prop:adjoint_respects_Kan_extension/lifting}. For ($\Leftarrow$), since $\bicatM$ is closed, the statement follows from \cref{prop:adjunction_as_Kan_extension/lifting}.
\end{proof}

We also recall the notion of morphisms between bicategories.

\begin{definition}\label{def:proarrow_equipment}
	Let $\bicatK,\bicatL$ be bicategories. A \emph{lax functor} $\Phi\colon \catK\to \catL$ consists of
	\begin{itemize}
		\item a map $\Phi\colon \obj(\bicatK)\to \obj(\bicatL)$,
		\item for each pair $A,B\in \bicatK$ of objects, a functor $\Phi=\Phi_{AB}\colon \bicatK(A,B)\to\bicatL(\Phi(A),\Phi(B))$,
		\item for each object $A\in\bicatK$, a $2$-cell $\epsilon^A\colon \id_{\Phi(A)} \Rightarrow \Phi(\id_A)$, and
		\item for each triple $A,B,C\in \bicatK$ of objects, a $2$-cell $\mu^{A,B,C}_{g,f} \colon \Phi(g)\circ \Phi(f) \Rightarrow \Phi(g\circ f)$ natural in $f\in \bicatK(A,B)$ and $g\in \catK(B,C)$
	\end{itemize}
	such that these data satisfy the associativity and unity axioms (see \cite[Definition 4.1.2]{Johnson-Yau:2021} for the precise definition). We call $\epsilon^A$ the \emph{lax unity constraint} and $\mu^{A,B,C}_{g,f}$ the \emph{lax functoriality constraint}.
	
	A lax functor $\Phi\colon \catK\to \catL$ is called \emph{normal} (or \emph{unitary}) if all $\epsilon^A$ are invertible, and a \emph{pseudo-functor} if all $\epsilon^A$ and $\mu^{A,B,C}_{g,f}$ are invertible.
\end{definition}

It is immediate to see that any pseudo-functor preserves equivalences and adjunctions, but this does not hold for a lax functor in general.

\subsection{Proarrow equipments}\label{subsection:proarrow_equipments}

Let $\bicatK,\bicatM$ be bicategories.

\begin{definition}[{\cite{Wood:1982proarrow1}, \cite{Wood:1985proarrow2}}]
	A pseudo-functor $\equip\colon \bicatK\to\bicatM$ is called a \emph{proarrow equipment} if it satisfies:
	\begin{enumerate}
		\item $\equip$ is bijective on objects,
		\item $\equip$ is locally fully faithful, and
		\item for any $1$-morphism $f$ in $\bicatK$, $f_\ast$ has a right adjoint $f^*$ in $\bicatM$.
	\end{enumerate}
\end{definition}

Note that in recent years, an equipment often refers to a special double category (\cite{Shulman:2008framed}, \cite{Cruttwell-Shulman:2010}).

Since a proarrow equipment $\equip$ is bijective on objects, we henceforth identify the objects of $\bicatK$ and $\bicatM$, so $\obj(\bicatK)=\obj(\bicatM)$.

For a proarrow equipment $\equip\colon \bicatK\to\bicatM$, given a morphism $f\colon A \to B$ in $\bicatK$, there exists an adjunction $f_\ast \dashv f^*$ in $\bicatM$. We write the unit of this adjunction by $\overline{f}\colon \id_A \Rightarrow f^*\circ f_\ast$. Since the unit is a left Kan extension, for any $2$-morphism $\tau \colon f\Rightarrow g \colon A \to B$ in $\bicatK$, one obtains a unique $2$-morphism $\tau^*\colon g^*\Rightarrow f^* \colon B \to A$ in $\bicatM$ such that
\[
	\begin{tikzcd}[column sep=large]
		& B \arrow{d}{f^*} \\
		A \arrow{ru}[swap,pos=0.25,outer sep=-2pt]{f_*}[name=A1,above,pos=0.4]{}
		\arrow[shift left=1.5ex,bend left=30]{ru}[pos=0.25]{g_*}[name=B1,below,pos=0.45]{}
		\arrow[Rightarrow,from=A1,to=B1,shorten <=-2pt,shorten >=-2pt,"\tau_*",swap,pos=0.3]
		\arrow{r}[swap]{\id_A}
		& A \arrow[Rightarrow,u,shift left=3.3ex,shorten <=-0.8ex,shorten >=2ex,"\overline{f}",swap,pos=0.2]
	\end{tikzcd}
	\qquad=\qquad
	\begin{tikzcd}[column sep=large]
		B \arrow{rd}[swap,pos=0.75,outer sep=-2pt]{g^*}[name=A2,above,pos=0.6]{}
		\arrow[shift left=1.5ex,bend left=30]{rd}[pos=0.75]{f^*}[name=B2,below,pos=0.55]{}
		\arrow[Rightarrow,from=A2,to=B2,dotted,shorten <=-2pt,shorten >=-2pt,"\tau^*",pos=0.3] & \\
		A \arrow{u}{g_*} \arrow{r}[swap]{\id_A}
		\arrow[Rightarrow,u,shift right=3.3ex,shorten <=-0.8ex,shorten >=2ex,"\overline{g}",pos=0.2,outer sep=1pt] & A\rlap{.}
	\end{tikzcd}
\]
This correspondence defines a pseudo-functor
\[ (\mplaceholder)^* \colon \bicatK^\coop \to \bicatM \]
which is locally fully faithful since $\equip$ is.

A morphism $X$ in $\bicatM$ is called \emph{representable} if it can be written as $X\cong f_*$ for some morphism $f$ in $\bicatK$, and \emph{corepresentable} if it can be written as $X\cong f^*$.

\begin{example}
	The following are typical examples of proarrow equipments.
	\begin{enumerate}
		\item Let $\Cat$ be the $2$-category of small categories and $\Prof$ the bicategory of profunctors. For a functor $F\colon \catA \to \catB$, we define the profunctor $F_*\colon \catA\slashedrightarrow\catB$ by $F_*=\Hom_\catB(\mplaceholder,F(\mplaceholder))$. It is known that $F_*$ has a right adjoint $F^*=\Hom_\catB(F(\mplaceholder),\mplaceholder)\colon \catB\slashedrightarrow\catA$ in $\Prof$ (see \cite[Proposition 7.9.1]{Borceux:1994HoCA1} or \cite[Remark 5.2.1]{Loregian:2021coend_calculus} for example). Hence the mapping $F \mapsto F_*$ yields a proarrow equipment
		      \[ \equip\colon\Cat\to\Prof. \]

		\item More generally, if $\moncatV$ is a cosmos, then $\moncatV$-enriched profunctors $X\colon \catA\slashedrightarrow\catB$ between $\moncatV$-enriched small categories are defined similarly, and there exists a proarrow equipment
		      \[ \equip\colon\VCat\to\VProf,\quad F \mapsto \catB(\mplaceholder,F(\mplaceholder)). \]
	\end{enumerate}
\end{example}

\begin{example}
	(Other examples)
	\begin{enumerate}
		\item Let $\catC$ be a category with pullbacks, and let $\Span(\catC)$ denote the bicategory of spans in $\catC$. Then the assignment $(f\colon c\to d)\mapsto (c\xleftarrow{\id_c}c \xrightarrow{f}d)$ defines a proarrow equipment
		      \[ \equip\colon\catC\to\Span(\catC). \]
		      %% \item equipment of relation

		\item If $\catS$ is a finitely complete category, there exists a proarrow equipment
		      \[ \equip\colon \Cat(\catS) \to \Prof(\catS) \]
		      relating $\catS$-internal categories and $\catS$-internal profunctors.

		\item Let $\catname{TopGeom}$ denote the $2$-category of elementary toposes and geometric morphisms (with morphisms reversed from the direction of left adjoints). Let $\catname{TopLex}$ denote the $2$-category of elementary toposes and left exact functors. Then there exists a proarrow equipment
		      \[ \equip\colon \catname{TopGeom}^\op \to \catname{TopLex}^\co \]
		      given by taking left adjoints of geometric morphisms.

		\item Similarly, let $\catname{AbelGeom}$ denote the $2$-category of Abelian categories and geometric morphisms, and $\catname{AbelLex}$ the $2$-category of Abelian categories and left exact functors. Then there exists a proarrow equipment
		      \[ \equip\colon \catname{AbelGeom}^\op \to \catname{AbelLex}^\co \]
		      given by taking left adjoints of geometric morphisms.
	\end{enumerate}
\end{example}

\begin{proposition}[Yoneda {\cite[Proposition 3]{Wood:1982proarrow1}}]\label{prop:Yoneda_lemma_in_a_proarrow_equipment}
	Let $\equip\colon \bicatK\to\bicatM$ be a proarrow equipment.
	\begin{enumerate}
		\item For morphisms $f\colon B\to C$ in $\bicatK$ and $Z\colon A\to C$ in $\bicatM$, we have $\Rift_{f_*} Z \cong f^*\circ Z$.
		\item For morphisms $f\colon B\to A$ in $\bicatK$ and $Z\colon A\to C$ in $\bicatM$, we have $\Ran_{f^*} Z \cong Z\circ f_*$.
	\end{enumerate}
\end{proposition}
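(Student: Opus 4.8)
The plan is to derive both isomorphisms directly from the defining adjunction of the proarrow equipment together with \cref{prop:Kan_extension/lifting_along_adjoint}, which already computes right Kan liftings along left adjoints and right Kan extensions along right adjoints. Recall that for a proarrow equipment $\equip\colon\bicatK\to\bicatM$ and a morphism $f$ in $\bicatK$, the representable $f_*$ comes equipped with a right adjoint $f^*$, so that $f_*\dashv f^*$ is an adjunction in $\bicatM$. This single adjunction supplies everything that is needed, and no further structure of the equipment is invoked.

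For part (1), I would feed the adjunction $f_*\dashv f^*$ into \cref{prop:Kan_extension/lifting_along_adjoint}~(1). There $f_*\colon B\to C$ plays the role of the left adjoint (with right adjoint $f^*\colon C\to B$), and $Z\colon A\to C$ plays the role of the morphism being lifted. Since the domains and codomains match---both $\Rift_{f_*}Z$ and $f^*\circ Z$ are morphisms $A\to B$---the proposition immediately yields $\Rift_{f_*}Z\cong f^*\circ Z$, and moreover exhibits this lifting as absolute.

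For part (2), I would instead invoke \cref{prop:Kan_extension/lifting_along_adjoint}~(2), applied to the right adjoint $f^*\colon A\to B$, whose left adjoint is $f_*$. With $Z\colon A\to C$ as the morphism being extended, the proposition computes a right Kan extension along a right adjoint as a pre-composite with the left adjoint, namely $\Ran_{f^*}Z\cong Z\circ f_*$, again absolute. One checks that both sides are morphisms $B\to C$, so the orientations are consistent.

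The only point requiring care---and it is bookkeeping rather than a genuine obstacle---is keeping straight the variance conventions: in the equipment $f_*$ and $f^*$ point in opposite directions, so in each case one must identify correctly which of $f_*,f^*$ is the left and which is the right adjoint, and confirm that the composite on the right-hand side has the intended source and target before citing \cref{prop:Kan_extension/lifting_along_adjoint}. Beyond this matching of data the statement is essentially immediate, and it is really just the specialization of \cref{prop:Kan_extension/lifting_along_adjoint} to the representable adjunctions $f_*\dashv f^*$.
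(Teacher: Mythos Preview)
Your proof is correct and takes essentially the same approach as the paper, which simply says the result is immediate from \cref{prop:Kan_extension/lifting_along_adjoint}. Your write-up merely unpacks this citation by making explicit that one applies the adjunction $f_*\dashv f^*$ supplied by the equipment.
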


\begin{proof}
	It is immediate from \cref{prop:Kan_extension/lifting_along_adjoint}.
\end{proof}

\begin{corollary}
	Let $\equip\colon \bicatK\to\bicatM$ be a proarrow equipment. For morphisms $f\colon B\to C, g\colon A\to C$ in $\bicatK$, we have $\Rift_{f_*} g_* \cong f^*\circ g_* \cong \Ran_{g^*} f^*$.
\end{corollary}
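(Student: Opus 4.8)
The plan is to derive both isomorphisms directly from the preceding Yoneda proposition (\cref{prop:Yoneda_lemma_in_a_proarrow_equipment}), so that no fresh computation is required; the entire task reduces to matching the sources and targets of the morphisms to the two halves of that proposition. First I would record that, since $f\colon B\to C$ and $g\colon A\to C$ lie in $\bicatK$, we have representables $f_*\colon B\to C$ and $g_*\colon A\to C$ in $\bicatM$, with right adjoints $f^*\colon C\to B$ and $g^*\colon C\to A$. This fixes the shapes of all the composites and Kan constructions in the statement.

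Next I would obtain the left-hand isomorphism by applying part (1) of \cref{prop:Yoneda_lemma_in_a_proarrow_equipment} with $Z\coloneqq g_*\colon A\to C$ and the given $f\colon B\to C$: this yields $\Rift_{f_*} g_* \cong f^*\circ g_*$. The dimensions are consistent, since $f^*\circ g_*\colon A\to B$ is exactly the expected target of a right Kan lifting of $g_*$ along $f_*$. For the right-hand isomorphism I would invoke part (2) of the same proposition, taking care which $\bicatK$-morphism is substituted. Relabelling part (2) to avoid a clash, it reads: for $h\colon B'\to A'$ in $\bicatK$ and $Z\colon A'\to C'$ in $\bicatM$, one has $\Ran_{h^*} Z\cong Z\circ h_*$. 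Setting $h\coloneqq g$ (so that $A'=C$ and $h^*=g^*\colon C\to A$) and $Z\coloneqq f^*\colon C\to B$ gives $\Ran_{g^*} f^*\cong f^*\circ g_*$. Concatenating the two displays produces the chain $\Rift_{f_*} g_* \cong f^*\circ g_* \cong \Ran_{g^*} f^*$.

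The only point demanding attention — rather than a genuine obstacle — is this direction-tracking: one must confirm that $f^*\colon C\to B$ and $g^*\colon C\to A$ have the correct domains and codomains for $f^*\circ g_*$ and for the lifting and extension to be defined, and in particular that part (2) is applied with $g$ (not $f$) in the role of the $\bicatK$-morphism, since it is the extension \emph{along} $g^*$ that is wanted. Once this bookkeeping is settled, both isomorphisms are immediate, and the corollary simply records the symmetry between the lifting $\Rift_{f_*} g_*$ and the extension $\Ran_{g^*} f^*$ by exhibiting their common value $f^*\circ g_*$.
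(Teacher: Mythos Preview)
Your proposal is correct and matches the paper's approach exactly: the corollary is stated without proof, as both isomorphisms are immediate specializations of the two parts of \cref{prop:Yoneda_lemma_in_a_proarrow_equipment}, precisely as you have spelled out. The direction-tracking you carry out is accurate, and nothing further is needed.
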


\begin{comment}
Let us apply \cref{prop:Yoneda_lemma_in_a_proarrow_equipment} to the proarrow equipment $\equip\colon\Cat\to\Prof$ of profunctors.

\mymemo{？}
\end{comment}

\subsection{The notion of (co)limits in a proarrow equipment}\label{subsection:limits_in_a_proarrow_equipment}

Let $\equip\colon \bicatK\to\bicatM$ be a proarrow equipment.

\begin{definition}[{\cite[\S2]{Wood:1982proarrow1}}]\label{def:weighted_colimits}
	For morphisms $f\colon J\to A$ in $\bicatK$ and $W\colon M\to J$ in $\bicatM$, the \emph{$W$-weighted colimit} of $f$ is a morphism $\colim^W f = W\star f\colon M \to A$ in $\bicatK$ together with a right Kan lifting
	\[
		\begin{tikzcd}
			& M \arrow{d}{W}
			\arrow[Rightarrow,d,"\iota",pos=0.7,shift right=3.4ex,shorten <=1.7ex,shorten >=-0.3ex] \\
			A \arrow[bend left=15]{ru}{(W\star f)^*} \arrow{r}[swap]{f^*} & J
		\end{tikzcd}
	\]
	in $\bicatM$. If the $W$-weighted colimit of $f$ exists, then $(W\star f)^*=\Rift_{W} f^*$. In other words, $W\star f$ exists if and only if the right Kan lift $\Rift_{W} f^*$ exists and is corepresentable.

	Dually, for morphisms $f\colon J\to A$ in $\bicatK$ and $V\colon J\to M$ in $\bicatM$, the \emph{$V$-weighted limit} of $f$ is a morphism $\lim^V f = \{V,f\}\colon M \to A$ in $\bicatK$ together with a right Kan extension
	\[
		\begin{tikzcd}
			M \arrow[bend left=15]{rd}{\{V,f\}_*}
			\arrow[Rightarrow,shift left=3.4ex,shorten <=1.7ex,shorten >=-0.3ex,"\pi",swap,pos=0.7]{d} & \\
			J \arrow{u}{V} \arrow{r}[swap]{f_*} & A
		\end{tikzcd}
	\]
	in $\bicatM$. If the $V$-weighted limit of $f$ exists, then $\{V,f\}_*=\Ran_{V} f_*$. In other words, $\{V,f\}$ exists if and only if the right Kan extension $\Ran_{V} f_*$ exists and is representable.
\end{definition}

\begin{example}
	Consider the proarrow equipment $\VCat\to\VProf$ of enriched categories. Let $M=\catI$ be the unit $\catV$-category.
	For a $\catV$-functor $F\colon \catJ \to \catA$ and a $\catV$-profunctor $W\colon \catI\slashedrightarrow\catJ$, the right Kan lifting $\Rift_W F^*\colon\catA\slashedrightarrow\catI$ is given by
	\[ \Rift_W F^*(\ast,A) = \Fun(\catJ^\op,\catV)(W(\mplaceholder,\ast),\catA(F\mplaceholder,A)). \]
	Hence the $W$-weighted colimit of $F$ in the sense of \cref{def:weighted_colimits} is precisely the enriched colimit of $F$ weighted by the presheaf $W\colon \catJ^\op\cong \catJ^\op\otimes\catI\to\catV$ in the sense of \cite[\S3.1]{Kelly:1982Basic}.
\end{example}

\begin{proposition}[{\cite[Proposition 6]{Wood:1982proarrow1}}]
	For morphisms $f\colon J \to A$ in $\bicatK$ and $V\colon N \to M$, $W\colon M \to A$ in $\bicatM$, if the $W$-weighted colimit $W\star f$ exists, then we have an isomorphism
	\[ V\star (W\star f) \cong (W\circ V)\star f, \]
	either side existing if the other does.

	Dually, for suitable $U,W,f$, if the $W$-weighted limit $\{W,f\}$ exists, then we have an isomorphism
	\[ \{U,\{W,f\}\} \cong \{U\circ W,f\}, \]
	either side existing if the other does.
\end{proposition}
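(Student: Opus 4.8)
The plan is to prove the statement for colimits; the limit statement is its formal dual (right Kan extensions in place of right Kan liftings, representability in place of corepresentability) and follows by the same argument, using the recalled composition law $\Lan_l\Lan_k f\cong\Lan_{lk}f$ in the guise appropriate to $\Ran$. Throughout I read the weights with domains and codomains matched to \cref{def:weighted_colimits}, so $f\colon J\to A$ in $\bicatK$, $V\colon N\to M$ and $W\colon M\to J$ in $\bicatM$, whence $W\circ V\colon N\to J$. First I would pass to costars in $\bicatM$. By \cref{def:weighted_colimits}, a colimit $W\star f$ exists exactly when the right Kan lifting $\Rift_W f^\ast$ exists and is corepresentable, and then $(W\star f)^\ast\cong \Rift_W f^\ast$. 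Writing $g\coloneqq W\star f$, the two sides of the desired isomorphism thus have costars
\[ (V\star g)^\ast\cong \Rift_V g^\ast\cong \Rift_V(\Rift_W f^\ast), \qquad ((W\circ V)\star f)^\ast\cong \Rift_{W\circ V} f^\ast, \]
so the whole proposition reduces to an identity for iterated right Kan liftings in $\bicatM$.

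The key step is therefore the lemma that, for $Z\colon A\to J$ in $\bicatM$ for which $\Rift_W Z$ exists, the lifting $\Rift_V(\Rift_W Z)$ exists if and only if $\Rift_{W\circ V}Z$ does, and the two are then canonically isomorphic. This is precisely the dual, taken in $\bicatM^\coop$, of the earlier proposition $\Lan_l\Lan_k f\cong\Lan_{lk}f$ on composites of Kan extensions; alternatively I would re-derive it directly from the defining universal property via the chain of bijections
\[ \bicatM(A,N)(X,\Rift_V\Rift_W Z)\cong \bicatM(A,M)(V\circ X,\Rift_W Z)\cong \bicatM(A,J)((W\circ V)\circ X, Z)\cong \bicatM(A,N)(X,\Rift_{W\circ V}Z), \]
natural in $X\colon A\to N$, the middle isomorphism using the universal property of $\Rift_W Z$ and the associativity coherence $W\circ(V\circ X)\cong(W\circ V)\circ X$ of $\bicatM$. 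By the bicategorical Yoneda lemma this shows that each of the two representing objects exists as soon as the other does and that they agree. Applying the lemma with $Z\coloneqq f^\ast$ — legitimate because the hypothesis that $W\star f$ exists guarantees $\Rift_W f^\ast$ exists — yields $\Rift_V(\Rift_W f^\ast)\cong \Rift_{W\circ V}f^\ast$.

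To finish I would reassemble the costars. Corepresentability of a $1$-cell is invariant under isomorphism, so $\Rift_V(\Rift_W f^\ast)$ is corepresentable exactly when $\Rift_{W\circ V}f^\ast$ is; together with the ``exists iff'' clause of the lemma this gives that $V\star(W\star f)$ exists iff $(W\circ V)\star f$ exists. In that case their costars are isomorphic in $\bicatM$, and since the pseudo-functor $(\mplaceholder)^\ast\colon \bicatK^\coop\to\bicatM$ is locally fully faithful (because $\equip$ is), an invertible $2$-cell between $(V\star(W\star f))^\ast$ and $((W\circ V)\star f)^\ast$ is the image of a unique invertible $2$-cell in $\bicatK$, yielding the asserted isomorphism $V\star(W\star f)\cong (W\circ V)\star f$.

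I expect the only real obstacle to be bookkeeping rather than mathematics: keeping the existence and corepresentability conditions synchronized across the ``either side existing if the other does'' clause, and handling the variance and associativity coherence isomorphisms of $\bicatM$ that I have suppressed in the displays above. Everything else is a formal consequence of the universal properties packaged in \cref{def:weighted_colimits} and of the local full faithfulness built into the notion of a proarrow equipment, so no genuinely new input beyond the iterated-lifting lemma is needed.
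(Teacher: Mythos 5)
Your proof is correct and follows essentially the same route as the paper: reduce to costars via $(W\star f)^\ast\cong\Rift_W f^\ast$, apply the iterated-lifting identity $\Rift_V(\Rift_W f^\ast)\cong\Rift_{W\circ V}f^\ast$, and descend along the locally fully faithful $\coequip$. The only difference is that you spell out the justification of the iterated-lifting lemma and the existence bookkeeping, which the paper leaves implicit.
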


\begin{proof}
	We prove the first half. In general, $\Rift_V (\Rift_W f^*) \cong \Rift_{W\circ V} f^*$	so we have
	\[ (V\star (W\star f))^* \cong \Rift_V (W\star f)^* \cong  \Rift_V (\Rift_W f^*) \cong \Rift_{W\circ V} f^* \cong ((W\circ V)\star f)^* \]
	and since $\coequip$ is locally fully faithful, we get $V\star (W\star f) \cong (W\circ V)\star f$.
\end{proof}

\begin{proposition}[{\cite[Proposition 7]{Wood:1982proarrow1}}]\label{prop:colimit_weighted_by_corepresentables}
	For morphisms $f\colon J\to A$, $w\colon M\to J$ in $\bicatK$, we have
	\[ w_* \star f \cong f\circ w \cong \{ w^*,f \}. \]
\end{proposition}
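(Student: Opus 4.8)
The plan is to read off both isomorphisms from the Yoneda lemma for proarrow equipments (\cref{prop:Yoneda_lemma_in_a_proarrow_equipment}) together with the (pseudo)functoriality of $\equip$ and $\coequip$, so that no genuine computation is needed and existence comes for free. First I would handle the colimit side. By \cref{def:weighted_colimits}, the weighted colimit $w_*\star f$ is, when it exists, the morphism of $\bicatK$ corepresenting the right Kan lifting $\Rift_{w_*} f^*$ in $\bicatM$; that is, $(w_*\star f)^*=\Rift_{w_*}f^*$. Applying \cref{prop:Yoneda_lemma_in_a_proarrow_equipment}(1) with the $\bicatK$-morphism $w\colon M\to J$ in the role of ``$f$'' and with $Z=f^*\colon A\to J$ in $\bicatM$ gives $\Rift_{w_*}f^*\cong w^*\circ f^*$. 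Since $\coequip\colon \bicatK^\coop\to\bicatM$ is a pseudo-functor (contravariant on $1$-cells), it sends $f\circ w$ to $w^*\circ f^*$, whence $w^*\circ f^*\cong (f\circ w)^*$. This shows that $\Rift_{w_*}f^*$ exists and is corepresented by $f\circ w$, so $w_*\star f$ exists; and because $\coequip$ is locally fully faithful and hence reflects isomorphisms, the isomorphism $(w_*\star f)^*\cong (f\circ w)^*$ descends to $w_*\star f\cong f\circ w$ in $\bicatK$.

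The limit side is entirely dual, using the covariant pseudo-functor $\equip$ in place of $\coequip$. By \cref{def:weighted_colimits}, the weighted limit $\{w^*,f\}$ corresponds to the right Kan extension $\Ran_{w^*}f_*$, i.e.\ $\{w^*,f\}_*=\Ran_{w^*}f_*$. Applying \cref{prop:Yoneda_lemma_in_a_proarrow_equipment}(2) with $w\colon M\to J$ in the role of ``$f$'' and $Z=f_*\colon J\to A$ yields $\Ran_{w^*}f_*\cong f_*\circ w_*$. As $\equip=(\mplaceholder)_*$ is a pseudo-functor, $f_*\circ w_*\cong (f\circ w)_*$, so the Kan extension is represented by $f\circ w$; local full faithfulness of $\equip$ then gives $\{w^*,f\}\cong f\circ w$. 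Chaining the two conclusions produces $w_*\star f\cong f\circ w\cong\{w^*,f\}$, as claimed.

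The only point that requires care---and the one I would double-check before writing out the details---is the bookkeeping of variances and of domains/codomains. Concretely: $\coequip$ reverses the order of composition while $\equip$ preserves it, and when specializing the Yoneda lemma one must take $w$ (not $f$) as the ``$\bicatK$-morphism'' and $f^*$ (respectively $f_*$) as the ambient $\bicatM$-morphism $Z$, checking that all sources and targets line up. There is no analytic difficulty here: the requisite Kan lift and Kan extension exist automatically, being honest composites in $\bicatM$, and their (co)representability is precisely the content of the functoriality isomorphisms above.
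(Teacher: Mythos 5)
Your argument is correct and is essentially the same as the paper's own proof: both derive $(w_*\star f)^*=\Rift_{w_*}f^*\cong w^*\circ f^*\cong(f\circ w)^*$ from \cref{prop:Yoneda_lemma_in_a_proarrow_equipment} and the pseudo-functoriality of $\coequip$, then conclude by local full faithfulness, with the limit case handled dually. No gaps; your variance bookkeeping is right.
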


\begin{proof}
	By \cref{prop:Yoneda_lemma_in_a_proarrow_equipment}, we have
	\[ (f\circ w)^* \cong w^*\circ f^* \cong \Rift_{w_*} f^* = (w_* \star f)^*, \]
	which implies $ w_* \star f \cong f\circ w$. The other isomorphism is proved similarly.
\end{proof}

\begin{definition}
	A morphism $g\colon A\to B$ in $\bicatK$ is said to \emph{preserve} a weighted colimit $W\star f$ if $g^*$ commutes with the right Kan lifting $(W\star f)^*=\Rift_{W} f^*$.

	Similarly, $g\colon A\to B$ is said to \emph{preserve} a weighted limit $\{V,f\}$ if $g_*$ commutes with the right Kan extension $\{V,f\}_*=\Ran_{V} f_*$.
\end{definition}

\begin{proposition}[{\cite[Proposition 8]{Wood:1982proarrow1}}]\label{prop:LAPC_in_a_proarrow_equipment}
	Left adjoints preserve all weighted colimits. Dually, right adjoints preserve all weighted limits.
\end{proposition}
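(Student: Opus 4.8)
The plan is to reduce the statement to \cref{prop:adjoint_respects_Kan_extension/lifting}, which asserts that left adjoints in a bicategory commute with all right Kan liftings. Recall that, by definition, a morphism $g\colon A\to B$ in $\bicatK$ preserves the weighted colimit $W\star f$ precisely when $g^*$ commutes with the right Kan lifting $(W\star f)^*=\Rift_W f^*$ in $\bicatM$. So it suffices to show that whenever $g$ is a left adjoint in $\bicatK$, the comparison $1$-cell $g^*$ is itself a left adjoint in $\bicatM$; the conclusion will then follow immediately from \cref{prop:adjoint_respects_Kan_extension/lifting}, applied to an arbitrary weight $W$ and diagram $f$.

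The key step is the translation of adjointness across the equipment. Suppose $g\dashv h$ in $\bicatK$. Since $\equip\colon \bicatK\to\bicatM$ is a pseudo-functor, and pseudo-functors preserve adjunctions, we obtain an adjunction $g_*\dashv h_*$ in $\bicatM$. On the other hand, the defining property of a proarrow equipment already gives the adjunction $g_*\dashv g^*$. By uniqueness of right adjoints up to isomorphism, I would conclude $g^*\cong h_*$. In particular $g^*$ is representable, and hence a left adjoint in $\bicatM$, with right adjoint $h^*$ coming from the equipment adjunction $h_*\dashv h^*$. This is the heart of the argument: being a left adjoint upstairs in $\bicatK$ forces $g^*$ to be a left adjoint downstairs in $\bicatM$.

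With this in hand, \cref{prop:adjoint_respects_Kan_extension/lifting} shows that $g^*\cong h_*$ commutes with every right Kan lifting, in particular with $\Rift_W f^* = (W\star f)^*$ for arbitrary $W\colon M\to J$ and $f\colon J\to A$. Thus $g$ preserves $W\star f$ whenever the latter exists, which is exactly the assertion that left adjoints preserve all weighted colimits. The dual statement — that right adjoints preserve all weighted limits — follows by running the same argument in $\bicatK^{\coop}$ and $\bicatM^{\coop}$, or directly by replacing right Kan liftings with right Kan extensions and $g^*$ with $g_*$ throughout. I do not anticipate any genuine obstacle; the only point demanding care is the bookkeeping of which $1$-cell plays the role of left versus right adjoint, so that the isomorphism $g^*\cong h_*$ is arranged to make $g^*$ genuinely acquire a right adjoint in $\bicatM$.
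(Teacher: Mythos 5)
Your proof is correct and follows essentially the same route as the paper: both reduce to \cref{prop:adjoint_respects_Kan_extension/lifting} after observing that $g\dashv h$ in $\bicatK$ forces $g^*$ to be a left adjoint in $\bicatM$. You spell out the intermediate identification $g^*\cong h_*$ (via $g_*\dashv h_*$ and $g_*\dashv g^*$) that the paper compresses into the single assertion $g^*\dashv h^*$; this is a harmless and indeed clarifying elaboration of the same argument.
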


\begin{proof}
	If a morphism $f$ in $\bicatK$ has a right adjoint $u$, then we get the adjunction $f^*\dashv u^*$ in $\bicatM$, so by \cref{prop:adjoint_respects_Kan_extension/lifting}, $f^*$ commutes with all right Kan liftings. Hence $f$ preserves all weighted colimits in particular.
\end{proof}

\begin{proposition}[Formal criterion for representability {\cite[Proposition 9]{Wood:1982proarrow1}}]\label{prop:formal_criterion_for_representability}
	For morphisms $f\colon A\to B$ in $\bicatK$ and $X\colon A\to B$ in $\bicatM$, the following are equivalent:
	\begin{enumerate}[label=\equivitem]
		\item $X\cong f_*$.
		\item The weighted colimit $X\star \id_B$ exists, $X\star \id_B \cong f$, and $X$ commutes with all right Kan liftings.
		\item The weighted colimit $X\star \id_B$ exists, $X\star \id_B \cong f$, and $X$ commutes with the right Kan lifting $(X\star \id_B)^*=\Rift_{X} \id_B^*$.
	\end{enumerate}
\end{proposition}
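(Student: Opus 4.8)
The plan is to establish the cycle (i) $\Rightarrow$ (ii) $\Rightarrow$ (iii) $\Rightarrow$ (i). Here (ii) $\Rightarrow$ (iii) is trivial, since commuting with \emph{all} right Kan liftings in particular entails commuting with the single lifting $\Rift_X \id_B^*$. Throughout I will use the local full faithfulness of the comparison pseudo-functor $(\mplaceholder)^*\colon\bicatK^{\coop}\to\bicatM$ together with the identification $\id_B^*\cong\id_B$, so that $(X\star\id_B)^*=\Rift_X\id_B^*\cong\Rift_X\id_B$; in this way the datum of $X\star\id_B$ is precisely that of the right Kan lifting $\Rift_X\id_B$ together with its corepresentability.

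For (i) $\Rightarrow$ (ii), suppose $X\cong f_*$. Being representable, $X$ is a left adjoint (with right adjoint $f^*$), hence by \cref{prop:adjoint_respects_Kan_extension/lifting} it commutes with all right Kan liftings, which is the last clause of (ii). For the remaining clauses I would compute the relevant lifting directly from the Yoneda result \cref{prop:Yoneda_lemma_in_a_proarrow_equipment}(1): $\Rift_{f_*}\id_B\cong f^*\circ\id_B\cong f^*$, which is corepresentable. Thus $\Rift_X\id_B^*$ is corepresentable, so $X\star\id_B$ exists with $(X\star\id_B)^*\cong f^*$, and local full faithfulness of $(\mplaceholder)^*$ promotes this to $X\star\id_B\cong f$.

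The content lies in (iii) $\Rightarrow$ (i), where the strategy is to exhibit an adjunction $X\dashv\Rift_X\id_B^*$ and then conclude by uniqueness of adjoints. The right Kan lifting $\Rift_X\id_B^*$ carries its defining counit $\varepsilon\colon X\circ\Rift_X\id_B^*\Rightarrow\id_B$, which by construction witnesses $(\Rift_X\id_B^*,\varepsilon)$ as this right Kan lifting, and condition (iii) asserts exactly that $X$ commutes with it. I would then invoke \cref{prop:adjunction_as_Kan_extension/lifting}(2) with $f=X$ and $g=\Rift_X\id_B^*$: the clause ``$(g,\varepsilon)$ is the right Kan lifting $\Rift_f\id_B$ and $f$ commutes with it'' is equivalent to ``$\varepsilon$ is the counit of an adjunction $f\dashv g$'', which yields $X\dashv\Rift_X\id_B^*$. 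Finally, the hypothesis $X\star\id_B\cong f$ gives $\Rift_X\id_B^*=(X\star\id_B)^*\cong f^*$, so $X$ is left adjoint to $f^*$; since $f_*\dashv f^*$ in the equipment, uniqueness of left adjoints forces $X\cong f_*$.

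I do not expect a genuinely hard step: no new construction is required, and the whole argument amounts to correctly instantiating \cref{prop:adjunction_as_Kan_extension/lifting}(2), \cref{prop:Yoneda_lemma_in_a_proarrow_equipment}, and \cref{prop:adjoint_respects_Kan_extension/lifting}. The one point deserving care is the bookkeeping that matches the counit of the Kan-lifting adjunction and the commutation hypothesis (iii) verbatim to the data demanded by \cref{prop:adjunction_as_Kan_extension/lifting}(2) — in particular tracking the identification $\id_B^*\cong\id_B$ and the sources and targets of the $2$-cells involved.
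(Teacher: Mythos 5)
Your proposal is correct and follows essentially the same route as the paper: (i)~$\Rightarrow$~(ii) via the computation $\Rift_{f_*}\id_B\cong f^*$ (which is just the proof of \cref{prop:colimit_weighted_by_corepresentables} unfolded, the lemma the paper cites directly), (ii)~$\Rightarrow$~(iii) trivially, and (iii)~$\Rightarrow$~(i) by identifying $\Rift_X\id_B^*\cong f^*$, extracting the adjunction $X\dashv f^*$ from \cref{prop:adjunction_as_Kan_extension/lifting}, and concluding by uniqueness of left adjoints against $f_*\dashv f^*$. No gaps.
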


\begin{proof}
	(i) $\Rightarrow$ (ii): By \cref{prop:colimit_weighted_by_corepresentables}, we have
	\[ X\star \id_B \cong f_*\star \id_B \cong \id_B \circ f \cong f. \]
	Also, since $X\cong f_*$ has a right adjoint, it commutes with all right Kan liftings.

	(ii) $\Rightarrow$ (iii): Clear.

	(iii) $\Rightarrow$ (i): From $X\star \id_B \cong f$, we have
	\[ f^* \cong (X\star \id_B)^* \cong \Rift_X \id_B^* \cong \Rift_X \id_B. \]
	Since $X$ commutes with this Kan lifting, \cref{prop:adjunction_as_Kan_extension/lifting} shows that there is an adjunction $X\dashv f^*$. As $f_*\dashv f^*$, we get $X\cong f_*$.
\end{proof}

Note that for morphisms $f\colon A \to B$, $u\colon B \to A$ in $\bicatK$, $f\dashv u$ holds in $\bicatK$ if and only if $f^*\cong u_*$ holds in $\bicatM$.

\begin{corollary}[Formal adjoint arrow theorem {\cite[Corollary 10]{Wood:1982proarrow1}}]\label{cor:formal_adjoint_arrow_theorem}
	For morphisms $f\colon A \to B$, $u\colon B \to A$ in $\bicatK$, the following are equivalent:
	\begin{enumerate}[label=\equivitem]
		\item $f\dashv u$ holds in $\bicatK$.
		\item The weighted colimit $f^*\star \id_A$ exists, $f^*\star \id_A \cong u$, and $f$ preserves all weighted colimits.
		\item The weighted colimit $f^*\star \id_A$ exists, $f^*\star \id_A \cong u$, and $f$ preserves the weighted colimit $f^*\star \id_A$.
	\end{enumerate}
\end{corollary}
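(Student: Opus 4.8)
The plan is to deduce the statement directly from the formal criterion for representability (\cref{prop:formal_criterion_for_representability}), using the remark recorded just above the corollary that $f\dashv u$ holds in $\bicatK$ if and only if $f^*\cong u_*$ holds in $\bicatM$. The point is that the corollary is nothing but \cref{prop:formal_criterion_for_representability} read off for a well-chosen pair of data. Concretely, I would apply that proposition not to $f$ and $f_*$, but to the $\bicatK$-morphism $u\colon B\to A$ together with the $\bicatM$-morphism $X\coloneqq f^*\colon B\to A$, i.e.\ with the roles of $A$ and $B$ interchanged. Under this substitution, clause (i) of \cref{prop:formal_criterion_for_representability} becomes $f^*\cong u_*$, which by the remark is exactly clause (i) of the corollary.

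Next I would match clause (iii). Recalling that $\id_A^*\cong\id_A$, and that by definition a morphism $g$ of $\bicatK$ preserves a weighted colimit $W\star h$ precisely when $g^*$ commutes with the right Kan lifting $(W\star h)^*=\Rift_W h^*$, the assertion ``$f$ preserves the weighted colimit $f^*\star\id_A$'' is literally the clause ``$f^*$ commutes with $(f^*\star\id_A)^*=\Rift_{f^*}\id_A^*$''. Hence clause (iii) of the corollary is, term for term, clause (iii) of \cref{prop:formal_criterion_for_representability} for the chosen data, and the proposition yields (i)$\Leftrightarrow$(iii) at once. It then remains only to slot (ii) into the cycle as (i)$\Rightarrow$(ii)$\Rightarrow$(iii). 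For (i)$\Rightarrow$(ii): if $f\dashv u$ then $f$ is a left adjoint in $\bicatK$, so \cref{prop:LAPC_in_a_proarrow_equipment} gives that $f$ preserves all weighted colimits, while $f^*\cong u_*$ combined with \cref{prop:colimit_weighted_by_corepresentables} computes $f^*\star\id_A\cong u_*\star\id_A\cong \id_A\circ u\cong u$, so this colimit exists and is $u$. The implication (ii)$\Rightarrow$(iii) is a trivial specialization.

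The one place to watch — and what I expect to be the main, though quite minor, obstacle — is the bookkeeping around clause (ii): ``$f$ preserves all weighted colimits'' is formally weaker than ``$f^*$ commutes with all right Kan liftings'' as it appears in \cref{prop:formal_criterion_for_representability}(ii), since the former constrains only liftings along corepresentable weights $h^*$. Arranging the proof as the loop (i)$\Rightarrow$(ii)$\Rightarrow$(iii)$\Rightarrow$(i) above avoids the issue entirely, because the reverse comparison of these two clauses is never invoked: the hard equivalence is carried by (i)$\Leftrightarrow$(iii), and (ii) is merely threaded between them. Everything else reduces to a direct appeal to the cited propositions, so no genuine calculation is required.
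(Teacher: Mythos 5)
Your proposal is correct and takes essentially the same route as the paper, whose entire proof is the one-line deduction from \cref{prop:formal_criterion_for_representability} by taking $X=f^*\colon B\to A$. Your extra care in threading clause (ii) through the cycle via \cref{prop:LAPC_in_a_proarrow_equipment} and \cref{prop:colimit_weighted_by_corepresentables} merely spells out a bookkeeping point that the paper's terse proof leaves implicit.
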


\begin{proof}
	This follows from \cref{prop:formal_criterion_for_representability} by taking $X=f^*\colon B\to A$.
\end{proof}

\begin{definition}\label{def:fully_faithful_morphism}
	A morphism $f\colon A \to B$ in $\bicatK$ is called \emph{fully faithful} if the unit of the adjunction $f_* \dashv f^*$ is an isomorphism.
\end{definition}

\begin{example}
	In the proarrow equipment $\VCat\to\VProf$ of enriched categories over a cosmos $\moncatV$, a $\moncatV$-functor $F\colon \catA\to\catB$ is fully faithful in the above sense if and only if it is fully faithful in the enriched sense; that is, all $F_{AA'}\colon \catA(A,A)\to \catB(FA,FA')$ are isomorphisms for $A,A'\in \catA$.
\end{example}

\begin{proposition}[{\cite[Proposition 13]{Wood:1982proarrow1}}]
	For a left adjoint $f\colon A\to B$ in $\bicatK$ with unit $\eta$, the following are equivalent.
	\begin{enumerate}[label=\equivitem]
		\item $f$ is fully faithful.
		\item $\eta$ is an isomorphism.
	\end{enumerate}
\end{proposition}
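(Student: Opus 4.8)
The plan is to transport the adjunction $f\dashv u$ from $\bicatK$ into $\bicatM$ along the equipment $\equip$ and then compare the resulting unit with $\overline{f}$. Recall that $f$ is fully faithful precisely when the unit $\overline{f}\colon \id_A\Rightarrow f^*\circ f_*$ of the adjunction $f_*\dashv f^*$ in $\bicatM$ is invertible (\cref{def:fully_faithful_morphism}). Hence the whole statement reduces to showing that $\overline{f}$ is invertible in $\bicatM$ if and only if $\eta$ is invertible in $\bicatK$.

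First I would apply the equipment $\equip$, which is a pseudo-functor and therefore preserves adjunctions, to the adjunction $(f\dashv u,\eta,\varepsilon)$ of $\bicatK$. This produces an adjunction $f_*\dashv u_*$ in $\bicatM$ whose unit $\eta'$ is the composite of the image $\equip(\eta)$ with the lax unity and functoriality constraints $\epsilon^A$ and $\mu_{u,f}$ of $\equip$. Since a pseudo-functor has invertible constraints, $\eta'$ is invertible if and only if $\equip(\eta)$ is. Moreover $\equip$ is locally fully faithful (one of the defining properties of a proarrow equipment), so the functor $\bicatK(A,A)\to\bicatM(A,A)$ reflects isomorphisms, and of course preserves them; thus $\equip(\eta)$ is invertible if and only if $\eta$ is. Combining these, $\eta$ is invertible exactly when $\eta'$ is.

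Next I compare $\eta'$ with $\overline{f}$. At this point we have two adjunctions in $\bicatM$ sharing the same left adjoint $f_*$, namely $f_*\dashv u_*$ with unit $\eta'$ and $f_*\dashv f^*$ with unit $\overline{f}$. By uniqueness of adjoints there is a unique invertible comparison $2$-cell $\psi\colon f^*\Rightarrow u_*$ intertwining the two adjunctions, and in particular it relates the units by whiskering: $\eta'=(\psi\, f_*)\circ \overline{f}$. The whiskered $2$-cell $\psi\, f_*\colon f^*\circ f_*\Rightarrow u_*\circ f_*$ is invertible because $\psi$ is, so $\overline{f}$ is invertible if and only if $\eta'$ is. Chaining the equivalences yields $f$ fully faithful $\iff$ $\overline{f}$ invertible $\iff$ $\eta'$ invertible $\iff$ $\eta$ invertible, which is the claim.

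Most of this is bookkeeping; the one step that needs care is the comparison, where I must invoke the precise form of the uniqueness-of-adjoints $2$-cell $\psi$ and check that it genuinely intertwines the units \emph{by whiskering} (not merely that an abstract iso $f^*\cong u_*$ exists), so that invertibility transfers faithfully between $\eta'$ and $\overline{f}$. The remaining ingredients — preservation of adjunctions by a pseudo-functor, invertibility of its coherence constraints, and reflection of isomorphisms by the locally fully faithful $\equip$ — are all immediate from the structure of the equipment recalled above.
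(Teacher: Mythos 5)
Your proof is correct and follows essentially the same route as the paper: the paper's one-line argument that ``$\eta_*$ is the unit of $f_*\dashv f^*$, and $(\mplaceholder)_*$ is locally fully faithful'' is exactly your chain of equivalences, with your comparison $2$-cell $\psi$ and the coherence constraints making explicit the identification of the transported unit with $\overline{f}$. No gaps; you have simply spelled out the details the paper leaves implicit.
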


\begin{proof}
	If $f$ has a right adjoint with unit $\eta$, then $\eta_*$ is the unit of the adjunction $f_* \dashv f^*$. Thus the assertion follows, since $(-)_*$ is locally fully faithful.
\end{proof}

%% ここまでは Wood: Abstract Proarrow I の内容

\subsection{Absolute limits and Cauchy completeness}\label{subsection:absolute_limits_and_Cauchy_completeness}
%% cf Street:1983, Absolute colimits in enriched categories
% \cite{Street:1983}

Most of the contents of this subsection are not included in \cite{Wood:1982proarrow1}, but they are considered part of the folklore.

Let $\equip\colon \bicatK\to\bicatM$ be a proarrow equipment.

\begin{definition}
	For morphisms $f\colon J\to A$ in $\bicatK$ and $W\colon M\to J$ in $\bicatM$, the $W$-weighted colimit $W\star f\colon M \to A$ of $f$ is said to be \emph{absolute} if it is preserved by every morphism $g\colon A\to B$ in $\bicatK$.
	Similarly, the $V$-weighted limit $\{V,f\}\colon M \to A$ of $f$ is said to be \emph{absolute} if it is preserved by every morphism $g\colon A\to B$ in $\bicatK$.
\end{definition}

\begin{proposition}\label{prop:left_adjoint_weight_is_absolute}
	Let $W\colon M\to J$ be a morphism in $\bicatM$. If $W$ has a right adjoint $V\colon J\to M$, then all $W$-weighted colimits are absolute.
	Similarly, if $W\colon J\to M$ has a left adjoint $U\colon M\to J$, then all $W$-weighted limits are absolute.
\end{proposition}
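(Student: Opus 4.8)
The plan is to reduce everything to \cref{prop:Kan_extension/lifting_along_adjoint}, which already isolates the essential content: a right Kan lifting along a left adjoint is absolute. I would begin by unwinding the two occurrences of the word ``absolute'' so as to match them. By definition, the $W$-weighted colimit $W\star f$ being absolute means it is preserved by every $g\colon A\to B$ in $\bicatK$, and $g$ preserving it means precisely that $g^*$ commutes with the right Kan lifting $(W\star f)^*=\Rift_W f^*$. On the other hand, a right Kan lifting is called absolute when it is commuted with by \emph{all} morphisms. Thus the whole statement follows once I show that $\Rift_W f^*$ is an absolute right Kan lifting, since each $g^*$ is in particular one of the morphisms witnessing that absoluteness.

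First I would take any $f\colon J\to A$ in $\bicatK$ for which the colimit $W\star f$ exists, so that $(W\star f)^*\cong\Rift_W f^*$ in $\bicatM$. Since $W$ is assumed to have a right adjoint $V$, the morphism $W$ is a left adjoint, and I would apply \cref{prop:Kan_extension/lifting_along_adjoint}~(1) to the right Kan lifting of $f^*$ along $W$: this gives $\Rift_W f^*\cong V\circ f^*$ together with the assertion that this Kan lifting is absolute. Consequently, for every $g\colon A\to B$ in $\bicatK$ the morphism $g^*$ commutes with $\Rift_W f^*$, concretely $(\Rift_W f^*)\circ g^*\cong (V\circ f^*)\circ g^*\cong V\circ(f^*\circ g^*)\cong \Rift_W(f^*\circ g^*)$. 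This is exactly the condition that $g$ preserves $W\star f$; as $g$ was arbitrary, $W\star f$ is absolute.

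For the second assertion I would argue dually. A $W$-weighted limit $\{W,f\}$ is absolute when every $g\colon A\to B$ in $\bicatK$ preserves it, i.e.\ when $g_*$ commutes with the right Kan extension $\{W,f\}_*=\Ran_W f_*$. If $W\colon J\to M$ has a left adjoint $U$, then $W$ is a right adjoint, and \cref{prop:Kan_extension/lifting_along_adjoint}~(2) yields $\Ran_W f_*\cong f_*\circ U$ together with the fact that this right Kan extension is absolute. Hence $g_*$ commutes with $\Ran_W f_*$ for every $g$, which says precisely that every $g$ preserves $\{W,f\}$.

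I do not expect any serious obstacle: the computation is essentially a one-line application of \cref{prop:Kan_extension/lifting_along_adjoint}. The only point that requires a little care, and the one I would state explicitly, is the reconciliation of the two senses of ``absolute'' (preservation by arrows of $\bicatK$ versus commutation with \emph{all} arrows of $\bicatM$); this works because the witnessing morphisms $g^*$, resp.\ $g_*$, are themselves morphisms of $\bicatM$ and hence are covered by the stronger absoluteness of the Kan lifting, resp.\ extension.
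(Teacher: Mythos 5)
Your proof is correct and follows essentially the same route as the paper's: both reduce to \cref{prop:Kan_extension/lifting_along_adjoint} to get $\Rift_W f^*\cong V\circ f^*$ and then verify the commutation $(\Rift_W f^*)\circ g^*\cong V\circ f^*\circ g^*\cong \Rift_W (gf)^*$ for every $g$ in $\bicatK$, with the dual argument for limits. The only cosmetic difference is that you additionally invoke the absoluteness clause of that lemma up front, whereas the paper carries out the one-line computation directly.
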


\begin{proof}
	Suppose $f\colon J\to A$ is a morphism in $\bicatK$ and the $W$-weighted colimit $W\star f\colon M\to A$ exists. Then $(W\star f)^* = \Rift_W f^*$. Since $W$ is a left adjoint, \cref{prop:Kan_extension/lifting_along_adjoint} shows $\Rift_W f^* \cong V\circ f^*$. Hence for any $g\colon A \to B$ in $\bicatK$, we have
	\[ \Rift_W f^* \circ g^* \cong V\circ f^*\circ g^* \cong V\circ (gf)^* \cong \Rift_W (gf)^*. \]
	Thus $g^*$ commutes with the right Kan lifting $\Rift_W f^*$, and $g$ preserves the colimit $W\star f$. The case of weighted limits is similar.
\end{proof}

%\mymemo{Question: Conversely, when does "an absolute weight has a right adjoint" hold?} It holds for the proarrow equipment $\equip\colon \VCat\to\VProf$ of enriched categories and enriched profunctors. \mymemo{What about in general?}

%% Garner: Diagrammatic characterisation of enriched absolute colimits
% \cite{Garner:2014diagrammatic}
\begin{proposition}[A generalization of ~\cite{Garner:2014diagrammatic}]\label{prop:left-adj-weighted_colimit_is_equal_to_right-adj-weighted_limit}
	Let $W\colon M\to J$ be a morphism in $\bicatM$ with a right adjoint $V\colon J\to M$. For morphisms $f\colon J\to A$ and $z\colon M\to A$ in $\bicatK$, $z$ is the (absolute) $W$-weighted colimit of $f$ if and only if $z$ is the (absolute) $V$-weighted limit of $f$.
\end{proposition}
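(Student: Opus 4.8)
The plan is to reduce the statement to a comparison of two adjoint $1$-morphisms in $\bicatM$ and then invoke uniqueness of adjoints. By \cref{def:weighted_colimits}, the object $z$ is the $W$-weighted colimit of $f$ precisely when the right Kan lifting $\Rift_W f^*$ exists and is corepresentable by $z$, i.e.\ $z^*\cong \Rift_W f^*$ in $\bicatM$; dually, $z$ is the $V$-weighted limit of $f$ precisely when $\Ran_V f_*$ exists and is representable by $z$, i.e.\ $z_*\cong \Ran_V f_*$. Since the equipment $\equip$ is locally fully faithful, such isomorphisms downstairs determine $z$ up to equivalence in $\bicatK$, so it suffices to compare these two conditions, one phrased in terms of $z^*$ and the other in terms of $z_*$.

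The key computation is to identify both Kan constructions using the adjunction $W\dashv V$. Because $W$ is a left adjoint with right adjoint $V$, \cref{prop:Kan_extension/lifting_along_adjoint}~(1) yields $\Rift_W f^*\cong V\circ f^*$; because $V$ is a right adjoint with left adjoint $W$, \cref{prop:Kan_extension/lifting_along_adjoint}~(2) yields $\Ran_V f_*\cong f_*\circ W$. In particular both constructions exist automatically, and by \cref{prop:left_adjoint_weight_is_absolute} (applied to $W$, which has a right adjoint, and to $V$, which has a left adjoint) the resulting colimit and limit are automatically absolute; this disposes of the parenthetical qualifier in the statement, so I only need to treat the non-absolute version.

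Next I would observe that these two morphisms of $\bicatM$ are themselves adjoint. Composing the canonical adjunction $f_*\dashv f^*$ of the proarrow equipment with the hypothesis $W\dashv V$ gives
\[ f_*\circ W \;\dashv\; V\circ f^*, \]
that is, an adjunction $\Ran_V f_*\dashv \Rift_W f^*$ in $\bicatM$. The equivalence then follows from uniqueness of adjoints, using that $z_*\dashv z^*$ holds for every morphism $z$ of $\bicatK$. Indeed, if $z_*\cong \Ran_V f_*$, then $z_*\dashv \Rift_W f^*$, and comparing with $z_*\dashv z^*$ forces $z^*\cong \Rift_W f^*$; conversely, if $z^*\cong \Rift_W f^*$, then $\Ran_V f_*\dashv z^*$, and comparing with $z_*\dashv z^*$ forces $z_*\cong \Ran_V f_*$. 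This establishes both implications.

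The argument is essentially formal and short; the only genuinely delicate point is the bookkeeping, namely verifying that ``corepresentable by $z$'' and ``representable by $z$'' translate into exactly the isomorphisms $z^*\cong\Rift_W f^*$ and $z_*\cong\Ran_V f_*$ (with the correct orientation), and that the uniqueness-of-adjoints step is applied in the right variance (once for right adjoints, once for left adjoints). This is precisely the pattern already carried out in the proof of \cref{prop:right-compact-weighted_colimit_is_equal_to_left-compact-weighted_limit} for the equipment $\equipStar$, and I would present the general statement in the same compact style, citing \cref{prop:Kan_extension/lifting_along_adjoint} and the uniqueness of adjoints rather than reproducing any diagram chases.
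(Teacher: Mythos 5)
Your proposal is correct and follows essentially the same route as the paper's proof: both identify $\Rift_W f^*\cong V\circ f^*$ and $\Ran_V f_*\cong f_*\circ W$ via \cref{prop:Kan_extension/lifting_along_adjoint}, deduce the adjunction $\Ran_V f_*\dashv \Rift_W f^*$, and conclude by comparing with $z_*\dashv z^*$. The extra remarks on absoluteness and the explicit uniqueness-of-adjoints bookkeeping are just a more detailed write-up of what the paper leaves implicit.
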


\begin{proof}
	Since $W\dashv V$, we see from \cref{prop:Kan_extension/lifting_along_adjoint}
	\[ \Rift_W f^* \cong V\circ f^*,\qquad \Ran_V f_* \cong f_*\circ W. \]
	In particular, we get an adjunction $\Ran_V f_* \dashv \Rift_W f^*$. Therefore, $\Rift_W f^*$ being corepresentable by $z$ is equivalent to $\Ran_V f_*$ being representable by $z$.
\end{proof}

\begin{definition}
	An object $A\in \bicatK$ is said to be \emph{Cauchy complete} if every left adjoint morphism $\Phi\colon D\to A$ in $\bicatM$ is representable.
\end{definition}

\begin{proposition}\label{prop:Cauchy_complete_is_having_left-adjoint-weighted_colimits}
	For an object $A\in \bicatK$, the following are equivalent:
	\begin{enumerate}[label=\equivitem]
		\item $A$ is Cauchy complete.
		\item $A$ has all colimits weighted by left adjoints.
		\item $A$ has all limits weighted by right adjoints.
	\end{enumerate}
\end{proposition}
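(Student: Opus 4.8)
The plan is to establish the cycle through the two nontrivial equivalences (i) $\Leftrightarrow$ (ii) and (ii) $\Leftrightarrow$ (iii), the latter being almost immediate. For (ii) $\Leftrightarrow$ (iii) I would invoke \cref{prop:left-adj-weighted_colimit_is_equal_to_right-adj-weighted_limit}: every left adjoint weight $W\colon M\to J$ has a right adjoint $V\colon J\to M$ and conversely, and for any $f\colon J\to A$ in $\bicatK$ that proposition identifies the $W$-weighted colimit of $f$ with the $V$-weighted limit of $f$ (one exists if and only if the other does, with the same value). Hence possessing all colimits weighted by left adjoints is literally the same condition as possessing all limits weighted by right adjoints.

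For the direction (ii) $\Rightarrow$ (i), take a left adjoint morphism $\Phi\colon D\to A$ in $\bicatM$; I must show it is representable. Since $\Phi$ is a left adjoint weight and $\id_A\colon A\to A$ is a morphism of $\bicatK$, assumption (ii) guarantees that the colimit $\Phi\star\id_A$ exists. I would then apply the formal criterion for representability (\cref{prop:formal_criterion_for_representability}) with $X=\Phi$ and $f\coloneqq \Phi\star\id_A$: the first two clauses hold tautologically, and the third --- that $\Phi$ commutes with all right Kan liftings --- holds because $\Phi$ is a left adjoint (\cref{prop:adjoint_respects_Kan_extension/lifting}). The criterion then yields $\Phi\cong(\Phi\star\id_A)_*$, so $\Phi$ is representable and $A$ is Cauchy complete.

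The converse (i) $\Rightarrow$ (ii) is where the actual bookkeeping lives. Given a left adjoint weight $W\colon M\to J$ with right adjoint $V\colon J\to M$ and a morphism $f\colon J\to A$ in $\bicatK$, I need $W\star f$ to exist, i.e.\ the right Kan lifting $\Rift_W f^*$ to exist and be corepresentable. By \cref{prop:Kan_extension/lifting_along_adjoint} the lifting along the left adjoint $W$ is computed as $\Rift_W f^*\cong V\circ f^*$, so existence is automatic. The key observation is that $V\circ f^*$ is itself a right adjoint: composing the adjunctions $f_*\dashv f^*$ and $W\dashv V$ gives $f_*\circ W\dashv V\circ f^*$. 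Thus $f_*\circ W\colon M\to A$ is a left adjoint morphism with codomain $A$, so by Cauchy completeness it is representable, say $f_*\circ W\cong h_*$ for some $h$ in $\bicatK$. Passing to right adjoints (which are unique) gives $V\circ f^*\cong h^*$, so $\Rift_W f^*$ is corepresentable and $W\star f\cong h$ exists.

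The only genuinely delicate point will be keeping the variances straight: verifying $f_*\circ W\dashv V\circ f^*$ with the correct domains and codomains, and then using uniqueness of adjoints to transport representability of a left adjoint to corepresentability of its right adjoint. Everything else is a direct application of the Yoneda-type computation of Kan liftings along adjoints and of the formal criterion for representability already recorded in the excerpt.
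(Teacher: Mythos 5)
Your proof is correct and follows essentially the same route as the paper: (ii)$\Leftrightarrow$(iii) via \cref{prop:left-adj-weighted_colimit_is_equal_to_right-adj-weighted_limit}, and (i)$\Leftrightarrow$(ii) by exploiting that $f_*\circ W$ is a left adjoint (hence representable under Cauchy completeness) together with $\Rift_W f^*\cong V\circ f^*$ and uniqueness of adjoints. The only cosmetic differences are that you invoke the formal criterion for representability for (ii)$\Rightarrow$(i) where the paper computes $g^*\cong V$ directly, and you argue (i)$\Rightarrow$(ii) on the colimit side rather than passing through the limit side; both variants are sound.
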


\begin{proof}
	(i) $\Rightarrow$ (ii): Let $W\colon D \to J$ be a left adjoint in $\bicatM$ with right adjoint $V\colon J \to D$. For any morphism $f\colon J \to A$ in $\bicatK$, by \cref{prop:left-adj-weighted_colimit_is_equal_to_right-adj-weighted_limit}, the existence of $\colim^W f$ is equivalent to the existence of $\lim^V f$, which holds if and only if $\Ran_V f_* \cong f_*\circ W$ is representable. Since $f_*\circ W$ is a left adjoint in $\bicatM$, it is representable by the Cauchy completeness of $A$.

	(ii) $\Rightarrow$ (i): Let $W\colon D \to A$ be a left adjoint in $\bicatM$ with right adjoint $V\colon A \to D$. Since $A$ has $W$-weighted colimits, in particular $g\coloneqq \colim^W \id_A$ exists. Then $g^*$ is the right Kan lifting $\Rift_W \id_A^*$. Now $W\dashv V$ implies $V\cong g^*$, and hence $g_*\cong W$.

	(ii) $\Leftrightarrow$ (iii): This follows from \cref{prop:left-adj-weighted_colimit_is_equal_to_right-adj-weighted_limit}.
\end{proof}

Therefore, by \cref{prop:left_adjoint_weight_is_absolute}, an object is Cauchy complete if it has all absolute weighted colimits.
%\mymemo{Question: When does the converse hold?} It holds for the proarrow equipment $\equip\colon \VCat\to\VProf$ of enriched categories. \mymemo{What about in general?}

\begin{remark}
	For the proarrow equipment $\equip\colon \VCat\to\VProf$ of enriched categories and enriched profunctors, the converse of \cref{prop:left_adjoint_weight_is_absolute} holds (\cite{Street:1983}). That is, weights of absolute colimits have right adjoints; and hence an enriched category is Cauchy complete if and only if it has all absolute weighted colimits.
	It is not clear to the author whether this is the case for general proarrow equipments.
\end{remark}

\printbibliography %% biblatex 

@book{Borceux:1994HoCA1,
  author     = {Francis Borceux},
  publisher  = {Cambridge University Press, Cambridge},
  title      = {Handbook of Categorical Algebra 1{,} Basic Category Theory},
  year       = {1994},
  isbn       = {0-521-44178-1},
  series     = {Encyclopedia of Mathematics and its Applications},
  volume     = {50},
  mrclass    = {18-02 (18Axx)},
  mrnumber   = {1291599},
  mrreviewer = {Martin Hyland},
  pages      = {xvi+345},
  zbl        = {0803.18001}
}

@article{Bondal-Kapranov:1991,
  author     = {Bondal, A. I. and Kapranov, M. M.},
  journal    = {{Math. USSR, Sb.}},
  title      = {Enhanced triangulated categories},
  year       = {1991},
  issn       = {0025-5734},
  number     = {1},
  pages      = {93--107},
  volume     = {70},
  doi        = {10.1070/sm1991v070n01abeh001253},
  fjournal   = {{Mathematics of the {USSR}, Sbornik}},
  mrclass    = {18E30},
  mrnumber   = {1055981},
  mrreviewer = {L. N. Vaserstein},
  publisher  = {{IOP} Publishing},
  url        = {https://doi.org/10.1070/sm1991v070n01abeh001253},
  zbl        = {0729.18008}
}

@misc{Keller:2006arXiv,
  author        = {Bernhard Keller},
  month         = jan,
  title         = {On differential graded categories},
  year          = {2006},
  archiveprefix = {arXiv},
  eprint        = {math/0601185},
  keywords      = {math.KT, math.AG, 18E30; 16D90},
  primaryclass  = {math.KT}
}

@book{Hovey:1999,
  author    = {Hovey, Mark},
  publisher = {American Mathematical Society, Providence, RI},
  title     = {Model categories},
  year      = {1999},
  isbn      = {0-8218-1359-5},
  series    = {Mathematical Surveys and Monographs},
  volume    = {63},
  mrnumber  = {1650134},
  pages     = {xii+209},
  url       = {https://people.math.rochester.edu/faculty/doug/otherpapers/hovey-model-cats.pdf},
  zbl       = {0909.55001}
}

@book{Kelly:1982Basic,
  author     = {Kelly, G. M.},
  publisher  = {Cambridge University Press, Cambridge-New York},
  title      = {Basic concepts of enriched category theory},
  year       = {1982},
  isbn       = {0-521-28702-2},
  note       = {Reprints in Theory and Applications of Categories 10, 1--136, 2005},
  series     = {London Mathematical Society Lecture Note Series},
  volume     = {64},
  mrclass    = {18-02 (18D20)},
  mrnumber   = {651714},
  mrreviewer = {F. E. J. Linton},
  pages      = {245},
  url        = {http://tac.mta.ca/tac/reprints/articles/10/tr10abs.html},
  zbl        = {0478.18005}
}

@article{Genovese:2017,
  author     = {Francesco {Genovese}},
  journal    = {Appl. Categ. Struct.},
  title      = {Adjunctions of quasi-functors between dg-categories},
  year       = {2017},
  issn       = {0927-2852},
  number     = {4},
  pages      = {625--657},
  volume     = {25},
  doi        = {10.1007/s10485-016-9470-y},
  fjournal   = {Applied Categorical Structures},
  mrclass    = {18A40 (18D05 18E30)},
  mrnumber   = {3669175},
  mrreviewer = {Vanessa Miemietz},
  msc2010    = {18A40 18D05 18E30},
  publisher  = {Springer Netherlands, Dordrecht},
  zbl        = {1387.18006}
}

@misc{nLab,
  author    = {{$n$Lab authors}},
  title     = {$n${L}ab},
  year      = {--2025},
  shorthand = {$n${L}ab},
  url       = {https://ncatlab.org/nlab/show/HomePage}
}

@article{Porta:2010,
  author     = {Marco {Porta}},
  journal    = {Adv. Math.},
  title      = {The {P}opescu-{G}abriel theorem for triangulated categories},
  year       = {2010},
  issn       = {0001-8708},
  number     = {3},
  pages      = {1669--1715},
  volume     = {225},
  doi        = {10.1016/j.aim.2010.04.002},
  fjournal   = {Advances in Mathematics},
  mrclass    = {18E30 (16D90 16E45)},
  mrnumber   = {2673743},
  mrreviewer = {Mike Prest},
  msc2010    = {18E30 16E45 18E35 16D90},
  publisher  = {Elsevier (Academic Press), San Diego, CA},
  zbl        = {1227.18011}
}

@article{Street:1983,
  author     = {Ross Street},
  journal    = {Cahiers de Top. et G\'eom. Diff.},
  title      = {Absolute colimits in enriched categories},
  year       = {1983},
  issn       = {0008-0004},
  number     = {4},
  pages      = {377--379},
  volume     = {24},
  fjournal   = {Cahiers de Topologie et G\'{e}om\'{e}trie Diff\'{e}rentielle},
  mrclass    = {18A30 (18D20)},
  mrnumber   = {749468},
  mrreviewer = {G. M. Kelly},
  msc2010    = {18A35 18D20 18A40 18D05},
  publisher  = {Madame Ehresmann, Amiens},
  zbl        = {0532.18001}
}

@article{Toen:2007,
  author     = {Bertrand {To\"en}},
  journal    = {Invent. Math.},
  title      = {The homotopy theory of dg-categories and derived {M}orita theory},
  year       = {2007},
  issn       = {0020-9910},
  number     = {3},
  pages      = {615--667},
  volume     = {167},
  doi        = {10.1007/s00222-006-0025-y},
  fjournal   = {Inventiones Mathematicae},
  mrclass    = {18D05 (18E30 18G55 19D55)},
  mrnumber   = {2276263},
  mrreviewer = {Mark Hovey},
  msc2010    = {18G55 55U35 18G35 16D90},
  publisher  = {Springer, Berlin/Heidelberg},
  zbl        = {1118.18010}
}

@article{Riehl-Verity:2015The_2-category,
  author     = {Riehl, Emily and Verity, Dominic},
  journal    = {Adv. Math.},
  title      = {The $2$-category theory of quasi-categories},
  year       = {2015},
  issn       = {0001-8708},
  pages      = {549--642},
  volume     = {280},
  doi        = {10.1016/j.aim.2015.04.021},
  fjournal   = {Advances in Mathematics},
  mrclass    = {18G55 (18A05 18D20 18G30 55U10 55U35)},
  mrnumber   = {3350229},
  mrreviewer = {Josep Elgueta},
  url        = {https://doi.org/10.1016/j.aim.2015.04.021},
  zbl        = {1319.18005},
  zbmath     = {6441148}
}

@article{Wood:1982proarrow1,
  author     = {Wood, R. J.},
  journal    = {Cahiers Topologie G\'{e}om. Diff\'{e}rentielle Cat\'{e}g.},
  title      = {Abstract proarrows. {I}},
  year       = {1982},
  issn       = {0008-0004},
  number     = {3},
  pages      = {279--290},
  volume     = {23},
  fjournal   = {Cahiers de Topologie et G\'{e}om\'{e}trie Diff\'{e}rentielle Cat{\'e}goriques},
  mrclass    = {18D05 (18D20)},
  mrnumber   = {675339},
  mrreviewer = {R.\ H.\ Street},
  zbl        = {0497.18012},
  zbmath     = {3783245}
}

@article{Wood:1985proarrow2,
  author     = {Wood, R. J.},
  journal    = {Cahiers Topologie G\'{e}om. Diff\'{e}rentielle Cat\'{e}g.},
  title      = {Proarrows. {II}},
  year       = {1985},
  issn       = {0008-0004},
  number     = {2},
  pages      = {135--168},
  volume     = {26},
  fjournal   = {Cahiers de Topologie et G\'{e}om\'{e}trie Diff\'{e}rentielle Cat\'{e}goriques},
  mrclass    = {18D05 (18D20)},
  mrnumber   = {794752},
  mrreviewer = {R.\ H.\ Street},
  zbl        = {0583.18003},
  zbmath     = {3933292}
}

@misc{Belmans:2013master,
  author = {Belmans, Pieter},
  title  = {On the homotopy theory of differential graded categories},
  year   = {2013},
  url    = {https://pbelmans.ncag.info/assets/memoire.pdf}
}

@article{Keller:1994Deriving,
  author     = {Keller, Bernhard},
  journal    = {Ann. Sci. \'{E}cole Norm. Sup. (4)},
  title      = {Deriving {DG} categories},
  year       = {1994},
  issn       = {0012-9593},
  number     = {1},
  pages      = {63--102},
  volume     = {27},
  doi        = {10.24033/asens.1689},
  fjournal   = {Annales Scientifiques de l'\'{E}cole Normale Sup\'{e}rieure. Quatri\`eme S\'{e}rie},
  mrclass    = {18E30 (16D90)},
  mrnumber   = {1258406},
  mrreviewer = {Jeremy\ Rickard},
  url        = {http://www.numdam.org/item?id=ASENS_1994_4_27_1_63_0},
  zbl        = {0799.18007},
  zbmath     = {559203}
}

@article{Bondal-van_den_Bergh:2003generators,
  author     = {Bondal, A. and van den Bergh, M.},
  journal    = {Mosc. Math. J.},
  title      = {Generators and representability of functors in commutative and noncommutative geometry},
  year       = {2003},
  issn       = {1609-3321,1609-4514},
  number     = {1},
  pages      = {1--36, 258},
  volume     = {3},
  doi        = {10.17323/1609-4514-2003-3-1-1-36},
  fjournal   = {Moscow Mathematical Journal},
  mrclass    = {18E30 (14F05)},
  mrnumber   = {1996800},
  mrreviewer = {Ioannis\ Emmanouil},
  url        = {https://doi.org/10.17323/1609-4514-2003-3-1-1-36},
  zbl        = {1135.18302},
  zbmath     = {2069670}
}

@article{Neeman:1996brown_representability,
  author     = {Neeman, Amnon},
  journal    = {J. Amer. Math. Soc.},
  title      = {The {G}rothendieck duality theorem via {B}ousfield's techniques and {B}rown representability},
  year       = {1996},
  issn       = {0894-0347,1088-6834},
  number     = {1},
  pages      = {205--236},
  volume     = {9},
  doi        = {10.1090/S0894-0347-96-00174-9},
  fjournal   = {Journal of the American Mathematical Society},
  mrclass    = {18E30 (14F05)},
  mrnumber   = {1308405},
  mrreviewer = {Luca\ Barbieri Viale},
  url        = {https://doi.org/10.1090/S0894-0347-96-00174-9},
  zbl        = {0864.14008},
  zbmath     = {868352}
}

@article{Tabuada:2005quillen_model_dgCat,
  author   = {Tabuada, Goncalo},
  journal  = {C. R. Math. Acad. Sci. Paris},
  title    = {Une structure de cat\'{e}gorie de mod\`eles de {Q}uillen sur la cat\'{e}gorie des dg-cat\'{e}gories},
  year     = {2005},
  issn     = {1631-073X,1778-3569},
  number   = {1},
  pages    = {15--19},
  volume   = {340},
  doi      = {10.1016/j.crma.2004.11.007},
  fjournal = {Comptes Rendus Math\'{e}matique. Acad\'{e}mie des Sciences. Paris},
  language = {French},
  mrclass  = {18E99},
  mrnumber = {2112034},
  url      = {https://doi.org/10.1016/j.crma.2004.11.007},
  zbl      = {1060.18010},
  zbmath   = {2136398}
}

@article{Canonaco-Stellari:2015Internal_Homs,
  author     = {Canonaco, Alberto and Stellari, Paolo},
  journal    = {Adv. Math.},
  title      = {Internal {H}oms via extensions of dg functors},
  year       = {2015},
  issn       = {0001-8708,1090-2082},
  pages      = {100--123},
  volume     = {277},
  doi        = {10.1016/j.aim.2015.03.004},
  fjournal   = {Advances in Mathematics},
  mrclass    = {18E30 (14F05 18D10)},
  mrnumber   = {3336084},
  mrreviewer = {Pieter\ Belmans},
  url        = {https://doi.org/10.1016/j.aim.2015.03.004},
  zbl        = {1355.14012},
  zbmath     = {6431969}
}

@book{Yekutieli:2020Derived_categories,
  author     = {Yekutieli, Amnon},
  publisher  = {Cambridge University Press, Cambridge},
  title      = {Derived categories},
  year       = {2020},
  isbn       = {978-1-108-41933-8},
  series     = {Cambridge Studies in Advanced Mathematics},
  volume     = {183},
  doi        = {10.1017/9781108292825},
  mrclass    = {18-02 (13D09 16E35 18G80)},
  mrnumber   = {3971537},
  mrreviewer = {Peder\ Thompson},
  pages      = {xi+607},
  zbl        = {1444.18001},
  zbmath     = {7143955}
}

@article{Bodzenta-Bondal:2022,
  author   = {Bodzenta, Agnieszka and Bondal, Alexey},
  journal  = {Compos. Math.},
  title    = {Flops and spherical functors},
  year     = {2022},
  issn     = {0010-437X,1570-5846},
  number   = {5},
  pages    = {1125--1187},
  volume   = {158},
  doi      = {10.1112/s0010437x22007497},
  fjournal = {Compositio Mathematica},
  mrclass  = {14F08 (14E05 14E30 18G80 18N10)},
  mrnumber = {4460094},
  url      = {https://doi.org/10.1112/s0010437x22007497},
  zbmath   = {7569953}
}

@article{Johnson:2014azumaya,
  author     = {Johnson, Niles},
  journal    = {J. Homotopy Relat. Struct.},
  title      = {Azumaya objects in triangulated bicategories},
  year       = {2014},
  issn       = {2193-8407,1512-2891},
  number     = {2},
  pages      = {465--493},
  volume     = {9},
  doi        = {10.1007/s40062-013-0035-6},
  fjournal   = {Journal of Homotopy and Related Structures},
  mrclass    = {18E30 (16K50 18D10 18D35)},
  mrnumber   = {3258690},
  mrreviewer = {Stanis\l aw\ Betley},
  url        = {https://doi.org/10.1007/s40062-013-0035-6},
  zbl        = {1316.18013},
  zbmath     = {6404313}
}

@book{Loregian:2021coend_calculus,
  author     = {Loregian, Fosco},
  publisher  = {Cambridge University Press, Cambridge},
  title      = {({C}o)end calculus},
  year       = {2021},
  isbn       = {978-1-108-74612-0},
  series     = {London Mathematical Society Lecture Note Series},
  volume     = {468},
  doi        = {10.1017/9781108778657},
  mrclass    = {18-02 (18A40 18D60 18D70 18M60)},
  mrnumber   = {4274071},
  mrreviewer = {Nicola\ Gambino},
  pages      = {xxi+308},
  url        = {https://doi.org/10.1017/9781108778657},
  zbmath     = {7344233}
}

@article{Shulman:2008framed,
  author   = {Shulman, Michael},
  journal  = {Theory Appl. Categ.},
  title    = {Framed bicategories and monoidal fibrations},
  year     = {2008},
  issn     = {1201-561X},
  pages    = {No. 18, 650--738},
  volume   = {20},
  fjournal = {Theory and Applications of Categories},
  mrclass  = {18D05 (18D10 18D30)},
  mrnumber = {2534210},
  zbl      = {1192.18005},
  zbmath   = {5636669}
}

@article{Cruttwell-Shulman:2010,
  author     = {Cruttwell, G. S. H. and Shulman, Michael A.},
  journal    = {Theory Appl. Categ.},
  title      = {A unified framework for generalized multicategories},
  year       = {2010},
  issn       = {1201-561X},
  pages      = {No. 21, 580--655},
  volume     = {24},
  fjournal   = {Theory and Applications of Categories},
  mrclass    = {18D05 (18D20 18D50)},
  mrnumber   = {2770076},
  mrreviewer = {Stephen\ Lack},
  zbl        = {1220.18003},
  zbmath     = {5934761}
}

@article{Anno-Logvinenko:2017spherical,
  author     = {Anno, Rina and Logvinenko, Timothy},
  journal    = {J. Eur. Math. Soc. (JEMS)},
  title      = {Spherical {DG}-functors},
  year       = {2017},
  issn       = {1435-9855,1435-9863},
  number     = {9},
  pages      = {2577--2656},
  volume     = {19},
  doi        = {10.4171/JEMS/724},
  fjournal   = {Journal of the European Mathematical Society (JEMS)},
  mrclass    = {18D20 (14F05 18E30)},
  mrnumber   = {3692883},
  mrreviewer = {Zhaoting\ Wei},
  url        = {https://doi.org/10.4171/JEMS/724},
  zbl        = {1374.14015},
  zbmath     = {6773009}
}

@article{Drinfeld:2004dg_quotients,
  author     = {Drinfeld, Vladimir},
  journal    = {J. Algebra},
  title      = {D{G} quotients of {DG} categories},
  year       = {2004},
  issn       = {0021-8693,1090-266X},
  number     = {2},
  pages      = {643--691},
  volume     = {272},
  doi        = {10.1016/j.jalgebra.2003.05.001},
  fjournal   = {Journal of Algebra},
  mrclass    = {18E30},
  mrnumber   = {2028075},
  mrreviewer = {Peter\ J\o rgensen},
  url        = {https://doi.org/10.1016/j.jalgebra.2003.05.001},
  zbl        = {1064.18009},
  zbmath     = {2055647}
}

@incollection{Benabou:1967bicategories,
  author     = {B\'{e}nabou, Jean},
  booktitle  = {Reports of the {M}idwest {C}ategory {S}eminar},
  publisher  = {Springer, Berlin-New York},
  title      = {Introduction to bicategories},
  year       = {1967},
  pages      = {1--77},
  series     = {Lecture Notes in Math.},
  volume     = {47},
  doi        = {10.1007/BFb0074299},
  mrclass    = {18.10},
  mrnumber   = {220789},
  mrreviewer = {J.\ R.\ Isbell},
  zbl        = {1375.18001},
  zbmath     = {6816541}
}

@book{Johnson-Yau:2021,
  author     = {Johnson, Niles and Yau, Donald},
  publisher  = {Oxford University Press, Oxford},
  title      = {2-dimensional categories},
  year       = {2021},
  isbn       = {978-0-19-887138-5; 978-0-19-887137-8},
  doi        = {10.1093/oso/9780198871378.001.0001},
  mrclass    = {18-02 (18N10 18N15 18N20)},
  mrnumber   = {4261588},
  mrreviewer = {Robert\ Laugwitz},
  pages      = {xix+615},
  url        = {https://doi.org/10.1093/oso/9780198871378.001.0001},
  zbl        = {1471.18002},
  zbmath     = {7289983}
}

@article{Garner:2014diagrammatic,
  author     = {Garner, Richard},
  journal    = {Theory Appl. Categ.},
  title      = {Diagrammatic characterisation of enriched absolute colimits},
  year       = {2014},
  issn       = {1201-561X},
  pages      = {No. 26, 775--780},
  volume     = {29},
  fjournal   = {Theory and Applications of Categories},
  mrclass    = {18A30 (18D20)},
  mrnumber   = {3274501},
  mrreviewer = {Philippe\ Gaucher},
  zbl        = {1317.18005},
  zbmath     = {6431471}
}

@article{Kuznetsov-Lunts:2015,
  author     = {Kuznetsov, Alexander and Lunts, Valery A.},
  journal    = {Int. Math. Res. Not. IMRN},
  title      = {Categorical resolutions of irrational singularities},
  year       = {2015},
  issn       = {1073-7928,1687-0247},
  number     = {13},
  pages      = {4536--4625},
  doi        = {10.1093/imrn/rnu072},
  fjournal   = {International Mathematics Research Notices. IMRN},
  mrclass    = {14F05 (14B05 18E30)},
  mrnumber   = {3439086},
  mrreviewer = {Andreas\ Krug},
  url        = {https://doi.org/10.1093/imrn/rnu072},
  zbl        = {1338.14020},
  zbmath     = {6465114}
}

@incollection{Toen:2011lectures,
  author     = {To\"{e}n, Bertrand},
  booktitle  = {Topics in algebraic and topological {$K$}-theory},
  publisher  = {Springer, Berlin},
  title      = {Lectures on dg-categories},
  year       = {2011},
  isbn       = {978-3-642-15707-3},
  pages      = {243--302},
  series     = {Lecture Notes in Math.},
  volume     = {2008},
  doi        = {10.1007/978-3-642-15708-0},
  mrclass    = {18G55 (18E30)},
  mrnumber   = {2762557},
  mrreviewer = {Andrey\ Yu.\ Lazarev},
  url        = {https://doi.org/10.1007/978-3-642-15708-0},
  zbl        = {1216.18013},
  zbmath     = {5885104}
}

@book{Neeman:2001triangulated,
  author     = {Neeman, Amnon},
  publisher  = {Princeton University Press, Princeton, NJ},
  title      = {Triangulated categories},
  year       = {2001},
  isbn       = {0-691-08685-0; 0-691-08686-9},
  series     = {Annals of Mathematics Studies},
  volume     = {148},
  doi        = {10.1515/9781400837212},
  mrclass    = {18E30 (55-02 55N20 55U35)},
  mrnumber   = {1812507},
  mrreviewer = {Stanis\l aw\ Betley},
  pages      = {viii+449},
  url        = {https://doi.org/10.1515/9781400837212},
  zbl        = {0974.18008},
  zbmath     = {1573275}
}

@article{Neeman:1992connection,
  author     = {Neeman, Amnon},
  journal    = {Ann. Sci. \'{E}cole Norm. Sup. (4)},
  title      = {The connection between the {$K$}-theory localization theorem of {T}homason, {T}robaugh and {Y}ao and the smashing subcategories of {B}ousfield and {R}avenel},
  year       = {1992},
  issn       = {0012-9593},
  number     = {5},
  pages      = {547--566},
  volume     = {25},
  doi        = {10.24033/asens.1659},
  fjournal   = {Annales Scientifiques de l'\'{E}cole Normale Sup\'{e}rieure. Quatri\`eme S\'{e}rie},
  mrclass    = {18E30 (19D10 19E08)},
  mrnumber   = {1191736},
  mrreviewer = {Steven\ E.\ Landsburg},
  url        = {http://www.numdam.org/item?id=ASENS_1992_4_25_5_547_0},
  zbl        = {0868.19001},
  zbmath     = {222975}
}

@phdthesis{Arlin:2020phd_thesis,
  author = {Arlin, Kevin},
  school = {University of California, Los Angeles},
  title  = {$2$-categorical Brown representability and the relation between derivators and infinity-categories},
  year   = {2020},
  url    = {https://escholarship.org/uc/item/6834c18d}
}

@misc{Genovese-Lowen-VandenBergh:2022derived_deformation:arXiv,
  author        = {Genovese, Francesco and Lowen, Wendy and Van den Bergh, Michel},
  title         = {{T}-structures on dg-categories and derived deformations},
  year          = {2022},
  archiveprefix = {arXiv},
  eprint        = {2212.12564},
  primaryclass  = {math.CT}
}

@article{Lowen-RamosGonzalez:2022tensor_product_of_well_generated,
  author     = {Lowen, Wendy and Ramos Gonz\'{a}lez, Julia},
  journal    = {J. Pure Appl. Algebra},
  title      = {On the tensor product of well generated dg categories},
  year       = {2022},
  issn       = {0022-4049,1873-1376},
  number     = {3},
  pages      = {Paper No. 106843, 44},
  volume     = {226},
  doi        = {10.1016/j.jpaa.2021.106843},
  fjournal   = {Journal of Pure and Applied Algebra},
  mrclass    = {18E35 (18G35)},
  mrnumber   = {4289719},
  mrreviewer = {Maosong\ Xiang},
  url        = {https://doi.org/10.1016/j.jpaa.2021.106843},
  zbl        = {1472.18010},
  zbmath     = {7396426}
}

@article{Verity:2011phd_thesis_reprints,
  author     = {Verity, Dominic},
  journal    = {Repr. Theory Appl. Categ.},
  title      = {Enriched categories, internal categories and change of base},
  year       = {2011},
  number     = {20},
  pages      = {1--266},
  fjournal   = {Reprints in Theory and Applications of Categories},
  mrclass    = {18D05 (18D20)},
  mrnumber   = {2844536},
  mrreviewer = {Josep\ Elgueta},
  zbl        = {1254.18001},
  zbmath     = {6033414}
}

@article{Orlov:2020finite-dimensional,
  author     = {Orlov, Dmitri},
  journal    = {Adv. Math.},
  title      = {Finite-dimensional differential graded algebras and their geometric realizations},
  year       = {2020},
  issn       = {0001-8708,1090-2082},
  pages      = {107096, 33},
  volume     = {366},
  doi        = {10.1016/j.aim.2020.107096},
  fjournal   = {Advances in Mathematics},
  mrclass    = {14A22 (16E35 16E45 18G80)},
  mrnumber   = {4072798},
  mrreviewer = {Jon\ Eivind\ Vatne},
  url        = {https://doi.org/10.1016/j.aim.2020.107096},
  zbl        = {1504.14005},
  zbmath     = {7183750}
}

@article{Campbell-Ponto:2023riemann-roch_in_monoidal_2-cat,
  author   = {Campbell, Jonathan A. and Ponto, Kate},
  journal  = {Q. J. Math.},
  title    = {Riemann-{R}och theorems in monoidal 2-categories},
  year     = {2023},
  issn     = {0033-5606,1464-3847},
  number   = {3},
  pages    = {1119--1163},
  volume   = {74},
  doi      = {10.1093/qmath/haad003},
  fjournal = {The Quarterly Journal of Mathematics},
  mrclass  = {14A30 (14C34 18M05 18N10)},
  mrnumber = {4642250},
  url      = {https://doi.org/10.1093/qmath/haad003},
  zbmath   = {7795678}
}

@article{Lunts-Orlov:2010uniqueness_of_enhancements,
  author     = {Lunts, Valery A. and Orlov, Dmitri O.},
  journal    = {J. Amer. Math. Soc.},
  title      = {Uniqueness of enhancement for triangulated categories},
  year       = {2010},
  issn       = {0894-0347,1088-6834},
  number     = {3},
  pages      = {853--908},
  volume     = {23},
  doi        = {10.1090/S0894-0347-10-00664-8},
  fjournal   = {Journal of the American Mathematical Society},
  mrclass    = {14F05 (18E30)},
  mrnumber   = {2629991},
  mrreviewer = {Andrei\ D.\ Halanay},
  url        = {https://doi.org/10.1090/S0894-0347-10-00664-8},
  zbl        = {1197.14014},
  zbmath     = {5775688}
}

@article{Keller:2005triangulated_orbit,
  author     = {Keller, Bernhard},
  journal    = {Doc. Math.},
  title      = {On triangulated orbit categories},
  year       = {2005},
  issn       = {1431-0635,1431-0643},
  pages      = {551--581},
  volume     = {10},
  fjournal   = {Documenta Mathematica},
  mrclass    = {18E30 (16G20)},
  mrnumber   = {2184464},
  mrreviewer = {Michael\ S.\ Barot},
  zbl        = {1086.18006},
  zbmath     = {2238184}
}

@article{Keller:1998invariance,
  author     = {Keller, Bernhard},
  journal    = {J. Pure Appl. Algebra},
  title      = {Invariance and localization for cyclic homology of {DG} algebras},
  year       = {1998},
  issn       = {0022-4049,1873-1376},
  number     = {1-3},
  pages      = {223--273},
  volume     = {123},
  doi        = {10.1016/S0022-4049(96)00085-0},
  fjournal   = {Journal of Pure and Applied Algebra},
  mrclass    = {16E40 (18E30)},
  mrnumber   = {1492902},
  mrreviewer = {Dieter\ Happel},
  url        = {https://doi.org/10.1016/S0022-4049(96)00085-0},
  zbl        = {0890.18007},
  zbmath     = {1121668}
}

@article{Dawson-Pare-Pronk:2004,
  author     = {Dawson, R. J. MacG. and Par\'{e}, R. and Pronk, D. A.},
  journal    = {Theory Appl. Categ.},
  title      = {Universal properties of {S}pan},
  year       = {2004},
  issn       = {1201-561X},
  pages      = {No. 4, 61--85},
  volume     = {13},
  fjournal   = {Theory and Applications of Categories},
  mrclass    = {18A40 (18D05)},
  mrnumber   = {2116323},
  mrreviewer = {Walter\ Tholen},
  zbl        = {1062.18005},
  zbmath     = {2125657}
}

\end{document}